\definecolor{redish}{HTML}{D62728}
\definecolor{blueish}{HTML}{1F77B4}
\definecolor{greenish}{HTML}{2CA02C}
\definecolor{orangeish}{HTML}{FF7F0E}
\definecolor{purpleish}{HTML}{9467DB}
\setlist[enumerate, 1]{% global settings for the first level in enumerate environments
  leftmargin = \parindent+0.5em\relax, 
  align = left,
  labelwidth=\parindent+0.5em\relax,
  labelsep = 0pt
}
\newcommand{\sqed}{\hfill{$\square$}}
\newcommand\cyr{%
\renewcommand\rmdefault{wncyr}%
\renewcommand\sfdefault{wncyss}%
\renewcommand\encodingdefault{OT2}%
\normalfont\selectfont}
\DeclareTextFontCommand{\textcyr}{\cyr}
\DeclareMathOperator*{\esssup}{ess\,sup}
\newcommand{\loc}{{\mathrm{loc}}}
\newcommand{\IC}{\mathbb{C}}
\newcommand{\IN}{\mathbb{N}}
\newcommand{\IR}{\mathbb{R}}
\newcommand{\IZ}{\mathbb{Z}}
\newcommand{\cK}{\mathcal K}
\newcommand{\cL}{\mathcal L}
\newcommand{\cR}{\mathcal R}
\newcommand{\cT}{\mathcal T}
\newcommand{\df}{\mathrm{d}}
\renewcommand{\Re}{{\mathrm{Re}}}
\renewcommand{\Im}{{\mathrm{Im}}}
\newtheorem{thm}{Theorem}[section]
\newtheorem{cor}[thm]{Corollary}
\newtheorem{lem}[thm]{Lemma}
\newtheorem{prop}[thm]{Proposition}
\theoremstyle{definition}
\newtheorem{defn}[thm]{Definition}
\newtheorem{exa}[thm]{Example}
\newtheorem{rem}[thm]{Remark}
\newcommand{\appref}[1]{Appendix \ref{#1}}
\newcommand{\corref}[1]{Corollary \ref{#1}}
\newcommand{\defnref}[1]{Definition \ref{#1}}
\renewcommand{\eqref}[1]{$($\ref{#1}$)$}
\newcommand{\lemref}[1]{Lemma \ref{#1}}
\newcommand{\cosref}[2]{\hyperlink{#1}{#2}}
\newcommand{\propref}[1]{Proposition \ref{#1}}
\newcommand{\secref}[1]{Section \ref{#1}}
\newcommand{\thmref}[1]{Theorem \ref{#1}}
\renewcommand{\to}{\xrightarrow{\quad}}
\newcommand{\apc}{\rightarrow}
\renewcommand{\mapsto}{\longmapsto}
\newcommand\xrowht[2][0]{\addstackgap[.5\dimexpr#2\relax]{\vphantom{#1}}}
\date{\today}
\title{\bf Positivity and long-term behaviour of a diffusion model with measure-valued nonlocal reaction term: applications in bioscience and engineering}
\author{Xiao Yang$^{1^*}$, Qiyao Peng$^{2, 1}$, Sander C. Hille$^1$}
\date{{\it\footnotesize $^{1^*}$ Mathematical Institute, Faculty of Science, Leiden University, The Netherlands.\\
Correspondence: x.yang@math.leidenuniv.nl\\
$^{2}$ Mathematics for AI in Real-world Systems, School of Mathematical Sciences, Lancaster University, UK.\\}
\vskip 1em
\today
}
\begin{document}
\numberwithin{equation}{section}
\numberwithin{figure}{section}

\maketitle
\begin{abstract}
    \noindent The behaviour is investigated of solutions to a diffusion equation on the real line with nonlocal and singular reaction term, i.e., given by a Dirac source or sink at the origin. It gives a simplified representation of for example a control system that senses concentration at a distance, but ``intervenes" at the origin. Positivity of solutions (for positive initial conditions) cannot be guaranteed for all parameter settings in the model. We determine a parameter regime and conditions on the positive initial condition in terms of monotonicity and symmetry, that do allow us to conclude the positivity of the solution for all time. In addition, we provide conditions that ensure convergence of the system to a constant steady state (pointwise), outside the region of observation. Technically, we extensively use Laplace transform arguments to achieve these results. 

\noindent
{\bf Keywords: }reaction-diffusion equation, Laplace transform, positivity, Dirac delta measure, negative feedback, point source
\end{abstract}

\section{Introduction}\label{cha:1}
In various settings in (bio)science and engineering a modeller is confronted with the question how to represent a system in which ``{\it entities}" (e.g., a chemical compound, type of biological cells) diffuse in the environment of a structure that can release and take up these entities, controlled by ``observations" of their abundance nearby. For instance, cell-to-cell communication through signalling molecules follows this general pattern, for which the mathematical work uses mostly continuum modelling, or so-called macroscopic models, where cells are modelled by cell density in the partial differential equation (PDE). These models consider the uptake of the compounds by describing ligand-receptor binding kinetics on the cell ``boundary", i.e., the cell membrane. The governing equations include the dynamics of the cells, receptors on the cell membrane and the concentration of signalling molecules (e.g., ligand); see \citep{Klotz1984,Finlay2020}. 

One may also think for example of the vascular system in animals that exchanges various compounds (e.g., oxygen, carbon dioxide, drugs) between blood and surrounding tissue, regulating the permeability of the blood vessel wall; or how immune cells spread to regions where they are most needed, signalled by so-called cytokines, e.g., immune response in wound healing known as the inflammation phase. In this phase of wound healing, immune cells such as macrophages and lymphocytes enter the blood vessels from other parts of body and leave the blood circulation from the broken vessels at the wound edge to clear the bacteria \citep{pena2024cellular}. Another example related to ``exchanging behaviour" between blood vessels and their direct environment is provided by cancer cell metastasis. In every metastatic journey, intravasation and extravasation are the fundamental steps. That is, the cancer cell enters the blood circulation (intravasation) from one part of body, then it exits at the other (extravasation) to develop the secondary tumour via the junction between the epithelial cells of the blood vessels \citep{Sznurkowska2022-ts,Di_Russo2024-zr}; see Fig. \ref{fig:1D_takeup} for an illustration. 

In chemical engineering, one may wish to control the concentration of a compound in a solution (or the temperature) as measured by a few localized sensors in the bath, by adjusting the concentration (or temperature) in a liquid flowing through a permeable tube system in the bath; see \cite{DUA01}.

A mathematical model of such a system would describe the dynamics of the density of entities (e.g., concentration of the compound) in the environment of the exchanging structure most naturally as a diffusion equation with suitable (dynamic) boundary conditions at the interface between structure and environment. If the structure has complicated or changing geometry in time, theoretical tractability easily becomes cumbersome, followed by various challenges in numerical simulations, such as computational efficiency and different numerical schemes to handle the changing and moving geometry (e.g., cutFEM \citep{Burman2014} and weighted shifted boundary method \citep{Xu2024}).

\begin{figure}[h!]
    \centering
    \includegraphics[width=\linewidth]{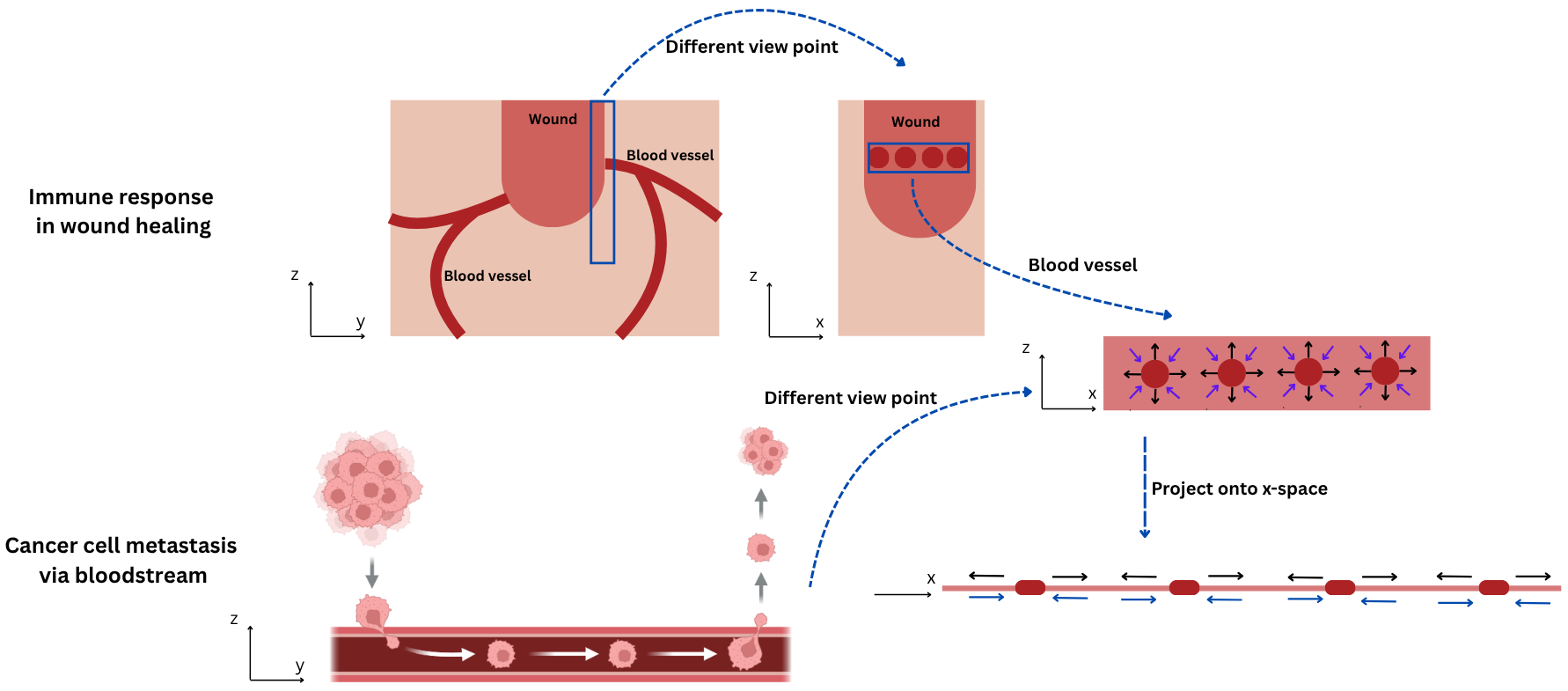}
    \caption{Schematic of immune cells migrating to wound region and cancer cell metastasis via the bloodstream. The cells enter the blood vessels from one part of the body and exit from the other part. Projecting onto x-space, the blood vessels and the trajectory of cells entering and exiting the blood vessels can be simplified to a one-dimensional model, as in System \eqref{eq: system considered u}--\eqref{def:Psi new}, where $u$ then stands for the cell density in these two biological examples.}
    \label{fig:1D_takeup}
\end{figure}

In previous work \citep{Yang2025, Peng2023, Peng2024-fv} we studied reduction of a model for cell-cell communication via diffusive compounds in a two-dimensional setting with ``spatially extended" cells by comparing its solution to that of a model on a cell-cell communication via diffusive compounds spatial domain with point sources. While in \citet{Peng2023, Peng2024-fv} cells are only secreting compounds, we introduced a nonlocal term in the work \citet{Yang2025} to model also the uptake of compounds by cells.

If the exchanging structure has symmetries, reduction of the dimensionality of the computational domain may also be made, resulting in a substantial increase in computational efficiency -- and often an improvement in theoretical tractability. For example, if the above mentioned vasculature or tubing in the bath consists of parallel tubes over a sufficiently long stretch, a three-dimensional description may be replaced well by a diffusive model for the density in a two-dimensional cross section orthogonal to the tubing. If the tubing is in a plane, a further reduction to a one-dimensional description for the average concentration perpendicular to this plane is reasonable (see Fig. \ref{fig:1D_takeup} for a schematic that presents two aforementioned biological examples and shows how we reduce to a one-dimensional model). In the latter reduction, the connectedness of the spatial domain must be maintained to retain a reasonable representation of reality, since diffusive movement from one side of a tube to the other without moving through, must be possible. As a result, in the one-dimensional case, the spatial exclusion model that we have been working on in two dimensions \citep{Yang2025,Peng2023,Peng2024-fv}, will not be valid and in this study, therefore, we only focus on the point source model.

These considerations of model reduction, motivated by the aforementioned examples from biology and engineering, led us to study the following ``prototypical" initial value problem, which is central in the current paper. It concerns an isotropic diffusion equation on $\IR$ with a nonlocal and non-autonomous reaction term that is singular, in the sense that it is Dirac-measure-valued:
\begin{equation}\label{eq:system considered}
    \left\{\begin{alignedat}{2}
        \partial_t v &=  D\partial_x^2 v + \widetilde{\Psi}_t[v]\delta_0, \quad&& \mbox{for } (t,x)\in \IR^+\times\IR,\\
        v(0,x) &= v_0(x), && \mbox{for } x\in\IR,
    \end{alignedat}\right.
\end{equation}
with $D>0$. The intensity $\widetilde{\Psi}_t[v]\in\IR$ of the Dirac point source (or sink, if the intensity is negative) is allowed to depend nonlocally on the solution to allow for control by remote sensors or to reflect that the exchanging structure that ``disappeared" in the reduction did have a spatial extent. Moreover, the requirement of maintaining connectivity of the spatial domain and the possibility of diffusing ``around" the exchanging structure in the one-dimensional reduced model is met.  

Well-posedness of equations with measure-valued reaction terms more general than System \eqref{eq:system considered} in various types of Sobolev spaces has been studied by Amann and Quitner (see \citep{Amann1995,Amann2001,AMQ:04}). Specification of their results to our case -- with an appropriate regularity assumption on the map $(t,v)\mapsto \widetilde{\Psi}_t[v]$ -- yields existence of so-called {\it mild solutions} to semilinear nonlocal Equation \eqref{eq:system considered} in the Sobolev space $H^1(\IR)$. The numerical simulation of diffusion equations with a reaction term that is measure-valued and concentrated on a lower-dimensional manifold requires care. First results in these directions can be found in \citet{Peng2024-fv,Gjerde2019,Asghar2024-ii, Ashyraliyev2008, Koeppl_2014}. Also, one has started to study inverse problems (parameter estimation) for such models \citep{Qiu2025}. Equations with nonlocal reaction terms can be found in e.g., \citep{Banerjee2020-bt,Banerjee2022}

A key point of attention for the modeller is, what functional to pick for $(t,v)\mapsto \widetilde{\Psi}_t[v]$ such that solutions (in a suitable sense) to Equation \eqref{eq:system considered} exist and depend continuously on the data (well-posedness) while assuring close correspondence of this solution $v$ to the average concentration over appropriate sections in the original three-dimensional system. One important criterium for correspondence is, that positivity of the solution is maintained, for positive initial condition $v_0$, because only positive densities are physically relevant, of course. Establishing positivity of the solution cannot be approached by means of classical techniques employed for (local, measure-valued) reaction-diffusion equations (see e.g., \cite{Sikic:1994,Canizo_ea:2012,Hille_ea:2025}) because of the nonlocality of $\widetilde{\Psi}_t$. 

This paper expands on work that considered (in two-dimensional space) the diffusion equation with forced point sources or sinks -- i.e., independent of the solution (cf. \cite{Yang2025}). Here, we do consider feedback from the solution. The main mathematical challenges that are faced in this novel setting and for which appropriate results are provided are:
\begin{itemize}[leftmargin=*]
    \item Exhibiting an approach towards positivity of solutions to this particular type of nonlocal equations that allows providing conditions on the model data that guarantee positivity of the solution -- generally not on all of $\IR$ unfortunately -- but on the part of $\IR$ that corresponds to the spatial domain outside the exchanging structure. It involves Laplace transform and a detailed analysis of positivity of specific special functions through this transform (see Section \ref{Sec:Laplace}).

    Positivity can be converted into a result on having control on the solution, keeping it below a desired threshold; see Section \ref{subsec: Positivity and control problem} below.
    
    \item Providing conditions on $\widetilde{\Psi}_t[v]$ such that the mild solution will converge to a steady state -- in an asymptotic sense (and appropriate sense of convergence).
\end{itemize}

Our approach to both employs a Laplace transform argument. As we also view the positivity as ``controlling the solution below a desired threshold", it brings feedback control systems in mind, in which the variable being controlled is measured and compared with a preset target. 

In a linear feedback system, by Laplace transform the study of its behaviour boils down to studying the so-called {\it transfer function}. Consider for example the system depicted in Fig. \ref{fig:feedback}. 
\begin{figure}[h!]
    \centering
    \includegraphics[scale=0.3]{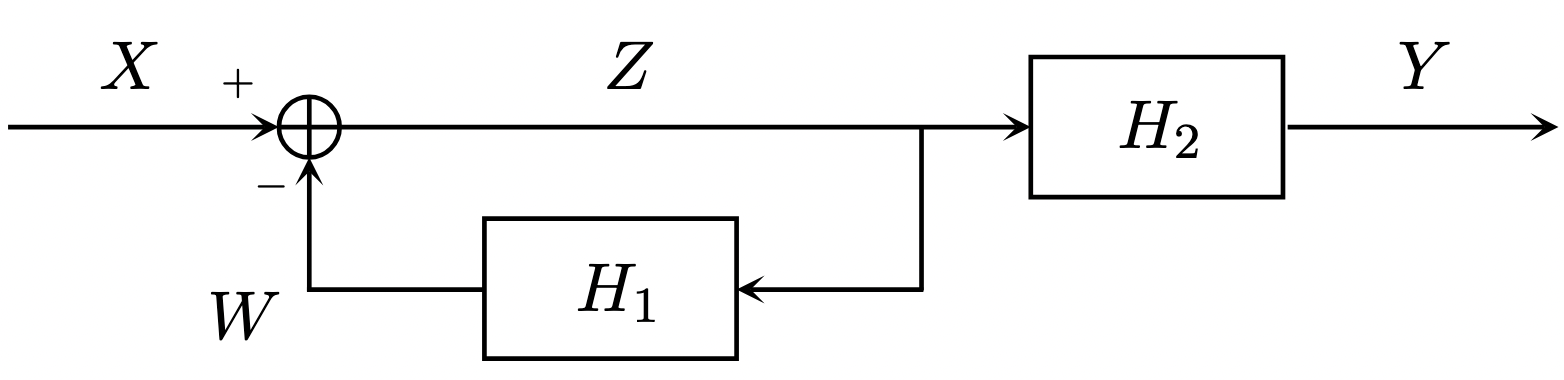}
    \caption{Feedback block diagram that represents the dynamics of $u_+$ in s-domain in Section \ref{sec:expression of u+ s-domain}. For the purpose of expressing the control system in a general setting, we use the symbols that are not relevant to our study.}
    \label{fig:feedback}
\end{figure}

The Laplace transformed system variables, denoted $X, Y, Z, W$, are related by the algebraic equations (for holomormic functions):
\begin{equation*}
\left\{
    \begin{aligned}
    Z&=X-W,\\
    W&=H_1Z,\\
    Y&=H_2Z.
\end{aligned}
\right.
\end{equation*}
The first two equations result in 
\[
    Z=\frac{X}{1+H_1},\qquad \mbox{hence}\quad  X=(1+H_1)Z.
\]
The transfer function for this feedback system between input $X$ and output $Y$ is then 
\[
    H:=\frac{Y}{X}=\frac{H_2}{1+H_1}.
\]
It will be similar transfer functions of which the study is essential for obtaining the positivity results for our model; see Section \ref{sec:expression of u+ s-domain}. In particular, the key idea is, that one can study the behaviour of the solution by considering a convenient system with the same transfer function. 

\subsection{Mathematical assumptions}
\label{sec:mathass}
We make the following fundamental assumptions and choices for the intensity functional $\widetilde{\Psi}_t[v]$. First, we assume that it depends on $v$ only through the solution at time $t$: $v_t:=v(t,\cdot)$. Thus, no time delay is considered. Next, the nonlocal dependence on $v_t$ is limited to its value (``measurement") at two distinct points, taken symmetrically around the Dirac point $x=0$ for convenience, say at $x=\pm R$. Note that point evaluation is continuous in $H^1(\IR)$, because of the Sobolev embedding of $H^1(\IR)$ into $C_b^0(\IR)$ (see \cite{Adams:2003}, Theorem 4.12, p.85). Hence, we assume:
\begin{quote}{\it 
\begin{enumerate}
    \item[(R1)] \;There is a function $(t,x_1,x_2)\mapsto f_t(x_1,x_2)$ such that
\[
    \widetilde{\Psi}_t[v] = f_t\bigl( v_t(-R), v_t(R)\bigr).
\]
Since the modelled system is thought to be symmetric around the Dirac point $x=0$, $f_t$ is symmetric: $f_t(x_1,x_2)=f_t(x_2,x_1)$ for all $x_1,x_2\in\IR$ and all $t\geqslant0$.
\end{enumerate}}
\end{quote}
Moreover, we assume the existence of a (positive) ``{\it asymptotically steady state}': 
\begin{quote}{\it 
\begin{enumerate}
    \item[(R2)] \;There exists $\bar{v}\geqslant 0$ such that 
\begin{equation}
    f_t(\bar{v},\bar{v})\apc 0,\quad\mbox{as } t\apc+\infty.
\end{equation}
\end{enumerate}}
\end{quote}
This will ensure that the solution $v_t$ to System \eqref{eq:system considered} will converge (in appropriate sense) to the constant function $\bar{v}$ as $t\to+\infty$. Note that the constant functions are not in the state space $H^1(\IR)$. Moreover, generally, System \eqref{eq:system considered} is not autonomous. Hence, it will not have a steady state in the usual sense.
Since we wish to focus on establishing an approach for proving positivity of solutions to nonlocal equations of the class studied, we want to take $f_t$ as simple as possible: 
\begin{quote}{\it
\begin{enumerate}
    \item[(R3)] \;$f_t\in C^2(\IR^2)$ and $\nabla f_t(x,x)$ is independent of $t$.
\end{enumerate}}
\end{quote}
Note that the symmetry of $f_t$ from (R1) implies (with the first part of (R3)) that $\partial_{x_1} f_t=
\partial_{x_2}f_t$ on the diagonal $\{(x,x)\mid x\in \IR\}$. Then, for any $c\in\IR$, and writing $u:=v-c$ for the deviation of $v$ from $c$, one has
\begin{align}
    \widetilde{\Psi}_t[v] &=f_t(c,c) + \nabla f_t(c,c) \left[ \begin{array}{c} {u_t}(-R) \\ u_t(R)\end{array}\right]+\mathrm{h.o.t.} \nonumber\\
    & \approx\Phi_c(t) + a_c\bigl[ u_t(-R) + u_t(R) \bigr]\label{eq:specific intensity this paper}
\end{align}
for small $u_t(\pm R)$, for some $a_c\in \IR$ that is independent of time, because of the second part of Assumption (R3). 

\subsection{Question of positivity of solution in relation to a control problem}
\label{subsec: Positivity and control problem}

Consider System \eqref{eq:system considered} with $\widetilde{\Psi}_t[v]$ satisfying (R1) and (R3). Assume that it has the linearized form in Expression \eqref{eq:specific intensity this paper}. The constant $c\in\IR$ may represent a particular threshold value for the state variable $v$. In an application, the equations can represent a system in which the value of the state variable at position $x\in\IR$ are controlled by change at $x=0$ only, based on deviation of the state variable from a desired overall value $c$ (i.e., ``$u$") ``measured" at location $x=-R$ and $x=R$.  A related control problem then is:

\begin{quote}{\it
    For given $\Phi_c(t)$ and $v_0(x)\leqslant c$, is it possible to find a parameter value $a_c\in\IR$ such that $v$ remains below the threshold $c$ for all time on (at least) the spatial domain $\IR\smallsetminus(-R,R)$?}
\end{quote}

In the context of this problem it is then convenient to define $u:=c-v$, as above, such that the condition $v\leqslant c$ is satisfied if we show positivity of $u$ on $\IR\smallsetminus(-R,R)$. Then, $u$ satisfies
\begin{equation}
\label{eq: system considered u}
    \left\{\begin{alignedat}{2} 
        \partial_t u &=  D\partial_x^2 u + \Psi_t[u]\delta_0, \quad&& \mbox{for } (t,x)\in \IR^+\times\IR,\\
        u(0,x) &= u_0(x),  && \mbox{for } x\in\IR,
    \end{alignedat}\right.
\end{equation}
with $u_0 := c-v_0\geqslant 0$ and
\begin{equation}\label{def:Psi new}
    \Psi_t[u] := \Phi(t) - a\bigl[ u(t,-R) + u(t,R)].
\end{equation}
Natural conditions are that $\Phi(t)\geqslant 0$, representing that the system tends to increase $u$, i.e., to move $v$ to a lower value than $c$. A parameter value $a\leqslant 0$ represents yet another tendency for the system to move $v$ below $c$. It is clear intuitively, that then $u$ will remain positive for all time when $u_0\geqslant 0$. The interesting case is, whether positivity of $u$ can be guaranteed for some $a>0$. This represents a situation in which the system tends to increase $v$, based upon the measurements at $x=\pm R$, bringing it ``close" to $c$, while at the same time ``steadily" decreasing $v$ through the forcing $\Phi(t)$. Thus, in the setting $a>0$ there is  negative feedback.

\subsection{Non-dimensionalization of the model}

We shall be concerned with System \eqref{eq: system considered u} -- \eqref{def:Psi new} from now on. It is mathematically convenient to work with a dimensionless model with a minimal number of parameters.

The state variable $u$ represents compound density, hence it has dimension $[u] = NL^{-1}$. The intensity $\Psi[u]$ of the Dirac source therefore must have dimension of a number flux, i.e., $NT^{-1}$, and $\Phi(t)$ then has the same dimension. We conclude that the parameter $a$ in Equation \eqref{def:Psi new} that characterizes the uptake from the environment has the dimension of a permeability (or speed): $[a] = LT^{-1}$.

We non-dimensionalize System \eqref{eq: system considered u} - \eqref{def:Psi new} on the diffusive time-scale corresponding to the length-scale of the structure's radius $R$. That is, new non-dimensional space and time variables become
\[
    \xi := \frac{x}{R},\qquad \tau:= \frac{t}{t^*}\quad \mbox{with } t^*:= \frac{R^2}{D}.
\]
Let $u^*$ be a reference density, fixed later. Define
\[
    \hat{u}(\tau,\xi) := \frac{u(t,x)}{u^*},\qquad \hat{u}_0(\xi):= \frac{u_0(x)}{u^*},\qquad \hat{\Psi}[\hat{u}](\tau) := \frac{t^*}{u^*R} \Psi[u](t).
\]
Then System \eqref{eq: system considered u} - \eqref{def:Psi new} becomes the dimensionless system
\begin{equation}
    \label{eq: dimensionless system}
    \left\{\begin{alignedat}{2}
    \partial_\tau \hat{u} &= \partial^2_\xi \hat{u} +\hat{\Psi}[\hat{u}](\tau)\delta_0, \quad && (\tau,\xi)\in\IR^+\times\IR, \\
    \hat{u}(0,\xi) &= \hat{u}_{0}(\xi), && \text{at } \tau=0,  \xi\in\IR,
    \end{alignedat}\right.
\end{equation}
where
\begin{equation}
\label{eq:dimensionless Psi}
    \hat{\Psi}[\hat{u}](\tau):=\hat{\Phi}(\tau)-\hat{a}\bigl[\hat{u}(\tau,1)+\hat{u}(\tau,-1)\bigr]
\end{equation}
with
\[
    \hat{\Phi}(\tau) := \frac{t^*}{u^*R} \Phi(t),\qquad \hat{a} := \frac{t^* a}{R} = \frac{aR}{D}.
\]
We shall assume later, that $\Phi_\infty:=\lim_{t\apc+\infty} \Phi(t)\geqslant 0$ exists. One may then choose the reference density $u^*$ such that $\lim_{\tau\apc+\infty} \hat{\Phi}(\tau)=1$, although this is not strictly required. 

From this point we shall work on the dimensionless model. For ease of presentation, we shall drop the accents and return to write $t$ and $x$ for the time and spatial variable. Thus, we work with System \eqref{eq: system considered u} -- \eqref{def:Psi new} as if we fixed $D=1$ and $R=1$. The non-dimensional variables $\xi$ and $\tau$ were introduced in this section only for the discussion of non-dimensionalization. Elsewhere in the paper they may appear with a different meaning.

\subsection{Structure of the manuscript}
The paper is structured as follows. Section \ref{sec:overview of results} displays an overview of the key results of this study. The positivity of the solution to System \eqref{eq: system considered u} -- \eqref{def:Psi new} is proved by utilising the Laplace transform as detailed in Section \ref{Sec:Laplace}. Section \ref{Sec:Pos} then theoretically defines the regions of parameters that guarantee the positivity of the solution. Furthermore, we discuss the existence of the steady state of System \eqref{eq: system considered u} -- \eqref{def:Psi new} in Section \ref{Sec_steady_state}. We verify our theoretical results with numerical simulations in Section \ref{Sec_numerical_results}. Conclusions are delivered in Section \ref{Sec_conclusions}. \cosref{app}{Appendices} contains some preliminary results, detailed computations, and clarifications of potentially ambiguous notations.

\section{Overview of main results}
\label{sec:overview of results}
In \cite{Yang2025}, we have discussed the well-posedness of a system analogous to System \eqref{eq: system considered u} -- \eqref{def:Psi new} in two spatial dimensions. In that case and here, this question can be addressed completely by the results obtained by Amann and Quitner \cite{Amann1995,Amann2001,AMQ:04}. The concept of solution used there is that of distributional solution. It is, however, shown that for the particular type of reaction term in our equation, this distributional solution is a so-called {\it mild solution} (see \cite{Amann2001}, \cite{AMQ:04}, Theorem 3.4, p. 1061). That is, $u$ is a continuous temporal trajectory $u:\IR_0^+\to X$, where $X$ is a suitable Sobolev space on $\Omega$, that solves the abstract Cauchy problem in $X$:
\begin{equation}\label{eq:cauchyproblem}
\dot u(t)= \mathfrak A u(t) + \mathfrak F(u(t),t),\quad u(0)=u_0\in X,
\end{equation}
where $\dot u$ is the derivative with respect to time and $(\mathfrak A,\mathcal{D}(\mathfrak A))$ is the generator of a strongly continuous semigroup $(e^{\mathfrak At})_{t\geqslant 0}$ on $X$. The solution is called mild, because it solves the formally integrated version of \eqref{eq:cauchyproblem}, that is the Variation of Constants Formula
\begin{equation}\label{eq:semigroupmild}
u(t)=e^{\mathfrak A t}u_0+\int_0^te^{\mathfrak A (t-\tau)}\mathfrak F(u(\tau),\tau)\df\tau
\end{equation}
(cf. also Section 2.1 of \cite{Yang2025}). The integral must be interpreted as a Bochner integral in $X$. For our system $\mathfrak{F}$ will map into a larger space that includes the Dirac measure $\delta_0$, but the analytic semigroup defined by the Laplace operator (on a suitable domain) will regularize it ``instantaneously" to have the integrant in $X$ for all $\tau\neq t$.

We have the following well-posedness result that was not covered by the previous paper \cite{Yang2025}. comparing to Theorem 1 in it, we are able to obtain a result with improved regularity for $u$. This is because the domain $\IR$ has dimension one, and has an empty boundary.

\begin{thm}[Well-posedness]\label{thm:wellposereduced}
 Let $a\in\IR$. System \eqref{eq: system considered u} -- \eqref{def:Psi new} has a unique mild solution $u\in C^0(\IR_0^+,H^1(\IR))$, provided $u_{0}\in H^1(\IR)$ and $\Phi\in L^\infty_{\mathrm{loc}}(\IR_0^+)$. Moreover, $u$ depends continuously on the initial condition $u_{0}$, for the topology of uniform convergence on compact sets.
\end{thm}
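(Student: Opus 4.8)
The plan is to construct the mild solution as the unique fixed point of the Duhamel (variation of constants) map, exploiting that although the source $\Psi_t[u]\delta_0$ leaves the state space $H^1(\IR)$, the analytic heat semigroup regularises it back into $H^1(\IR)$ with a singularity that is still integrable in time. Write $\mathfrak A=\partial_x^2$ for the generator of the heat semigroup $(e^{\mathfrak A t})_{t\geqslant 0}$ on $H^1(\IR)$, with Gaussian kernel $G_t(x)=(4\pi t)^{-1/2}e^{-x^2/4t}$, so that $e^{\mathfrak A t}\delta_0=G_t$. On the Banach space $C^0([0,T],H^1(\IR))$ I would study the operator
\[
(\mathcal T u)(t):=e^{\mathfrak A t}u_0+\int_0^t \Psi_\tau[u(\tau)]\,G_{t-\tau}\,\df\tau,
\]
where $\Psi_\tau[u(\tau)]=\Phi(\tau)-a[u(\tau,-1)+u(\tau,1)]$, a fixed point of which is precisely a mild solution on $[0,T]$ in the sense of \eqref{eq:semigroupmild}.

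The crux of the argument is the direct estimate $\norm{G_s}_{H^1(\IR)}\leqslant C\,s^{-3/4}$ as $s\to 0^+$, obtained from $\norm{G_s}_{L^2}\sim s^{-1/4}$ and $\norm{\partial_x G_s}_{L^2}\sim s^{-3/4}$. Since $3/4<1$, the map $\tau\mapsto \Psi_\tau[u(\tau)]\,G_{t-\tau}$ is Bochner integrable in $H^1(\IR)$: indeed $\Phi\in L^\infty_\loc(\IR_0^+)$ and the point evaluations at $\pm 1$ are bounded functionals on $H^1(\IR)$ by the Sobolev embedding $H^1(\IR)\hookrightarrow C^0_b(\IR)$, so $\tau\mapsto\Psi_\tau[u(\tau)]$ is bounded on $[0,T]$, while $(t-\tau)^{-3/4}$ is integrable near $\tau=t$. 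A standard continuity argument for such singular-kernel Duhamel integrals then shows $\mathcal Tu\in C^0([0,T],H^1(\IR))$, with $(\mathcal Tu)(0)=u_0$ because the integral term is $O(t^{1/4})$.

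For existence and uniqueness I would invoke the Banach fixed point theorem. Subtracting, $\Psi_\tau[u]-\Psi_\tau[w]=-a\bigl[(u-w)(\tau,-1)+(u-w)(\tau,1)\bigr]$ --- note the forcing $\Phi$ cancels --- so the Sobolev embedding gives $|\Psi_\tau[u]-\Psi_\tau[w]|\leqslant 2|a|c_S\norm{u(\tau)-w(\tau)}_{H^1(\IR)}$, and using $\int_0^t(t-\tau)^{-3/4}\df\tau=4t^{1/4}$,
\[
\sup_{[0,T]}\norm{\mathcal Tu-\mathcal Tw}_{H^1(\IR)}\leqslant 8|a|c_S C\,T^{1/4}\,\sup_{[0,T]}\norm{u-w}_{H^1(\IR)}.
\]
Choosing $T$ with $8|a|c_S C\,T^{1/4}<1$ makes $\mathcal T$ a contraction on the complete space $C^0([0,T],H^1(\IR))$, hence a unique mild solution on $[0,T]$. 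Since the admissible step $T$ depends only on $a$ (the affine structure produces a state-independent Lipschitz constant and $\Phi$ is merely locally bounded), restarting from $u(T),u(2T),\dots$ and concatenating yields a unique solution on all of $\IR_0^+$.

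Continuous dependence follows from the same estimate applied to two initial data: using that $e^{\mathfrak A t}$ is a contraction on $H^1(\IR)$,
\[
\norm{(u-w)(t)}_{H^1(\IR)}\leqslant\norm{u_0-w_0}_{H^1(\IR)}+8|a|c_S C\int_0^t(t-\tau)^{-3/4}\norm{(u-w)(\tau)}_{H^1(\IR)}\,\df\tau,
\]
and a singular Gronwall (Henry) inequality gives $\sup_{[0,T]}\norm{u-w}_{H^1(\IR)}\leqslant C(T)\norm{u_0-w_0}_{H^1(\IR)}$ on every compact interval; with $H^1(\IR)\hookrightarrow C^0_b(\IR)$ this yields the stated continuity for uniform convergence on compact sets. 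The entire difficulty sits in the first two steps, namely admitting the singular source $\delta_0\notin H^1(\IR)$ via the borderline-integrable bound $s^{-3/4}$; this is exactly what dimension one affords, whereas in two dimensions the analogous exponent is $s^{-1}$, which is not integrable and explains the loss of $H^1$ regularity encountered there.
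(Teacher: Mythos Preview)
Your argument is correct and complete in outline. The key estimate $\norm{G_s}_{H^1(\IR)}\leqslant Cs^{-3/4}$ is right, the affine structure of $\Psi_\tau$ indeed makes the contraction step size independent of the state so that concatenation gives global existence, and the singular Gronwall step for continuous dependence is standard.

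Your route differs from the paper's. The paper does not carry out a direct fixed-point argument; instead it invokes the abstract machinery of Amann--Quittner for semilinear parabolic problems with measure-valued nonlinearities, and the ``proof'' consists only of checking that the Sobolev exponents in the map $\mathfrak F:C^0([0,T],W^{1,p}(\IR))\to L^r([0,T],W^{\sigma-2,p}(\IR))$ can be chosen so that the theory applies with $p=2$ --- the relevant facts being that $\delta_0\in H^{\sigma-2}(\IR)$ for $\sigma<3/2$ and that $\partial\IR=\varnothing$. Your approach is more elementary and entirely self-contained: it bypasses the abstract framework and makes transparent, via the explicit kernel bound, exactly where dimension one enters (the integrable singularity $s^{-3/4}$ versus the borderline $s^{-1}$ in two dimensions). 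The paper's approach, by contrast, situates the result within a general theory that would apply to more complicated domains and nonlinearities without reworking the estimates. Both lead to the same conclusion; yours is arguably the cleaner proof for this specific statement.
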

\begin{proof} We provide only a sketch, as the result derives from \cite{Amann2001,AMQ:04} following the same idea and principle as the proof of Theorem 2 of \cite{Yang2025}. It consists of appropriately picking the exponents of the Sobolev spaces in the following map
\[
    \mathfrak {F}:C^0([0,T],W^{1,p}(\IR))\to L^r([0,T],W^{\sigma-2,p}(\IR))),
    \quad u \mapsto \bigl(\;t\mapsto \Psi[u](t)\delta_0\bigr).
\]
First of all, we have $\delta_0\in H^{\sigma-2}(\IR)$ for any $\sigma-2<-\frac{1}{2}$, namely, $\sigma<\frac{3}{2}$. Second, the boundary of the domain $\IR$ is empty. Having noticed these two points, we can follow Chapter 3 in \cite{AMQ:04} to conclude that it is possible to take $p=2$. Hence, the solution is in $W^{1,2}(\IR)=:H^1(\IR)$.
\end{proof}

We shall work with this notion of mild solution in the remainder of the paper. A mild solution is a weak solution, in the usual sense. In this study, the fundamental conditions on $u_0$ and $\Phi$ are $u_{0}\in H^1(\IR)$ and $\Phi\in L^\infty_{\mathrm{loc}}(\IR_0^+)$ is of at most exponential order (to have Laplace transform exist), respectively. Hence, these conditions will be assumed for the rest of the theorems in this manuscript, which may not always be stated explicitly in every statement.

Our focus here is on establishing positivity and long-term behaviour of solutions of the model. Due to the singularity introduced by the Dirac delta distribution and the nonlocality of the reaction term, classical theories unfortunately cannot be applied. Regarding the positivity of the solution, it is evident that the solution remains positive everywhere for positive $u_0$, whenever $\Phi(t)\geqslant 0$ and $a\leqslant 0$, because the perturbation $\mathfrak F$ is then positive on positive functions (see \cite{Sikic:1994, Hille_ea:2025}). 

The interesting case is when the negative feedback occurs, i.e., $a\geqslant0$ . Our main result on this case is:

\begin{thm}[Positivity under monotonicty condition]\label{thm:positivity}
    Suppose in System \eqref{eq: system considered u} -- \eqref{def:Psi new} that initial condition $0\leqslant u_{0}\in H^1(\IR)$ and $0\leqslant\Phi(t)\in L^\infty_\loc(\IR_0^+)$ is of at most exponential order. If $\hat{a}= \frac{aR}{D}\in[0, e^{-1}]$ and if $u_{0}$ is increasing on $(-\infty,0)$ while decreasing on $\IR^+$, then the mild solution $u(t)\in H^1(\IR)$ to the system is nonnegative for $t\geqslant 0$ and a.e. $x\in\IR\smallsetminus(-R,R)$.
\end{thm}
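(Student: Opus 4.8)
The plan is to pass to the Laplace transform in time, as announced, and convert the nonlocal feedback into an algebraic transfer function. Writing $U(\lambda,x)=\mathcal{L}[u(\cdot,x)](\lambda)$ and solving the resulting spatial ODE $D\,\partial_x^2 U-\lambda U=-u_0-\Psi^*(\lambda)\delta_0$ with the decaying Green's function $g_\lambda(x)=\tfrac{1}{2\sqrt{D\lambda}}e^{-\sqrt{\lambda/D}\,|x|}$, one gets
\[
U(\lambda,x)=(g_\lambda * u_0)(x)+\Psi^*(\lambda)\,g_\lambda(x).
\]
Evaluating at $x=\pm R$, using that $g_\lambda$ is even and that $\Psi^*(\lambda)=\Phi^*(\lambda)-a\bigl[U(\lambda,R)+U(\lambda,-R)\bigr]$, I would solve the resulting scalar equation to obtain the transfer-function form
\[
\Psi^*(\lambda)=\frac{\Phi^*(\lambda)-a\,B(\lambda)}{1+2a\,K(\lambda)},\qquad K(\lambda)=g_\lambda(R),\quad B(\lambda)=(g_\lambda * u_0)(R)+(g_\lambda * u_0)(-R),
\]
in which the denominator $1+2aK(\lambda)$ is exactly the feedback factor $1+H_1$ of the block diagram in the introduction. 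After non-dimensionalization $2aK(\lambda)=\hat a\,e^{-\sqrt\lambda}/\sqrt\lambda$.

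Next I would reduce the global statement to a boundary statement. On each half-line $x\geqslant R$ and $x\leqslant -R$ the source is absent, so $u$ solves the plain heat equation there, with nonnegative initial datum $u_0\geqslant 0$ and with $u(t,\cdot)\to 0$ at spatial infinity (because $u(t,\cdot)\in H^1(\IR)\hookrightarrow C_0(\IR)$). Hence, by the parabolic minimum principle on the (unbounded) half-lines, it suffices to prove the boundary positivity $u(t,R)\geqslant 0$ and $u(t,-R)\geqslant 0$ for all $t\geqslant 0$; the positivity on $\IR\smallsetminus(-R,R)$ then follows. By linearity of System \eqref{eq: system considered u} -- \eqref{def:Psi new} in the data I would further split $u=u^{[\Phi]}+u^{[u_0]}$, where $u^{[\Phi]}$ is the solution with $u_0\equiv 0$ and $u^{[u_0]}$ the one with $\Phi\equiv 0$, and establish nonnegativity of each boundary trace separately.

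The heart of the proof is the nonnegativity of the inverse Laplace transform of these traces. For the forcing part, $\mathcal{L}[u^{[\Phi]}(\cdot,R)](\lambda)=\Phi^*(\lambda)K(\lambda)/(1+2aK(\lambda))$, and expanding the transfer factor as a Neumann series represents $u^{[\Phi]}(\cdot,R)$ as $\Phi$ convolved with $\tfrac12\sum_{n\geqslant 0}(-\hat a)^n\psi_{n+1}$, where $\psi_m=\mathcal{L}^{-1}\bigl[\lambda^{-m/2}e^{-m\sqrt\lambda}\bigr]\geqslant 0$ are classical error-function/Wright-type kernels. Since $\Phi\geqslant 0$, it remains to show that this alternating series of nonnegative special functions has nonnegative sum, and here the threshold enters: the resummed kernel $1/(1+2aK(\lambda))$ is free of positivity-obstructing singularities exactly when the characteristic equation $1+2aK(\lambda)=0$, i.e.\ $\sqrt\lambda\,e^{\sqrt\lambda}=-\hat a$, has no admissible root, and via the Lambert-$W$ function this happens precisely for $\hat a\leqslant e^{-1}$, the value $-e^{-1}$ being the branch point of $W$. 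This identifies $e^{-1}$ as the sharp constant. For the initial-data part, with $\mathcal{L}[u^{[u_0]}(\cdot,R)](\lambda)=(g_\lambda * u_0)(R)-a\,B(\lambda)K(\lambda)/(1+2aK(\lambda))$, the monotonicity hypothesis is used: unimodality of $u_0$ with peak at $0$ is preserved by the free heat flow (the Gaussian kernel is variation-diminishing), which lets me control $B(\lambda)$ against the free trace $(g_\lambda * u_0)(R)$ and conclude that the feedback sink acting on the initial profile cannot overcome the positive free evolution at $x=\pm R$.

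The main obstacle is the special-function step: proving, uniformly in $t\geqslant 0$, the nonnegativity of the inverse transform of $1/(1+2aK(\lambda))$ and of the combined traces, and showing that $e^{-1}$ is exactly the borderline value rather than merely a convenient sufficient bound. A naive term-by-term (alternating-series) estimate is too lossy, since the ratios $\psi_m/\psi_{m+1}$ degenerate like $t^{-1/2}$ as $t\to\infty$; the argument must instead exploit the global analytic structure of the transfer function through the Lambert-$W$ characterization of its singularities. Coupling this analytic positivity with the monotonicity-based control of the initial-data contribution is the second delicate point, since the individual traces $u(t,\pm R)$ involve the non-sign-definite asymmetry $(g_\lambda * u_0)(R)-(g_\lambda * u_0)(-R)$, which only the unimodality assumption tames.
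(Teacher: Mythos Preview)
Your overall framework matches the paper's: Laplace transform in time, transfer-function representation with the feedback denominator $1+2aK(\lambda)$, reduction to boundary positivity at $x=\pm R$ via the comparison principle on the half-lines, and the split into forcing and initial-data contributions. Where your proposal diverges is in the two places you yourself flag as ``obstacles'', and in both the paper uses ideas you have not hit upon.

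For the transfer-function positivity, your Lambert-$W$ argument is not a proof. Pole location of $1/(1+2aK(\lambda))$ does not by itself determine the sign of the inverse Laplace transform; in fact for $\hat a\in(0,e^{-1}]$ the characteristic equation $we^w=-\hat a$ \emph{does} have real solutions (two of them), and it is precisely their reality that underlies positivity, not their absence. The paper's device is different: it uses the substitution lemma $\mathcal{L}^{-1}\{F(\sqrt{\cdot})\}(t)=\int_0^\infty \tau(4\pi t^3)^{-1/2}e^{-\tau^2/4t}\,\mathcal{L}^{-1}\{F\}(\tau)\,\mathrm{d}\tau$ to pass from $P_a(\lambda,\beta)$ to the meromorphic $\tilde P_a(s,\beta)=e^{-\beta s}/(2s+2ae^{-s})$, and then recognizes $\tilde p_a(t,0)=\mathcal{L}^{-1}\{\tilde P_a(\cdot,0)\}$ as the solution of the scalar delay equation $y'(t)=-a\,y(t-1)$ with constant history $y\equiv\tfrac12$ on $[0,1]$. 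The $e^{-1}$ threshold then comes from the classical Gy\H{o}ri--Ladas positivity criterion for this DDE (or an elementary argument via its real spectral roots). Your Neumann-series expansion into the $\psi_m$ is correct but, as you note, cannot be summed termwise; the DDE reformulation is what closes that gap.

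For the initial-data part, the variation-diminishing heuristic is not how the paper proceeds, and it is not clear it would suffice: you need pointwise control of $u(t,R)$ and $u(t,-R)$ separately, not merely of their sum or of a shape property of the profile. The paper instead writes $u(t,-1)=\tfrac12(u_+(t)-u_-(t))$ and reduces to positivity of the spatial integral of a kernel $\tilde r_{a,-}(t,\tilde x)$ (again obtained via the $\sqrt s$ substitution) against $u_0(\tilde x)$. This kernel has an explicit piecewise structure in $t$ with a single ``problematic interval'' of length at most one on which it is negative and bounded in absolute value by $a/2$, adjacent to an interval where it is at least $1/2$. Monotonicity of $u_0$ on $\mathbb{R}^+$ then lets the positive interval dominate the negative one, via a direct comparison of integrals over shifted unit intervals; the argument is a case analysis in $t$ (the cases $t\geqslant 2$, $\tfrac32\leqslant t<2$, $1<t<\tfrac32$, $0\leqslant t\leqslant 1$) and uses $a\leqslant e^{-1}<\tfrac12$ explicitly. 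This concrete interval-shifting is the step your proposal is missing.
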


Our other main result on refined behaviour of solutions is the existence of an asymptotic steady state:

\begin{thm}\label{thm:steadystate}
    Suppose in the System \eqref{eq: system considered u} -- \eqref{def:Psi new} that $\Phi(t)\geqslant 0$ and initial condition $u_{0}\geqslant0$ as in \thmref{thm:positivity}. Assume moreover, that $\lim_{t\apc+\infty}\Phi(t)=\Phi_{\infty}\in\IR$ exists and that $\Phi-\Phi_{\infty}\in L^1(\IR_0^+)$. If $\hat{a}=\frac{aR}{D}\in(0,e^{-1}]$, then the solution $u$ to the system converges pointwise to the value $\displaystyle\frac{\Phi_\infty}{2\hat{a}}$ everywhere.
\end{thm}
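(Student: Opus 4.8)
The plan is to pass to the Laplace domain in time and extract the long-term behaviour from the behaviour of the transform near $s=0$. Writing $U(s,x)=\int_0^\infty e^{-st}u(t,x)\,dt$ and transforming System \eqref{eq: system considered u}--\eqref{def:Psi new} (in the dimensionless form, so the diffusivity and $R$ equal $1$), the spatial profile solves the ODE $\partial_x^2 U - sU = -u_0 - \Psi^{\mathcal L}(s)\,\delta_0$, whose bounded solution is
\begin{equation*}
U(s,x)=\int_{\IR} g_s(x-y)\,u_0(y)\,dy \;+\; \Psi^{\mathcal L}(s)\,g_s(x),\qquad g_s(x):=\tfrac{1}{2\sqrt s}\,e^{-\sqrt s\,\abs{x}},
\end{equation*}
where $\Psi^{\mathcal L}$ is the transform of $t\mapsto\Psi_t[u]$ and $g_s$ is the Green's function of $\partial_x^2-s$ on $\IR$. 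Evaluating at $x=\pm1$, summing, and substituting into $\Psi^{\mathcal L}(s)=\Phi^{\mathcal L}(s)-\hat a\,[U(s,-1)+U(s,1)]$ produces the transfer-function identity
\begin{equation*}
\Psi^{\mathcal L}(s)=\frac{\Phi^{\mathcal L}(s)-\hat a\,A(s)}{\,1+\hat a\,e^{-\sqrt s}/\sqrt s\,},\qquad A(s):=\int_\IR\bigl[g_s(1-y)+g_s(-1-y)\bigr]u_0(y)\,dy,
\end{equation*}
precisely of the shape $H=H_2/(1+H_1)$ announced in the introduction.

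Next I would identify the candidate limit. Since $\Phi\to\Phi_\infty$ with $\Phi-\Phi_\infty\in L^1$, we have $\Phi^{\mathcal L}(s)=\Phi_\infty/s+O(1)$ as $s\downarrow0$; since $u_0\in H^1\subset L^2$, a Cauchy--Schwarz bound gives $A(s)=O(s^{-3/4})=o(1/s)$ and $s\!\int g_s(x-y)u_0(y)\,dy\to0$, while the denominator behaves like $\hat a/\sqrt s$. Expanding then yields $s\,\Psi^{\mathcal L}(s)\,g_s(x)\to \tfrac{\Phi_\infty}{2\hat a}$, so that $\lim_{s\downarrow0}s\,U(s,x)=\tfrac{\Phi_\infty}{2\hat a}$ for every $x$ (here $\hat a>0$ is exactly what makes the denominator dominant and the limit finite). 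This matches the assertion and is consistent with $\bar u:=\Phi_\infty/(2\hat a)$ being the unique constant making the limiting source intensity $\Phi_\infty-2\hat a\,\bar u$ vanish.

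The substance of the proof is to upgrade this heuristic final-value computation to an honest pointwise limit. The difficulty is that $s=0$ is not an isolated pole of $U(\cdot,x)$: the $\sqrt s$ makes it a branch point, so the naïve final value theorem does not apply. I would therefore invert by the Bromwich integral and deform the contour onto a Hankel contour wrapped around the branch cut of $\sqrt s$ along $(-\infty,0]$. The deformation across the imaginary axis is legitimate precisely because the characteristic equation $\sqrt s\,e^{\sqrt s}=-\hat a$ has no root with $\Re(\sqrt s)\geqslant0$ when $\hat a\leqslant e^{-1}$ — this is where the bound $e^{-1}$ (the branch point of the Lambert $W$ function) enters, guaranteeing that $1+\hat a\,e^{-\sqrt s}/\sqrt s$ does not vanish on $\{\Re s\geqslant0\}\smallsetminus\{0\}$. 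The simple pole of $U(\cdot,x)$ at $s=0$ then contributes exactly $\bar u$, while the Hankel-contour contribution is a $\sqrt s$-type integral that I expect to decay (algebraically, like $t^{-1/2}$) as $t\to+\infty$ by dominated convergence, using $\Phi-\Phi_\infty\in L^1$ (which makes $\Phi^{\mathcal L}(s)-\Phi_\infty/s$ continuous up to the imaginary axis) and $u_0\in L^2$ to control $A$ near the cut.

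The main obstacle is thus the rigorous inversion around the branch point: verifying the absence of zeros of the denominator on $\{\Re s\geqslant0\}\smallsetminus\{0\}$ (for which the constraint $\hat a\leqslant e^{-1}$ is essential) and producing decay estimates for the branch-cut integral that justify interchanging the limit with the integral. An alternative, avoiding contour estimates, would be to invoke an Ingham--Karamata--type Tauberian theorem, drawing the needed one-sided regularity condition from positivity of $u$ (\thmref{thm:positivity}) together with the $L^1$ control of $\Phi-\Phi_\infty$; the analytic input — no right-half-plane singularities apart from the simple pole at $0$ — is identical and again rests on $\hat a\leqslant e^{-1}$.
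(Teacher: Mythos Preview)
Your route is genuinely different from the paper's, and the obstacle you yourself flag --- rigorous inversion around the branch point at $s=0$ --- is the crux, not a detail. In particular, calling $s=0$ ``the simple pole of $U(\cdot,x)$'' is inaccurate: it is a branch point of $\sqrt s$ at which $U$ also blows up like $1/s$, so there is no residue in the ordinary sense. Extracting $\bar u$ from a Hankel-type inversion and then proving the cut integral tends to zero as $t\to\infty$ is the entire difficulty. The Tauberian alternative is more promising, but positivity of $u$ alone does not obviously supply a slow-decrease condition, since $u(t,x)$ is not known to be monotone in $t$.

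The paper sidesteps all of this by establishing existence of the limit \emph{in the time domain first}, and only then invoking the Final Value Theorem (which, as \appref{app:A} warns, is vacuous without knowing a priori that the limit exists). For $\hat a\in(0,e^{-1}]$ the DDE analysis shows $\tilde p_a(\cdot,0)$ is positive and exponentially decaying, whence $p_a(\cdot,\beta)\in W^{1,1}(\IR_0^+)$ for $\beta>0$ (\propref{prop:paw11}). Decomposing $u_+$ via \eqref{eq:uplusinpa} and applying Young's inequality together with the hypothesis $\Phi-\Phi_\infty\in L^1$ then yields $u_+'\in L^1(\IR_0^+)$ directly (\propref{prop:uplusprimeregular}); the limit $\lim_{t\to\infty}u_+(t)$ therefore exists by elementary calculus, and the Final Value Theorem reads off its value $\Phi_\infty/\hat a$. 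A simpler argument handles $u_-$. Finally, convergence is propagated from $x=\pm1$ outward by regarding $u$ on each half-line as the solution of a Dirichlet heat problem with boundary data $u(t,\pm1)$; the $L^1$ bound on $u'(t,\pm1)$ is precisely what makes the error-function kernel estimate in \propref{prop:steadyderive} go through.

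So your transfer-function asymptotics correctly identify the limit, but the paper's two-stage strategy --- boundary limits via time-domain kernel estimates, then spatial propagation via a Dirichlet comparison --- converts the heuristic into a proof with no contour deformation or Tauberian machinery.
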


The main result on positivity, Theorem \ref{thm:positivity}, is complemented by the following, which will turn out to be less involved to prove. However, it requires a symmetry condition on the initial condition.
\begin{prop}[Positivity under symmetry condition]\label{prop:Positivity with symmetry}
    Suppose in System \eqref{eq: system considered u} -- \eqref{def:Psi new} that $\Phi(t)\geqslant 0$ and initial condition $u_{0}\geqslant0$ as in \thmref{thm:positivity}. If $\hat{a}=\frac{aR}{D}\in(0,e^{-1}]$ and $u_0$ is symmetrical on $\IR\smallsetminus (-\frac{1}{2}R,\frac{1}{2}R)$ around $x=0$, then the mild solution $u(t)\in H^1(\IR)$ to the system is nonnegative for $t\geqslant 0$ and a.e. $x\in\IR\smallsetminus(-R,R)$.
\end{prop}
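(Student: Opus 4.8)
The plan is to exploit the fact that System \eqref{eq: system considered u}--\eqref{def:Psi new} is affine in $u$ and invariant under the reflection $x\mapsto -x$, so that the symmetry hypothesis on $u_0$ lets me \emph{symmetrise away} the part of the datum that the nonlocal feedback cannot detect. First I would split $u_0=u_0^{e}+u_0^{o}$ into its even and odd parts about $x=0$. The assumption that $u_0$ is symmetric on $\IR\smallsetminus(-\tfrac12 R,\tfrac12 R)$ is exactly the statement that $u_0^{o}$ is supported in $(-\tfrac12 R,\tfrac12 R)$, while $u_0\geqslant 0$ forces the pointwise domination $|u_0^{o}|\leqslant u_0^{e}$ with $u_0^{e}\geqslant 0$. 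By superposition I write $u=u^{e}+u^{o}$, where $u^{o}$ is the mild solution with datum $u_0^{o}$ and forcing $\Phi\equiv 0$, and $u^{e}$ the solution with datum $u_0^{e}$ and the given $\Phi$. The structural point is that the odd part is invisible to the feedback: setting $w(t,x):=-u^{o}(t,-x)$ one checks that $w$ solves the same problem with the same datum, so by uniqueness (\thmref{thm:wellposereduced}) $w=u^{o}$, i.e.\ $u^{o}$ is odd. Hence $u^{o}(t,-R)+u^{o}(t,R)=0$, the intensity $\Psi_t[u^{o}]$ vanishes identically, and $u^{o}(t)=e^{t\partial_x^2}u_0^{o}$ is pure heat flow. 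Consequently $u^{e}$ is even, so $u^{e}(t,-R)=u^{e}(t,R)$ and its intensity collapses to the scalar expression $\Psi_t[u^{e}]=\Phi(t)-2a\,u^{e}(t,R)$.

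Next I would analyse the symmetric problem for $u^{e}$ by the Laplace transform in time, as set up in \secref{Sec:Laplace}. With $K_s(x)=\tfrac{1}{2\sqrt s}e^{-\sqrt s\,|x|}$ the Green's function of $\partial_x^2-s$, the transformed solution is
\[
    \widehat{u^{e}}(s,x)=\int_\IR K_s(x-y)\,u_0^{e}(y)\,\df y+\widehat{\Psi}(s)\,K_s(x),
\]
and, since the feedback is now scalar, solving the self-consistency at $x=R$ yields the transfer-function form
\[
    \widehat{\Psi}(s)=\frac{\widehat{\Phi}(s)-a\,B(s)}{1+2a\,K_s(R)},\qquad
    B(s)=\int_\IR\bigl[K_s(R-y)+K_s(R+y)\bigr]u_0^{e}(y)\,\df y\;\geqslant\;0 .
\]
This is the analogue of the transfer function $H_2/(1+H_1)$ of the introduction, and the symmetry is precisely what turns the denominator into the clean factor $1+2aK_s(R)$, which is why the argument will be lighter than that of \thmref{thm:positivity}. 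Note that the kernel $K_s(R-y)+K_s(R+y)$ is even in $y$, so $B$ — and hence $\widehat\Psi$ — depends on $u_0$ only through $u_0^{e}$, confirming at the level of transforms that the odd part is decoupled.

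The first summand of $\widehat{u^{e}}$ inverts to $e^{t\partial_x^2}u_0^{e}\geqslant 0$, so positivity of $u^{e}$ on $\{|x|\geqslant R\}$ reduces to controlling the inverse transform of the source term $\widehat{\Psi}(s)K_s(x)$ from below by $-e^{t\partial_x^2}u_0^{e}$. This is where the threshold $\hat a=aR/D\in(0,e^{-1}]$ enters: it must guarantee that $1+2aK_s(R)$ is zero-free in the relevant half-plane and that the resulting special functions have inverse transforms of the required sign. I would invoke here the positivity lemmas established for \thmref{thm:positivity} in \secref{Sec:Laplace}--\ref{Sec:Pos}; the scalar denominator makes this step a direct application rather than the more delicate monotonicity-based estimate needed there. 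I expect this inverse-transform positivity to be the genuine analytic heart of the argument, inherited from the general machinery.

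Finally I would reassemble $u=u^{e}+u^{o}$ on $\{|x|\geqslant R\}$, and I anticipate the \textbf{main obstacle} to be the absorption of the heat correction $u^{o}=e^{t\partial_x^2}u_0^{o}$: being odd it is sign-changing and need not be nonnegative on $\{|x|\geqslant R\}$. The leverage available is that $u_0^{o}$ is supported in $(-\tfrac12R,\tfrac12R)$, at distance $\geqslant\tfrac12R$ from the positivity region, together with $|u_0^{o}|\leqslant u_0^{e}$. Pairing $y$ with $-y$ in the convolution gives, for $x\geqslant R$, the comparison
\[
    e^{t\partial_x^2}u_0^{e}(x)-\bigl|e^{t\partial_x^2}u_0^{o}(x)\bigr|
    \;\geqslant\;2\!\int_0^{R/2}\! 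G(t,x+y)\,u_0^{e}(y)\,\df y\;\geqslant\;0,
\]
where $G$ is the heat kernel, so the negative excursions of $u^{o}$ are strictly dominated by $e^{t\partial_x^2}u_0^{e}$ with a margin governed by the gap between the symmetry radius $\tfrac12R$ and the positivity radius $R$. The crux is to feed this margin into the lower bound for $u^{e}$ coming from the Laplace step so that $u^{e}+u^{o}\geqslant 0$ on $\{|x|\geqslant R\}$; carrying out this reconciliation carefully — rather than the inverse-transform positivity, which is reused — is where the real work of the proposition lies.
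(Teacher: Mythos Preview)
Your even/odd split is structurally sound up to the point you yourself flag as the ``crux'', but that step is a genuine gap, and the paper avoids it by choosing a different decomposition.

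The paper does \emph{not} split into even and odd parts. It splits $u_0=u_{01}+u_{02}$ \emph{spatially}: $u_{01}:=u_0\cdot\II_{\IR\smallsetminus(-\frac12,\frac12)}$ and $u_{02}:=u_0\cdot\II_{(-\frac12,\frac12)}$. Both pieces are \emph{nonnegative}. The outer piece $u_{01}$ is even by hypothesis, so the fully symmetric case (\propref{prop:sympos}) already gives positivity of the corresponding solution on $\IR\smallsetminus(-1,1)$. For the inner piece $u_{02}$, supported in $(-\tfrac12,\tfrac12)$, the paper returns to the kernel $\tilde r_{a,-}(t,\tilde x)$ analysed just before the proposition: for $\tilde x\in(-\tfrac12,\tfrac12)$ one has $|\beta_+-\beta_-|<1$, so the ``problematic interval'' $(\beta_++1,\beta_-)$ on which $\tilde r_{a,-}$ can be negative is \emph{empty}. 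Hence the boundary values $u(t,\pm1)$ produced by $u_{02}$ are obtained by integrating a nonnegative kernel against nonnegative data, and are nonnegative. Superposition finishes.

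Your decomposition manufactures a sign-changing piece $u^o$ and then requires $u^e(t,\pm1)\geqslant|u^o(t,\pm1)|$. The heat-kernel comparison you give establishes $e^{t\partial_x^2}u_0^e\geqslant|u^o|$, but this is the wrong inequality: $u^e$ equals $e^{t\partial_x^2}u_0^e$ \emph{plus} the feedback contribution $\int_0^t\cK_1(t-\tau,x)\Psi_\tau[u^e]\,\df\tau$, and for $a>0$ this acts as a sink, so $u^e$ can be strictly below the free heat flow. The margin $2\int_0^{R/2}G(t,x+y)u_0^e(y)\,\df y$ you display depends only on $u_0^e$ on $(0,\tfrac12R)$, whereas the deficit $e^{t\partial_x^2}u_0^e-u^e$ is driven by the feedback and hence by all of $u_0^e$; there is no mechanism in your outline that relates the two. ``Feeding this margin into the lower bound for $u^e$'' would require precisely a kernel-level estimate of the type the paper proves for $\tilde r_{a,\pm}$, so the even/odd reduction buys nothing here. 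The paper's spatial cut sidesteps the whole issue because no sign-changing piece is ever created.
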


\begin{rem}{\it
  Within the range of $a$ for which positive solutions on the domain $\IR\smallsetminus(-1,1)$ can be expected for suitable initial conditions according to our main results, it is still possible to construct examples of initial conditions that yield solutions that eventually become negative on that domain. Without getting into details, let us sketch such a construction. Take an initial condition $u_0$ that is highly asymmetrical on $\IR\smallsetminus(-1,1)$: e.g., $u_0$ vanishes identically on $(-\infty,\frac{1}{2})$, while on $[1,+\infty)$ it takes very large values, but going quickly to 0 on $[\frac{1}{2},1]$. Thus, it violates the monotonicity condition on $\IR^+$ of \thmref{thm:positivity}. As a consequence of this strong asymmetry and violation of the mentioned monotonicity, during the diffusion process, the point source drives the solution $u$ below zero.}
\end{rem}

\section{Employing Laplace transform for proving positivity}
\label{Sec:Laplace}
Due to the nonlocal term in the reaction-diffusion equation, classical methods to obtain positivity of solutions cannot be applied to System \eqref{eq: system considered u} -- \eqref{def:Psi new}. Moreover, it can be anticipated that the nonlocal character inhibits the {\it global} positivity of the solution. However, since that system is intended as approximation of a model with a spatially extended object that secretes and takes-up compounds from its environment, only the values and behaviour of the solution in our model on the part of space that is ``external" to the object is of interest. As the overview of main results shows, we can guarantee positivity of the solution on this external space in a particular regime for the uptake parameter $a$.

To do so, in this work, we utilize extensively the Laplace transform, enabled by the essentially linear character of the uptake. In this section we shall briefly recall the results from Laplace Transform Theory that are key to our approach. Our main reference for these is \citet{Arendt-Batty}.

Let $f\in L_{\loc}^1(\IR^+_0)$. One says that $f$ has Laplace transform $F:=\cL\{f\}$ if for some $s\in\IC$ the limit
\begin{equation}\label{eq:Laplace integral def}
    F(s) = \cL\{f\}(s) := \lim_{T\apc+\infty}\int_0^{T}e^{-st}f(t)\df t
\end{equation}
exists. The {\it abscissa of convergence} of $f$ is
\[
    \mathrm{abs}(f) := \inf \bigl\{ \Re(s)\mid s\in\IC, F(s) \mbox{ exists}\bigr\}.
\]
With the convention that $\inf\emptyset = +\infty$, $f$ has a Laplace transform if and only if $\mathrm{abs}(f)<+\infty$. In that case, the Laplace integral \eqref{eq:Laplace integral def} converges for any $s\in\IC$ with $\Re(s)>\mathrm{abs}(f)$ (see \cite{Arendt-Batty}, Proposition 1.4.1). If $f$ is of {\it exponential order}, i.e., there exists $\omega\in\IR$ and $M\geqslant 0$ such that
\[
    |f(t)| \leqslant Me^{\omega t}\quad \mbox{for a.e. } t\geqslant 0,
\]
then $\mathrm{abs}(f)\leqslant \omega$.   $F(s)$ is a holomorphic function on $\{s\in\IC\mid\Re(s)>\mathrm{abs}(f)\}$ (see \cite{Arendt-Batty}, Theorem 1.5.1). 

The function essentially uniquely determines its Laplace transform, and vice versa:
\begin{thm}[Uniqueness Theorem]\label{thm:uniquness Laplace}
Let $f,g\in L^1_\loc(\IR_0^+)$ with $\mathrm{abs}(f)<+\infty$ and $\mathrm{abs}(g)<+\infty$. Let $r_0>\max\bigl(\mathrm{abs}(f),\mathrm{abs}(g)\bigr)$. If $\mathcal{L}\{f\}(r)=\mathcal{L}\{g\}(r)$ for all $r\in (r_0,+\infty)$, then $f=g$.
\end{thm}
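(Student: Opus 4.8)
The plan is to exploit linearity, then complex analysis, and finally a moment argument built on the Weierstrass approximation theorem. First I would set $h := f - g \in L^1_\loc(\IR_0^+)$. Since $\cL$ is linear on its common domain of convergence and $\mathrm{abs}(h) \leqslant \max\bigl(\mathrm{abs}(f),\mathrm{abs}(g)\bigr) < r_0$, the hypothesis gives $\cL\{h\}(r) = 0$ for every $r \in (r_0,+\infty)$. It then suffices to prove that $h = 0$ a.e.

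The function $F := \cL\{h\}$ is holomorphic on the open connected half-plane $\{s \in \IC \mid \Re(s) > \mathrm{abs}(h)\}$ (cf. \cite{Arendt-Batty}, Theorem 1.5.1, recalled above). It vanishes on the ray $(r_0,+\infty)$, which has an accumulation point inside that half-plane; by the identity theorem for holomorphic functions, $F \equiv 0$ on the entire half-plane.

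The remaining, and genuinely delicate, step is to deduce $h = 0$ from $F \equiv 0$. The obstacle is that the abscissa of convergence only controls \emph{conditional} convergence of the Laplace integral, whereas the moment argument below needs \emph{absolute} integrability of a suitable exponentially weighted integrand. I would circumvent this by passing to the primitive $q(t) := \int_0^t h(\tau)\,\df\tau$, which is continuous, satisfies $\cL\{q\}(s) = s^{-1}F(s) = 0$ on the half-plane, and is of exponential order: fixing any $r_1 > \max\bigl(\mathrm{abs}(h),0\bigr)$, convergence of $\int_0^\infty e^{-r_1 t}h(t)\,\df t$ forces $q(t) = O(e^{r_1 t})$ (a standard growth estimate for primitives, cf. \cite{Arendt-Batty}). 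Consequently, for $\gamma > r_1$ the function $\phi(t) := e^{-\gamma t}q(t)$ lies in $L^1(\IR_0^+)$, and $\int_0^\infty e^{-n t}\phi(t)\,\df t = \cL\{q\}(\gamma + n) = 0$ for every integer $n \geqslant 0$.

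Finally, the substitution $x = e^{-t}$ turns these identities into $\int_0^1 x^{n}\psi(x)\,\df x = 0$ for all $n \geqslant 0$, where $\psi(x) := \phi(-\ln x)/x$ absorbs the Jacobian and satisfies $\int_0^1 |\psi| = \int_0^\infty |\phi| < +\infty$, so $\psi \in L^1(0,1)$. By the Weierstrass approximation theorem, polynomials are dense in $C[0,1]$, whence $\int_0^1 c(x)\psi(x)\,\df x = 0$ for every $c \in C[0,1]$; testing against continuous approximations of indicators of subintervals yields $\int_I \psi = 0$ for all intervals $I$, so $\psi = 0$ a.e.\ by the Lebesgue differentiation theorem. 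Unwinding the substitution gives $q \equiv 0$, and therefore $h = q' = 0$ a.e., i.e.\ $f = g$. The main difficulty is precisely the conditional-versus-absolute convergence gap addressed in the third paragraph; once it is bridged by passing to the primitive, the concluding moment argument is routine.
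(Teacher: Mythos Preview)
The paper does not actually prove this theorem: it states the result and defers to \cite{Arendt-Batty}, Theorem 1.7.3 (see the sentence immediately after \thmref{thm:lapinvexist} and the surrounding discussion). Your proposal therefore supplies what the paper omits, and it is correct. The reduction to $h=f-g$, the identity-theorem step to get $F\equiv 0$ on the half-plane, the passage to the primitive $q$ to upgrade conditional convergence to exponential boundedness (your verification that $q(t)=O(e^{r_1 t})$ via Abel summation is the key technical point, and it holds), and the Weierstrass moment argument after the substitution $x=e^{-t}$ all go through as written. This is precisely the classical proof, and is essentially the argument in \cite{Arendt-Batty} that the paper is citing; so rather than taking a different route, you have filled in exactly the proof the authors chose to delegate.
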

A slight rephrasing is given by
\begin{cor}\label{thm:lapuniq}
    Let $f\neq g\in L^1_{\mathrm{loc}}(\IR_0^+)$.  If $\cL\{f\}$ and $\cL\{g\}$ exist, then $\cL\{f\}, \cL\{g\}$ do not coincide for some $s\in\IC$.
\end{cor}
\noindent Thus, the Laplace transform is invertible on the set 
\[
    \mathcal{H}_{hs}^{\mathcal{L}} := \mathcal{L}\bigl\{ \{f\in L^1_\loc(\IR_0^+)\mid \mathrm{abs}(f)<+\infty\}\bigr\},
\]
which consists of functions $\IC\to\IC$ that are holomorphic on some halve space. One says that $\cL^{-1}\{F\}:=f$ is the \textit{inverse Laplace transform} of $F$, and that $f$ and $F$ are a {\it Laplace transform pair}. 
The range $\mathcal{H}_{hs}^{\mathcal{L}}$ is hard to describe precisely, but  a class of holomorphic functions can be identified that is obtained through Laplace transform: 
\begin{thm}[Paley-Wiener]\label{thm:lapinvexist}
    Let $\omega\in\IR$ and $F:\{s\in\IC\mid\Re(s)>
    \omega\}\to\IC$ be a holomorphic function. If
    $$\sup_{\sigma>\omega}\int_{\sigma-i\infty }^{\sigma+i\infty}|F(s)|^2\df s<+\infty,$$
    then there exists $f\in L_{\mathrm{loc}}^1(\IR_0^+)$ and $C>0$ such that $\cL\{f\}=F$ and $|f(t)|\leqslant Ce^{\omega t}$.
\end{thm}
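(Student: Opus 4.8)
The plan is to reduce the statement to the classical Paley--Wiener theorem for the Hardy space $H^2$ of a half-plane and then to transport the resulting $L^2$-representation back by a translation. First I would set $\tilde F(s):=F(s+\omega)$, which is holomorphic on $\{\Re(s)>0\}$ and satisfies $\sup_{\sigma>0}\int_{-\infty}^{+\infty}|\tilde F(\sigma+i\eta)|^2\,\df\eta=\sup_{\sigma>\omega}\int_{-\infty}^{+\infty}|F(\sigma+i\eta)|^2\,\df\eta=:M<+\infty$. It then suffices to produce $h\in L^2(\IR_0^+)$ with $\cL\{h\}=\tilde F$ on $\{\Re(s)>0\}$: putting $f(t):=e^{\omega t}h(t)$ gives $\cL\{f\}(s)=\cL\{h\}(s-\omega)=\tilde F(s-\omega)=F(s)$ for $\Re(s)>\omega$, which is the desired representation.

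To construct $h$ I would, for each $\sigma>0$, take the vertical slice $\eta\mapsto\tilde F(\sigma+i\eta)\in L^2(\IR)$, let $\phi_\sigma$ be its inverse Fourier--Plancherel transform, and set $g_\sigma(t):=e^{\sigma t}\phi_\sigma(t)=\frac{1}{2\pi i}\int_{\sigma-i\infty}^{\sigma+i\infty}\tilde F(s)e^{st}\,\df s$. By Plancherel, $\int_{\IR}e^{-2\sigma t}|g_\sigma(t)|^2\,\df t=\|\phi_\sigma\|_{L^2}^2\le M/2\pi$ for every $\sigma>0$. Holomorphy of $\tilde F$ together with the uniform bound $M$ should allow a contour shift (Cauchy's theorem on a rectangle, with the horizontal sides sent to $\pm i\infty$), showing that $g_\sigma$ is independent of $\sigma$; call the common function $h$. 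Since then $\int_{-\infty}^{0}e^{-2\sigma t}|h(t)|^2\,\df t\le M/2\pi$ for all $\sigma>0$, letting $\sigma\to+\infty$ forces $h=0$ a.e.\ on $(-\infty,0)$, so $h\in L^2(\IR_0^+)$. Finally, taking the Fourier transform of $\phi_\sigma(t)=e^{-\sigma t}h(t)$ recovers $\int_0^\infty h(t)e^{-(\sigma+i\eta)t}\,\df t=\tilde F(\sigma+i\eta)$, i.e.\ $\cL\{h\}=\tilde F$.

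The main obstacle is precisely this contour shift: the hypothesis controls $\tilde F$ only through vertical $L^2$-averages, so discarding the horizontal sides of the rectangle requires establishing that $\tilde F(\,\cdot+i\eta)\to0$ in a suitable averaged sense along some sequence $\eta\to\pm\infty$ --- the standard but delicate real-analytic heart of Paley--Wiener.

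It remains to address the growth estimate. Membership $h\in L^2$ yields immediately $\int_0^\infty|f(t)|^2e^{-2\omega t}\,\df t<+\infty$ and, by Cauchy--Schwarz on compact intervals, $f\in L^1_\loc(\IR_0^+)$. To upgrade this to the genuine pointwise bound $|f(t)|\le Ce^{\omega t}$ I would apply the same inversion to $F(s)/(s-\omega+1)$, which by Cauchy--Schwarz is \emph{uniformly integrable} on the lines $\Re(s)=\sigma>\omega$, so that its Bromwich integral converges absolutely to a continuous function bounded by $Ce^{\sigma t}$ for every $\sigma>\omega$; letting $\sigma\downarrow\omega$ then gives the claimed $e^{\omega t}$ growth. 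Reconciling this regularized (division-by-a-linear-factor) form with the representation of $F$ itself is exactly what the representation theorem of Arendt--Batty packages, so in practice I would cite that source for the precise pointwise conclusion.
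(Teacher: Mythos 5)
Your proposal takes a genuinely different route from the paper, simply because the paper contains no argument of its own for this statement: it reduces \thmref{thm:lapinvexist} directly to \citet{Arendt-Batty}, Theorem 1.8.3 (the Paley--Wiener theorem for the Hardy space $H^2$ of a half-plane). What you have written out is, in effect, a proof sketch of that cited theorem itself: the shift $\tilde F(s)=F(s+\omega)$, Plancherel applied to the vertical slices, the rectangle contour argument showing that $g_\sigma$ does not depend on $\sigma$, and the monotone-convergence argument forcing $h=0$ a.e.\ on $(-\infty,0)$. This part is sound in outline, and the step you flag as the obstacle --- discarding the horizontal edges of the rectangle when only $L^2$ control on vertical lines is available --- is indeed where all the work lies; the textbook proofs dispose of it by averaging over the height of the rectangle, exploiting the finiteness of the double integral of $|\tilde F|^2$. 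Up to that point you recover exactly what the cited source delivers: $h\in L^2(\IR_0^+)$ with $\cL\{h\}=\tilde F$, hence $f:=e^{\omega\cdot}h\in L^1_\loc(\IR_0^+)$ with $\cL\{f\}=F$.

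The genuine gap is the final pointwise bound $|f(t)|\leqslant Ce^{\omega t}$, and you should not expect to close it. Your regularization inverts $F(s)/(s-\omega+1)$, so the absolutely convergent Bromwich integral you estimate represents the convolution $f\ast e^{(\omega-1)\cdot}$, not $f$ itself, and nothing allows you to strip off the mollification afterwards; deferring to \citet{Arendt-Batty} does not help either, since Theorem 1.8.3 there concludes only the square-integrability $e^{-\omega\cdot}f\in L^2(\IR_0^+)$, not a pointwise estimate. In fact the pointwise bound is false under the stated hypotheses: take $\omega=0$ and $f_0(t)=t^{-1/4}\mathds{1}_{(0,1)}(t)\in L^2(\IR_0^+)$; by Plancherel its transform $F_0$ satisfies $\sup_{\sigma>0}\int_{\IR}|F_0(\sigma+i\eta)|^2\df\eta\leqslant 2\pi\Vert f_0\Vert_{L^2}^2<+\infty$, while by the Uniqueness Theorem (\thmref{thm:uniquness Laplace}) every $f\in L^1_\loc(\IR_0^+)$ with $\cL\{f\}=F_0$ equals $f_0$ a.e., and $f_0$ admits no a.e.\ bound $|f_0(t)|\leqslant C$. (Replacing $f_0$ by $\sum_{n\geqslant1}2^{-n}(t-n)^{-1/4}\mathds{1}_{(n,n+1)}(t)$ defeats even bounds required only for large $t$.) So the step you could not justify is an overstatement in the theorem as formulated, not a defect of your argument; the provable conclusion is $e^{-\omega t}f(t)\in L^2(\IR_0^+)$, which still yields $f\in L^1_\loc(\IR_0^+)$, $\mathrm{abs}(f)\leqslant\omega$, and absolute convergence of the Laplace integral on $\{\Re(s)>\omega\}$ --- and it is this weaker conclusion that should be invoked where the theorem is applied (e.g.\ in \propref{prop:paexists}).
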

Proofs of Theorems \ref{thm:lapuniq} and \ref{thm:lapinvexist} can be reduced immediately from the results in \citet{Arendt-Batty}, Theorem 1.7.3 and Theorem 1.8.3, respectively.

Thus, Laplace transform converts functions from a ``time domain" (often named as $t-$domain) to a complex frequency domain (often named as $s-$domain). For any function $F(s)$ in the $s-$domain, if it is the Laplace transform of a function $f(t)$ in $t$-domain, then $F(s)$ is a holomorphic function on some complex right half-plane $\{s \in \IC \mid \Re(s) > \omega\}$. Properties of the holomorphic function $F(s)$ translate into associated properties of $f(t)$ in the time domain; Figure \ref{fig:laplaceflowchart} sketches a typical flowchart of applying Laplace transform to answer a question.
\begin{figure}[h!]
    \centering
    \includegraphics[scale=0.3]{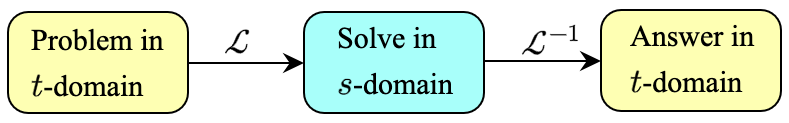}
    \caption{Flowchart of applying Laplace transform, where $\mathcal{L}$ represents Laplace transform and $\mathcal{L}^{-1}$ is inverse Laplace transform.}
    \label{fig:laplaceflowchart}
\end{figure}

Particularly, for linear ordinary differential equations as found for example in Electrical Engineering, Laplace transform helps simplify the problem by converting the differential equations to algebraic equations.  In this paper, the use of the Laplace transform derives similarly from its property to turn convolution of functions into ordinary multiplication of (holomorphic) functions:
\[
    \mathcal{L}\{f*g\}(s) = \mathcal{L}\{f\}(s)\cdot \mathcal{L}\{g\}(s).
\]
The solution to the diffusion equation on $\IR$ is given by a convolution integral equation in the time domain. Applying Laplace transform turns this into an algebraic equation of holomorphic functions in the $s-$domain.

Positivity of a function in the $t$-domain can be characterised by the property of complete monotonicity of its Laplace transform. A function $g(x)$ is called {\it completely monotonic} on the interval $(x_0,+\infty)$ in $\IR$ if
\begin{equation}\label{def:complete monotonicity}
    (-1)^n g^{(n)}(x)\geqslant 0\qquad\mbox{for all } x>x_0, n\in\IN.
\end{equation}
See also \cite{Arendt-Batty}, Section 2.7.
The necessity of this property is exhibited as follows
\begin{prop}\label{prop:poscompmono}
    Let $f\in L_{\mathrm{loc}}^1(\IR_0^+)$ with $\cL\{f\}$ exists for some $s_0\in\IC$. If $f\geqslant0$ almost everywhere, then $F:=\cL\{f\}$ is completely monotonic on $(\Re(s_0),+\infty)$.
\end{prop}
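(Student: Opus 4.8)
The plan is to represent each derivative $F^{(n)}$ as the Laplace transform of a manifestly nonnegative function and simply read off the sign. First I would record that, since $\cL\{f\}$ exists at $s_0$, one has $\mathrm{abs}(f)\leqslant\Re(s_0)=:\omega$; hence by Proposition 1.4.1 of \cite{Arendt-Batty} (quoted in the excerpt) the Laplace integral converges for every $s$ with $\Re(s)>\omega$, and $F$ is holomorphic there. The crucial input, which I would take from Theorem 1.5.1 of \cite{Arendt-Batty}, is that on this half-plane one may differentiate under the integral sign: for every $n\in\IN$ and every $s$ with $\Re(s)>\omega$,
\[
    F^{(n)}(s) = \int_0^{\infty}(-t)^n e^{-st}f(t)\,\df t = (-1)^n\,\cL\{t^n f\}(s),
\]
the underlying point being that multiplication by the polynomial $t^n$ does not raise the abscissa of convergence, so $\cL\{t^n f\}$ still exists on $\{\Re(s)>\omega\}$.

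Restricting to real arguments $x>\omega$ then yields
\[
    (-1)^n F^{(n)}(x) = \int_0^{\infty}t^n e^{-xt}f(t)\,\df t.
\]
Here the integrand is nonnegative for almost every $t\geqslant 0$, because $t^n e^{-xt}\geqslant 0$ and $f\geqslant 0$ a.e. by hypothesis. Since the improper integral converges to a finite value, while the partial integrals $\int_0^T$ are nondecreasing in $T$ for a nonnegative integrand, that limit is necessarily $\geqslant 0$. Thus $(-1)^n F^{(n)}(x)\geqslant 0$ for all $x>\omega$ and all $n$, which is exactly complete monotonicity of $F$ on $(\Re(s_0),+\infty)$ in the sense of \eqref{def:complete monotonicity}.

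The only genuinely delicate step is the justification of differentiation under the (a priori possibly only conditionally convergent) Laplace integral, together with the invariance of the abscissa of convergence under multiplication by $t^n$; both are standard and are already subsumed in the holomorphicity theorem quoted above, so I would invoke that rather than reprove it. I would also note that the positivity hypothesis on $f$ in fact makes the integrals \emph{absolutely} convergent on $\{\Re(s)>\omega\}$, since there $|f|=f$; this removes any subtlety about conditional convergence and renders the termwise sign argument of the second paragraph completely rigorous.
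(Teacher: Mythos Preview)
Your proof is correct and follows essentially the same approach as the paper: differentiate under the Laplace integral to obtain $(-1)^n F^{(n)}(x)=\int_0^\infty t^n e^{-xt}f(t)\,\df t$ and read off the sign from nonnegativity of the integrand. You are in fact more careful than the paper in justifying the differentiation step and in noting that positivity of $f$ upgrades the convergence to absolute; incidentally, the paper's displayed formula contains a typo ($x^n$ in place of $t^n$), which your version corrects.
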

\begin{proof}
By the above, $F$ is analytic on $(\Re(s_0),+\infty)$. Therefore, for any $x>\Re(s_0)$,
\[
    \frac{\df^n F}{\df s^n}(x) =\left.\int_0^{+\infty}\frac{\partial^n}{\partial s^n}(e^{-st}f(t))\df t\,\right|_{s=x} = 
 (-1)^n\int_0^{+\infty}x^ne^{-xt}f(t)\df t.
\]
The integrand $x^ne^{-xt}f(t)$ is nonnegative a.e.
\end{proof}

The former result can be strenghtened to a full characterization of positivity of $f$, by means of the Post-Widder Inversion Theorem.

\begin{prop}[Characterization of positivity by Laplace transform]\label{prop:char positivity by Laplace}
    Let $f\in L^1_\loc(\IR_0^+)$ such that $\mathrm{abs}(f)<+\infty$. Then, $f\geqslant0$ if and only if $\cL\{f\}$ is completely monotonic on $(\mathrm{abs}(f),+\infty)$.
\end{prop}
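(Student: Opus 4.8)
The forward implication is already established: \propref{prop:poscompmono} shows that if $f\geqslant 0$ almost everywhere, then $\cL\{f\}$ is completely monotonic on $(\Re(s_0),+\infty)$ for every $s_0$ with $\Re(s_0)>\mathrm{abs}(f)$, and taking the union of these half-lines as $\Re(s_0)\downarrow\mathrm{abs}(f)$ yields complete monotonicity on all of $(\mathrm{abs}(f),+\infty)$. So the plan is to prove only the converse, namely that complete monotonicity of $F:=\cL\{f\}$ forces $f\geqslant 0$.

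For the converse I would use the Post-Widder real inversion formula, which reconstructs $f$ from the real-axis derivatives of $F$: for almost every $t>0$,
\[
    f(t)=\lim_{n\apc+\infty}\frac{(-1)^n}{n!}\left(\frac{n}{t}\right)^{n+1}F^{(n)}\!\left(\frac{n}{t}\right).
\]
The key observation is that the sign of each approximant is dictated entirely by complete monotonicity. Fix $t>0$; for all $n>t\cdot\mathrm{abs}(f)$ the evaluation point $n/t$ lies in $(\mathrm{abs}(f),+\infty)$, so by hypothesis $(-1)^nF^{(n)}(n/t)\geqslant 0$. Since the remaining factors $\tfrac{1}{n!}$ and $(n/t)^{n+1}$ are strictly positive, the whole $n$-th term is nonnegative, and passing to the limit gives $f(t)\geqslant 0$ for a.e.\ $t>0$.

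The main, indeed only substantive, obstacle is to invoke the correct version of the Post-Widder theorem and to pin down the sense in which its limit recovers $f$. The real inversion results of \citet{Arendt-Batty} give convergence of the approximating sequence to $f$ either pointwise almost everywhere or in $L^1_\loc(\IR_0^+)$; in the pointwise case the conclusion is immediate, and in the $L^1_\loc$ case one extracts an almost-everywhere convergent subsequence whose nonnegative terms still force $f\geqslant 0$ a.e. Either way, once the inversion formula is granted the sign analysis is automatic, so the proof reduces to a careful citation and application of Post-Widder together with the elementary positivity bookkeeping above.
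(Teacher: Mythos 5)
Your proposal is correct and follows essentially the same route as the paper: necessity via \propref{prop:poscompmono}, and sufficiency by applying the Post-Widder inversion formula (\thmref{thm:postwidder}) at Lebesgue points $t>0$, noting that for $n$ large enough $n/t>\mathrm{abs}(f)$ so each approximant is nonnegative by complete monotonicity. The paper's proof is exactly this sign-bookkeeping argument, using the Lebesgue-point (pointwise a.e.) form of Post-Widder that you identify as the relevant version.
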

\begin{proof}
    The necessity has been shown already in \propref{prop:poscompmono}. For sufficiency, note that almost every $t\geqslant 0$ is a Lebesgue point of $f$. Take such a point $t>0$. For sufficiently large $n\in\IN$, $n/t>\mathrm{abs}(f)$. The Post-Widder Inversion Formula (see \thmref{thm:postwidder}) yields
    \[
        f(t)=\lim _{n\apc+\infty }{\frac {(-1)^{n}}{n!}}\left({\frac {n}{t}}\right)^{n+1}F^{(n)}\left({\frac {n}{t}}\right)\geqslant 0,
    \]
    by complete monotonicity. Thus, $f\geqslant 0$ in $L^1_\loc(\IR_0^+)$.
\end{proof}

The following result is crucial for properly rephrasing later on our mathematical problem at hand. 
\begin{lem}[\citet{Arendt-Batty} Prop. 1.6.8, p.40]\label{lem:lapsqrtsub}
    Assume $f\in L_{\mathrm{loc}}^1(\IR_0^+)$ of exponential order. Let 
    \[
        g(t):=\int_0^{+\infty}\frac{\tau e^{-\frac{\tau^2}{4t}}}{\sqrt{4\pi t^3}}f(\tau)\df\tau.
    \]
    Then $g\in L_{\mathrm{loc}}^1(\IR_0^+)$ is well-defined, $G:=\cL\{g\}$ also exists, and $G(s)=F(\sqrt{s})$.
\end{lem}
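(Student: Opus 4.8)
The plan is to reduce the entire statement to a single kernel identity and one application of Fubini's theorem. The key fact is that, with the principal branch of the square root,
\[
    \int_0^{+\infty} e^{-st}\,\frac{\tau\, e^{-\frac{\tau^2}{4t}}}{\sqrt{4\pi t^3}}\,\df t = e^{-\tau\sqrt{s}},\qquad \Re(s)>0,
\]
i.e.\ the kernel $p_\tau(t):=\tau(4\pi t^3)^{-1/2}e^{-\tau^2/(4t)}$, viewed as a function of $t$, is itself a Laplace transform pair with $e^{-\tau\sqrt{s}}$. Granting this and the interchange of integrals, the computation is immediate:
\[
    G(s)=\int_0^{+\infty}\!\!\int_0^{+\infty} e^{-st}\,\frac{\tau e^{-\frac{\tau^2}{4t}}}{\sqrt{4\pi t^3}}\,f(\tau)\,\df\tau\,\df t
        =\int_0^{+\infty} f(\tau)\,e^{-\tau\sqrt{s}}\,\df\tau = F(\sqrt{s}).
\]
So the whole proof hinges on (i) the kernel identity and (ii) a Fubini/Tonelli justification that also delivers the well-definedness of $g$ and the existence of $G$.

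For the kernel identity I would first treat real $s=\lambda>0$ and then continue analytically. The cleanest route is to recognise $p_\tau(t)$ as the first-passage-time density of Brownian motion (the one-sided $\tfrac12$-stable, or Lévy, density), whose total mass is $1$ for each fixed $\tau>0$ and whose Laplace transform in $t$ is the classical $e^{-\tau\sqrt{\lambda}}$. If one prefers a self-contained derivation, set $h(\tau):=\int_0^{+\infty} e^{-\lambda t}p_\tau(t)\,\df t$; since $p_\tau(t)=-2\,\partial_\tau k(t,\tau)$ where $k(t,\tau)=(4\pi t)^{-1/2}e^{-\tau^2/(4t)}$ is the heat kernel solving $\partial_t k=\partial_\tau^2 k$, one differentiates twice under the integral sign in $\tau$ (justified by the Gaussian decay) to obtain the ODE $h''=\lambda h$, subject to boundedness of $h$ and the boundary value $h(0^+)=1$ (the latter because $p_\tau$ concentrates its unit mass near $t=0$ as $\tau\apc 0^+$). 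This forces $h(\tau)=e^{-\tau\sqrt{\lambda}}$. Analyticity in $s$ of both sides on $\{\Re(s)>0\}$ then upgrades the identity from $\lambda\in(0,+\infty)$ to all $s$ with $\Re(s)>0$.

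For the interchange, use the hypothesis of exponential order, $|f(\tau)|\leqslant Me^{\omega\tau}$. Writing $\sigma:=\Re(s)$ and applying Tonelli to the nonnegative integrand together with the already-proved identity at the real argument $\sigma$,
\[
    \int_0^{+\infty}\!\!\int_0^{+\infty} e^{-\sigma t}\,\frac{\tau e^{-\frac{\tau^2}{4t}}}{\sqrt{4\pi t^3}}\,|f(\tau)|\,\df\tau\,\df t
        \leqslant M\int_0^{+\infty} e^{\omega\tau}\,e^{-\tau\sqrt{\sigma}}\,\df\tau <+\infty
\]
whenever $\sqrt{\sigma}>\omega$, i.e.\ $\sigma>\max(\omega,0)^2$. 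This single finiteness statement simultaneously shows that $g(t)$ is well-defined for a.e.\ $t$, that $g\in L^1_{\loc}(\IR_0^+)$ with $\mathrm{abs}(g)<+\infty$ (so $G$ exists), and that Fubini legitimises the swap used above. Finally, $F(\sqrt{s})$ is meaningful there because, from $\Re(\sqrt{s})=\sqrt{(|s|+\sigma)/2}\geqslant\sqrt{\sigma}>\omega\geqslant\mathrm{abs}(f)$, the Laplace integral defining $F$ converges at $\sqrt{s}$.

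I expect the main obstacle to be the kernel identity rather than the Fubini bookkeeping: one must pin down the correct branch of $\sqrt{s}$, verify the boundary condition $h(0^+)=1$ (or, equivalently, justify the probabilistic identification), and carry out the differentiation under the integral sign and the analytic continuation cleanly. The only other point requiring care is the abscissa tracking above, ensuring $\Re(\sqrt{s})>\mathrm{abs}(f)$ so that the right-hand side $F(\sqrt{s})$ is genuinely defined on the half-plane where the identity $G(s)=F(\sqrt{s})$ is asserted.
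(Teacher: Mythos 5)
Your proposal is correct, and its skeleton is exactly the paper's: write $G(s)$ as a double integral, swap the order by Fubini--Tonelli, and invoke the kernel identity $\int_0^{+\infty} e^{-st}\,\tau(4\pi t^3)^{-1/2}e^{-\tau^2/(4t)}\,\df t = e^{-\tau\sqrt{s}}$ to land on $F(\sqrt{s})$. The difference lies in how that kernel identity is established. The paper isolates it as a separate computation (Lemma \ref{lem:lapkerncomputation} in Appendix \ref{app:C}) and proves it by a purely algebraic device: substituting $t\mapsto\beta^2/(4st')$, averaging the two resulting representations, and then substituting $v=\sqrt{st}-\sqrt{\beta^2/(4t)}$ to reduce to a Gaussian integral. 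You instead characterize the transform $h(\tau)$ via the heat-equation relation $p_\tau(t)=-2\,\partial_\tau\cK_1(t,\tau)$, deriving the ODE $h''=\lambda h$ with boundedness and $h(0^+)=1$, then continue analytically in $s$ (or shortcut via the first-passage-time identification). Both routes are valid; the paper's substitution trick is more elementary and self-contained but unilluminating, while your ODE/probabilistic derivation explains \emph{why} the identity holds and generalizes more readily, at the cost of justifying differentiation under the integral sign and the boundary limit $h(0^+)=1$. A second point in your favour: the paper's justification of the interchange is a one-line remark, whereas your Tonelli estimate $\int\!\!\int e^{-\sigma t}p_\tau(t)|f(\tau)|\,\df\tau\,\df t\leqslant M\int e^{\omega\tau}e^{-\tau\sqrt{\sigma}}\,\df\tau<+\infty$ for $\sigma>\max(\omega,0)^2$ does genuine work, since it simultaneously yields the a.e.\ well-definedness of $g$, its membership in $L^1_{\loc}(\IR_0^+)$ with $\mathrm{abs}(g)<+\infty$, and the legitimacy of the swap -- all three claims of the lemma -- together with the abscissa tracking $\Re(\sqrt{s})\geqslant\sqrt{\sigma}>\omega$ ensuring $F(\sqrt{s})$ is defined where the identity is asserted.
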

\begin{proof}
By \eqref{eq:lapsqrtkern} in \lemref{lem:lapkerncomputation},
\begin{align*}
G(s) &= \int_0^{+\infty}\left(\int_0^{+\infty}\frac{\tau e^{-\frac{\tau^2}{4t}}}{\sqrt{4\pi t^3}}f(\tau)\df\tau\right)e^{-st}\df t\\
 &= \int_0^{+\infty}\left(\int_0^{+\infty}\frac{\tau e^{-\frac{\tau^2}{4t}}}{\sqrt{4\pi t^3}}e^{-st}\df t\right)f(\tau)\df\tau\\
&= \int_0^{+\infty} e^{-\sqrt s\tau}f(\tau)\df\tau 
= F(\sqrt s).
\end{align*}
Note that we apply Fubini-Tonelli theorem in order to swap the integrals.
\end{proof}
\begin{cor}\label{cor:lapsqrtsub}
    Given $f,\xi(\cdot,\tau)\in L_{\mathrm{loc}}^1(\IR_0^+)$  that are of exponential order and $\cL\{\xi(\cdot,\tau)\}(s)=e^{-\eta(s)\tau}$, then
$$\cL^{-1}\{F\circ \eta\}(t)=\int_0^{+\infty} \xi(t,\tau)f(\tau)\df\tau.$$
\end{cor}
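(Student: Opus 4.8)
The plan is to mimic the proof of \lemref{lem:lapsqrtsub} verbatim, replacing the specific Gaussian kernel and the map $s\mapsto\sqrt{s}$ by the general kernel $\xi$ and the general map $\eta$. Concretely, I would set
\[
    g(t) := \int_0^{+\infty}\xi(t,\tau)f(\tau)\df\tau,
\]
and aim to show that $\cL\{g\}=F\circ\eta$. The claim then follows immediately from the Uniqueness Theorem \thmref{thm:uniquness Laplace} (equivalently \corref{thm:lapuniq}): it identifies $g$ as the unique element of $L^1_\loc(\IR_0^+)$ whose Laplace transform is $F\circ\eta$, that is, $g=\cL^{-1}\{F\circ\eta\}$. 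As a preliminary, I would check that $g$ is well-defined, lies in $L^1_\loc(\IR_0^+)$, and has finite abscissa of convergence, so that $\cL\{g\}$ exists; this rests on the exponential-order hypotheses on $f$ and on $\xi(\cdot,\tau)$, exactly as the well-definedness of $g$ was obtained in \lemref{lem:lapsqrtsub}.

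The computation of $G:=\cL\{g\}$ is the heart of the argument and proceeds by interchanging the two integrations:
\begin{align*}
    G(s) &= \int_0^{+\infty}\left(\int_0^{+\infty}\xi(t,\tau)f(\tau)\df\tau\right)e^{-st}\df t
         = \int_0^{+\infty}\left(\int_0^{+\infty}\xi(t,\tau)e^{-st}\df t\right)f(\tau)\df\tau \\
         &= \int_0^{+\infty}e^{-\eta(s)\tau}f(\tau)\df\tau
         = F(\eta(s)),
\end{align*}
where the inner $t$-integral is recognised as $\cL\{\xi(\cdot,\tau)\}(s)=e^{-\eta(s)\tau}$ by hypothesis, and the final equality is simply the integral definition of $F=\cL\{f\}$ evaluated at the point $\eta(s)$. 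For this last step to be legitimate one needs $\Re(\eta(s))>\mathrm{abs}(f)$, so that $\eta(s)$ lies in the half-plane on which the Laplace integral for $f$ converges. It therefore suffices to work with real $s=r$ large enough that $\eta(r)$ exceeds $\mathrm{abs}(f)$, which is all that the Uniqueness Theorem requires.

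The step I expect to be the main obstacle is the justification of the interchange of the two integrals (Fubini--Tonelli). In \lemref{lem:lapsqrtsub} the Gaussian kernel $\frac{\tau e^{-\tau^2/4t}}{\sqrt{4\pi t^3}}$ is nonnegative, so Tonelli's theorem applies unconditionally and no majorant is needed. The same is true here whenever $\xi\geqslant 0$: for real $r$ large enough that $\eta(r)>\max(\omega,\mathrm{abs}(f))$, where $|f(\tau)|\leqslant Me^{\omega\tau}$, one has $\int_0^{+\infty}\xi(t,\tau)e^{-rt}\df t=e^{-\eta(r)\tau}$, whence
\[
    \int_0^{+\infty}\!\!\int_0^{+\infty}\xi(t,\tau)|f(\tau)|e^{-rt}\df t\,\df\tau
    = \int_0^{+\infty}|f(\tau)|e^{-\eta(r)\tau}\df\tau
    \leqslant M\int_0^{+\infty}e^{-(\eta(r)-\omega)\tau}\df\tau<+\infty,
\]
so the swap is valid on a ray $(r_0,+\infty)$, which is exactly the set on which \thmref{thm:uniquness Laplace} operates. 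Since the kernels arising in this paper are nonnegative, this suffices. For a genuinely sign-changing $\xi$ one would instead need the additional hypothesis that $\cL\{|\xi(\cdot,\tau)|\}$ is finite and suitably controlled in $\tau$, since the bound $|\cL\{\xi(\cdot,\tau)\}(r)|=e^{-\Re(\eta(r))\tau}$ alone does not dominate $\int_0^{+\infty}|\xi(t,\tau)|e^{-rt}\df t$; this is the only place where the hypotheses must be read with care.
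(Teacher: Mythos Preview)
Your proposal is correct and follows exactly the intended approach: the paper does not give a separate proof for the corollary, treating it as the obvious generalization of \lemref{lem:lapsqrtsub} obtained by replacing the specific Gaussian kernel and $s\mapsto\sqrt{s}$ with the general $\xi$ and $\eta$, via the same Fubini--Tonelli swap. Your discussion of the interchange of integrals, including the observation that nonnegativity of $\xi$ (which holds in all the paper's applications) makes Tonelli applicable, is more careful than what the paper itself spells out.
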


\section{Positivity of solutions on part of the domain}\label{Sec:Pos}

For a nonlocal model like System \eqref{eq: system considered u} -- \eqref{def:Psi new} one cannot expect to find that solutions for positive initial conditions, with positive forcing $\Phi(t)$ to remain positive on the whole domain $\IR$, in general. See e.g., the counter-example at the end of \secref{sec:overview of results}. However, numerical simulations suggest that positivity can be ensured in many cases {\it on a subdomain of $\IR$} that contains ``the environment" of the original ``exchanging structure", i.e., $\IR\smallsetminus(-1,1)$. This will be satisfactory in applications, as the system is intended to approximate a model that is defined on this subdomain (only).

The mathematical techniques that we are aware of that are employed to prove positivity of solutions for PDEs with local reaction terms, like those e.g., in \cite{Sikic:1994,Canizo_ea:2012,Hille_ea:2025}, are not suited to prove positivity only on a subdomain, nor are able to deal with nonlocal reaction terms. Therefore, in this work we invented an approach suitable for the nonlocal reaction term in System \eqref{eq: system considered u} -- \eqref{def:Psi new} and applicable to subdomains.

\subsection{Pointwise positivity on the boundary suffices}
\label{sec:pointwise positivity}
Recall that we work with the non-dimensional equations, which -- improperly speaking -- effectively amounts to taking $R=1$ and $D=1$. Let $I$ be an open interval in $\IR$ whose closure does not contain $0$, the support of the Dirac measure in the reaction term. We approach the question of positivity of a solution of System \eqref{eq: system considered u} -- \eqref{def:Psi new} on the subdomain $I$ by means of the observation:
\begin{lem}\label{lem:central pos principle boundary}
    Let $u_0\geqslant 0$ on $I$. The mild solution $u$ to System \eqref{eq: system considered u} -- \eqref{def:Psi new} is positive on $I$ for all $t\geqslant 0$ if and only if $u(t,p)\geqslant 0$ for all boundary points $p$ of $I$ for all $t\geqslant 0$.
\end{lem}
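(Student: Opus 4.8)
The plan is to exploit the fact that the singular source sits outside $I$, so that on $I$ the equation is nothing but the plain heat equation, and then to invoke the minimum principle. Concretely, since $\overline{I}$ does not contain $0$ and $\delta_0$ is supported at $\{0\}$, the distributional source $\Psi_t[u]\delta_0$ pairs to zero against every test function supported in $I\times(0,+\infty)$. Hence the mild solution $u$ is a distributional solution of the homogeneous heat equation $\partial_t u=\partial_x^2 u$ on $I\times(0,+\infty)$, and by hypoellipticity of the heat operator $u\in C^\infty\bigl(I\times(0,+\infty)\bigr)$ and solves it classically there. This reduces the lemma entirely to the comparison/minimum principle for the heat equation, the novelty of the nonlocal source having been pushed off the region of interest.

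The ``only if'' direction is immediate from regularity in space: by \thmref{thm:wellposereduced}, $u(t,\cdot)\in H^1(\IR)\hookrightarrow C_b^0(\IR)$, so $u(t,\cdot)$ is continuous, and nonnegativity on the open interval $I$ extends by continuity to its closure, in particular to every boundary point $p\in\partial I$, for all $t\geqslant 0$.

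For the ``if'' direction I would fix $T>0$ and run a minimum-principle argument on the space-time cylinder $I\times(0,T]$. When $I$ is \emph{bounded}, its parabolic boundary is $\bigl(\overline{I}\times\{0\}\bigr)\cup\bigl(\partial I\times[0,T]\bigr)$, and $u\geqslant 0$ there: on the bottom because $u_0\geqslant 0$ on $I$, on the lateral sides because $u(t,p)\geqslant 0$ at the boundary points by hypothesis (and by the continuity used above). The weak minimum principle for the heat equation then forces $u\geqslant 0$ on all of $I\times(0,T]$, and letting $T\to+\infty$ concludes.

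The main obstacle is that the intervals actually relevant to the application -- the two components of $\IR\smallsetminus(-1,1)$ -- are half-lines, so $I$ is \emph{unbounded} and one needs a minimum principle on an unbounded domain, which requires control of $u$ at infinity. This control again comes from the state space: $t\mapsto u(t)$ is continuous into $H^1(\IR)$, so $\{u(t)\mid t\in[0,T]\}$ is compact in $H^1(\IR)$, hence (through the continuous embedding $H^1(\IR)\hookrightarrow C_0(\IR)$) compact in $C_0(\IR)$; a compact subset of $C_0(\IR)$ vanishes uniformly at infinity, giving $\sup_{t\in[0,T]}|u(t,x)|\to 0$ as $|x|\to+\infty$. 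With this uniform decay in hand I would argue by contradiction via truncation: if $u(t_0,x_0)=m<0$ at some interior point, choose $M$ so large that $|u(t,x)|<|m|/2$ for $|x|>M$ and all $t\in[0,T]$; then on the \emph{bounded} cylinder $\bigl(I\cap(-M,M)\bigr)\times(0,T]$ the parabolic-boundary minimum of $u$ is strictly larger than $m$, contradicting the bounded-domain minimum principle. Establishing the uniform-in-$t$ decay and setting up this truncation is the only delicate point; the remainder is standard parabolic theory.
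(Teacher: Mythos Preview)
Your proposal is correct and follows essentially the same route as the paper: both observe that the source is supported outside $\overline{I}$, reduce to the homogeneous heat equation on $I$, and then invoke a comparison/minimum principle with the given boundary and initial data. The paper's proof simply cites the comparison principle from \cite{Salsa2022partial} without explicitly treating the unbounded-interval case, whereas you supply that detail via the uniform-decay-at-infinity argument (compactness of the trajectory in $C_0(\IR)$) to justify the minimum principle on half-lines --- a welcome addition, but not a different approach.
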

\begin{proof}
    Note that the mild solution $u(t)\in H^1(\IR)$ and that by a Sobolev Embedding (see \cite{Adams:2003} Theorem 4.12, p.85) $u$ is continuous. Hence, point evaluation of $u(t)$ at $p\in \partial I$ is defined.
    Since $0\not\in I$, $u$ on $I$ is the solution to $\partial_t w = \Delta w$ with initial condition $w(0)=u(0)|_I\geqslant 0$ and Dirichlet boundary conditions $w(t,p)= u(t,p)$ at $p\in\partial I$ for all $t\geqslant 0$. According to the comparison principle (see Corollary 2.5 of \cite{Salsa2022partial}), if $u(t,p)\geqslant 0$ for all $t\geqslant 0$ and $p\in\partial I$, then $w(t)\geqslant 0$ for all $t\geqslant 0$. 
\end{proof}

Define
\begin{equation}\label{eq:def u pm}
    u_+(t):= u(t,1) + u(t,-1),\qquad u_-(t):= u(t,1) - u(t,-1).
\end{equation}
As an immediate consequence of Lemma \ref{lem:central pos principle boundary} one obtains that ``$u_+(t)\geqslant0$ for all $t\geqslant 0$" is a necessary condition for positivity of the solution $u$ on the subdomain $\IR\smallsetminus(-1,1)$. The following corollary formulates a slightly stronger result.
\begin{cor}
    The following are equivalent:
    \begin{enumerate}
        \item[({\it i})] $u(t,x)\geqslant 0$ for all $t\geqslant0$ and $x\in\IR\smallsetminus(-1,1)$.
        \item[({\it ii})] $u(t,-1)\geqslant0$ and $u(t,1)\geqslant 0$ for all $t\geqslant 0$.
        \item[({\it iii})] $u_+(t)\geqslant |u_-(t)|$ for all $t\geqslant 0$.
    \end{enumerate}
\end{cor}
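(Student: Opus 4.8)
The plan is to establish the two equivalences $(i)\Leftrightarrow(ii)$ and $(ii)\Leftrightarrow(iii)$ separately. The first is geometric and rests entirely on \lemref{lem:central pos principle boundary}; the second is a one-line algebraic fact. Throughout I use the standing assumption $u_0\geqslant 0$.

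For $(i)\Leftrightarrow(ii)$: the implication $(i)\Rightarrow(ii)$ is immediate, since $\pm 1\in\IR\smallsetminus(-1,1)$, so positivity on the whole set forces $u(t,\pm1)\geqslant 0$ for all $t$. For the converse $(ii)\Rightarrow(i)$, I would decompose $\IR\smallsetminus(-1,1)=(-\infty,-1]\cup[1,+\infty)$ and treat the two open half-lines $I_-:=(-\infty,-1)$ and $I_+:=(1,+\infty)$ separately. Each has a single boundary point ($-1$ and $1$ respectively), and the closure of each excludes the support $\{0\}$ of the Dirac term. Since $u_0\geqslant 0$ everywhere, in particular on $I_\pm$, I can apply \lemref{lem:central pos principle boundary} on each half-line: the hypotheses $u(t,-1)\geqslant 0$ and $u(t,1)\geqslant 0$ from $(ii)$ are precisely the required boundary nonnegativity, yielding $u(t,\cdot)\geqslant 0$ on $I_-$ and on $I_+$. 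Appending the endpoint values $u(t,\pm1)\geqslant 0$ then covers all of $\IR\smallsetminus(-1,1)$, which is $(i)$.

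For $(ii)\Leftrightarrow(iii)$: I would fix $t\geqslant 0$ and write $a:=u(t,1)$ and $b:=u(t,-1)$, so that $u_+(t)=a+b$ and $u_-(t)=a-b$. The elementary identity $\min(a,b)=\tfrac12\bigl((a+b)-|a-b|\bigr)=\tfrac12\bigl(u_+(t)-|u_-(t)|\bigr)$ shows that $\min(a,b)\geqslant 0$ if and only if $u_+(t)\geqslant|u_-(t)|$. Since $\min(a,b)\geqslant 0$ is equivalent to the conjunction $a\geqslant 0$ and $b\geqslant 0$, this is exactly the equivalence of $(ii)$ and $(iii)$ at the given $t$; letting $t$ range over $\IR_0^+$ finishes the argument.

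I expect no serious obstacle here: the statement is a direct corollary of \lemref{lem:central pos principle boundary} combined with a single inequality. The only point deserving attention is that the lemma is being invoked on the \emph{unbounded} half-lines $I_\pm$ rather than on bounded intervals. This is legitimate because the lemma is stated for arbitrary open intervals whose closure excludes $0$, and because the mild solution satisfies $u(t)\in H^1(\IR)$ and hence decays at infinity, so the comparison principle underlying the lemma applies without imposing an additional growth condition at $\pm\infty$.
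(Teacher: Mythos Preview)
Your proposal is correct and follows essentially the same route as the paper: both invoke \lemref{lem:central pos principle boundary} for $(i)\Leftrightarrow(ii)$ and reduce $(ii)\Leftrightarrow(iii)$ to an elementary inequality. Your use of the identity $\min(a,b)=\tfrac12\bigl((a+b)-|a-b|\bigr)$ is a slightly cleaner packaging than the paper's direct inequality manipulation, but the content is the same.
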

\begin{proof}
``({\it i}\,) $\Leftrightarrow$ ({\it ii}\,)" is Lemma \ref{lem:central pos principle boundary}. For ``({\it ii}\,) $\Rightarrow$ ({\it iii}\,)", observe that $u(t,1)+2u(t,-1)\geqslant u(t,1)$, because $u(t,-1)\geqslant 0$. Then, by subtracting $u(t,-1)$ on both sides, one gets $u_+(t)\geqslant u_-(t)$. Similarly, one can start with $2u(t,1)+u(t,-1)\geqslant u(t,-1)$ and arrive at $u_+(t)\geqslant -u_-(t)$. The condition in ({\it iii}\,) implies that both $u(t,1)+u(t,-1)\geqslant u(t,1)-u(t,-1)$ and $u(t,1)+u(t,-1)\geqslant u(t,-1)-u(t,1)$ must hold. This gives ({\it ii}\,).
\end{proof}

We shall therefore start by finding conditions that ensure the positivity of $u_+$, which are necessary for $u$ to be positive on $\IR\smallsetminus (-1,1)$. Then we proceed to study the positivity on the boundary, ``pointwise" at $x=-1$ and $x=1$. This requires a careful analysis of the integral kernels in the expressions of $u(t,1)$ and $u(t,-1)$.

\subsection{Positivity of the sum: \texorpdfstring{$u_+$}{Lg}} \label{sec:positivity_u+}

Let $(e^{\Delta t})_{t\geqslant 0}$ be the diffusion semigroup in $H^1(\IR)$, generated by the Laplacian $\Delta$ on the domain $D(\Delta):= H^2(\IR)$. It is given explicitly in this setting as convolution with the Gauss-Weierstrass or heat kernel on $\IR$ given by
$$\cK_1(t,\beta):=\frac{e^{-\frac{\beta^2}{4t}}}{\sqrt{4\pi t}},\qquad(t>0, \beta\in\IR),$$
as
\[
    e^{\Delta t}f(x) := \int_\IR \cK_1(t,x-\tilde{x})f(\tilde{x})\df\tilde{x}.
\]
The Sobolev space $H^1(\IR)$ embeds continuously into $C_b^0(\IR)$, because of spatial dimension 1 (see Sobolev Embedding Theorem, e.g., \cite{Adams:2003}, Theorem 4.12, p.85). Thus, point evaluation at $x=-1$ and $x=1$ are defined on $H^1(\IR)$ and yield continuous functionals.
In view of the Variation of Constants Formula \eqref{eq:semigroupmild} and substituting 
\[
    \mathfrak F(u(\tau),\tau)=(\Phi(\tau)-a(u(\tau,1)+u(\tau,-1)))\delta_0,
\]
we obtain for $t>0$ and $x\in\IR$:
\begin{equation}
    u(t,x)=\int_0^t\cK_1(t-\tau,x)(\Phi(\tau)-au(\tau,1)-au(\tau,-1))\df\tau+\int_{\IR}\cK_1(t,x-\tilde{x})u_{0}(\tilde{x})\df \tilde{x}. \label{eq:integral expr u}
\end{equation}
From the latter equation, we can obtain a renewal equation for $u_+$.

\subsubsection{A Renewal Equation for $u_+$}\label{sec:DDE_u+}

Since the mild solution $t\mapsto u(t):\IR_0^+\to H^1(\IR)$ is continuous, by the observation made above that point evaluation is continuous, we obtain that $t\mapsto u_+(t)$ is a continuous function. From Equation \eqref{eq:integral expr u}, we derive that it solves the integral equation
\begin{equation}\label{eq:volterraeq}
u_+(t) = f_0(t)+\int_0^t h(t-\tau)u_+(\tau)\df s = v(t) + h*u_+(t), 
\end{equation}
where 
\begin{equation}
\label{eq_kernel_h}
    h(t-\tau) := -2a\cK_1(t-\tau,1),
\end{equation}
and
\begin{equation}\label{eq:gt}
f_0(t):=2\int_0^t\cK_1(t-\tau,1)\Phi(\tau)\df \tau+\int_{\IR}(\cK_1(t,1-\tilde{x})+\cK_1(t,1+\tilde{x}))u_{0}(\tilde{x})\df \tilde{x}.
\end{equation}

We used here the symmetry $\cK_1(t,-x)=\cK_1(t,x)$. Note that $v\geqslant0$ under our assumptions (i.e., $\Phi(t)\geqslant 0$ and $u_0\geqslant 0$). It is a \textit{Volterra integral equation of the second kind}, for which a comprehensive solution theory exists; see \cite{Volterra-Naito}. Our case is a more particular instance of the \textit{convolution type}, also known as a (linear) \textit{renewal equation}. The kernel $h$ defined in \eqref{eq_kernel_h} is in $L^1_{\loc}(\IR_0^+)$ as well as $\Phi\in L^1_{\loc}(\IR_0^+)$. Hence, its $n$-fold convolution product, $h^{(*n)}:=h*\dots*h$, is defined. Consequently, we can solve $u_+$ via
\begin{equation}\label{eq:resolvent expr uplus}
    u_+ = f_0+ \mathcal{R}*f_0, \qquad{ \text{with }} \mathcal{R} := \sum_{n=1}^\infty h^{(*n)},
\end{equation}
if a suitable interpretation can be given to the series defining the so-called {\it resolvent kernel} $\mathcal{R}$ of the renewal equation (see e.g., \cite{Diekmann_ea:1995}, Section I.2).

First of all, we need the following results for later use:

\begin{lem}\label{lem:v exp order}
$f_0$ is of exponential order.
\end{lem}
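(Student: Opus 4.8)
The plan is to estimate separately the two terms that define $f_0$, using that each ingredient is already of (at most) exponential order. First I would record the standing hypotheses in usable form: since $\Phi$ is of at most exponential order there are constants $M\ge 0$ and $\omega\in\IR$ with $|\Phi(t)|\le Me^{\omega t}$ for a.e.\ $t\ge 0$; and since $u_0\in H^1(\IR)$ embeds continuously into $C^0_b(\IR)$, we have $\|u_0\|_\infty<+\infty$.

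For the initial-data term $\int_\IR(\cK_1(t,1-\tilde x)+\cK_1(t,1+\tilde x))u_0(\tilde x)\df\tilde x$, I would use the crude but uniform estimate obtained by pulling out $\|u_0\|_\infty$ and invoking that $\cK_1(t,\cdot)$ is a probability density, $\int_\IR\cK_1(t,\beta)\df\beta=1$. This bounds the whole term by $2\|u_0\|_\infty$ for every $t>0$; being uniformly bounded, it is of exponential order with rate $0$. Estimating this term in $L^\infty$ rather than via Cauchy--Schwarz against $\|u_0\|_{L^2}$ is the point that avoids a spurious $t^{-1/4}$ blow-up as $t\to0^+$.

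For the forcing term $2\int_0^t\cK_1(t-\tau,1)\Phi(\tau)\df\tau$, I would first observe that $s\mapsto\cK_1(s,1)=\frac{1}{\sqrt{4\pi s}}e^{-1/(4s)}$ is continuous on $(0,+\infty)$ and tends to $0$ both as $s\to0^+$ (the Gaussian factor dominates the $s^{-1/2}$ singularity) and as $s\to+\infty$; hence it is bounded, say $C_K:=\sup_{s>0}\cK_1(s,1)<+\infty$. The term is then at most $2C_K\int_0^t|\Phi(\tau)|\df\tau\le 2C_K M\int_0^t e^{\omega\tau}\df\tau$, and evaluating the elementary integral yields exponential order in every case: a bound $\tfrac{2C_KM}{\omega}e^{\omega t}$ when $\omega>0$; a bound $2C_KMt\le 2C_KMe^t$ when $\omega=0$; and a uniform bound when $\omega<0$.

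Combining the two estimates gives $|f_0(t)|\le M'e^{\omega't}$ for a.e.\ $t\ge 0$, with $\omega'=\max(\omega,0)$ (or any rate $>0$ in the borderline case $\omega=0$) and a suitable $M'$, which is the assertion. The same boundedness arguments show along the way that $f_0$ is continuous on $(0,+\infty)$ and bounded near $0$, so it lies in $L^1_\loc(\IR_0^+)$ as presumed. No step is genuinely delicate; the only point requiring a moment's care is the boundedness of $\cK_1(\cdot,1)$ on $(0,+\infty)$, together with the choice to control the initial-data term in the $L^\infty$ norm of $u_0$.
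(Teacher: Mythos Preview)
Your proof is correct and follows essentially the same route as the paper: bound the initial-data term using $\|u_0\|_\infty$ together with $\|\cK_1(t,\cdot)\|_{L^1}=1$, and bound the forcing term using the boundedness of $t\mapsto\cK_1(t,1)$ combined with the exponential order of $\Phi$. Your write-up is in fact slightly more detailed than the paper's (you justify why $\cK_1(\cdot,1)$ is bounded and do the case split on the sign of $\omega$), but the underlying argument is identical.
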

\begin{proof}For all $t>0$, the first integral in \eqref{eq:gt} is bounded by $C_1e^{\gamma_1 t}$ for some $C_1,\gamma_1$, because $\cK_1(t,1)$ is bounded as a function of $t$ and $\Phi\in L_{\mathrm{loc}}^\infty(\IR_0^+)$ of some exponential order $\gamma_1$. Therefore, we just need to show that the second integral satisfies the same property for all $t>0$. In fact, we can show that the integral of each term is bounded. Because there is an embedding $u_0\in H^1(\IR)\subseteq C_b^0(\IR)\subseteq L^{\infty}(\IR)$ (see the discussion at the beginning of \secref{sec:mathass}), we can apply H\"older's inequality,
$$\left|\int_{\IR}\cK_1(t,1\pm\tilde{x})u_0\df\tilde{x}\right|\leqslant\Vert u_0\Vert_{L^{\infty}}\Vert\cK_1(t,\cdot)\Vert_{L^1}.$$
Due to Gauss integral, it is well known that $\Vert\cK_1(t,\cdot)\Vert_{L^1}=1$ independent of $t$. Thus, $f_0$ is of exponential order, and hence belongs to $L^\infty_{\loc}(\IR_0^+)$.
\end{proof}

\begin{lem}
\label{lem:cR exp order}
$\cR$ is of exponential order.
\end{lem}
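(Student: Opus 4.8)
The obstruction to a naive argument is that $h$ itself is not integrable on $\IR_0^+$: since $\cK_1(t,1)\sim(4\pi t)^{-1/2}$ as $t\apc+\infty$, the kernel $h$ decays only like $t^{-1/2}$, so $\norm{h}_{L^1}=+\infty$ and the Neumann series $\sum_n h^{(*n)}$ cannot be controlled directly in the convolution algebra $L^1(\IR_0^+)$. The plan is to remove this difficulty by an exponential reweighting, after which the renewal kernel becomes a genuine contraction in $L^1(\IR_0^+)$ and the classical resolvent estimate applies.

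First I would record that $h$ is bounded: $\cK_1(t,1)=e^{-1/(4t)}(4\pi t)^{-1/2}$ tends to $0$ both as $t\apc 0^+$ and as $t\apc+\infty$ and is continuous on $(0,+\infty)$, so $\norm{h}_{L^\infty}=2|a|\sup_{t>0}\cK_1(t,1)<+\infty$. Next, for $\omega>0$ set $h_\omega(t):=e^{-\omega t}h(t)$. A direct computation (consistent with the kernel identity used in the proof of \lemref{lem:lapsqrtsub}) gives the Laplace transform of the heat kernel, $\cL\{\cK_1(\cdot,1)\}(\omega)=e^{-\sqrt\omega}/(2\sqrt\omega)$, whence
\[
    \norm{h_\omega}_{L^1}=2|a|\int_0^{+\infty}e^{-\omega t}\cK_1(t,1)\df t=\frac{|a|\,e^{-\sqrt\omega}}{\sqrt\omega},
\]
which tends to $0$ as $\omega\apc+\infty$. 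I then fix $\omega_0>0$ so large that $\norm{h_{\omega_0}}_{L^1}<1$.

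The key algebraic fact is that exponential weighting commutes with convolution, $e^{-\omega t}(f*g)(t)=(f_\omega*g_\omega)(t)$, so that $e^{-\omega_0 t}h^{(*n)}(t)=h_{\omega_0}^{(*n)}(t)$ for every $n$. By Young's inequality $\norm{h_{\omega_0}^{(*n)}}_{L^1}\leqslant\norm{h_{\omega_0}}_{L^1}^{\,n}$, and since $\norm{h_{\omega_0}}_{L^1}<1$ the series $\sum_{n\geqslant 1}h_{\omega_0}^{(*n)}$ converges absolutely in $L^1(\IR_0^+)$ to a function $\cR_{\omega_0}$. This simultaneously gives a meaning to the series defining $\cR$ and shows $\cR_{\omega_0}(t)=e^{-\omega_0 t}\cR(t)$; moreover the geometric sum obeys the resolvent identity $\cR_{\omega_0}=h_{\omega_0}+h_{\omega_0}*\cR_{\omega_0}$ in $L^1(\IR_0^+)$.

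The final step upgrades integrability to a pointwise bound. Since $h_{\omega_0}\in L^\infty(\IR_0^+)$ (from the first step, using $e^{-\omega_0 t}\leqslant 1$) and $\cR_{\omega_0}\in L^1(\IR_0^+)$, Young's inequality in the form $\norm{f*g}_{L^\infty}\leqslant\norm{f}_{L^\infty}\norm{g}_{L^1}$ shows $h_{\omega_0}*\cR_{\omega_0}\in L^\infty(\IR_0^+)$. The resolvent identity then exhibits $\cR_{\omega_0}$ as a sum of two bounded functions, so $|\cR_{\omega_0}(t)|\leqslant M$ for a.e.\ $t$; undoing the weight yields $|\cR(t)|=e^{\omega_0 t}|\cR_{\omega_0}(t)|\leqslant M e^{\omega_0 t}$ for a.e.\ $t\geqslant 0$, which is exactly exponential order. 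The main obstacle is the non-integrability of $h$: once the weight $\omega_0$ is selected through the explicit decay $\cL\{\cK_1(\cdot,1)\}(\omega)\apc 0$, the remainder is the standard contraction/resolvent argument, with the one extra twist that the bound $h_{\omega_0}\in L^\infty$ is what promotes the $L^1$ resolvent to the required pointwise exponential-order estimate.
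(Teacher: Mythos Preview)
Your argument is correct, but it is considerably more elaborate than the paper's. The paper does not go through $L^1$ at all: it uses only the boundedness of $h$, which you yourself establish in your first step. From $\norm{h}_{L^\infty}<+\infty$ one obtains by induction the Picard--Gronwall type pointwise estimate
\[
    |h^{(*n)}(t)|\leqslant \frac{\norm{h}_{L^\infty}^{\,n}\,t^{n-1}}{(n-1)!},
\]
because each new convolution integrates the previous bound over $[0,t]$. Summing in $n$ gives $|\cR(t)|\leqslant \norm{h}_{L^\infty}e^{\norm{h}_{L^\infty}t}$ directly, with the explicit exponential rate $\omega=\norm{h}_{L^\infty}$.

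So the ``obstruction'' you identify at the outset---that $h\notin L^1(\IR_0^+)$---is not actually an obstruction: one never needs to work in the $L^1$ convolution algebra, since the factorial from repeated integration over the simplex already forces convergence of the series pointwise. Your route via exponential tilting, the Neumann series in $L^1$, and then bootstrapping to $L^\infty$ through the resolvent identity is a perfectly valid alternative (and a useful technique when $h\in L^1_\loc$ but $h\notin L^\infty$), but here it is a detour. What the paper's approach buys is brevity and an explicit constant; what yours buys is a method that would still apply if $h$ were merely locally integrable and of exponential order rather than bounded.
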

\begin{proof}Inductively, we have
$$|h^{(*n)}(t)|\leqslant\frac{\Vert h\Vert_{\infty}^nt^{n-1}}{(n-1)!}.$$
Then
$$\label{eq:resolvestimation}
|\cR(t)|\leqslant\sum_{n=1}^{\infty}\frac{\Vert h\Vert_{\infty}^nt^{n-1}}{(n-1)!}=\Vert h\Vert_{\infty}{e^{\Vert h\Vert_\infty t}}.
$$
In fact, $\cR$ is even a smooth integral kernel and also in $L^\infty_{\loc}(\IR_0^+)$.
\end{proof}

\begin{cor}\label{cor:uplusgrowthrate}
$u_+$ is of exponential order.
\end{cor}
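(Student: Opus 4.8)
The plan is to read the conclusion off directly from the resolvent representation \eqref{eq:resolvent expr uplus}, namely $u_+ = f_0 + \cR * f_0$, combined with the two preceding lemmas. By \lemref{lem:v exp order} there are constants $C_0\geqslant 0$ and $\gamma_0\in\IR$ with $|f_0(t)|\leqslant C_0 e^{\gamma_0 t}$, and from the proof of \lemref{lem:cR exp order} we already have the explicit bound $|\cR(t)|\leqslant \norm{h}_\infty e^{\norm{h}_\infty t}$. Since a finite sum of functions of exponential order is again of exponential order, it suffices to show that the convolution term $\cR * f_0$ is of exponential order.

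First I would estimate the convolution pointwise. Setting $\omega := \max(\gamma_0,\norm{h}_\infty)$, for every $t\geqslant 0$ one has
\[
  |(\cR*f_0)(t)| \leqslant \int_0^t |\cR(t-\tau)|\,|f_0(\tau)|\,\df\tau \leqslant C_0\norm{h}_\infty \int_0^t e^{\norm{h}_\infty(t-\tau)} e^{\gamma_0 \tau}\,\df\tau \leqslant C_0 \norm{h}_\infty\, t\, e^{\omega t},
\]
where in the last step I bound the two exponentials by $e^{\omega(t-\tau)}$ and $e^{\omega\tau}$ respectively (using $t-\tau\geqslant 0$ and $\tau\geqslant 0$) and integrate the resulting constant $e^{\omega t}$ over $[0,t]$.

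The single point that needs a little care — and it is the only nontrivial step — is that this bound carries a polynomial prefactor $t$, so it is not literally of the form $M e^{\omega t}$. This is absorbed by enlarging the exponent slightly: for any $\eps>0$ the function $t\mapsto t e^{-\eps t}$ is bounded on $\IR_0^+$, say by $M_\eps$, whence $|(\cR*f_0)(t)| \leqslant C_0 \norm{h}_\infty M_\eps\, e^{(\omega+\eps)t}$, which exhibits $\cR*f_0$ as a function of exponential order. Combining this with the exponential bound on $f_0$ gives that $|u_+(t)|\leqslant |f_0(t)| + |(\cR*f_0)(t)|$ is dominated by a single exponential, so $u_+$ is of exponential order. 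I expect no genuine obstacle beyond this polynomial-absorption trick; the whole argument is a direct consequence of Lemmas \ref{lem:v exp order} and \ref{lem:cR exp order} fed through the convolution structure of \eqref{eq:resolvent expr uplus}.
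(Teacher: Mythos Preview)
Your proof is correct and follows exactly the approach of the paper: invoke the resolvent representation \eqref{eq:resolvent expr uplus} and feed in \lemref{lem:v exp order} and \lemref{lem:cR exp order}. The paper simply states that the result ``follows from'' these two lemmas without spelling out the convolution estimate or the polynomial-absorption step, so your version is a more detailed rendering of the same argument.
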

\begin{proof}
As we explained above, Equation \eqref{eq:volterraeq} can be solved by \eqref{eq:resolvent expr uplus}. Then it follows from \lemref{lem:v exp order} and \lemref{lem:cR exp order}. In particular, $u_+\in L^\infty_{\loc}(\IR_0^+)$ as well.
\end{proof}

If $a\leqslant 0$, then $h\geqslant 0$ and consequently, $u_+\geqslant 0$, as can be immediately seen from \eqref{eq:resolvent expr uplus}. However, the negative feedback ($a>0$) really complicates the analysis of positivity in the system, because now the series defining $\mathcal{R}$ is alternating. 

\subsubsection{An expression for $u_+$ in the $s$-domain}\label{sec:expression of u+ s-domain}

Corollary \ref{cor:uplusgrowthrate} guarantees that $u_+$ is of exponential order. Thus, applying the Laplace transform to study integral equation  \eqref{eq:volterraeq} is now a most natural step to take. Necessary fundamental definitions and properties concerning Laplace Transform can be found in Section \ref{Sec:Laplace} and \appref{app:A}. So, let 
\[
U_+(s):=\cL\{u_+\}(s),\quad 
\Theta(s):=\cL\{\Phi\}(s),\quad
F_0:=\cL\{f_0\}(s),\quad \mbox{and}\quad Q(s,\beta):=\cL\{\cK_1(\cdot,\beta)\}(s),
\]
where all Laplace transforms exist because the functions that are transformed are of exponential order (see \corref{cor:uplusgrowthrate}, \lemref{lem:v exp order}, and \eqref{eq:lapheatkernone}). The convolution equation \eqref{eq:volterraeq} then becomes an algebraic relation for the Laplace transforms:
\begin{equation}\label{eq:relation Laplace u-plus}
U_+(s)=F_0(s)-2aQ(s,1)U_+(s),
\end{equation}
where 
\begin{equation}
\label{eq:Q}
Q(s,\beta) = \frac{e^{-\beta\sqrt{s}}}{2\sqrt{s}}
\end{equation}

(computed in Appendix \ref{app:C}, in particular see \eqref{eq:lapheatkernone}) and
\begin{equation}\label{eq:def G}
F_0(s)=2Q(s,1)\Theta(s)+\int_{\IR}(Q(s,|1-\tilde{x}|)+Q(s,|1+\tilde{x}|))u_0(\tilde{x})\df \tilde{x}.
\end{equation}
Comparison of \eqref{eq:relation Laplace u-plus} and \eqref{eq:def G} indicates that defining
\begin{equation}\label{eq:lappspell}
    P_a(s,\beta) := \frac{Q(s,\beta)}{1+2aQ(s,1)}
\end{equation}
yields a convenient expression for $U_+(s)$:
\begin{equation}\label{eq:lapuplusinpa}
U_+(s)=2P_a(s,1)\Theta(s)+\int_{\IR}(P_a(s,|1-\tilde{x}|)+P_a(s,|1+\tilde{x}|))u_0(\tilde{x})\df \tilde{x}.
\end{equation}

\subsubsection{Outline of the approach to proving positivity of $u_+$}
\label{sec:outlineposuplus}

If $p_a(t,\beta):=\cL^{-1}\{P_a(.\cdot\beta)\}(t)$ exists, apply inverse Laplace transform formally to \eqref{eq:lapuplusinpa}. The desired solution in the $t$-domain becomes
\begin{equation}\label{eq:uplusinpa} 
u_+(t)=\underbrace{\int_0^t2p_a(\tau,1)\Phi(t-\tau)\df\tau}_{u_+^{\mathrm{tp}}}+\underbrace{\int_{\IR}(p_a(t,|1-\tilde{x}|)+p_a(t,|1+\tilde{x}|))u_0(\tilde{x})\df \tilde{x}}_{u_+^{\mathrm{sp}}}.
\end{equation}

Thus, the question about positivity of $u_+$ can be related to properties of its holomorphic Laplace transform $U_+$, which essentially boils down to similar properties of the functions $P_a(s,\beta)$, through Expression \eqref{eq:lapuplusinpa}. We observe the sufficient condition:
\begin{quote}
    {\it If the inverse Laplace transform of $P_a(s,\beta)$ exists and is nonnegative for all $\beta\geqslant 0$, then the inverse Laplace transform of $U_+$ exists, equals $u_+$ and is nonnegative.}
\end{quote} 
Analysing positivity through the Laplace transform is not straightforward, though. \propref{prop:char positivity by Laplace} provides an entrance, but it is difficult to have control over the signs of all derivatives, required for complete monotonicity. Therefore, we take a different approach.

For $P_a(s,\beta)$ we have an explicit expression, although complicated. In fact, 
\begin{equation}
P_a(s,\beta) = \frac{\frac{e^{-\beta\sqrt{s}}}{2\sqrt{s}}}{1+2a\frac{e^{-\beta\sqrt{s}}}{2\sqrt{s}}}
 = \frac{e^{-\beta\sqrt{s}}}{2\sqrt{s}+2ae^{-\sqrt{s}}}.
\end{equation}
A crucial observation, formulated in \lemref{lem:lapsqrtsub} or \corref{cor:lapsqrtsub}, allows us to change from the complicated form of $P_a(s,\beta)$, to the better analyzable {\it meromorphic} function 
\begin{equation}\label{eq:tilde P}
    \tilde P_a(s,\beta) := \frac{e^{-\beta s}}{2s + 2ae^{-s}}.
\end{equation}
Accordingly (cf. \eqref{eq:Q}), we also define 
\begin{equation}\label{eq:tilde Q}
    \tilde Q(s,\beta) := \frac{e^{-\beta s}}{2s}.
\end{equation}
It allows to conclude that 
\begin{quote}
    {\it If the inverse Laplace transform of $\tilde P_a(s,\beta)$ exists and is positive, then so is that of $P_a(s,\beta)$,}
\end{quote} 
according to \lemref{lem:lapsqrtsub}. Thus, the question is now reduced to studying $\tilde P_a(a,\beta)$. In summary of the discussion above, we formulate
\begin{prop}\label{prop:patoupluspos}
    Assume that $0\leqslant u_0\in H^1(\IR)$ and $0\leqslant\Phi\in L^\infty_\loc(\IR^+_0)$. Fix $a\in\IR$ and assume that the inverse Laplace transforms $p_a(t,\beta):=\mathcal{L}^{-1}\{P_a(s,\beta)\}$ and $\tilde{p}_a(t,\beta):=\mathcal{L}^{-1}\{\tilde{P}_a(s,\beta)\}$ exist in $L^1_\loc(\IR_0^+)$ for all $\beta\geqslant 0$. Consider the following statements:
    \begin{enumerate}
        \item[({\it i})]  $u_+(t)\geqslant 0$ for all $t\geqslant 0$;
        \item[({\it ii})]  $p_a(t,\beta)\geqslant0$ for all $t\geqslant 0$ and $\beta\geqslant 0$;
       \item[({\it iii})]  $\tilde p_a(t,\beta)\geqslant0$ for all $t\geqslant 0$ and $\beta\geqslant 0$.
       \item[({\it iv})]  $\tilde p_a(t,0)\geqslant0$ for all $t\geqslant 0$
    \end{enumerate}

    Then ({\it iv}) $\Leftrightarrow$ ({\it iii}) $\Rightarrow$ ({\it ii}) $\Rightarrow$ ({\it i}).
\end{prop}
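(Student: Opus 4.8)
The plan is to prove the three links of the chain separately, each by exploiting a different transform identity between $P_a$ and $\tilde P_a$, and then to isolate the standing regularity hypotheses as the one genuinely delicate point.

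I would begin with the two outward implications. For (\textit{iii}) $\Rightarrow$ (\textit{ii}), the key is that $Q(s,\beta)=\tilde Q(\sqrt s,\beta)$ gives directly $P_a(s,\beta)=\tilde P_a(\sqrt s,\beta)$. Hence \corref{cor:lapsqrtsub}, applied with $\eta(s)=\sqrt s$ and the kernel $\xi(t,\tau)=\tau e^{-\tau^2/4t}/\sqrt{4\pi t^3}$ (whose Laplace transform is $e^{-\sqrt s\,\tau}$), yields the representation
\[
  p_a(t,\beta)=\int_0^{+\infty}\frac{\tau e^{-\frac{\tau^2}{4t}}}{\sqrt{4\pi t^3}}\,\tilde p_a(\tau,\beta)\,\df\tau .
\]
Since the kernel is nonnegative on $(0,+\infty)\times[0,+\infty)$, nonnegativity of $\tilde p_a(\cdot,\beta)$ transfers to $p_a(\cdot,\beta)$ for every $\beta\geqslant 0$. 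For (\textit{ii}) $\Rightarrow$ (\textit{i}), I would invoke \eqref{eq:uplusinpa}: writing $w(t)$ for its right-hand side, one checks that $\cL\{w\}$ equals the right-hand side of \eqref{eq:lapuplusinpa} (the temporal-production term is the convolution $2p_a(\cdot,1)*\Phi$ with transform $2P_a(s,1)\Theta(s)$, and a Fubini interchange pulls the Laplace transform through the $\tilde x$-integral in the spatial term). As $\cL\{w\}=U_+=\cL\{u_+\}$, the Uniqueness Theorem \thmref{thm:uniquness Laplace} forces $w=u_+$; then $p_a(\cdot,\beta)\geqslant 0$ together with $\Phi\geqslant 0$ and $u_0\geqslant 0$ makes both integrals nonnegative, so $u_+\geqslant 0$.

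The equivalence (\textit{iii}) $\Leftrightarrow$ (\textit{iv}) is the heart of the reduction, and it rests on a single algebraic observation in \eqref{eq:tilde P}, namely
\[
  \tilde P_a(s,\beta)=e^{-\beta s}\,\tilde P_a(s,0),
\]
so that the whole $\beta$-dependence is carried by the factor $e^{-\beta s}$. By the translation theorem for the Laplace transform, multiplication by $e^{-\beta s}$ corresponds to a shift by $\beta$ in the time domain; since $\tilde p_a(\cdot,0)\in L^1_\loc(\IR_0^+)$ is supported on $[0,+\infty)$, uniqueness gives
\[
  \tilde p_a(t,\beta)=\tilde p_a(t-\beta,0)\,\II_{[\beta,+\infty)}(t).
\]
Thus $\tilde p_a(t,\beta)\geqslant 0$ for all $t,\beta\geqslant 0$ is equivalent to $\tilde p_a(t,0)\geqslant 0$ for all $t\geqslant 0$: the implication (\textit{iii}) $\Rightarrow$ (\textit{iv}) is immediate by setting $\beta=0$, and the shift identity supplies the converse.

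I expect the main obstacle not to be any individual implication but the standing hypotheses needed to deploy the transform lemmas cleanly. Indeed, \corref{cor:lapsqrtsub} is stated for functions of \emph{exponential order}, whereas the proposition only posits $\tilde p_a(\cdot,\beta)\in L^1_\loc(\IR_0^+)$. I would therefore first establish that $\tilde p_a(\cdot,\beta)$ is of exponential order: the function $\tilde P_a(\cdot,\beta)$ is meromorphic with poles at the zeros of $2s+2ae^{-s}$, which satisfy $se^{s}=-a$ and hence lie in a left half-plane $\{\Re(s)\leqslant\omega\}$ for some $\omega\in\IR$, so the inverse grows at most like $e^{\omega t}$. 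A secondary care-point is verifying that $\tilde p_a(\cdot,0)$ is genuinely supported on $[0,+\infty)$ (so the shift theorem introduces no spurious contribution on $[0,\beta)$) and justifying the Fubini interchange in the spatial term of \eqref{eq:uplusinpa}; both become routine once the exponential-order bound is secured.
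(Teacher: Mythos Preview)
Your proof is correct and follows essentially the same route as the paper: the shift identity $\tilde P_a(s,\beta)=e^{-\beta s}\tilde P_a(s,0)$ for (\textit{iv}) $\Leftrightarrow$ (\textit{iii}), the subordination formula from \lemref{lem:lapsqrtsub}/\corref{cor:lapsqrtsub} for (\textit{iii}) $\Rightarrow$ (\textit{ii}), and the representation \eqref{eq:uplusinpa} for (\textit{ii}) $\Rightarrow$ (\textit{i}). Your closing paragraph flags a real tension---\corref{cor:lapsqrtsub} is stated for functions of exponential order while the proposition assumes only $L^1_\loc$---but the paper does not address this inside the proof either; it simply defers the exponential-order bound to \propref{prop:tildepexpdecay}, established via the DDE \eqref{eq:ddeonlyb}, so your pole-based justification is a reasonable alternative rather than a necessary repair.
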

\begin{proof} All implications have been discussed above, or are obvious, except for ``({\it iv\,}) $\Rightarrow$ ({\it iii\,})". Consider Expression \eqref{eq:tilde P}. Multiplication in the $s$-domain by $e^{-s\beta}$ simply shifts the function by $\beta$ to the right, due to Table \cosref{tab:laplace}{A.1}. Hence, 
\begin{equation}\label{eq:tilde p beta as p 0}
    \tilde{p}_a(t,\beta)=\begin{cases} \tilde{p}_a(t-\beta,0), & \text{for } t\geqslant\beta\\
    0, & \mbox{otherwise}.
    \end{cases}
\end{equation}
Thus, the positivity of $\tilde{p}_a(t,\beta)$ is not affected by a change in $\beta$.
\end{proof}

Now, a second advantage of Laplace transform comes into play: it allows us to change perspectives between different but equivalent problems. We view $\tilde{P}_a(s,\beta)$ as the transfer function of a feedback system, by Laplace transform. In fact, we will relate \eqref{eq:tilde P} to a delayed differential equation (DDE) in Section \ref{subsec:relate to DDE}. Analyzing the resulting DDE using literature (and -- independently -- in \appref{app:B}), in turn, finally yields the following nontrivial positivity result for $\tilde{p}_a(t,0)$:
\begin{prop}\label{prop:positivity tilde pa}
If $a\leqslant e^{-1}$, then $\tilde{p}_a(t,\beta)\geqslant0$ and $p_a(t,\beta)\geqslant0$ for all $t,\beta\geqslant 0$.
\end{prop}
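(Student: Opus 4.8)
The plan is to reduce the entire proposition to the single scalar statement (\textit{iv}) of \propref{prop:patoupluspos}, namely $\tilde p_a(t,0)\geqslant 0$ for all $t\geqslant 0$, and then let that proposition carry the rest. Once (\textit{iv}) is in hand, the equivalence (\textit{iv})$\Leftrightarrow$(\textit{iii}) upgrades it to $\tilde p_a(t,\beta)\geqslant 0$ for every $\beta\geqslant 0$, and the implication (\textit{iii})$\Rightarrow$(\textit{ii}) yields $p_a(t,\beta)\geqslant 0$ as well. The latter step is precisely \lemref{lem:lapsqrtsub}/\corref{cor:lapsqrtsub} applied to the identity $P_a(s,\beta)=\tilde P_a(\sqrt{s},\beta)$: it expresses $p_a(t,\beta)=\int_0^{+\infty}\frac{\tau e^{-\tau^2/(4t)}}{\sqrt{4\pi t^3}}\,\tilde p_a(\tau,\beta)\,\df\tau$ through a nonnegative kernel, so positivity (and existence, by exponential order of $\tilde p_a$) transfers automatically. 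Thus everything rests on the positivity of the single function $t\mapsto\tilde p_a(t,0)$.

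First I would make that function explicit. Writing $\tilde P_a(s,0)=\tfrac1{2s}\bigl(1+\tfrac{a}{s}e^{-s}\bigr)^{-1}$ and expanding the geometric series for $\Re(s)$ large gives $\tilde P_a(s,0)=\tfrac12\sum_{n\geqslant 0}(-a)^n e^{-ns}s^{-(n+1)}$. Inverting term by term — each term is supported on $[n,+\infty)$, so for fixed $t$ the sum is finite and convergence is a non-issue, and one confirms the outcome via the Uniqueness \thmref{thm:uniquness Laplace} — yields the piecewise-polynomial formula
\begin{equation*}
    \tilde p_a(t,0)=\frac12\sum_{n=0}^{\lfloor t\rfloor}\frac{(-a)^n (t-n)^n}{n!}.
\end{equation*}
The bound $(t-n)^n\leqslant t^n$ shows $\tilde p_a(\cdot,0)$ is locally bounded and of exponential order, discharging the standing existence hypothesis of \propref{prop:patoupluspos}. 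The structural point of this formula is that $\tilde p_a(\cdot,0)$ is, up to the factor $\tfrac12$ and the jump at $t=0$ that fixes the initial value, the fundamental solution of the delay differential equation $y'(t)=-a\,y(t-1)$ with $y\equiv 0$ on $(-\infty,0)$: it equals the constant $\tfrac12$ on $[0,1)$ and obeys the delay relation on each later unit interval.

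The hard part is the positivity of this alternating finite sum, and here the threshold $a\leqslant e^{-1}$ is sharp. For $a\leqslant 0$ it is immediate, since every term is nonnegative. For $0<a\leqslant e^{-1}$ I would exploit the characteristic equation $\lambda e^{\lambda}=-a$ of the delay equation: its left-hand side attains minimum $-e^{-1}$ at $\lambda=-1$, so a \emph{real} root $\lambda\in[-1,0)$ exists \emph{exactly} when $a\leqslant e^{-1}$ — this is where the constant $e^{-1}$ enters. Setting $S:=2\tilde p_a(\cdot,0)$ and substituting $S(t)=e^{\lambda t}z(t)$, the relation $-a=\lambda e^{\lambda}$ turns $S'(t)=-aS(t-1)$ into $z'(t)=\lambda\bigl(z(t-1)-z(t)\bigr)$, with $z(t)=e^{-\lambda t}>0$ nondecreasing on $[0,1)$. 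Since $\lambda<0$, I would then prove by induction over the unit intervals $[n,n+1]$ that $z$ remains nondecreasing: as long as $z(t-1)\leqslant z(t)$ one has $z'(t)=\lambda\bigl(z(t-1)-z(t)\bigr)\geqslant 0$, and a first-violation (continuation) argument on each interval closes the loop. Monotonicity of $z$ gives $z\geqslant z(0)>0$, hence $\tilde p_a(t,0)\geqslant 0$. Equivalently, one may simply invoke the classical non-oscillation theorem for $y'=-a\,y(t-1)$, valid precisely for $a\leqslant e^{-1}$; the self-contained induction is what I would relegate to \appref{app:B}. The main obstacle is therefore not the algebra but establishing non-oscillation right up to the critical value $a=e^{-1}$, where the two characteristic roots coalesce at $\lambda=-1$ and the slack in the inductive monotonicity argument degenerates.
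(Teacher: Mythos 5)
Your proposal is correct, and it follows the paper's own skeleton exactly: the reduction to statement ({\it iv}) of \propref{prop:patoupluspos}, the piecewise-polynomial series for $\tilde p_a(t,0)$ (the paper's \eqref{eq:lapinvpsub} at $\beta=0$), and the identification of $2\tilde p_a(\cdot,0)$ with the solution of the delay equation $y'(t)=-a\,y(t-1)$, $y\equiv 1$ on $[0,1]$, are precisely the paper's steps (\propref{prop:tildepexpdecay}). The only divergence is in how positivity of that DDE solution is established up to $a=e^{-1}$: the paper's main text cites the Gy\"ori--Ladas non-oscillation theorem (via \corref{clry:suff pos condition specific}), while its self-contained alternative in \appref{app:B} performs exactly your substitution $y(t)=e^{\lambda t}z(t)$ with $\lambda e^{\lambda}=-a$ the real characteristic root (\lemref{lem:zerobrange}), but then closes by citing Dibl\'ik's monotonicity lemma (\lemref{prop:ddeincrease}) where you instead inline a first-violation argument. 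Your inlined argument is sound, with one caveat you should make explicit: the continuation argument requires the history $z(t)=e^{-\lambda t}$ to be \emph{strictly} increasing (which it is, since $\lambda<0$), not merely ``nondecreasing'' as you write. At a putative first time $T>1$ where monotonicity fails, continuity of $z'$ forces $z'(T)=0$, i.e. $z(T-1)=z(T)$, and this contradicts \emph{strict} monotonicity of $z$ on $[0,T]$; with mere nondecrease (e.g. a constant history) no contradiction arises --- which is also why Dibl\'ik's lemma demands the strict inequality $h(1)>h(t)$ on $[0,1)$. Finally, your closing worry that the argument ``degenerates'' at the critical value $a=e^{-1}$ is unfounded: the coalescence of the two characteristic roots at $\lambda=-1$ is harmless, since the argument uses only the existence of some real root $\lambda\in[-1,0)$ and the strictness of the history, both of which persist at $a=e^{-1}$.
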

This will be proven in \secref{subsec:relate to DDE}, as a consequence of -- ultimately -- \corref{clry:suff pos condition specific}. Finally we obtain the similarly nontrivial positivity result for $u_+$ as an immediate consequence: 
\begin{thm}\label{thm:upluspos}
Assume that $0\leqslant u_0\in H^1(\IR)$ and $0\leqslant\Phi\in L^\infty_\loc(\IR^+_0)$, of at most exponential order. If $a\in[0,e^{-1}]$, then the function $u_+$ is nonnegative.
\end{thm}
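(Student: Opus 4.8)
The plan is to read this theorem as the final assembly step of the reduction carried out in \secref{sec:positivity_u+}, so that essentially all of the analytic work has already been isolated into \propref{prop:patoupluspos} and \propref{prop:positivity tilde pa}. First I would observe that the standing hypotheses here — $0\leqslant u_0\in H^1(\IR)$ and $0\leqslant\Phi\in L^\infty_\loc(\IR_0^+)$ of at most exponential order — are exactly those demanded by \propref{prop:patoupluspos}, and that $a\in[0,e^{-1}]$ in particular satisfies $a\leqslant e^{-1}$. Hence \propref{prop:positivity tilde pa} applies and delivers both the existence of the inverse Laplace transforms $\tilde p_a(t,\beta)$ and $p_a(t,\beta)$ in $L^1_\loc(\IR_0^+)$ and their nonnegativity for all $t,\beta\geqslant 0$. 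In the language of \propref{prop:patoupluspos} this is precisely statement ({\it ii}) (indeed also ({\it iii})).

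Next I would invoke the implication ({\it ii}) $\Rightarrow$ ({\it i}) of \propref{prop:patoupluspos} to transfer this positivity to $u_+$. Concretely, applying $\cL^{-1}$ to the $s$-domain identity \eqref{eq:lapuplusinpa} produces the representation \eqref{eq:uplusinpa}, in which the forcing part is the convolution $\int_0^t 2p_a(\tau,1)\Phi(t-\tau)\df\tau$ — obtained from the convolution property $\cL\{f*g\}=\cL\{f\}\cL\{g\}$ applied to $2P_a(s,1)\Theta(s)$ — and the initial-data part is $\int_\IR\bigl(p_a(t,|1-\tilde x|)+p_a(t,|1+\tilde x|)\bigr)u_0(\tilde x)\df\tilde x$, obtained by interchanging $\cL^{-1}$ with the integral over $\tilde x$. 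Since $p_a\geqslant 0$ by the previous step while $\Phi\geqslant 0$ and $u_0\geqslant 0$ by hypothesis, every integrand is nonnegative, so $u_+^{\mathrm{tp}}\geqslant 0$, $u_+^{\mathrm{sp}}\geqslant 0$, and hence $u_+(t)\geqslant 0$ for all $t\geqslant 0$. To make this rigorous rather than merely formal, I would verify that the right-hand side of \eqref{eq:uplusinpa} really is $\cL^{-1}\{U_+\}$ and then use that $u_+$ is of exponential order (\corref{cor:uplusgrowthrate}), so that the Uniqueness Theorem \thmref{thm:uniquness Laplace} identifies the two and forces equality with the original $u_+$.

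I expect that the only genuinely delicate point internal to \emph{this} proof is the Fubini-type justification for exchanging $\cL^{-1}$ with the spatial integral defining $u_+^{\mathrm{sp}}$; here one leans on $u_0\in L^\infty(\IR)$ together with the Gaussian decay of $\cK_1$ (and the corresponding decay of $p_a(t,\beta)$ in $\beta$) to supply a dominating function integrable uniformly in $\tilde x$. The substantive difficulty — establishing nonnegativity of $\tilde p_a(t,0)$ and the sharp threshold $a\leqslant e^{-1}$ — has been deliberately pushed into \propref{prop:positivity tilde pa} via the delay-differential-equation reformulation of \eqref{eq:tilde P}, so it does not resurface here. Granting that proposition, the present theorem then follows immediately by the chain above.
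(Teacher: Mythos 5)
Your proposal is correct and takes essentially the same route as the paper: the paper's proof of \thmref{thm:upluspos} is precisely the two-step reduction you describe, citing \propref{prop:patoupluspos} to reduce the claim to nonnegativity of $\tilde p_a$ and then \propref{prop:positivity tilde pa} to supply it. The extra detail you give (the convolution/Fubini justification and the appeal to the Uniqueness Theorem) is material the paper has already packaged inside \propref{prop:patoupluspos} and Propositions \ref{prop:tildepexpdecay}--\ref{prop:uplustermexist}, so it does not change the argument.
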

\begin{proof}
By \propref{prop:patoupluspos}, we just need to guarantee $\tilde p_a(t,0)$ is nonnegative for all $\beta\geqslant0$. This is provided by \propref{prop:positivity tilde pa}.   
\end{proof}

The condition of at most exponential order has been included to assure that the Laplace transform of $\Phi$ exists. This is necessary for th eapplication of our techniques. We shall assume this condition from this point onwards.

Anticipating the results that follow in subsequent sections that establish the existence of the inverse Laplace transforms $p_a(t,\beta)$ and $\tilde{p}_a(t,\beta)$ in $L^1_\loc(\IR_0^+)$, in view of \propref{prop:patoupluspos} we need to show that $p_a(t,\beta)\geqslant0$ or $\tilde p_a(t,\beta)\geqslant0$. According to \propref{prop:poscompmono} we expect $P_a(s,\beta)$ to be completely monotonic (recall \eqref{def:complete monotonicity}). This yields insight in the shape of the region in the $a\beta$-plane where $p_a(t,\beta)$ cannot be nonnegative for all $t$. This provides useful information beyond that provided in \thmref{thm:upluspos}, derived from the study of the DDE.

The easiest case is the first derivative
\begin{equation}
\label{Eq_P_a_derivative}
P_a'(s,\beta)=
-\frac{e^{(1- \beta) \sqrt{s}}}{4  \sqrt{s}\left( a + e^{\sqrt{s}} \sqrt{s} \right)^2}\left( a\beta-a + e^{\sqrt{s}} + \beta\sqrt{s}e^{\sqrt{s}} \right).
\end{equation}
$P_a'(s,\beta)$ is a strictly increasing function on $[0,+\infty)$ under the condition that $\beta\geqslant0$. Thus, $P_a'(s,\beta)$ has a (simple) positive real root exactly when $P_a'(0,\beta)=a\beta-a+1<0$, which defines the \textit{critical curve}. In this region, indicated in Figure \ref{fig:possible-reject} in red, we therefore reject that $p_a(t,\beta)$ is nonnegative.

In theory, we can also determine the region on the $a\beta$-plane where $P''_a(s,\beta)$ has a positive root, and analogously for higher derivatives. However, identifying these regions for second and higher derivatives is extremely involved. Instead, the analysis of the associated DDE will identify a region in $a$-space that certainly works: $a\leqslant e^{-1}\approx 0.3679$. Note that there seems to be a gap between $e^{-1}$ and 1, the smallest value of $a$ in the rejected region. It is not clear whether there exist admissible values of $a$ in this gap. We will investigate this further numerically, in Section \ref{Sec_numerical_results}.

\begin{figure}
    \centering
    \includegraphics[scale=0.45]{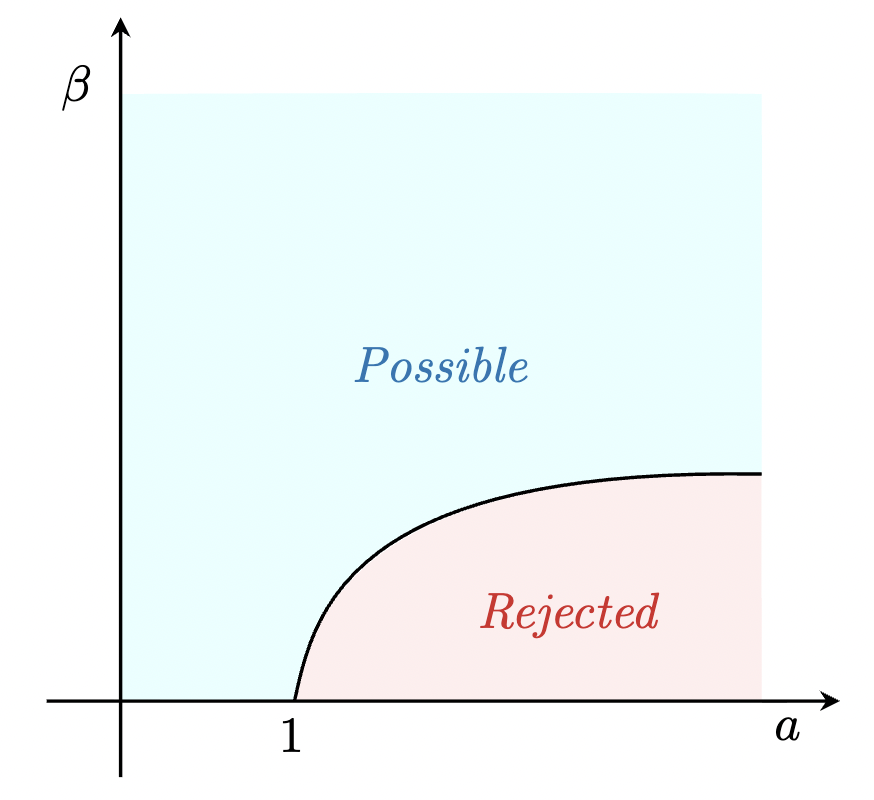}
    \caption{Regions of $a$ and $\beta$ regarding the positivity of $p_a(t,\beta)$ determined by the existence of the positive real root in Equation \eqref{Eq_P_a_derivative} is shown. Thus, the positivity of $u_+$ is rejected or possible accordingly. The regions are separated by the critical curve $a\beta - a + 1 =0$.}
    \label{fig:possible-reject}
\end{figure}

Thus, we shall now focus on establishing properties of the transfer functions $P_a(s,\beta)$ and their simplified forms $\tilde{P}_a(s,\beta)$, in particular the existence and properties of their inverse Laplace transforms $p_a(t,\beta)$ and $\tilde{p}_a(t,\beta)$.

\subsubsection{ Analysis of $\tilde{p}_a(s,\beta)$ by related delay equation}
\label{subsec:relate to DDE}

Rewrite \eqref{eq:tilde P} for $\beta=0$ as
\begin{equation}\label{eq:padde}
s\tilde P_a(s,0)-\frac{1}{2}=-ae^{-s}\tilde P_a(s,0).
\end{equation}
Applying \cosref{tab:laplace}{A.1} to \eqref{eq:padde}, the inverse transform $\tilde p_a(t,0)$ -- assuming for the moment that it exists in $L^1_\loc(\IR_0^+)$ -- equivalently satisfies the following delayed differential equation (cf. \appref{app:B}),
\begin{equation}\label{eq:ddeonlyb}
\begin{cases}
    \tilde{p}_a'(t,0)=-a\tilde{p}_a(t-1,0), &  t\in(1,+\infty)\\
    \tilde{p}_a(t,0) = \frac{1}{2}, & t\in[0,1]
\end{cases}.
\end{equation}

So, we are faced with the question of determining all $A\in\IR$ for which the solution $y(t)$ to the delay differential equation with specific history,
\begin{equation}
\label{eq:dimensionless scalar dde}
        \begin{cases}
            y'(t) = Ay(t-1), \quad &t\in(1, +\infty),\\
            y(t) = 1, \quad & t\in[0,1].
        \end{cases}
\end{equation}
is nonnegative for all time $t$. This can be answered partially by invoking \cite{gyori1991oscillation} Theorem 3.3.1, p.70, which is in the field a well-known result on positivity of solutions of a much broader class of linear delay differential equations (DDEs). We shall quote this result for the convenience of the reader, because \cite{gyori1991oscillation} seems not easily accessible. 

The theorem considers non-autonomous scalar DDEs with variable coefficients and delays, of the form
\begin{equation}\label{eq:DDE with variable delay and coefficients}
    x'(t) + \sum_{i=1}^n p_i(t)x(t-\tau_i(t))=0,\qquad \mbox{for } t\in[t_0,T),
\end{equation}
with $t_0\leqslant T\leqslant \infty$ and continuous coefficient functions $p_i:[t_0,T)\apc\IR$ and delay functions $\tau_i:[t_0,T)\apc\IR_+$. Put
\[
    t_{-1} := \min_{1\leqslant i\leqslant n} \bigl[ \inf_{t_0\leqslant t < T} t-\tau_i(t)\bigr],\qquad g(t) :=  \min_{1\leqslant i\leqslant n} \bigl[ t_0\vee (t-\tau_i(t)) ]\quad\mbox{for } t\in[t_0,T)
\]
and define
\[
    \Phi^+ := \bigl\{ \phi\in C^0([t_{-1}, t_0],\IR)\mid \phi(t_0)>0, \phi(t)\geqslant\phi(t_0) \mbox{ for all } t\in[t{-1}, t_0] \bigr\}.
\]
Then, one has
\begin{thm}[Gy\"ori \& Ladas, \cite{gyori1991oscillation}, Theorem 3.3.1, p.70]
    Assume that $p_i$ and $\tau_i$ are continuous and that
    \begin{equation}
        \sum_{i=1}^n \int_{g(t)}^t p_i(s)^+ \df s \leqslant \frac{1}{e}\qquad\mbox{for all } t\in[t_0,T),
    \end{equation}
    then for any $\phi\in\Phi^+$ the solution to \eqref{eq:DDE with variable delay and coefficients} satisfying the history condition $x(t)=\phi(t)$ for $t\in[t_{-1},t_0]$ is positive: $x(t)>0$ for $t\in[t_0,T)$.
\end{thm}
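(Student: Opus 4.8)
The plan is to argue by contradiction through the \emph{first zero} of the solution, and to reduce the positivity question to a growth estimate for the logarithmic derivative, in which the constant $1/e$ enters through the tangent-line inequality $e^{w}\geqslant ew$. Suppose the assertion fails. Since $x(t_0)=\phi(t_0)>0$ and the solution of a delay equation with continuous data is continuous, there is a first $t^\ast\in(t_0,T)$ with $x(t^\ast)=0$ and $x(t)>0$ for all $t\in[t_{-1},t^\ast)$; positivity on the history interval $[t_{-1},t_0]$ is built into $\phi\in\Phi^+$, which forces $\phi\geqslant\phi(t_0)>0$. Because each delayed argument satisfies $t_{-1}\leqslant t-\tau_i(t)\leqslant t<t^\ast$ for $t<t^\ast$, every value $x(t-\tau_i(t))$ entering the equation is strictly positive on $[t_0,t^\ast)$.

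On $[t_0,t^\ast)$ I would introduce the Riccati-type quantity $\lambda(t):=-x'(t)/x(t)$. Integrating gives $\int_{t_0}^{t}\lambda(s)\,\df s=\ln x(t_0)-\ln x(t)\apc+\infty$ as $t\apc t^\ast$, and, provided some delay stays bounded away from $0$ near $t^\ast$ so that $g(t)<t^\ast$, also $\int_{g(t)}^{t}\lambda\apc+\infty$. Dividing the equation by $x(t)$ yields the generalized characteristic identity $\lambda(t)=\sum_i p_i(t)\,x(t-\tau_i(t))/x(t)$; discarding the terms with $p_i(t)<0$, which only lower $\lambda$ because the ratios are positive, gives $\lambda(t)\leqslant\sum_i p_i^+(t)\,x(t-\tau_i(t))/x(t)$. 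For a delayed argument still lying in $[t_0,t)$ one has $x(t-\tau_i(t))/x(t)=\exp\!\big(\int_{t-\tau_i(t)}^{t}\lambda\big)$, while for an argument reaching into the history the hypothesis $\phi\geqslant\phi(t_0)$ gives $x(t-\tau_i(t))/x(t)\geqslant x(t_0)/x(t)=\exp\!\big(\int_{t_0}^{t}\lambda\big)$; since $g(t)\geqslant t_0$, the window $[g(t),t]$ is the natural one in which to compare. Formally feeding this back produces a self-referential estimate of the schematic form $\lambda(t)\lesssim\big(\textstyle\sum_i p_i^+(t)\big)\exp\!\big(\int_{g(t)}^{t}\lambda\big)$, and here is where the budget $\sum_i\int_{g(t)}^{t}p_i^+\leqslant 1/e$ together with $e^{w}\geqslant ew$ (the tangent line to the exponential at $w=1$, the unique slope for which $e^{w}/w$ is minimized) forces $u(t):=\int_{g(t)}^{t}\lambda$ to remain bounded. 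This contradicts $u\apc+\infty$ and would establish $x>0$ on $[t_0,T)$.

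The hard part will be making this rigorous in the presence of \emph{sign-changing} coefficients, since only the positive parts $p_i^+$ are budgeted and the delays may reach into the history precisely when the first zero is close to $t_0$. The genuine difficulty is that $\lambda$ need not be nonnegative: on intervals where $x$ is increasing one has $\lambda<0$, and then the monotone comparison $\int_{t-\tau_i(t)}^{t}\lambda\leqslant\int_{g(t)}^{t}\lambda$ used above can fail, while on history-reaching intervals the ratio $x(t-\tau_i(t))/x(t)$ must be bounded the ``wrong'' way. Controlling these increasing excursions is exactly the role of the two structural hypotheses: the monotonicity encoded in $\Phi^+$ keeps the delayed terms from ever dropping below $x(t_0)$, and the sharp constant $1/e$ keeps the induced estimate on $u$ self-improving rather than expansive. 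A cleaner way to organize the argument, sidestepping the sign obstruction, is to instead \emph{construct} a continuous nonnegative solution $\lambda$ of the generalized characteristic equation $\lambda(t)=\sum_i p_i(t)\exp\!\big(\int_{t-\tau_i(t)}^{t}\lambda\big)$ by monotone iteration, using the $1/e$ bound to show the iterates stay dominated by a fixed upper solution; then $x(t)=x(t_0)\exp\!\big(-\int_{t_0}^{t}\lambda\big)$ is a positive solution and, by uniqueness for the initial-value problem, coincides with the given one. As a consistency check, in the only case actually needed here, the autonomous single-delay equation \eqref{eq:dimensionless scalar dde} with $A=-a\leqslant 0$, the coefficient $p\equiv a$ has constant sign and the history is constant, so the solution is monotone while it stays positive, the sign obstruction disappears entirely, and the whole estimate collapses to the classical non-oscillation criterion $a\leqslant 1/e$.
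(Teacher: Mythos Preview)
The paper does not supply its own proof of this theorem: it is quoted verbatim from Gy\H{o}ri and Ladas and used as a black box. The only argument the paper actually gives is for the special autonomous single-delay case $y'(t)=Ay(t-1)$ with constant history (Corollary~\ref{clry:suff pos condition specific}), and that is done in Appendix~\ref{app:B} by a route different from yours: one locates a real negative root $\lambda$ of the characteristic equation $\lambda-Ae^{-\lambda}=0$ (which exists precisely when $A\in[-e^{-1},0)$), takes the exponential solution $w(t)=e^{\lambda t}$, forms the ratio $v=y/w$, and shows via Dibl\'{\i}k's monotonicity lemma that $v$ is increasing, whence $y(t)\geqslant e^{\lambda(t-1)}>0$.

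Your proposal follows instead the standard Gy\H{o}ri--Ladas line: the Riccati substitution $\lambda=-x'/x$, the generalized characteristic inequality, and the tangent-line bound $e^{w}\geqslant ew$ that singles out $1/e$. The ideas are the right ones for the general variable-coefficient, multi-delay statement, and your final paragraph correctly identifies that the clean way to finish is to build a nonnegative solution of the generalized characteristic equation by monotone iteration rather than to push the first-zero contradiction through sign changes. As written, though, the general case is a sketch with an honestly flagged gap (the sign obstruction when $\lambda<0$), not a proof; you would still need to carry out the iteration argument. For the constant-coefficient, constant-history instance that the paper actually needs, your closing remark is correct: monotonicity removes the obstruction and the argument collapses to the classical criterion. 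So your approach and the paper's Appendix~\ref{app:B} argument are genuinely different---yours is analytic via the characteristic inequality, the paper's is via an explicit subsolution and a comparison lemma---and each has its virtue: yours generalizes to the full Gy\H{o}ri--Ladas setting, while the paper's gives an explicit exponential lower bound $y(t)\geqslant e^{\lambda(t-1)}$ that it uses elsewhere.
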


This implies the following sufficient condition for positivity of the solution $y(t)$ to \eqref{eq:dimensionless scalar dde}:
\begin{cor}\label{clry:suff pos condition specific}
    Let $A\geqslant -e^{-1}$. Then $y(t)>0$ for all $t\geqslant 0$.
\end{cor}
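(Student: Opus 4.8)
The plan is to deduce this directly from the Győri--Ladas theorem quoted above, by matching \eqref{eq:dimensionless scalar dde} to the general form \eqref{eq:DDE with variable delay and coefficients}. First I would rewrite the delay equation $y'(t)=Ay(t-1)$ as $y'(t)+(-A)y(t-1)=0$, so that it fits the template with a single summand ($n=1$), constant coefficient $p_1(t)\equiv -A$, and constant delay $\tau_1(t)\equiv 1$. Both are continuous, as required. The natural choice of parameters is $t_0=1$ and $T=+\infty$, since the differential part of \eqref{eq:dimensionless scalar dde} is imposed for $t>1$ while the prescribed history $y\equiv 1$ lives on $[0,1]$.

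With this identification I would compute the auxiliary quantities. Since $\tau_1\equiv 1$, one has $t_{-1}=\inf_{t\geqslant 1}(t-1)=0$, so the history interval $[t_{-1},t_0]$ is exactly $[0,1]$, on which $y\equiv 1$; and $g(t)=1\vee(t-1)=\max(1,t-1)$. The history $\phi\equiv 1$ belongs to $\Phi^+$: indeed $\phi(t_0)=1>0$ and $\phi(t)=1\geqslant 1=\phi(t_0)$ for all $t\in[0,1]$, the defining inequality being allowed to hold with equality.

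The crux is to verify the smallness hypothesis $\int_{g(t)}^t p_1(s)^+\,\df s\leqslant 1/e$ for all $t\geqslant 1$, and here I would split on the sign of $A$. If $A\geqslant 0$ then $p_1\equiv -A\leqslant 0$, so $p_1^+\equiv 0$ and the integral vanishes, trivially satisfying the bound. If $-e^{-1}\leqslant A<0$ then $p_1^+\equiv -A=|A|$, and the integral equals $|A|\,(t-g(t))$. The key elementary estimate is that $t-g(t)\leqslant 1$ for every $t\geqslant 1$: for $t\in[1,2]$ one has $g(t)=1$ and $t-g(t)=t-1\leqslant 1$, while for $t\geqslant 2$ one has $g(t)=t-1$ and $t-g(t)=1$. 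Hence the integral is at most $|A|\leqslant e^{-1}=1/e$, as needed. In both cases the hypothesis holds precisely on the range $A\geqslant -e^{-1}$.

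Finally, the Győri--Ladas theorem yields $y(t)>0$ for all $t\in[t_0,T)=[1,+\infty)$, and on the remaining interval $[0,1)$ one has $y\equiv 1>0$ by the history condition; together these give $y(t)>0$ for all $t\geqslant 0$. I do not expect any serious obstacle here: once the dictionary between the two equations is set up correctly, the only real content is the bound $t-g(t)\leqslant 1$ and the observation that the threshold $|A|\leqslant 1/e$ is exactly what makes the smallness condition hold, matching the claimed range $A\geqslant -e^{-1}$. The one point demanding mild care is the handling of the positive part $p_1^+$ together with the case distinction on the sign of $A$, and checking that the borderline history $\phi\equiv\phi(t_0)$ is admissible in $\Phi^+$.
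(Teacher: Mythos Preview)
Your proposal is correct and follows exactly the route the paper intends: the corollary is stated immediately after the Gy\H{o}ri--Ladas theorem as a direct specialization, and you have spelled out precisely the identification ($n=1$, $p_1\equiv -A$, $\tau_1\equiv 1$, $t_0=1$, $g(t)=\max(1,t-1)$, $\phi\equiv 1\in\Phi^+$) and the verification of the integral bound that the paper leaves implicit. The paper additionally supplies an independent proof in Appendix~B (via the spectral equation $\lambda-Ae^{-\lambda}=0$, a change of variables $v=y/e^{\lambda t}$, and Dibl\'ik's monotonicity lemma), but that is offered as an alternative, not as the primary argument.
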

We can now obtain results on $\tilde{p}_a(t,\beta)$ from the DDE analysis.
\begin{prop}\label{prop:tildepexpdecay}
    $y(t)$ is of exponential order. One has $\tilde{p}_a(t,0)=\frac{1}{2}y(t)$. Therefore, $\tilde{p}_a(t,\beta)$ is of exponential order, in particular 
    \begin{equation}
        \bigl| \tilde{p}_a(t,\beta)\bigr|\leqslant \frac{1}{2}\theta(t-\beta) e^{|a|(t-\beta)}\quad \mbox{for all } t\geqslant 0.
    \end{equation}
    Moreover, one has the series expression (which is a finite sum for each $t\geqslant0$):
    \begin{equation}\label{eq:lapinvpsub}
        \tilde p_a(t,\beta)=\frac{1}{2}\sum_{m=0}^{\infty}\frac{(-a)^m(t-m-\beta)^m}{m!}\theta(t-m-\beta).
\end{equation}
\end{prop}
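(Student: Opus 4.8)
The plan is to obtain $y$ in closed form by the \emph{method of steps}, and then to transport every assertion to $\tilde p_a$ through two elementary devices: the linear rescaling $\tilde p_a(\cdot,0)=\tfrac12 y$ (with $A=-a$, the function $\tfrac12 y$ solves \eqref{eq:ddeonlyb} while $y$ solves \eqref{eq:dimensionless scalar dde}), and the translation identity \eqref{eq:tilde p beta as p 0} from \propref{prop:patoupluspos}. A single computation will simultaneously secure existence of the inverse transform and the identity $\tilde p_a(\cdot,0)=\tfrac12 y$: I would compute the Laplace transform of the explicit series for $\tfrac12 y$ term by term, check that it equals $\tilde P_a(s,0)$ of \eqref{eq:tilde P}, and then invoke the Uniqueness Theorem \ref{thm:uniquness Laplace}.

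First I would solve \eqref{eq:dimensionless scalar dde} recursively. On $[0,1]$ one has $y\equiv 1$, and I claim that on each interval $[N,N+1]$,
\[
  y(t)=\sum_{m=0}^{N}\frac{A^m(t-m)^m}{m!}.
\]
This is proved by induction on $N$. Assuming the formula on $[N,N+1]$, for $t\in[N+1,N+2]$ one has $t-1\in[N,N+1]$, so $y'(t)=Ay(t-1)=\sum_{m=0}^{N}\frac{A^{m+1}(t-1-m)^m}{m!}$; integrating from $N+1$ and re-indexing $k=m+1$ gives
\[
  y(t)=y(N+1)+\sum_{k=1}^{N+1}\frac{A^{k}}{k!}\bigl[(t-k)^k-(N+1-k)^k\bigr].
\]
The hard part is the bookkeeping of the integration constants: since $(N+1-k)^k=0$ for $k=N+1$, the subtracted terms cancel exactly against $y(N+1)-1=\sum_{k=1}^{N}\frac{A^k(N+1-k)^k}{k!}$, leaving $y(t)=1+\sum_{k=1}^{N+1}\frac{A^k(t-k)^k}{k!}=\sum_{m=0}^{N+1}\frac{A^m(t-m)^m}{m!}$, as required. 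Rewriting with Heaviside factors, and noting that $(t-m)^m\theta(t-m)$ vanishes once $m>t$, this becomes the finite sum $y(t)=\sum_{m\geqslant0}\frac{A^m(t-m)^m}{m!}\theta(t-m)$; with $A=-a$ this is \eqref{eq:lapinvpsub} at $\beta=0$ up to the factor $\tfrac12$.

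The remaining assertions then follow quickly. For the bound, since $(t-m)^m\leqslant t^m$ on the support,
\[
  |y(t)|\leqslant\sum_{m=0}^{\lfloor t\rfloor}\frac{|A|^m(t-m)^m}{m!}\leqslant\sum_{m=0}^{\infty}\frac{|A|^m t^m}{m!}=e^{|A|t}=e^{|a|t},
\]
so $y$ is of exponential order. Transforming the series term by term (using the shift rule and $\cL\{t^m/m!\}(s)=s^{-(m+1)}$) yields, for $\Re(s)$ large enough that $|ae^{-s}/s|<1$, a geometric series summing to $\tfrac12\,(s+ae^{-s})^{-1}=\tilde P_a(s,0)$; the interchange of sum and integral is justified by the exponential bound just obtained. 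By \thmref{thm:uniquness Laplace} this identifies $\tilde p_a(\cdot,0)=\tfrac12 y$, which in particular shows the inverse transform exists (it is continuous and of exponential order). Finally the translation identity \eqref{eq:tilde p beta as p 0} gives $\tilde p_a(t,\beta)=\tfrac12 y(t-\beta)\theta(t-\beta)$, whence both the series \eqref{eq:lapinvpsub} and the estimate $|\tilde p_a(t,\beta)|=\tfrac12|y(t-\beta)|\theta(t-\beta)\leqslant\tfrac12\theta(t-\beta)e^{|a|(t-\beta)}$ follow at once. The only genuinely delicate point I anticipate is the telescoping of the integration constants in the induction; the transform interchange and the shift are routine once the exponential bound is in hand.
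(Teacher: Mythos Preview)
Your proposal is correct and follows essentially the same approach as the paper: method of steps for the explicit series, an exponential bound, identification of $\tilde p_a(\cdot,0)=\tfrac12 y$ via Laplace transform and the Uniqueness Theorem~\ref{thm:uniquness Laplace}, and finally the shift identity \eqref{eq:tilde p beta as p 0}. The only minor differences are that the paper obtains the exponential bound from the general \lemref{lem:bound y} rather than directly from the series, and identifies the Laplace transform by transforming the DDE (via the differentiation rule) rather than summing the series term by term; these are cosmetic reorderings of the same ingredients.
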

\begin{proof}
    According to \lemref{lem:bound y}, $y(t)$ is continuous and exponentially bounded. Thus, it has a Laplace transform, $Y(s)$. Taking Laplace transform of the DDE \eqref{eq:dimensionless scalar dde} with $A=-a$, then $X(s):=\frac{1}{2}Y(s)$ satisfies 
    \[
        sX(s) - \frac{1}{2} = -ae^{-s}X(s).
    \]
    Hence, $X(s)=\tilde{P}_a(s,0)$. By the Uniqueness Theorem, \thmref{thm:uniquness Laplace}, $\frac{1}{2}y(t)$ is the unique function in $L^1_\loc(\IR_0^+)$ for which the Laplace transform exists and equals $\tilde{P}_a(s,0)$. Thus $\tilde{p}_a(t,0)=\frac{1}{2}y(t)$. One has
    \[
        |y(t)|\leqslant e^{|a|t}\quad \mbox{for all } t\geqslant 0,
    \]
    according to \lemref{lem:bound y}. Relation \eqref{eq:tilde p beta as p 0} between $\tilde{p}_a(t,\beta)$ and $\tilde{p}_a(t,0)$ now yields the indicated estimate for $\tilde{p}_a(t,\beta)$.

    By integration by steps (cf. \eqref{eq:etaA}) it can be readily verified that 
    \[
        y(t) = \sum_{m=0}^\infty \frac{A^m (t-m)^m}{m!} \theta(t-m).
    \]
    Again applying Relation \eqref{eq:tilde p beta as p 0} yields expression \eqref{eq:lapinvpsub}.
\end{proof}

\begin{proof}[Proof of \propref{prop:positivity tilde pa}]
Thanks to \propref{prop:patoupluspos}, it suffices to prove $\tilde p_a(t,0)\geqslant0$. From the analysis made above, we got that $\tilde{p}_a(t,0)=\frac{1}{2}y(t)$, with $y(t)$ the solution to \eqref{eq:dimensionless scalar dde} with $A=-a$. Hence, if $a\leqslant e^{-1}$, $\tilde{p}_a(t,0)>0$ for all $t\geqslant 0$.
\end{proof}

As this result is of importance in application of our results in this paper, we shall provide in Appendix ref{app:B} a proof of Corollary \ref{clry:suff pos condition specific} based on Laplace transform that is independent of \cite{gyori1991oscillation}, Theorem 3.3.1. However, this approach can only be applied to the situation of constant coefficients and single constant delay. It was the original line of argumentation before we became aware of \cite{gyori1991oscillation}.

The condition $A\geqslant -e^{-1}$ is sufficient for strict positivity of $y(t)$. Whether positivity fails for $A<-e^{-1}$ close to $-e^{-1}$ is not clear. Moreover, various research has considered the question whether all solutions are oscillatory or that there exist positive solutions (see e.g., \cite{gyori1991oscillation,DIBLIK1998200}), in the generality of variable coefficients. For our investigation of positivity of solutions to System \eqref{eq: system considered u} - \eqref{def:Psi new} we want the specific solution to \eqref{eq:dimensionless scalar dde} to be positive.

\subsubsection{Further analysis of $P_a(s,\beta)$ and $\tilde{P}_a(s,\beta$) and their inverse Laplace transforms}
\label{sec:furtheranapatildepa}
One must always justify the existence of the inverse Laplace transform of a holomorphic function, especially when the function is not obtained as the Laplace transform of some given function in $L^1_{\mathrm{loc}}(\IR_0^+)$. So, in this case we need to prove that each term in \eqref{eq:lapuplusinpa} has inverse Laplace transform. It is apparent that $P_a(s,\beta)$ is the most crucial factor in the expression. 

The inverse transform $\tilde{p}_a(t,\beta):=\cL^{-1}\{\tilde{P}_a(\cdot,\beta)\}$ exists in $L^1_\loc(\IR_0^+)$, according to \propref{prop:tildepexpdecay}. Then 
\begin{equation}\label{eq:papatilderelation}
    p_a(t,\beta)=\int_0^{+\infty}\frac{\tau e^{-\frac{\tau^2}{4t}}}{\sqrt{4\pi t^3}}\tilde p_a(\tau,\beta)\df\tau
\end{equation} by \lemref{lem:lapsqrtsub} or \corref{cor:lapsqrtsub}. 
Taking this form of $p_a(t,\beta)$, we can justify the existence of the inverse Laplace transform of each term in $U_+$.

\begin{prop}\label{prop:paexists}
$p_a(t,\beta)=\cL^{-1}\{P_a(\cdot,\beta)\}(t)$ exists and is of exponential order.
\end{prop}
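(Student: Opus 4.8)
The plan is to deduce both assertions from the subordination identity \eqref{eq:papatilderelation} together with the exponential bound on $\tilde{p}_a$ already secured in \propref{prop:tildepexpdecay}, so that almost no new machinery is required. First I would record the purely algebraic observation that links the two transfer functions: comparing the explicit expression $P_a(s,\beta)=\frac{e^{-\beta\sqrt{s}}}{2\sqrt{s}+2ae^{-\sqrt{s}}}$ with the meromorphic function $\tilde{P}_a$ of \eqref{eq:tilde P}, one sees immediately that
\[
    P_a(s,\beta)=\tilde{P}_a(\sqrt{s},\beta).
\]
This is exactly the shape needed to invoke \lemref{lem:lapsqrtsub} (equivalently \corref{cor:lapsqrtsub}).

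I would then apply that lemma with $f:=\tilde{p}_a(\cdot,\beta)$, which is of exponential order by \propref{prop:tildepexpdecay} and whose Laplace transform is $F=\tilde{P}_a(\cdot,\beta)$. The lemma asserts that the subordinated function
\[
    g(t):=\int_0^{+\infty}\frac{\tau e^{-\frac{\tau^2}{4t}}}{\sqrt{4\pi t^3}}\,\tilde{p}_a(\tau,\beta)\df\tau
\]
lies in $L^1_\loc(\IR_0^+)$, possesses a Laplace transform, and satisfies $\cL\{g\}(s)=F(\sqrt{s})=\tilde{P}_a(\sqrt{s},\beta)=P_a(s,\beta)$. Since $g$ is by definition the right-hand side of \eqref{eq:papatilderelation}, i.e. $g=p_a(\cdot,\beta)$, this shows that $p_a(t,\beta)=\cL^{-1}\{P_a(\cdot,\beta)\}(t)$ exists in $L^1_\loc(\IR_0^+)$. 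This disposes of the existence claim essentially without computation.

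For the exponential-order claim I would insert the explicit estimate $|\tilde{p}_a(\tau,\beta)|\leqslant \tfrac12\,\theta(\tau-\beta)\,e^{|a|(\tau-\beta)}$ from \propref{prop:tildepexpdecay} into \eqref{eq:papatilderelation} and evaluate the resulting Gaussian integral by completing the square in the exponent $-\tfrac{\tau^2}{4t}+|a|\tau$. Splitting the shifted Gaussian into a total-derivative part (which integrates exactly) and a part controlled by the full Gaussian integral yields a closed-form upper bound of the form
\[
    |p_a(t,\beta)|\;\leqslant\;\tfrac12\,e^{-|a|\beta}\Bigl(\tfrac{1}{\sqrt{\pi t}}+2|a|\,e^{a^2 t}\Bigr),
\]
from which exponential order (indeed $\mathrm{abs}(p_a)\leqslant a^2$) follows: for $t\geqslant 1$ the right-hand side is dominated by a constant multiple of $e^{a^2 t}$.

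The one point that deserves care — and the only genuine obstacle — is the behaviour at $t=0$. The term $1/\sqrt{\pi t}$ above blows up as $t\downarrow 0$, and this is not an artefact of the estimate, since $P_a(s,0)\sim \tfrac{1}{2\sqrt{s}}$ as $s\to+\infty$ forces $p_a(t,0)$ to carry a genuine (but integrable) $t^{-1/2}$ singularity at the origin. Hence the pointwise inequality $|p_a(t,\beta)|\leqslant Me^{\omega t}$ cannot hold uniformly down to $t=0$; the operative reading of ``of exponential order'' here is that the bound holds for $t$ bounded away from $0$, while $p_a\in L^1_\loc(\IR_0^+)$ (from the first step) controls a neighbourhood of the origin. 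Together these two facts guarantee $\mathrm{abs}(p_a)<+\infty$, i.e. that the Laplace transform of $p_a$ converges on a right half-plane, which is precisely the property used subsequently. Everything else reduces to the routine square-completion and elementary Gaussian estimates.
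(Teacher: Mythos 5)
Your proof is correct, and for half of the statement it takes a genuinely different route from the paper. The paper's own proof splits into two cases: for $\beta>0$ it does \emph{not} use subordination at all, but invokes the Paley--Wiener theorem (\thmref{thm:lapinvexist}), observing that $P_a(\cdot,\beta)$ is holomorphic on a right half-plane and square-integrable along vertical lines thanks to the decay of $e^{-\beta\sqrt{s}}$, which yields existence of an inverse transform together with a bound $Ce^{\omega t}$; only for $\beta=0$ does it argue as you do, combining \eqref{eq:papatilderelation} with the prepackaged Gaussian estimate \eqref{eq:paexpbound} of \lemref{lem:pabound} to get $|p_a(t,0)|\leqslant\frac{5e^{20a^2 t}}{8\sqrt{\pi t}}$. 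Your uniform treatment of all $\beta\geqslant 0$ through \lemref{lem:lapsqrtsub} applied to $\tilde p_a(\cdot,\beta)$ (of exponential order by \propref{prop:tildepexpdecay}) is arguably cleaner: existence in $L^1_\loc(\IR_0^+)$, existence of the Laplace transform, and the identity $\cL\{g\}(s)=\tilde P_a(\sqrt{s},\beta)=P_a(s,\beta)$ all come directly from that lemma, with \thmref{thm:uniquness Laplace} identifying $g$ with $\cL^{-1}\{P_a(\cdot,\beta)\}$; no complex-analytic input is needed, and your square-completion gives the sharper abscissa bound $a^2$ in place of the paper's $20a^2$. What the paper's case split buys in return is that, for $\beta>0$, Paley--Wiener delivers the exponential bound with no computation whatsoever. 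Two minor remarks. First, your displayed estimate is not literally what your computation yields: integrating the total-derivative part exactly gives $\frac{e^{-\beta^2/(4t)}}{2\sqrt{\pi t}}$ and the Gaussian part is bounded by $|a|e^{-|a|\beta}e^{a^2 t}$, so the factor $e^{-\beta^2/(4t)}$ (or, more crudely, $e^{a^2t}$) must accompany the $t^{-1/2}$ term; your version with a bare $e^{-|a|\beta}/\sqrt{\pi t}$ does not follow from the computation when $\beta<4|a|t$. This changes nothing in the conclusion, since either form gives an integrable singularity at the origin and growth of order $e^{a^2t}$. Second, your concern about the behaviour at $t=0$ is exactly what the paper's \defnref{defn:loclp} of ``(eventually) exponential order'' is designed for: the bound is only required on some interval $[t_0,+\infty)$, and the remark following that definition names $1/\sqrt{t}$ as the motivating example, so your ``operative reading'' coincides with the paper's official definition rather than being a reinterpretation.
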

\begin{proof}
When $\beta>0$, take a big $\omega\in\IR$. $P_a$ is holomorphic on $\{s\in\IC\mid \Re(s)>\omega\}$, because $\sqrt{s}$ will dominate in the denominator $\sqrt{s}+ae^{-\sqrt s}$. Along each vertical line eventually $e^{-\sqrt{s}\beta}$ will dominate. Therefore, it satisfies the conditions in \thmref{thm:lapinvexist}.

When $\beta=0$, \propref{prop:tildepexpdecay} shows that $\tilde p_a(t,0)$ is of exponential order: in fact, $|\tilde p_a(t,0)|\leqslant\frac{1}{2}e^{at}$. Then by \eqref{eq:papatilderelation} 
$$
p_a(t,0)=\int_0^{+\infty}\frac{\tau e^{-\frac{\tau^2}{4t}}}{\sqrt{4\pi t^3}}\tilde p_a(\tau,0)\df\tau.
$$
We just need to prove this function is well-defined in $L^1_{\mathrm{loc}}(\IR_0^+)$ and of exponential order. By \eqref{eq:paexpbound}, 
\begin{equation}
|p_a(t,0)|\leqslant\frac{5e^{20a^2t}}{8\sqrt{\pi t}},
\end{equation}
which certifies that $p_a(t,0)\in L^1_{\mathrm{loc}}(\IR_0^+)$ and is of exponential order.
\end{proof}

\begin{prop}\label{prop:uplustermexist}
Each term in \eqref{eq:uplusinpa} is well-defined and possesses a Laplace transform.
\end{prop}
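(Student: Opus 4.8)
The plan is to treat the two summands in \eqref{eq:uplusinpa} separately, as they are of different nature. The temporal part $u_+^{\mathrm{tp}}(t)=2\int_0^t p_a(\tau,1)\Phi(t-\tau)\df\tau$ is exactly the convolution $2\,p_a(\cdot,1)*\Phi$. By \propref{prop:paexists}, $p_a(\cdot,1)\in L^1_\loc(\IR_0^+)$ is of exponential order, and by our standing assumption $\Phi\in L^\infty_\loc(\IR_0^+)$ is of exponential order as well; note that for $\beta=1>0$ the Gaussian factor derived below forces $p_a(t,1)\to 0$ as $t\to 0$, so no singularity at the origin interferes. The convolution of two such functions is again a well-defined element of $L^1_\loc(\IR_0^+)$ of exponential order, whence its Laplace transform exists and, by the convolution theorem $\cL\{f*g\}=\cL\{f\}\cL\{g\}$, equals $2P_a(s,1)\Theta(s)$. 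This settles the first term.

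The spatial part $u_+^{\mathrm{sp}}(t)=\int_{\IR}\bigl(p_a(t,|1-\tilde x|)+p_a(t,|1+\tilde x|)\bigr)u_0(\tilde x)\df\tilde x$ is more delicate, and the crux is to control $p_a(t,\beta)$ for large $\beta$. I would start from the subordination identity \eqref{eq:papatilderelation}, together with the fact that $\tilde p_a(\tau,\beta)=0$ for $\tau<\beta$ and the estimate $|\tilde p_a(\tau,\beta)|\leqslant\tfrac{1}{2}\theta(\tau-\beta)e^{|a|(\tau-\beta)}$ of \propref{prop:tildepexpdecay}. Substituting $\sigma=\tau-\beta$ and using $e^{-(\sigma+\beta)^2/4t}\leqslant e^{-\beta^2/4t}e^{-\sigma^2/4t}$ yields a bound of the form $|p_a(t,\beta)|\leqslant C(t)(1+\beta)e^{-\beta^2/4t}$, where $C(t)$ is locally integrable and of exponential order (the Gaussian moment integral contributes a factor $e^{ta^2}$, just as in the case $\beta=0$ treated in \propref{prop:paexists}). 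This Gaussian decay in $\beta$ is precisely what is needed: since $u_0\in H^1(\IR)\hookrightarrow L^\infty(\IR)$, the integrand is dominated by $\Vert u_0\Vert_{L^\infty}\,C(t)\,(1+|1\pm\tilde x|)\,e^{-|1\pm\tilde x|^2/4t}$, whose integral over $\tilde x$ converges by a standard Gaussian moment computation. Hence $u_+^{\mathrm{sp}}(t)$ is well-defined for every $t>0$.

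Finally, to exhibit a Laplace transform for $u_+^{\mathrm{sp}}$, I would first note that integrating the bound above in $\tilde x$ shows $u_+^{\mathrm{sp}}$ is of exponential order, so $\mathrm{abs}(u_+^{\mathrm{sp}})<+\infty$ and $\cL\{u_+^{\mathrm{sp}}\}$ exists. Its value is then computed by interchanging the time integral with the spatial integral and using $P_a(s,\beta)=\cL\{p_a(\cdot,\beta)\}(s)$ from \propref{prop:paexists}, which reproduces exactly the spatial integral term of \eqref{eq:lapuplusinpa}. I expect the interchange to be the main obstacle: one must produce a single dominating function for $e^{-\Re(s)t}\,|p_a(t,|1\pm\tilde x|)|\,|u_0(\tilde x)|$ that is jointly integrable in $(t,\tilde x)$ on $\IR_0^+\times\IR$, simultaneously integrable in $\tilde x$ through the Gaussian decay in $\beta$ and in $t$ through the exponential order, uniformly for $\Re(s)$ in a right half-plane. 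Once Fubini--Tonelli is justified by such a bound, the formal passage to \eqref{eq:lapuplusinpa} is rigorous and the proposition follows.
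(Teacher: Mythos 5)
Your proposal is correct in substance and follows essentially the same route as the paper: the temporal term is handled identically (convolution of the exponential-order kernel $p_a(\cdot,1)$ from \propref{prop:paexists} with the exponential-order forcing $\Phi$), and for the spatial term both you and the paper extract Gaussian decay in $\beta$ from the subordination identity \eqref{eq:papatilderelation} combined with the shifted bound on $\tilde p_a$ from \propref{prop:tildepexpdecay}. The only structural difference is how that decay is paired against $u_0$: the paper applies Cauchy--Schwarz against $\Vert u_0\Vert_{L^2}$, using its bound \eqref{eq:exp bound pa} obtained via \lemref{lem:pabound}, whereas you use the embedding $H^1(\IR)\hookrightarrow L^\infty(\IR)$ and integrate the Gaussian in $\tilde x$; both are valid since $u_0\in L^2(\IR)\cap L^\infty(\IR)$. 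One technical point in your argument needs repair: the intermediate claim $|p_a(t,\beta)|\leqslant C(t)(1+\beta)e^{-\beta^2/4t}$ with $C(t)$ \emph{locally integrable} cannot hold as stated. The factor $\tau=\sigma+\beta$ in the subordination kernel produces, after your substitution and the completion of squares, a contribution of size $\beta t^{-1}e^{-\beta^2/4t}$ (schematically, $\frac{1}{\sqrt{4\pi t^3}}\bigl[O(t)+\beta\,O(\sqrt{t})\bigr]e^{2a^2t}e^{-\beta^2/4t}$), and since $\sup_{\beta\geqslant0}\frac{\beta}{(1+\beta)t}=\frac{1}{t}$, any admissible $C(t)$ would have to dominate $t^{-1}$ near $t=0$, which is not locally integrable. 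The repair is immediate, and is exactly the device the paper uses when it relaxes the exponent to $\beta^2/(5t)$ in \eqref{eq:exp bound pa}: absorb the linear factor into part of the Gaussian, e.g.
\begin{equation*}
\frac{\beta}{t}\,e^{-\frac{\beta^2}{4t}}\leqslant\frac{2}{\sqrt{et}}\,e^{-\frac{\beta^2}{8t}},
\end{equation*}
which yields $|p_a(t,\beta)|\leqslant C(t)e^{-\beta^2/8t}$ with $C(t)=O\bigl(t^{-1/2}e^{2a^2t}\bigr)$ locally integrable and of exponential order; every subsequent step of your argument goes through verbatim with this weaker (but still Gaussian-in-$\beta$) bound. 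Finally, note that the Fubini--Tonelli identification of $\cL\{u_+^{\mathrm{sp}}\}$ with the spatial term of \eqref{eq:lapuplusinpa} is more than the proposition asserts: once $u_+^{\mathrm{sp}}$ is shown to be in $L^1_{\loc}(\IR_0^+)$ and of exponential order, the existence of its Laplace transform---which is all that is claimed---already follows, so your proof is complete at that point; the paper stops there as well.
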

\begin{proof}
By \propref{prop:paexists} and $\Phi\in L^{\infty}_\loc(\IR_0^+)$ is of exponential order, we can see that $u_+^{\mathrm{tp}}$ in \eqref{eq:uplusinpa} exists and is of exponential order. By \corref{cor:uplusgrowthrate} $u_+$ is also of exponential order. Thus, $u_+^{\mathrm{sp}}$ in \eqref{eq:uplusinpa} also has the same properties. However, we prove a stronger statement that 
$$q_a(t):=\int_{\IR}p_a(t,|1\pm \tilde{x}|)u_0(\tilde{x})\df \tilde{x}$$
is in $L^1_{\mathrm{loc}}(\IR_0^+)$ and is of exponential order as we will analyze each term later. From \eqref{eq:lapinvpsub} note that $\tilde p_a(t,\beta)$ is $\tilde p_a(t,0)$ shifted by $\beta$ to the right. Then $|\tilde p_a(t,\beta)|\leqslant\frac{1}{2}\theta(t-\beta)e^{at}$. Again by \eqref{eq:paexpbound}, 
\begin{equation}\label{eq:exp bound pa}
    |p_a(t,\beta)|\leqslant\frac{5e^{-\frac{\beta^2}{5t}+20a^2t}}{8\sqrt{\pi t}}.
\end{equation}
Then use H\"older's inequality,
\begin{align*}
    |q_a(t)|&\leqslant\left(\int_{\IR}|p_a(t,|1\pm \tilde{x}|)|^2\df \tilde{x}\cdot\int_{\IR}|u_0(\tilde{x})|^2\df \tilde{x}\right)^{\frac{1}{2}}\\
    &\leqslant\Vert u_0\Vert_{L^2}\cdot\frac{5\sqrt[4]{5}e^{20a^2t}}{8\sqrt[4]{\pi t}}.
\end{align*}
As $u_0\in H^1(\IR)$, $q_a\in L^1_{\mathrm{loc}}(\IR_0^+)$ is well-defined and of exponential order.
\end{proof}

The behaviour of the solution is determined by $p_a(t,\beta)$, which is then tightly related to the simpler function $\tilde p_a(t,\beta)$ through \eqref{eq:papatilderelation}. $\tilde p_a(t,\beta)$ has an exact formula given by \eqref{eq:lapinvpsub}. Moreover, $\tilde{p}_a(t,\beta)$ is related to $\tilde{p}_a(t,0)$ through \eqref{eq:tilde p beta as p 0}.  For different $a$, the behaviour of $\tilde p_a(t,0)$ varies significantly, which has been illustrated in Fig. \ref{fig:tildepa} To provide some insight into this dependence, we begin by plotting \eqref{eq:lapinvpsub} for various choices of $a$ in Fig. \ref{fig:tildepapos}.

When $a=0$, $\tilde p_a(t,0)=\frac{1}{2}$ is a constant function; when $a<0$, it seems that $\tilde p_a(t,0)$ always increases exponentially. This phenomenon is presented in Fig. \ref{fig:tildepapos}, left panel, for $a=0$ and $-1$.
When $a>0$, it becomes more complicated. For $a$ close to $0$, $\tilde p_a(t,0)$ is exponentially decaying. For $a$ a bit further from $0$, $\tilde p_a(t,0)$ behaves like a damped oscillation. For $a$ much further from $0$, $\tilde p_a(t,0)$ behaves like unstable oscillation. This can be seen in Fig. \ref{fig:tildepapos}, right panel.

\begin{rem}{\it
The behaviour of $\tilde p_a(t,0)$ is determined by the poles of $\tilde{P}_a(s,0)$, which are the solutions to the transcendental equation 
\begin{equation}
    s+ae^{-s}=0 \;\;\Leftrightarrow\;\; se^s=-a.
\end{equation}
The solutions of this equation are characterized by the Lambert $W$-function (see \cite{CGH96}), $W_k(-a)$. For example, when $a = 1$, the principal poles of $\tilde P_a(s,0)$ are not purely real: $W_0(-1) \approx -0.318 + 1.337i$, $W_{-1}(-1) \approx -0.318 - 1.337i$. Ignoring all other poles, $\tilde p_a(t,0)$ can be approximated as
$$\tilde p_a(t,0)\approx C_1 e^{-0.318t} \cos(1.337t + \varphi_1),$$
for some $C_1$ and $\varphi_1$. This explains the damped oscillatory behavior of the green curve in Fig. \ref{fig:tildepa}, right panel. Similarly, when $a=2$, the principal poles of $P_a(s,0)$ are $W_0(-2) \approx 0.173 + 1.674i$, $W_{-1}(-2) \approx 0.173 - 1.674i$. This leads to the approximation
$$\tilde p_a(t,0)\approx C_2e^{0.173t} \cos(1.674t + \varphi_2),$$
which explains the unstable oscillatory behavior of the blue curve in Fig. \ref{fig:tildepa}, right panel. Finally, when $a = 0.25$, the only principal pole of $\tilde P_a(t,0)$ is $W_0(-0.25) \approx -0.357$. Thus, the solution simplifies to
$$\tilde p_a(t,0) \approx C_3 e^{-0.357t},$$
which explains the exponential decay behavior of the red curve in Fig. \ref{fig:tildepa}, right panel. We are particularly interested in the last case as the red curve, because this is the case when $a>0$ and $\tilde p_a(t,0)$ is positive and decaying.}

\begin{figure}[H]
    \centering
    \begin{tabular}{cc}
        \includegraphics[scale=0.5]{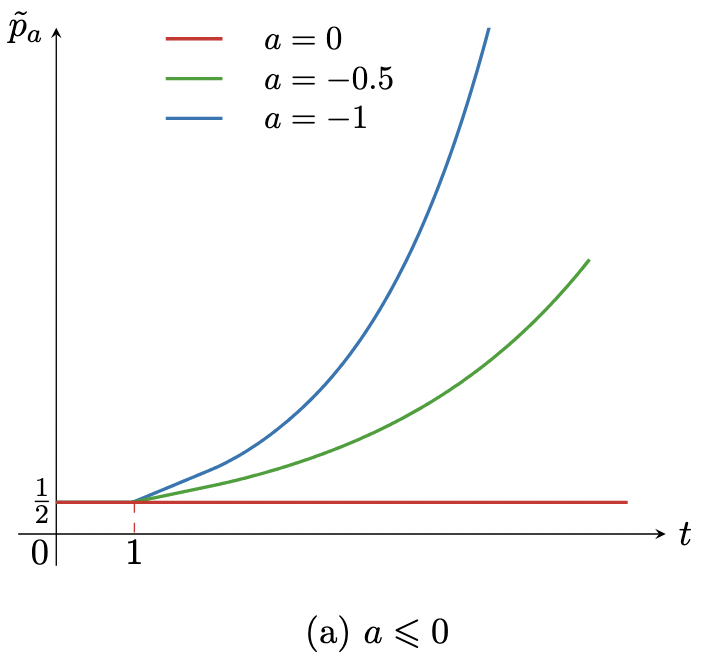}\qquad\includegraphics[scale=0.5]{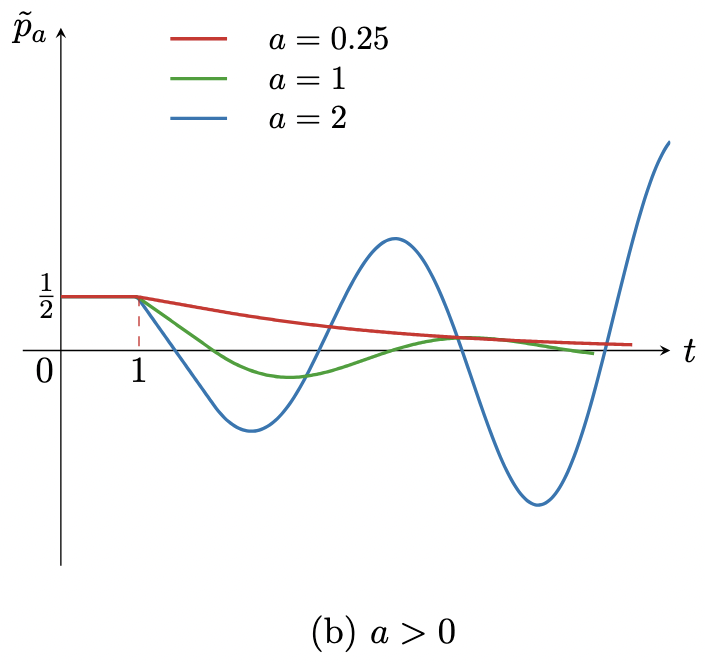}
    \end{tabular}
    \caption{ Change of shape of $\tilde p_a(t,0)$ with varying $a$. The function has been plotted using the exact expression \eqref{eq:lapinvpsub}. Left: when $a\leqslant0$, particularly, $a\in\{0,0.5, -1\}$. Right: when $a>0$, in particular $a \in \{0.25, 1, 2\}$. Note that for $a\in\{1,2\}$, $\tilde{p}_a$ is oscillatory and can become negative. }
    \label{fig:tildepa}
\end{figure}
\end{rem}

\begin{prop}\label{prop:paw11}
The following regularity properties hold:
\begin{enumerate}
    \item[(i)] $p_a(t,\beta)\in L^1(\IR_0^+)$ for all $\beta\geqslant 0$;
    \item[({ii})] $p_a(t,\beta)\in W^{1,1}(\IR_0^+)$ for all $\beta>0$.
\end{enumerate}
\end{prop}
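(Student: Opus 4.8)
The plan is to derive both regularity statements from the subordination identity \eqref{eq:papatilderelation}, namely $p_a(t,\beta)=\int_0^{+\infty}\xi(t,\tau)\,\tilde p_a(\tau,\beta)\,\df\tau$ with $\xi(t,\tau):=\tau e^{-\tau^2/4t}/\sqrt{4\pi t^3}$, transferring the already-understood behaviour of $\tilde p_a$ to $p_a$ through the properties of the one-sided stable kernel $\xi$. Throughout I work in the regime $a\in(0,e^{-1}]$ underlying this part of the analysis, in which \propref{prop:positivity tilde pa} gives $\tilde p_a\geqslant 0$; together with monotone convergence and \eqref{eq:tilde P} this yields $\int_0^{+\infty}\tilde p_a(\tau,\beta)\,\df\tau=\lim_{s\downarrow 0}\tilde P_a(s,\beta)=\tfrac{1}{2a}$, so that $\tilde p_a(\cdot,\beta)\in L^1(\IR_0^+)$ with $\|\tilde p_a(\cdot,\beta)\|_{L^1}=\tfrac{1}{2a}$. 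Positivity here is a convenience; what is really used is $\tilde p_a(\cdot,\beta)\in L^1$, i.e. the exponential decay of $y$ recorded in \propref{prop:tildepexpdecay} for $a$ in the stable range.

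For (i) I estimate $\int_0^{+\infty}|p_a(t,\beta)|\,\df t\leqslant \int_0^{+\infty}\!\int_0^{+\infty}\xi(t,\tau)\,|\tilde p_a(\tau,\beta)|\,\df\tau\,\df t$ and swap the order of integration by Tonelli, both factors being nonnegative. The inner $t$-integral follows from \corref{cor:lapsqrtsub}: since $\cL\{\xi(\cdot,\tau)\}(s)=e^{-\sqrt s\,\tau}$ and $\xi\geqslant0$, monotone convergence gives $\int_0^{+\infty}\xi(t,\tau)\,\df t=\lim_{s\downarrow 0}e^{-\sqrt s\,\tau}=1$ for every $\tau>0$; that is, $\xi(\cdot,\tau)$ is a probability density in $t$. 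Hence $\|p_a(\cdot,\beta)\|_{L^1}\leqslant\|\tilde p_a(\cdot,\beta)\|_{L^1}=\tfrac{1}{2a}<+\infty$, which proves (i). This also clarifies why (i) must fail at $a=0$: the mass $\tfrac{1}{2a}$ blows up, consistent with $p_0=\cK_1\notin L^1(\IR_0^+)$.

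For (ii), with $\beta>0$, I differentiate under the integral, $\partial_t p_a(t,\beta)=\int_0^{+\infty}\partial_t\xi(t,\tau)\,\tilde p_a(\tau,\beta)\,\df\tau$, and bound its $L^1$ norm by $\int_0^{+\infty}|\tilde p_a(\tau,\beta)|\bigl(\int_0^{+\infty}|\partial_t\xi(t,\tau)|\,\df t\bigr)\df\tau$ via Tonelli. For fixed $\tau>0$ the map $t\mapsto\xi(t,\tau)$ vanishes at $t=0^+$ and $t=+\infty$ and is unimodal, with maximum at $t=\tau^2/6$ of size $C\tau^{-2}$ for an explicit constant $C>0$; therefore its total variation is $\int_0^{+\infty}|\partial_t\xi(t,\tau)|\,\df t=2\max_t\xi(t,\tau)=2C\tau^{-2}$. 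Now the shift relation \eqref{eq:tilde p beta as p 0} gives $\mathrm{supp}\,\tilde p_a(\cdot,\beta)\subseteq[\beta,+\infty)$, on which $\tau^{-2}\leqslant\beta^{-2}$, so $\|\partial_t p_a(\cdot,\beta)\|_{L^1}\leqslant 2C\beta^{-2}\|\tilde p_a(\cdot,\beta)\|_{L^1}=C/(a\beta^{2})<+\infty$. By \eqref{eq:exp bound pa} the factor $e^{-\beta^2/5t}$ forces $p_a(t,\beta)\to 0$ as $t\to0^+$ for $\beta>0$, so there is no boundary contribution, and together with (i) this gives $p_a(\cdot,\beta)\in W^{1,1}(\IR_0^+)$.

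The main technical point to make rigorous is precisely the differentiation under the integral in (ii): on each compact $[t_1,t_2]\subset(0,+\infty)$ one dominates $\partial_t\xi(t,\tau)$ uniformly in $t$ by a $\tau$-integrable majorant against $|\tilde p_a(\tau,\beta)|$, which yields $p_a(\cdot,\beta)\in C^1((0,+\infty))$ with classical derivative equal to the integral above; the $L^1$ bound just established then upgrades this to a weak derivative on all of $(0,+\infty)$, with no atom at the origin since $p_a(0^+,\beta)=0$. This step, rather than the estimates themselves, is where the care lies. It also explains the restriction $\beta>0$ in (ii): the total-variation weight $\tau^{-2}$ is integrable against $\tilde p_a(\cdot,\beta)$ only because the latter is supported away from $0$, whereas for $\beta=0$ one has $\tilde p_a(\tau,0)=\tfrac12$ near $\tau=0$ and $\int_0\tau^{-2}\,\df\tau$ diverges.
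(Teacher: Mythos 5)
Your proof is correct in the standing regime $a\in(0,e^{-1}]$ (which \propref{prop:paw11} implicitly assumes -- as your $a=0$ remark rightly shows, statement ({\it i}) is false for $a=0$), but it takes a genuinely different route from the paper's. The paper also starts from the subordination identity \eqref{eq:papatilderelation}, but then argues by pointwise Gaussian estimates: for ({\it i}) it bounds $|p_a(t,\beta)|\leqslant Ce^{-\beta^2/4t}/\sqrt{\pi t}$ via \lemref{lem:pabound}, splits $(0,1)$ from $(1,+\infty)$, and invokes \lemref{lem:xnt3n1inft}; for ({\it ii}) it differentiates the kernel explicitly, splits the time axis again, and controls the resulting integrals with \lemref{lem:xnt3n1inft} and \lemref{lem:xnt2n01}, using $\tau\geqslant\beta>0$ to absorb constants into powers of $\tau$. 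You instead exploit two structural facts: the kernel $\cT(\cdot,\tau)$ of \eqref{eq:papatilderelation} is a probability density in $t$, so Tonelli gives $\Vert p_a(\cdot,\beta)\Vert_{L^1}\leqslant\Vert\tilde p_a(\cdot,\beta)\Vert_{L^1}=\tfrac{1}{2a}$; and $t\mapsto\cT(t,\tau)$ is unimodal and vanishes at both ends, so its total variation equals twice its maximum, of order $\tau^{-2}$ -- the same device the paper uses in \lemref{lem:intderheatfinite}, but applied to $\cT$ instead of $\cK_1$, combined with the support property $\tilde p_a(\cdot,\beta)=0$ on $[0,\beta)$. Your version is shorter, produces explicit norm bounds, makes the role of the restriction $\beta>0$ in ({\it ii}) transparent, and bypasses the Gaussian-integral lemmas of \appref{app:C}; the paper's version is more computational but never needs the exact mass of $\tilde p_a$. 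Your treatment of differentiation under the integral (local domination on compacta, then upgrading to a weak derivative) is also more careful than the paper's, which performs this step silently.

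One point to repair, though it is a citation slip rather than a gap. The identity $\int_0^{+\infty}\tilde p_a(\tau,\beta)\,\df\tau=\tfrac{1}{2a}$ does not follow from \propref{prop:tildepexpdecay}, which records only the \emph{growth} bound $|\tilde p_a(t,\beta)|\leqslant\tfrac12\theta(t-\beta)e^{|a|(t-\beta)}$. Monotone convergence identifies $\int_0^{+\infty}\tilde p_a\,\df\tau$ with $\lim_{s\downarrow0}\int_0^{+\infty}e^{-s\tau}\tilde p_a(\tau,\beta)\,\df\tau$, but to equate this with $\lim_{s\downarrow0}\tilde P_a(s,\beta)$ you need the Laplace-transform identity to hold for all $s>0$, i.e. $\mathrm{abs}\bigl(\tilde p_a(\cdot,\beta)\bigr)\leqslant0$. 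That is precisely the exponential decay of $y(t)$ for $A\in[-e^{-1},0)$, established in \propref{thm:ypos} of \appref{app:B}; alternatively, since $\tilde p_a\geqslant0$ and $\tilde P_a(\cdot,\beta)$ has no pole on $\IR_0^+$ in this regime, one may invoke Landau's theorem (the abscissa of convergence of a nonnegative function is a singularity of the analytic continuation of its transform). Either patch closes the argument; note that the paper's own proof has the same hidden dependence, since its bound $Ce^{\omega\tau}$ must have $\omega<0$ for \lemref{lem:xnt3n1inft} and \lemref{lem:pabound} to apply.
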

\begin{proof}
({\it i}\,): According to \propref{prop:paexists}, $p_a(t,\beta)$ is exponentially decaying. One derives from \eqref{eq:exp bound pa} that
\begin{equation}\label{eq:paboundinf}
    |p_a(t,\beta)|\leqslant\frac{Ce^{-\frac{\beta^2}{4t}}}{\sqrt{\pi t}}.
\end{equation}
whose integral is bounded on $(0,1)$ as $e^{-\frac{\beta^2}{4t}}\leqslant1$ for $t>0$. For the converges of the integral of $|p_a(t,\beta)|$ on $(1,+\infty)$, it is enough to show that $$\int_1^{+\infty}\int_\beta^{+\infty}\frac{\tau e^{-\frac{\tau^2}{4t}}}{\sqrt{4\pi t^3}}Ce^{\omega\tau}\df\tau\df t$$
is finite. This is a direct consequence of \lemref{lem:xnt3n1inft}, applied with $n=0$ and $f(\tau)=Ce^{\omega\tau}\tau$.\sqed

\noindent ({\it ii}\,): By \propref{prop:tildepexpdecay}, we just need to show the following integral converges
\begin{equation}\label{eq:paprime}
\int_0^{+\infty}\left|\int_\beta^{+\infty}\frac{\partial}{\partial t}\left(\frac{\tau e^{-\frac{\tau^2}{4t}}}{\sqrt{4\pi t^3}}\tilde p_a(\tau,\beta)\right)\df\tau\right|\df t,
\end{equation}
which is equal to 
$$\int_0^{+\infty}\left|\int_\beta^{+\infty}\left(-\frac{3\tau e^{-\frac{\tau^2}{4t}}}{2\sqrt{4\pi t^5}}+\frac{\tau^3 e^{-\frac{\tau^2}{4t}}}{4\sqrt{4\pi t^7}}\right)\tilde p_a(\tau,\beta)\df\tau\right|\df t.$$
On $t\in(1,+\infty)$, the integral is finite by \lemref{lem:xnt3n1inft}. For $t\in(0,1)$, find $C'\geqslant0$ such that $C<C'\tau^2$ for all $\tau\in[\beta,+\infty)$ which is possible for $\beta>0$. Then the integral is bounded above by
$$\int_0^{1}\int_\beta^{+\infty}\left(\frac{3\tau^3 e^{-\frac{\tau^2}{4t}}}{2\sqrt{4\pi t^5}}+\frac{\tau^5e^{-\frac{\tau^2}{4t}}}{4\sqrt{4\pi t^7}}\right)C'e^{\omega\tau}\df\tau\df t,$$
which is finite due to \lemref{lem:xnt2n01}.
\end{proof}

\subsection{Pointwise positivity for suitably symmetric initial conditions}

Our goal is to deal with positivity of the solution at each point. However, we are still not there yet, because we only find some condition (\thmref{thm:upluspos}) where $u_+(t)=u(t,1)+u(t,-1)$ is guaranteed to be nonnegative. Using substitution principle we just need to prove that $u(t,1),u(t,-1)\geqslant0$ for all $t\geqslant0$. Throughout we assume $a\in[0,e^{-1}]$ as in \thmref{thm:upluspos}. Now, our goal is to find some conditions on $u_0$ to ensure $u(t,1),u(t,-1)\geqslant0$. The very first and easy condition that we can impose is:
\begin{prop}\label{prop:sympos}
    Assume $0\leqslant u_0\in H^1(\IR)$, $0\leqslant \Phi\in L^1_\loc(\IR_0^+)$ and $a\in[0,e^{-1}]$. Further, assume that $u_0$ is an even function. Then $u(t,1)\geqslant 0$ and $u(t,-1)\geqslant 0$ for all $t\geqslant 0$. Consequently,
$u(t,x)\geqslant 0$ for all $t\geqslant 0$ and $x\in\IR\smallsetminus (-1,1)$.
\end{prop}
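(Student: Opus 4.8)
The plan is to reduce the claim to the vanishing of the antisymmetric combination $u_-$ defined in \eqref{eq:def u pm} and then to exploit the left--right symmetry of the whole system. Once I know that $u(t,1)=u(t,-1)$ for all $t\geqslant 0$, I can combine this with the positivity of the sum already established in \thmref{thm:upluspos}: from $u(t,1)=u(t,-1)$ and $u(t,1)+u(t,-1)=u_+(t)\geqslant 0$ I obtain $u(t,1)=u(t,-1)=\tfrac{1}{2}u_+(t)\geqslant 0$. Positivity on the full external domain $\IR\smallsetminus(-1,1)$ then follows immediately from \lemref{lem:central pos principle boundary}, applied to the two components $(-\infty,-1)$ and $(1,+\infty)$, whose only boundary points are $\pm 1$.

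So the heart of the matter is to show $u_-(t)\equiv 0$ when $u_0$ is even. My preferred route is a reflection-and-uniqueness argument. Let $R$ denote the reflection $(Rf)(x):=f(-x)$ and set $\tilde u(t,\cdot):=R\,u(t,\cdot)$. I would verify that $\tilde u$ is again a mild solution of System \eqref{eq: system considered u}--\eqref{def:Psi new} with the same initial datum: the heat semigroup commutes with $R$ because the kernel $\cK_1(t,\cdot)$ is even, the source is invariant since $R\delta_0=\delta_0$, and the intensity functional is unchanged because $\Psi_\tau[\tilde u]=\Phi(\tau)-a\bigl(\tilde u(\tau,1)+\tilde u(\tau,-1)\bigr)=\Phi(\tau)-a\bigl(u(\tau,-1)+u(\tau,1)\bigr)=\Psi_\tau[u]$, using the symmetric dependence on the measurements at $\pm 1$; finally $\tilde u(0,\cdot)=R u_0=u_0$ by evenness. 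Applying $R$ to the Variation of Constants Formula \eqref{eq:semigroupmild} makes this precise. By the uniqueness part of \thmref{thm:wellposereduced}, $\tilde u=u$, i.e.\ $u(t,\cdot)$ is even for every $t$, whence $u_-(t)=0$.

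Alternatively --- and perhaps more transparently --- I can read $u_-(t)=0$ directly off the integral representation \eqref{eq:integral expr u}. Writing out $u(t,1)$ and $u(t,-1)$, the source integrals carry the factors $\cK_1(t-\tau,1)$ and $\cK_1(t-\tau,-1)$, which coincide by evenness of the kernel; since the common intensity $\Phi(\tau)-a\,u_+(\tau)$ multiplies both, these contributions cancel in the difference. What remains is $u_-(t)=\int_\IR\bigl[\cK_1(t,1-\tilde x)-\cK_1(t,1+\tilde x)\bigr]u_0(\tilde x)\,\df\tilde x$, and the substitution $\tilde x\mapsto-\tilde x$ in one of the two terms, together with $u_0(-\tilde x)=u_0(\tilde x)$, shows that the two integrals are equal, so $u_-(t)=0$.

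I do not anticipate a genuine obstacle here; consistent with the remark that this statement is ``less involved'' than \thmref{thm:positivity}, the only work is the routine check that reflection preserves the mild-solution property (equivalently, the routine change of variables in the direct computation). The essential structural input is the symmetry of the system about $x=0$ --- isotropic diffusion, a source centred at the origin, and an intensity depending symmetrically on the values at $\pm 1$ --- which is exactly what forces the even initial datum to propagate to an even solution.
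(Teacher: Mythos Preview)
Your proposal is correct and follows essentially the same approach as the paper's own proof, which simply states ``Due to symmetry, $u(t,1)=u(t,-1)$ for all $t\geqslant 0$. Their sum, $u_+(t)$, is nonnegative, according to \thmref{thm:upluspos}. Then they must be both nonnegative.'' You have merely unpacked the ``due to symmetry'' step (via either the reflection-uniqueness argument or the direct computation of $u_-$ from \eqref{eq:integral expr u}) and made explicit the appeal to \lemref{lem:central pos principle boundary} for the final conclusion.
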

\begin{proof}
Due to symmetry, $u(t,1)=u(t,-1)$ for all $t\geqslant 0$. Their sum, $u_+(t)$, is nonnegative, according to \thmref{thm:upluspos}. Then they must be both nonnegative. 
\end{proof}
The symmetry condition on $u_0$ can be weakened. Recall the definition of $u_-(t)$ in \eqref{eq:def u pm}. Equation \eqref{eq:integral expr u} yields, by substracting the corresponding expressions for $x=1$ and $x=-1$,

\begin{equation}\label{eq:uminus}
u_-(t)=\int_{\IR}\left(\frac{e^{-\frac{|1-\tilde{x}|^2}{4t}}}{\sqrt{4\pi t}}-\frac{e^{-\frac{|1+\tilde{x}|^2}{4t}}}{\sqrt{4\pi t}}\right)u_0(\tilde{x})\df \tilde{x}.
\end{equation}
Then $u(t,1)=\frac{1}{2}(u_+(t)+u_-(t))=$
\begin{equation}\label{eq:up1pm}
\int_0^t2p_a(\tau,1)\Phi(t-\tau)\df\tau+\int_{\IR}(p_a(t,|1-\tilde{x}|)+p_a(t,|1+\tilde{x}|)+\cK_1(t,|1-\tilde{x}|)-\cK_1(t,|1+\tilde{x}|))u_0(\tilde{x})\df \tilde{x},
\end{equation}
and $u(t,-1)=\frac{1}{2}(u_+(t)-u_-(t))=$
\begin{equation}\label{eq:up-1pm}
\int_0^t2p_a(\tau,1)\Phi(t-\tau)\df\tau+\int_{\IR}(p_a(t,|1-\tilde{x}|)+p_a(t,|1+\tilde{x}|)+\cK_1(t,|1+\tilde{x}|)-\cK_1(t,|1-\tilde{x}|))u_0(\tilde{x})\df \tilde{x}.
\end{equation} 
As we only allow $a\in[0,e^{-1}]$, one of the main results from \secref{sec:outlineposuplus}, \propref{prop:positivity tilde pa}, shows that $p_a(t,\beta)\geqslant0$ for all $\beta\geqslant0$. The only terms in \eqref{eq:up1pm} and \eqref{eq:up-1pm} that may potentially tarnish the positivity are the ones in the spatial integrals.
Again, recalling the notations in \eqref{eq:tilde P} and \eqref{eq:tilde Q}, if we can prove that the integrals of the inverse transforms of
\begin{equation}
\tilde R_{a,+}(s,\tilde{x}):=\tilde P_a(s,|1-\tilde{x}|)+\tilde P_a(s,|1+\tilde{x}|)+\tilde Q(s,|1-\tilde{x}|)-\tilde Q(s,|1+\tilde{x}|)
\end{equation}
and
\begin{equation}
\tilde R_{a,-}(s,\tilde{x}):=\tilde P_a(s,|1-\tilde{x}|)+\tilde P_a(s,|1+\tilde{x}|)+\tilde Q(s,|1+\tilde{x}|)-\tilde Q(s,|1-\tilde{x}|)
\end{equation}
against $u_0(\tilde{x})$ are nonnegative, the desired result follows from \lemref{lem:lapsqrtsub} or \corref{cor:lapsqrtsub}.

Denote $\beta_+:=|1-\tilde{x}|$ and $\beta_-:=|1+\tilde{x}|$ -- interpreting them as distances from $\tilde{x}$ to $1$ and $-1$ respectively. Spell out 
\begin{equation}\label{eq:raspell}
\tilde R_{a,-}(s,\tilde{x})=\frac{e^{-\beta_+s}}{2s+2ae^{-s}}+\frac{e^{-\beta_-s}}{2s+2ae^{-s}}+\frac{e^{-\beta_-s}}{2s}-\frac{e^{-\beta_+s}}{2s}.
\end{equation}
The last two terms are transformed to $\frac{1}{2}\theta(t-\beta_-)-\frac{1}{2}\theta(t-\beta_+)$. If $\beta_-\leqslant\beta_+$, equivalently $\tilde{x}\leqslant0$,their sum is guaranteed to be nonnegative. In this case, it leads to imposing extra conditions on $u_0$ only for $\tilde{x}>0$, equivalently $\beta_->\beta_+$. Similarly, for $\tilde R_{a,+}$, it leads to imposing extra conditions on $u_0$ only for $\tilde{x}<0$. Thus, due to symmetry it is sufficient to just study $\tilde R_{a,-}$ and $\tilde{x}>0$.

We further distinguish two cases: $\tilde x<\frac{1}{2}$, equivalently $\beta_--\beta_+<1$, and $\tilde x\geqslant\frac{1}{2}$, equivalently, $\beta_--\beta_+\geqslant1$. For each case, we plot each term of the inverse transform $\tilde r_{a,-}:=\cL^{-1}\{\tilde R_{a,-}\}$ in \eqref{eq:raspell} as a function of $t$ for some fixed $\tilde{x}>0$; see Figure \ref{fig:ra2}. The curves of these four terms are colored in red, blue, orange, and green, respectively. 

\begin{figure}[h!]
    \centering
    \includegraphics[scale=0.58]{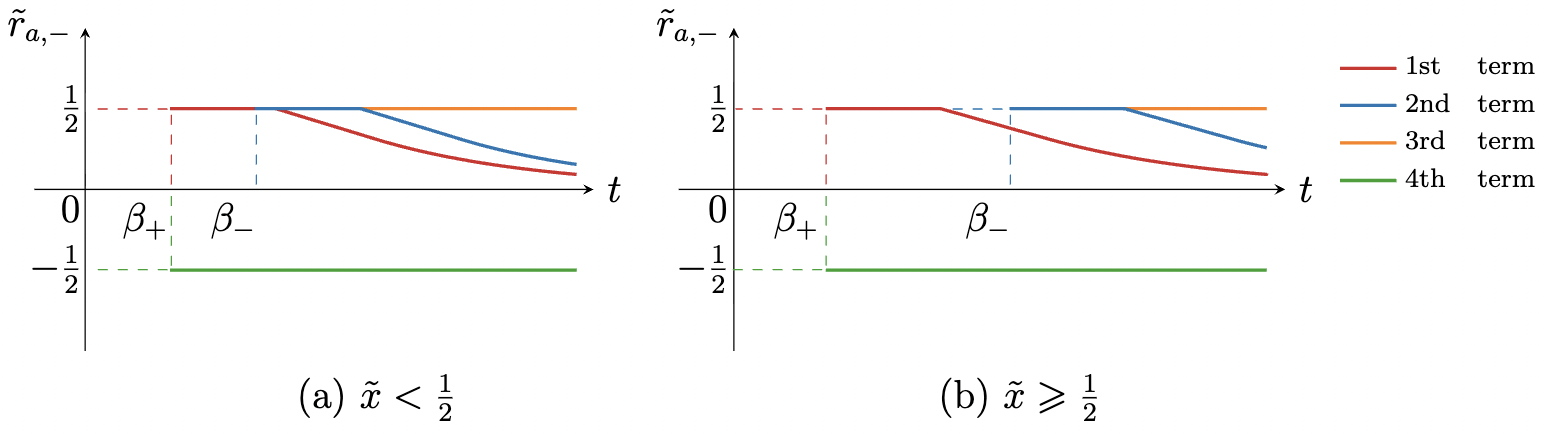}
    \caption{Shape of the inverse Laplace transform $\tilde r_{a,-}$ of each of the four terms in \eqref{eq:raspell}, respectively. (Left panel: $\displaystyle\tilde x <\frac{1}{2}$; Right panel: $\displaystyle\tilde x\geqslant\frac{1}{2}$.)} 
    \label{fig:ra2}
\end{figure}

Note that the absolute value of the horizontal segment of the first three terms is the same as the absolute value of the fourth term. Hence, the nonnegativity of $\tilde{r}_{a,-}$ is guaranteed in $t\in(0,\beta_++1)\cup(\beta_-, +\infty)$. So, the only \textit{problematic interval} is $(\beta_++1,\beta_-)$ where $\tilde r_{a,-}$ is negative. We are assuming $\tilde{x}>0$. At the beginning, $\beta_+,\beta_-$ overlap at $1$. When $\tilde{x}$ increases, $\beta_-$ always shift to the right, but $\beta_+$ first shifts to the left till $0$ when $\tilde{x}=1$, and then shifts to the right. 

\begin{prop}\label{prop:halfsympos}
Assume $0\leqslant u_0\in H^1(\IR)$, $0\leqslant \Phi\in L^1_\loc(\IR_0^+)$ and $a\in[0,e^{-1}]$. If $u_0$ is even on $\IR\smallsetminus \bigl[-\frac{1}{2},\frac{1}{2}\bigr]$, then $u(t,x)\geqslant 0$ for all $t\geqslant 0$ and $x\in\IR\smallsetminus(-1,1)$.
\end{prop}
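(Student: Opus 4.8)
The plan is to establish positivity of $u$ on $\IR\smallsetminus(-1,1)$ by proving the pointwise bounds $u(t,1)\geqslant 0$ and $u(t,-1)\geqslant 0$ for all $t\geqslant 0$, which is exactly condition ({\it ii}) of the Corollary following \lemref{lem:central pos principle boundary}. I start from \eqref{eq:up1pm}--\eqref{eq:up-1pm}. The temporal contribution $\int_0^t 2p_a(\tau,1)\Phi(t-\tau)\df\tau$ is nonnegative, since $p_a\geqslant 0$ by \propref{prop:positivity tilde pa} and $\Phi\geqslant 0$; so it remains only to control the spatial integrals. Setting $\tilde r_{a,\pm}:=\cL^{-1}\{\tilde R_{a,\pm}\}$, and using \lemref{lem:lapsqrtsub}/\corref{cor:lapsqrtsub} together with the nonnegativity of the subordination kernel $\tau e^{-\tau^2/4t}/\sqrt{4\pi t^3}$ and Fubini (justified by the exponential bounds \eqref{eq:exp bound pa} and the estimates in \propref{prop:uplustermexist}), it suffices to show that for every $\tau\geqslant 0$
\begin{equation*}
\int_\IR \tilde r_{a,-}(\tau,\tilde x)\,u_0(\tilde x)\df\tilde x \geqslant 0 \qquad\text{and}\qquad \int_\IR \tilde r_{a,+}(\tau,\tilde x)\,u_0(\tilde x)\df\tilde x \geqslant 0.
\end{equation*}

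I treat $\tilde r_{a,-}$; the case of $\tilde r_{a,+}$ is symmetric. Recall $\beta_+=|1-\tilde x|$ and $\beta_-=|1+\tilde x|$. The piecewise analysis preceding the statement (see Figure \ref{fig:ra2}) shows that the only interval on which $\tilde r_{a,-}(\cdot,\tilde x)$ is negative is the problematic interval $(\beta_++1,\beta_-)$, on which $\tilde r_{a,-}(\tau,\tilde x)=\tilde p_a(\tau-\beta_+,0)-\tfrac12$. This interval is nonempty precisely when $\beta_--\beta_+\geqslant 1$, that is, when $\tilde x\geqslant \tfrac12$. Consequently $\tilde r_{a,-}(\tau,\tilde x)\geqslant 0$ for all $\tau\geqslant 0$ whenever $\tilde x<\tfrac12$, and symmetrically $\tilde r_{a,+}(\tau,\tilde x)\geqslant 0$ for all $\tau$ whenever $\tilde x>-\tfrac12$.

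The key step is to pair the two reflected problematic regions using the evenness of $u_0$ on $\IR\smallsetminus[-\tfrac12,\tfrac12]$. Since $\tilde x\mapsto -\tilde x$ swaps $\beta_+\leftrightarrow\beta_-$, one has $\tilde R_{a,-}(s,-\tilde x)=\tilde R_{a,+}(s,\tilde x)$, hence $\tilde r_{a,-}(\tau,-\tilde x)=\tilde r_{a,+}(\tau,\tilde x)$. I split the integral over $\{|\tilde x|<\tfrac12\}$ and $\{|\tilde x|\geqslant \tfrac12\}$. The first part is nonnegative because $\tilde r_{a,-}(\tau,\tilde x)\geqslant 0$ there and $u_0\geqslant 0$. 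On the second part I fold $\tilde x\leqslant -\tfrac12$ onto $\tilde x\geqslant \tfrac12$ using $u_0(-\tilde x)=u_0(\tilde x)$, obtaining
\begin{equation*}
\int_{\tilde x\geqslant 1/2}\bigl[\tilde r_{a,-}(\tau,\tilde x)+\tilde r_{a,+}(\tau,\tilde x)\bigr]u_0(\tilde x)\df\tilde x .
\end{equation*}
In the sum $\tilde R_{a,-}+\tilde R_{a,+}$ the $\tilde Q$-terms cancel, leaving $2\tilde P_a(s,\beta_+)+2\tilde P_a(s,\beta_-)$; correspondingly $\tilde r_{a,-}(\tau,\tilde x)+\tilde r_{a,+}(\tau,\tilde x)=2\tilde p_a(\tau,\beta_+)+2\tilde p_a(\tau,\beta_-)\geqslant 0$ by \propref{prop:positivity tilde pa}. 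Thus both parts are nonnegative, so the spatial integral for $\tilde r_{a,-}$ is $\geqslant 0$; the same argument with the roles of $1$ and $-1$ interchanged handles $\tilde r_{a,+}$.

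The heart of the argument — and the reason the threshold is exactly $\tfrac12$ — is that the problematic interval of $\tilde r_{a,-}(\cdot,\tilde x)$ opens up precisely at $\tilde x=\tfrac12$, so the evenness hypothesis is imposed on exactly the set where it is needed, and that upon reflecting, the offending step-function ($\tilde Q$) contributions cancel identically, leaving the manifestly nonnegative $\tilde p_a$-terms. The main technical care will lie in justifying the interchange of the $\tilde x$-integration with the subordination integral and in the precise bookkeeping of the four shifted/piecewise-constant pieces of $\tilde r_{a,-}$ underlying the claim about the problematic interval. Granting these, we conclude $u(t,1)\geqslant 0$ and $u(t,-1)\geqslant 0$ for all $t\geqslant 0$, whence by the Corollary to \lemref{lem:central pos principle boundary}, $u(t,x)\geqslant 0$ for all $t\geqslant 0$ and $x\in\IR\smallsetminus(-1,1)$.
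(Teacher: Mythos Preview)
Your proposal is correct and follows essentially the same approach as the paper. Both split according to whether $|\tilde x|<\tfrac12$ (where the problematic interval is empty, so $\tilde r_{a,-}\geqslant0$ directly) or $|\tilde x|\geqslant\tfrac12$ (where evenness is invoked). The only organizational difference is that the paper packages the second part as a linear decomposition $u_0=u_{01}+u_{02}$ and cites \propref{prop:sympos} for the even piece $u_{01}$, whereas you inline that step by folding the integral and observing that $\tilde r_{a,-}+\tilde r_{a,+}=2\tilde p_a(\cdot,\beta_+)+2\tilde p_a(\cdot,\beta_-)\geqslant0$; this is precisely the kernel-level content of \propref{prop:sympos}, so the two arguments are equivalent.
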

\begin{proof}
We decompose $u_0$ into the sum of two nonnegative functions $u_{01}$ and $u_{02}$. $u_{01}$ is $0$ on $(-\frac{1}{2},\frac{1}{2})$ and equal to $u_0$ outside $(-\frac{1}{2},\frac{1}{2})$; $u_{02}$ is the other way around. If $u_{01}$ serves as the initial condition, it will generate nonnegative solutions by \propref{prop:sympos}. If $u_{02}$ serves as the initial condition, it will generate nonnegative solutions as well. This is because when $\tilde{x}\in(-\frac{1}{2},\frac{1}{2})$, $|\beta_+-\beta_-|\leqslant1$. Then the \textit{problematic interval} $(\beta_++1,\beta_-)$ of $\tilde r_{a,-}$ is empty. In this case, $\tilde r_{a,-}(t,\tilde{x})$ is nonnegative for all $t$ and $\tilde{x}\in(-\frac{1}{2},\frac{1}{2})$. While $u_{02}$ is only nonzero inside $(-\frac{1}{2},\frac{1}{2})$. Then the spatial integral is nonnegative.
\end{proof}

\subsection{Proof of main positivity result}

We shall now prove our main result on positivity,  \thmref{thm:positivity}.

\begin{proof}[Proof of \thmref{thm:positivity}]
As before, we just consider $\tilde r_{a,-}(t,\tilde{x})$ and $u_0(\tilde x)$ for $\tilde{x}>0$ and $t\geqslant0$. We need to carefully switch between the spatial and temporal coordinates back and forth.
\begin{itemize}[leftmargin=*]
\item Case 1: $t\geqslant2$. When $\tilde{x}\in(0,1)$, $\beta_-=\tilde{x}+1<2\leqslant t$. $t$ is outside the \textit{problematic interval} $(\beta_++1,\beta_-)$, so $\tilde r_{a,+}$ is always nonnegative. In this case, the function is only negative on $\tilde{x}\in(1,+\infty)$ and $t\in(\beta_++1,\beta_-)$. When $\tilde{x}\in(1,+\infty)$, $\beta_-=\tilde{x}+1,\beta_+=\tilde{x}-1$. Thus, $t$ is in the \textit{problematic interval} $(\beta_++1,\beta_-)=(\tilde{x},\tilde{x}+1)$ exactly when $\tilde{x}\in(t-1,t)$. Then  $\tilde r_{a,-}$ is negative, which is a line with slope $\frac{a}{2}$ as a function of $\tilde{x}$ (cf. \eqref{eq:lapinvpsub}). However, on the interval $\tilde{x}\in(t-2,t-1)$, $\beta_-=\tilde{x}+1<t<\tilde{x}+2=\beta_-+1$. Then $\tilde r_{a,-}\geqslant\frac{1}{2}$ as shown in Fig. \ref{fig:raspatial2}. 

\begin{minipage}{\linewidth}
    \centering
    \includegraphics[width = 0.9\textwidth]{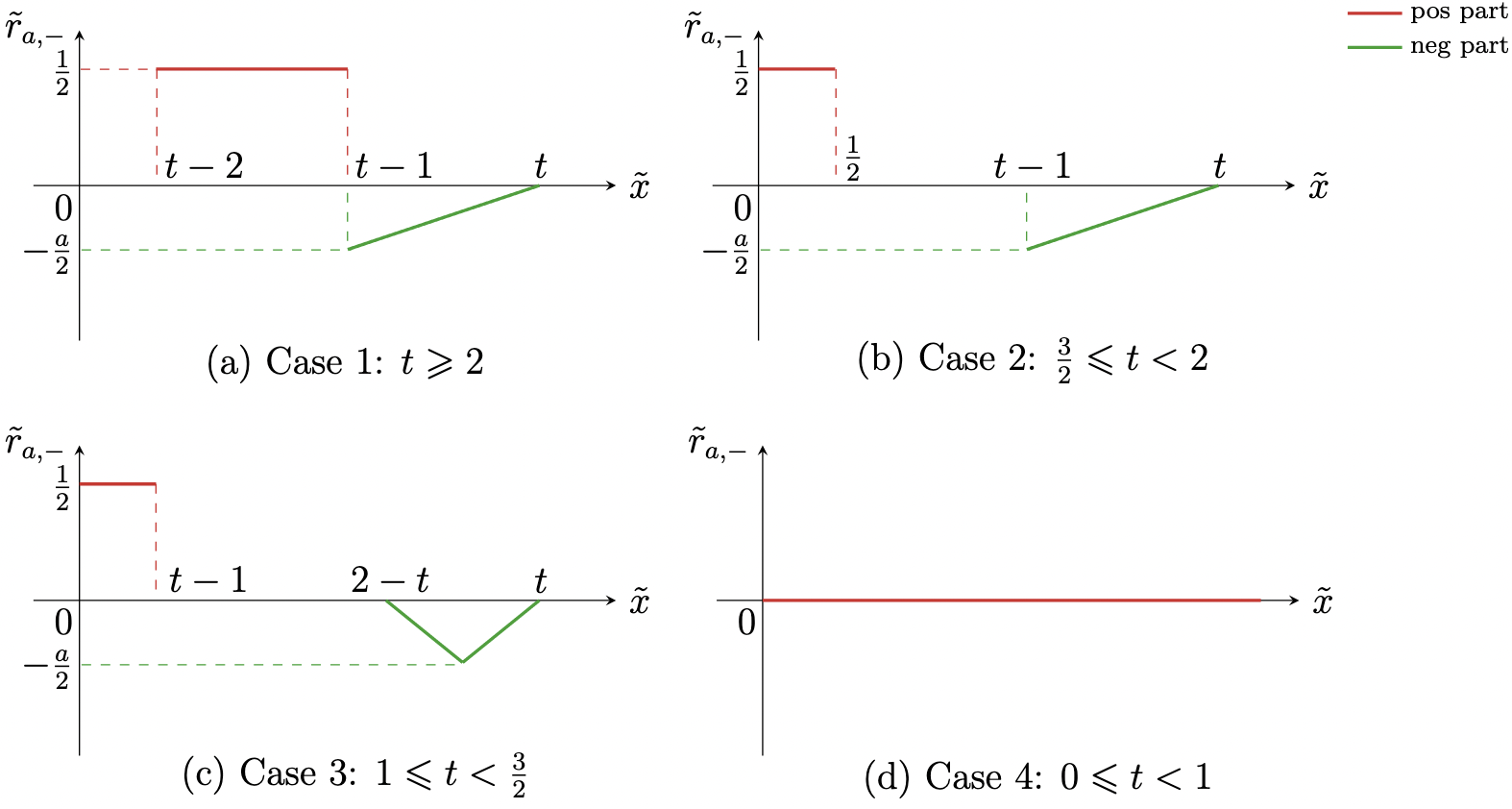}
    \captionof{figure}{The inverse Laplace transform $\tilde{r}_{a,-}$ regarding various values of $t$. The value of $\tilde{r}_{a,-}$ is dominated by the first term (positive part in red lines) and the fourth term (negative part in green lines). However, since the negative part only appears in the historical time domain, the positivity of $\tilde{r}_{a,-}$ in $(t, +\infty)$ is guaranteed.}
    \label{fig:raspatial2}
\end{minipage}

Thus, on this interval $(t-2,t-1)$, we have $\tilde r_{a,-}(t,\tilde{x})>|\tilde r_{a,-}(t,\tilde{x}+1)|=-\tilde r_{a,-}(t,\tilde{x}+1)$. As $u_0$ is monotonically decreasing, $u_0(\tilde{x})\geqslant u_0(\tilde{x}+1)$. Now, every term is nonnegative, so $\tilde r_{a,-}(t,\tilde{x})u_0(\tilde{x})\geqslant-\tilde r_{a,-}(t,\tilde{x}+1)u_0(\tilde{x}+1)$. Then we will have
$$\int_{t-2}^{t-1}\tilde r_{a,-}(t,\tilde{x})u_0(\tilde{x})\df \tilde{x}+\int_{t-1}^{t}\tilde r_{a,-}(t,\tilde{x})u_0(\tilde{x})\df \tilde{x}\geqslant0.$$
As the only negative of $\tilde r_{a,-}$ appears on $(t-1,t)$. Then we can conclude that the whole integral is nonnegative.

\item Case 2: $\frac{3}{2}\leqslant t<2$. The idea is very similar to Case 1, but we need to exploit the conditions a bit more. For $\tilde{x}\in (0,\frac{1}{2})$, $\beta_-<\frac{3}{2}\leqslant t$ and $t<2<\beta_-+1$. When $t\in(\beta_-,\beta_-+1)$, $\tilde r_{a,-}\geqslant\frac{1}{2}$ as shown in Fig. \ref{fig:raspatial2}. Thus, $\tilde r_a\geqslant\frac{1}{2}$ on $\tilde{x}\in(0,\frac{1}{2})$ during this period of time. The negative part of $\tilde r_{a,-}$ only appears outside $(0,\frac{1}{2})$ for at most one unit interval with absolute value below $\frac{a}{2}$; see \propref{prop:halfsympos}. Then the negative part of the integral is bounded from below by $$-\int_{x_0}^{x_0+1}\frac{a}{2}u_0(\tilde{x})\df \tilde{x}$$
for some $x_0\geqslant\frac{1}{2}$.
Because $u_0(\tilde{x})$ is monotonically decreasing, it is further bounded from below by
$$-2\int_{x_0}^{x_0+\frac{1}{2}}\frac{a}{2}u_0(\tilde{x})\df \tilde{x}=-\int_{x_0}^{x_0+\frac{1}{2}} au_0(\tilde{x})\df \tilde{x}.$$
However, 
$$\int_{0}^{\frac{1}{2}}au_0(\tilde{x})\df \tilde{x}-\int_{x_0}^{x_0+\frac{1}{2}}au_0(\tilde{x})\df \tilde{x}\geqslant0,$$
because $\tilde r_{a,-}\geqslant\frac{1}{2}$ on $(0,\frac{1}{2})$, $\tilde r_{a,-}u_0\geqslant\frac{1}{2}u_0\geqslant au_0$ on $(0,\frac{1}{2})$ where we use the range for $a\in[0,e^{-1}]$ and $e=2.718\cdots>2$. We can conclude that the whole integral is nonnegative.

\item Case 3: $1<t<\frac{3}{2}$. $\tilde{x}\in(0,t-1)$, $\tilde r_{a,-}\geqslant\frac{1}{2}$. The negative part is on $\tilde{x}\in (2-t,t)$ with absolute value bounded by $\frac{a}{2}$. The length of the first interval has length is half of the second one. Using the same trick as in Case 2, we can conclude that the whole integral is nonnegative.

\item Case 4: $0\leqslant t\leqslant1$. We can see that the red and green curves in Fig. \ref{fig:ra2} always cancel each other out on $[\beta_+,\beta_++1]$. Thus, $\tilde r_{a,-}=0$ for $t\in[0,1]\subseteq[0,\beta_++1]$ and all $\tilde{x}>0$.
\end{itemize}
This completes the proof.
\end{proof}

\section{Convergence to a Steady State}\label{Sec_steady_state}

First of all, in order to expect a nonnegative steady state of the system that makes sense for real life application, we shall assume $a\in(0,e^{-1}]$. Moreover, a necessary condition for the existence of a steady state is that the temporal limits of the boundary values $u(t,\pm1)$ at the ``cell boundary" should exist.  However, we shall see in the following that to ensure the existence of steady states stronger regularity conditions on $u_+$ and $u_-$ are needed -- specifically, particular regularity of their derivatives.

Note that $u_+$ and $u_-$ do possess derivatives in $L_{\loc}^1$. Because we have already computed their Laplace transforms in \secref{Sec:Laplace} and \secref{Sec:Pos}, and differentiation in the $t$-domain corresponds to multiplication by $s$ in the  $s$-domain, it follows that their derivatives also belong to $L_{\loc}^1$. So do $u'(t,\pm1)\in L^1_\loc(\IR_0^+)$.

\begin{prop}\label{prop:steadyderive}
Assume that $u_+',u_-'\in L^1(\IR_0^+)$. If $\lim_{t\apc+\infty}u_+(t)$ and $\lim_{t\apc+\infty}u_-(t)$ exist, then the mild solution $u(t)$ to System \eqref{eq: dimensionless system} converges pointwise on $[1,+\infty)$ to the steady state value $\lim_{t\apc+\infty} u(t,1)$, respectively to $\lim_{t\apc+\infty} u(t,-1)$ on $(-\infty,-1]$.
\end{prop}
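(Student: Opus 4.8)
The plan is to reduce the statement to a pure heat equation on the half-line $[1,+\infty)$ (the argument on $(-\infty,-1]$ being symmetric) and then to exploit that the boundary value is transported into the interior by a probability kernel of unit mass. First I would record that, since the support $\{0\}$ of the Dirac term lies outside $[1,+\infty)$, the restriction of the mild solution solves $\partial_t u=\partial_x^2 u$ on $(0,+\infty)\times[1,+\infty)$, with Dirichlet datum $g(t):=u(t,1)=\tfrac12\bigl(u_+(t)+u_-(t)\bigr)$ at $x=1$ and initial datum $u_0|_{[1,+\infty)}$. The hypothesis $u_+',u_-'\in L^1(\IR_0^+)$ guarantees, via $u_\pm(t)=u_\pm(0)+\int_0^t u_\pm'$, that $g$ is bounded and that $g_\infty:=\lim_{t\apc+\infty}u(t,1)$ exists; this is all that the remainder of the argument uses. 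Since $u(t)\in H^1(\IR)\hookrightarrow C_b^0(\IR)$ and the boundary and initial data are bounded, the maximum principle (cf. \lemref{lem:central pos principle boundary}) makes $u$ bounded on $[1,+\infty)\times\IR_0^+$, so it is the unique bounded solution of this half-line Dirichlet problem.

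Next I would write that bounded solution through its standard boundary-integral representation,
\begin{equation*}
u(t,x)=\int_1^{+\infty} G_D(t;x,y)\,u_0(y)\,\df y+\int_0^t K_b(t-\tau,x)\,g(\tau)\,\df\tau,
\end{equation*}
where $G_D(t;x,y):=\cK_1(t,x-y)-\cK_1(t,x+y-2)$ is the Dirichlet heat kernel on $[1,+\infty)$ and
\begin{equation*}
K_b(t,x):=\frac{(x-1)\,e^{-(x-1)^2/4t}}{\sqrt{4\pi t^3}}
\end{equation*}
is the associated boundary-flux kernel. Crucially, $K_b(\cdot,x)$ is exactly the kernel of \lemref{lem:lapsqrtsub} with $\tau=x-1$; hence $K_b(\cdot,x)\geqslant0$ and $\cL\{K_b(\cdot,x)\}(s)=e^{-(x-1)\sqrt s}$, so $\int_0^{+\infty}K_b(t,x)\,\df t=\cL\{K_b(\cdot,x)\}(0)=1$. (Equivalently, Laplace-transforming the half-line problem gives $U(s,x)=e^{-(x-1)\sqrt s}\,\cL\{g\}(s)+\cL\{\text{init}\}(s)$, and the final value theorem — where $g'=\tfrac12(u_+'+u_-')\in L^1$ enters — yields $sU(s,x)\apc g_\infty$; the time-domain route below avoids the Tauberian converse.)

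It then remains to pass to the limit $t\apc+\infty$ in the two terms. For the initial-data term I would bound $\abs{G_D(t;x,y)}\leqslant\cK_1(t,x-y)$ and use Cauchy--Schwarz with $u_0\in L^2(\IR)$, together with $\norm{\cK_1(t,\cdot)}_{L^2}=O(t^{-1/4})$, to conclude that it tends to $0$. For the boundary term I would substitute $\sigma=t-\tau$ to obtain $\int_0^t K_b(\sigma,x)\,g(t-\sigma)\,\df\sigma$; since $K_b(\cdot,x)\geqslant0$ is integrable with unit mass, $g$ is bounded, and $g(t-\sigma)\apc g_\infty$ for each fixed $\sigma$, dominated convergence (dominating by $\norm{g}_\infty K_b(\cdot,x)$) gives the limit $g_\infty\int_0^{+\infty}K_b(\sigma,x)\,\df\sigma=g_\infty$. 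Adding the two limits yields $u(t,x)\apc g_\infty=\lim_{t\apc+\infty}u(t,1)$ for every $x\geqslant1$, and the reflected computation on $(-\infty,-1]$ gives convergence to $\lim_{t\apc+\infty}u(t,-1)$.

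The main obstacle is the conceptual reduction in the first paragraph: justifying that the $H^1$-valued mild solution restricts on $[1,+\infty)$ to the classical bounded solution represented above. This rests on (i) the analytic heat semigroup regularizing $u$ to a smooth classical solution on $(0,+\infty)\times[1,+\infty)$ away from the source, and (ii) uniqueness of bounded solutions of the half-line Dirichlet problem. Once the representation is in force, the two limit computations are routine; the only quantitative inputs are the unit mass and nonnegativity of $K_b$ (inherited from \lemref{lem:lapsqrtsub}) and the $L^2$-decay of the heat kernel.
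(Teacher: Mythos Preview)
Your argument is correct, and it differs from the paper's in a pleasant way. The paper homogenizes the boundary condition by writing $u=\tilde u_R+u(t,1)$, which pushes $u'(t,1)$ into the right-hand side as a forcing; the resulting Duhamel term is $\int_0^t\mathrm{erf}\bigl(x/\sqrt{4(t-\tau)}\bigr)\,u'(\tau,1)\,\df\tau$, and it is precisely here that the hypothesis $u_\pm'\in L^1(\IR_0^+)$ is consumed (via \lemref{lem:convolutionlimit}). You instead keep the inhomogeneous boundary datum and use the double-layer representation with the boundary-flux kernel $K_b$; because $K_b(\cdot,x)\geqslant0$ has unit mass, a direct dominated-convergence step suffices, and your proof actually reveals that for this proposition the assumption $u_\pm'\in L^1$ is not needed---boundedness of $g$ and existence of $g_\infty$ (already implied by continuity of $u_\pm$ and the assumed limits) are enough. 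Both routes handle the initial-data contribution the same way, via Cauchy--Schwarz and the $t^{-1/4}$ decay of $\Vert\cK_1(t,\cdot)\Vert_{L^2}$. The paper's detour through $u'$ is what ties this proposition to the later regularity work in \propref{prop:uplusprimeregular}; your approach is more self-contained but obscures where the $L^1$-derivative condition becomes genuinely necessary in the overall argument for \thmref{thm:steadystate}.
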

\begin{proof}
Due to symmetry, it is enough to show that if $\lim_{t\apc+\infty}u(t,1)$ exists, then $u$ has a steady state on $[1,+\infty)$. By the comparison principle (see Corollary 2.5 of \cite{Salsa2022partial}), we can equivalently consider the diffusive system of $u_R$ on the semi-infinite real line $\IR_0^+$ whose boundary condition at $x=0$ is forced by the given solution of $u(t,1)$,

$$\left\{\begin{alignedat}{2}
    \frac{\partial u_R}{\partial t}&=\frac{\partial^2 u_R}{\partial x^2} && \text{for } (t,x)\in\IR_0^+\times\IR_0^+\\
    u_R(t,0)&=u(t,1) && \text{for } t\in\IR_0^+\\
    u_R(0,x)&=u_0(x+1)\quad && \text{for } x\in \IR_0^+
\end{alignedat}\right..
$$
Substitute $u_R$ by $\tilde u_R+u(t,1)$ to obtain a system with homogeneous boundary condition
\begin{equation}\label{eq:vrhom}
\left\{\begin{alignedat}{2}
    \frac{\partial\tilde u_R}{\partial t}&=\frac{\partial^2 \tilde u_R}{\partial x^2} + u'(t,1) && \text{for } (t,x)\in\IR_0^+\times\IR_0^+\\
    \tilde u_R(t,0)&=0 && \text{for } t\in\IR_0^+\\
    \tilde u_R(0,x)&= u_0(x+1)- u(0,1)\quad && \text{for } x\in \IR_0^+
\end{alignedat}\right..
\end{equation}
Evaluating the Variation of Constants Formula \eqref{eq:semigroupmild} in the setting of System \ref{eq:vrhom}, we get \begin{equation}\label{eq:vrkern}
    \tilde{v}_R(t,x)=\int_0^t \int_{\IR_0^+}\cK_{1,\mathrm{semi}}(t-\tau,x,\tilde x) u'(\tau,1)\df \tilde x\df\tau+\int_{\IR_0^+}\cK_{1,\mathrm{semi}}(t,x,\tilde x)(u_0(\tilde x+1)-u(0,1))\df\tilde x,\end{equation}
where the heat kernel for $\IR_0^+$ with Dirichlet boundary condition at $x=0$ is given by $$\cK_{1,\mathrm{semi}}(t,x,\tilde x):=\frac{1}{\sqrt{4\pi t}}\left(e^{-\frac{(x-\tilde x)^2}{4t}}-e^{-\frac{(x+\tilde x)^2}{4t}}\right)$$
according to \cite{carslaw_heatconduction}, Section 18, p.33. The second integral of \eqref{eq:vrkern} converges to zero similar to \lemref{lem:uminuslim}. The first integral of \eqref{eq:vrkern} can be further computed as
\[
    \int_0^t\mathrm{erf}\left(\frac{x}{\sqrt{4(t-\tau)}}\right)u'(\tau,1)\df\tau,
\]
where the \textit{error function} is defined as $$\mathrm{erf}(x):=\frac{2}{\sqrt{\pi}}\int_0^xe^{-y^2}\df y.$$ All conditions of \lemref{lem:convolutionlimit} are satisfied. Then we can show that $\lim_{t\apc\infty}\tilde u_R(t,x)=0$ pointwise. Eventually, $\lim_{t\apc+\infty}u(t,x)=\lim_{t\apc+\infty} u(t,1)$ pointwise.

A similar result is obtained on $(-\infty,-1]$: $\lim_{t\apc+\infty}u(t,x)=\lim_{t\apc+\infty} u(t,-1)$ pointwise.
\end{proof}

We shall now establish the integrability conditions for $u'_+$ and $u'_-$ that are assumed in \propref{prop:steadyderive}. First, we prove this for $u_-$.

\begin{lem}\label{lem:uminuslim}
$\lim_{t\apc+\infty}u_-(t)=0$, and $u_-'\in L^1(\IR_0^+)$.
\end{lem}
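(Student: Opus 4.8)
The starting point is the explicit formula \eqref{eq:uminus}, in which the contribution of the Dirac source has already cancelled: since the source sits at $x=0$, equidistant from $x=1$ and $x=-1$, the convolution terms $\int_0^t\cK_1(t-\tau,\pm1)\Psi_\tau[u]\df\tau$ coincide and drop out of $u_-(t)=u(t,1)-u(t,-1)$. Hence $u_-(t)=(e^{\Delta t}u_0)(1)-(e^{\Delta t}u_0)(-1)$ depends on the initial datum alone. Writing $w(t,\cdot):=e^{\Delta t}u_0$ and using the Fundamental Theorem of Calculus in the spatial variable, I would first record the key identity $u_-(t)=\int_{-1}^1\partial_x w(t,x)\df x$ together with $\partial_x w(t,\cdot)=e^{\Delta t}u_0'$. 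The latter follows by moving the $x$-derivative onto the Gaussian and integrating by parts in $\tilde x$, the boundary terms vanishing because $u_0\in H^1(\IR)\hookrightarrow C_0(\IR)$ and $u_0'\in L^2(\IR)$. The whole proof then rests on distributing heat-kernel derivatives against $u_0'\in L^2$ and using the scaling identities $\Vert\cK_1(t,\cdot)\Vert_{L^2}=c_0 t^{-1/4}$, $\Vert\partial_\beta\cK_1(t,\cdot)\Vert_{L^2}=c_1 t^{-3/4}$, and $\Vert\partial_\beta^2\cK_1(t,\cdot)\Vert_{L^2}=c_2 t^{-5/4}$, each obtained from the substitution $\beta=t^{1/2}\xi$.

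For the limit, I would estimate $|u_-(t)|\leqslant 2\,\Vert e^{\Delta t}u_0'\Vert_{L^\infty}$ and apply Cauchy--Schwarz to the convolution $e^{\Delta t}u_0'=\cK_1(t,\cdot)*u_0'$, so that $\Vert e^{\Delta t}u_0'\Vert_{L^\infty}\leqslant\Vert\cK_1(t,\cdot)\Vert_{L^2}\,\Vert u_0'\Vert_{L^2}$. This gives $|u_-(t)|\leqslant 2c_0\,t^{-1/4}\Vert u_0'\Vert_{L^2}\apc 0$ as $t\apc+\infty$, which already fixes the limit at the value $0$ rather than merely guaranteeing that a limit exists.

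For $u_-'\in L^1(\IR_0^+)$ I would differentiate the identity $u_-(t)=\int_{-1}^1(e^{\Delta t}u_0')(x)\df x$ in time (justified by parabolic smoothing for $t>0$) and convert the time derivative into a spatial one via the heat equation, obtaining $u_-'(t)=\int_{-1}^1\partial_x^2\bigl(e^{\Delta t}u_0'\bigr)(x)\df x$. This single expression I would then bound in two complementary ways. Keeping both derivatives on the kernel, $\partial_x^2(e^{\Delta t}u_0')=\bigl(\partial_\beta^2\cK_1(t,\cdot)\bigr)*u_0'$ and Cauchy--Schwarz yield $|u_-'(t)|\leqslant 2c_2\,t^{-5/4}\Vert u_0'\Vert_{L^2}$, which is integrable on $(1,+\infty)$. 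Alternatively, evaluating the inner integral by the Fundamental Theorem of Calculus gives $u_-'(t)=\partial_x(e^{\Delta t}u_0')(1)-\partial_x(e^{\Delta t}u_0')(-1)$, whence $|u_-'(t)|\leqslant 2c_1\,t^{-3/4}\Vert u_0'\Vert_{L^2}$, which is integrable on $(0,1)$. Splitting $\int_0^{+\infty}|u_-'(t)|\df t=\int_0^1+\int_1^{+\infty}$ and using the $t^{-3/4}$ bound on the first piece and the $t^{-5/4}$ bound on the second then shows $u_-'\in L^1(\IR_0^+)$.

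The main obstacle is precisely that no single power-law estimate is integrable over all of $(0,+\infty)$: the $t^{-3/4}$ bound fails at infinity, while the $t^{-5/4}$ bound fails at the origin. The resolution is the observation that placing one extra spatial derivative on the heat kernel trades integrability near $t=0$ for faster decay as $t\apc+\infty$, so that the two-region split above closes the argument. The remaining points --- differentiation under the integral sign, the interchange of $\partial_t$ with the spatial Laplacian on $e^{\Delta t}u_0'$, and the vanishing of boundary terms in the integrations by parts --- are all routine consequences of $u_0\in H^1(\IR)$ and the smoothing of the diffusion semigroup, and I would treat them briefly.
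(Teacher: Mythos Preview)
Your proof is correct and follows the same overall strategy as the paper: split the time axis at $t=1$ and exploit heat-kernel scaling together with Cauchy--Schwarz/H\"older against an $L^2$ function coming from $u_0\in H^1(\IR)$. The paper bounds the limit directly by $\Vert u_0\Vert_{L^2}\cdot(2/\pi t)^{1/4}$, computes $\partial_t\cK_1$ explicitly for the large-$t$ piece (against $u_0$), and for the small-$t$ piece invokes the heat identity $\partial_t\cK_1=\partial_x^2\cK_1$ with one integration by parts to land on $u_0'$; the resulting integrals are then dispatched via the technical Lemmas~\ref{lem:xnt2n01} and~\ref{lem:xnt3n1inft}.

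Your reorganisation through the FTC identity $u_-(t)=\int_{-1}^1(e^{\Delta t}u_0')(x)\,\df x$ is a genuine simplification: it puts $u_0'$ in play from the outset, so all three estimates reduce to the single scaling principle $\Vert\partial_\beta^k\cK_1(t,\cdot)\Vert_{L^2}=c_k\,t^{-(2k+1)/4}$, and the appendix lemmas are bypassed entirely. The paper's version, by contrast, mixes $u_0$ and $u_0'$ across the two time ranges and leans on those lemmas for the bookkeeping. Both arguments hinge on the same insight you articulate at the end --- that one must trade one spatial derivative between kernel and data to get the right power on each half of $(0,\infty)$ --- but your packaging makes this transparent.
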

\begin{proof}
Apply H\"older's inequality to \eqref{eq:uminus}. One gets
$$|u_-(t)|\leqslant\Vert u_0\Vert_{L^2}\cdot\sqrt[4]{\frac{2}{\pi t}},$$
which approaches to $0$ when $t\apc+\infty$.

For the absolute integrability of $u_-'(t)$, we just need to show that the following integral is finite
$$\int_0^{+\infty}\left|\int_{\IR}\frac{\partial}{\partial t}\left(\frac{e^{-\frac{|1-\tilde{x}|^2}{4t}}}{\sqrt{4\pi t}}-\frac{e^{-\frac{|1+\tilde{x}|^2}{4t}}}{\sqrt{4\pi t}}\right)u_0(\tilde x)\df\tilde x\right|\df t.$$
We compute 
$$\frac{\partial}{\partial t}\left(\frac{e^{-\frac{|1\pm\tilde{x}|^2}{4t}}}{\sqrt{4\pi t}}\right)=\frac{e^{-\frac{|1\pm\tilde x|^2}{4t}}}{4\sqrt{4\pi t^5}}(|1\pm\tilde x|^2-2t).$$
As $u_0$ is in $H^1(\IR)$, then we can apply \lemref{lem:xnt3n1inft} to conclude the integral is finite on the interval $(1,+\infty)$.

By the property of the heat kernel, \begin{equation}\label{eq:heatproperty}
    \frac{\partial\cK_1(t,|x-y|)}{\partial t}=\frac{\partial^2\cK_1(t,|x-y|)}{\partial x^2},
\end{equation}
and Stokes' theorem (integration by parts), it is now enough to show that the following integral is finite
$$\int_0^1\int_{\IR}\left(\frac{e^{-\frac{|1- \tilde{x}|^2}{4t}}}{\sqrt{4\pi t^3}}\frac{|1-\tilde x|}{2}+\frac{e^{-\frac{|1+\tilde{x}|^2}{4t}}}{\sqrt{4\pi t^3}}\frac{|1+\tilde x|}{2}
\right)|u_0'(\tilde x)|\df\tilde x\df t.
$$
This is true due to \lemref{lem:xnt2n01}.
\end{proof}

Establishing a similar result for $u_+$ is more complicated. Naturally, we should require that the temporal limit of the secretion term, $\lim_{t\apc+\infty}\Phi(t)=\Phi_{\infty}\in\IR^+_0$, exists in order to expect the existence of the steady states. By this limit we mean, that there exists a version of $\Phi$ in its equivalence class in $L^\infty_\loc(\IR)$ that has the indicated limit. Otherwise, the cell would keep secreting more and more compound indefinitely, preventing the system from reaching equilibrium. However, we shall see that an additional regularity on $\Phi$ is needed.

\begin{prop}\label{lem:upluslim}
Assume $\lim_{t\apc+\infty}\Phi(t)=\Phi_{\infty}\in\IR^+_0$ exists. Then
$$\lim_{t\apc+\infty}u_+(t)=2\Phi_\infty\int_0^{+\infty}p_a(t,1)\df t.$$
\end{prop}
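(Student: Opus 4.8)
Here is how I would approach proving Proposition \ref{lem:upluslim}.

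The plan is to split $u_+$ via the representation \eqref{eq:uplusinpa},
\[
u_+(t)=\underbrace{2\int_0^t p_a(\tau,1)\Phi(t-\tau)\df\tau}_{u_+^{\mathrm{tp}}(t)}+\underbrace{\int_{\IR}\bigl(p_a(t,\abs{1-\tilde x})+p_a(t,\abs{1+\tilde x})\bigr)u_0(\tilde x)\df\tilde x}_{u_+^{\mathrm{sp}}(t)},
\]
and to prove separately that $u_+^{\mathrm{tp}}(t)\to 2\Phi_\infty\int_0^{+\infty}p_a(\tau,1)\df\tau$ and $u_+^{\mathrm{sp}}(t)\to 0$ as $t\to+\infty$. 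The target integral is finite since $p_a(\cdot,1)\in L^1(\IR_0^+)$ by \propref{prop:paw11}(i), so the claimed limit makes sense.

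For the temporal part, $u_+^{\mathrm{tp}}=2\,(p_a(\cdot,1)*\Phi)$ is a convolution of the fixed $L^1$-kernel $p_a(\cdot,1)$ with a function tending to $\Phi_\infty$. Writing $\Phi(t-\tau)=\Phi_\infty+(\Phi(t-\tau)-\Phi_\infty)$ decomposes $u_+^{\mathrm{tp}}(t)$ into $2\Phi_\infty\int_0^t p_a(\tau,1)\df\tau$, which converges to $2\Phi_\infty\int_0^{+\infty}p_a(\tau,1)\df\tau$, and the remainder $2\int_0^t p_a(\tau,1)\bigl(\Phi(t-\tau)-\Phi_\infty\bigr)\df\tau$. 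Since $\Phi\in L^\infty_\loc(\IR_0^+)$ has a finite limit at infinity it is (essentially) bounded on $\IR_0^+$; given $\eps>0$, choosing $T_0$ with $\abs{\Phi(s)-\Phi_\infty}<\eps$ for $s\geqslant T_0$ and splitting the $\tau$-integral at $t-T_0$ bounds the remainder by $\eps\norm{p_a(\cdot,1)}_{L^1}+\bigl(\esssup_{s\geqslant0}\abs{\Phi(s)-\Phi_\infty}\bigr)\int_{t-T_0}^{+\infty}\abs{p_a(\tau,1)}\df\tau$. The tail vanishes as $t\to+\infty$ and $\eps$ is arbitrary, so the remainder tends to $0$. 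This step is routine.

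The substance of the proof is the spatial part $u_+^{\mathrm{sp}}(t)\to 0$, where the crude bounds like \eqref{eq:exp bound pa} are useless (they grow in $t$). The key input is $\tilde p_a(\cdot,0)\in L^2(\IR_0^+)$: indeed for $a\in(0,e^{-1}]$ \corref{clry:suff pos condition specific} gives $\tilde p_a(\cdot,0)=\tfrac12 y\geqslant0$, and since then $y'=-ay(\cdot-1)<0$ on $(1,+\infty)$ one has $0<y\leqslant y(1)=1$; combined with the explicit transform $\cL\{y\}(s)=\frac{1}{s+ae^{-s}}$ from \propref{prop:tildepexpdecay} and monotone convergence this yields $\int_0^{+\infty}y=\frac1a<+\infty$, so $y\in L^1\cap L^\infty\subset L^2(\IR_0^+)$. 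I would then push the spatial integral through the subordination identity \eqref{eq:papatilderelation} and Tonelli --- legitimate because $u_0\geqslant0$, $\tilde p_a\geqslant0$ and the kernel is positive --- to get
\[
u_+^{\mathrm{sp}}(t)=\int_0^{+\infty}\frac{\tau e^{-\frac{\tau^2}{4t}}}{\sqrt{4\pi t^3}}\,\tilde u_+^{\mathrm{sp}}(\tau)\df\tau,\qquad \tilde u_+^{\mathrm{sp}}(\tau):=\int_{\IR}\bigl(\tilde p_a(\tau,\abs{1-\tilde x})+\tilde p_a(\tau,\abs{1+\tilde x})\bigr)u_0(\tilde x)\df\tilde x.
\]
By the shift relation \eqref{eq:tilde p beta as p 0}, $\tilde p_a(\tau,\beta)=\tilde p_a(\tau-\beta,0)\theta(\tau-\beta)$, the substitution $\sigma=\tau-\beta$ gives $\int_{\IR}\abs{\tilde p_a(\tau,\abs{1-\tilde x})}^2\df\tilde x=2\int_0^{\tau}\abs{\tilde p_a(\sigma,0)}^2\df\sigma\leqslant 2\norm{\tilde p_a(\cdot,0)}_{L^2}^2$, so Cauchy--Schwarz yields the bound $\abs{\tilde u_+^{\mathrm{sp}}(\tau)}\leqslant C\norm{u_0}_{L^2}\norm{\tilde p_a(\cdot,0)}_{L^2}$, uniform in $\tau$. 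Since $\int_0^{+\infty}\frac{\tau e^{-\frac{\tau^2}{4t}}}{\sqrt{4\pi t^3}}\df\tau=\frac{1}{\sqrt{\pi t}}$, this uniform bound gives $\abs{u_+^{\mathrm{sp}}(t)}\leqslant\frac{C'}{\sqrt{\pi t}}\to0$.

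Adding the two contributions proves the proposition. The step I expect to need the most care is securing $\tilde p_a(\cdot,0)\in L^2(\IR_0^+)$: the bound $\abs{\tilde p_a(t,0)}\leqslant\tfrac12 e^{at}$ from \propref{prop:tildepexpdecay} grows, so its integrability must instead be extracted from positivity together with the explicit form of $\cL\{y\}$ (equivalently, from the fact that the dominant pole $W_0(-a)$ of $\tilde P_a(\cdot,0)$ is negative for $a\in(0,e^{-1}]$), after which the subordination estimate is straightforward.
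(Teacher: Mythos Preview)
Your argument is correct. For the temporal part you essentially reprove \lemref{lem:convolutionlimit} inline, which is exactly what the paper invokes; no difference there. The genuine divergence is in how you handle the spatial part $u_+^{\mathrm{sp}}$.

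The paper is terse: it just says $u_+^{\mathrm{sp}}(t)=O(t^{-1/4})$ ``exactly as in \lemref{lem:uminuslim}''. Unpacked, this amounts to the pointwise comparison $0\leqslant p_a(t,\beta)\leqslant\cK_1(t,\beta)$: from $0\leqslant y\leqslant 1$ (positivity plus $y$ nonincreasing from $y(0)=1$) one has $0\leqslant\tilde p_a(\tau,0)\leqslant\tfrac12$, and then \eqref{eq:papatilderelation} with \eqref{eq:tilde p beta as p 0} gives
\[
0\;\leqslant\; p_a(t,\beta)\;\leqslant\;\tfrac12\int_\beta^{+\infty}\frac{\tau e^{-\tau^2/4t}}{\sqrt{4\pi t^3}}\,\df\tau\;=\;\cK_1(t,\beta).
\]
After this, H\"older in $\tilde x$ against $u_0\in L^2(\IR)$ yields $\abs{u_+^{\mathrm{sp}}(t)}\leqslant 2\norm{u_0}_{L^2}\norm{\cK_1(t,\cdot)}_{L^2}=2\norm{u_0}_{L^2}(8\pi t)^{-1/4}$, literally the computation of \lemref{lem:uminuslim}. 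Your route instead subordinates $u_+^{\mathrm{sp}}$ itself, proves $\tilde p_a(\cdot,0)\in L^2$ via $y\in L^1\cap L^\infty$ (using $\int_0^{+\infty}y=\cL\{y\}(0^+)=1/a$), and bounds $\tilde u_+^{\mathrm{sp}}$ uniformly by Cauchy--Schwarz. This works and even gives the sharper rate $O(t^{-1/2})$, but the detour through $y\in L^1$ is unnecessary: you already have $0\leqslant\tilde p_a\leqslant\tfrac12$ in hand, and that alone suffices for the paper's shorter argument.
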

\begin{proof}
By \lemref{lem:convolutionlimit}, the temporal limit of $u_{+}^{\mathrm{tp}}$ in is \eqref{eq:uplusinpa}
$$\lim_{t\apc+\infty}u_{+}^{\mathrm{tp}}(t)=\lim_{t\apc+\infty}\Phi(t)\cdot\int_0^{+\infty}p_a(t,1)\df t$$
because $2\Phi\in L^\infty(\IR_0^+)$ with $\lim_{t\apc+\infty}2\Phi=2\Phi_{\infty}$, and $p_a(t,1)\in L^1(\IR_0^+)$ by \propref{prop:tildepexpdecay}. On the other hand,
$u_{+}^{\mathrm{sp}}$
in \eqref{eq:uplusinpa} has decaying rate $O(t^{-\frac{1}{4}})$, which can be shown exactly as in \lemref{lem:uminuslim}.
\end{proof}

\begin{prop}\label{prop:uplusprimeregular}
Assume $\lim_{t\apc+\infty}\Phi(t)=\Phi_{\infty}\in \IR_0^+$ exists, and $\Phi-\Phi_{\infty}\in L^1(\IR_0^+)$. Then $u_+'\in L^1(\IR_0^+)$.
\end{prop}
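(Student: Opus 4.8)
The plan is to split $u_+$ according to \eqref{eq:uplusinpa} into its temporal part $u_+^{\mathrm{tp}}(t)=\int_0^t 2p_a(\tau,1)\Phi(t-\tau)\df\tau$ and its spatial part $u_+^{\mathrm{sp}}(t)=\int_\IR\bigl(p_a(t,|1-\tilde x|)+p_a(t,|1+\tilde x|)\bigr)u_0(\tilde x)\df\tilde x$, and to show separately that each has an $L^1(\IR_0^+)$ derivative. The role of the two hypotheses is confined entirely to the temporal part.

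For the temporal part, write $\Phi=\Phi_\infty+\psi$ with $\psi:=\Phi-\Phi_\infty\in L^1(\IR_0^+)$, so that $u_+^{\mathrm{tp}}=2\Phi_\infty\int_0^t p_a(\tau,1)\df\tau + 2\,(p_a(\cdot,1)*\psi)$. Differentiating gives $(u_+^{\mathrm{tp}})'(t)=2\Phi_\infty p_a(t,1)+2\,(p_a(\cdot,1)*\psi)'(t)$. The first summand lies in $L^1(\IR_0^+)$ by \propref{prop:paw11}(i). For the second, recall from \propref{prop:paw11}(ii) that $p_a(\cdot,1)\in W^{1,1}(\IR_0^+)$, and from the bound \eqref{eq:exp bound pa} that $p_a(0^+,1)=0$; hence the extension of $p_a(\cdot,1)$ by zero to $\IR$ belongs to $W^{1,1}(\IR)$ with no jump at the origin, so that $(p_a(\cdot,1)*\psi)'=p_a'(\cdot,1)*\psi$. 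Young's inequality then yields $\Norm{p_a'(\cdot,1)*\psi}_{L^1}\leqslant\Norm{p_a'(\cdot,1)}_{L^1}\Norm{\psi}_{L^1}<+\infty$. This is exactly where both ``$\Phi_\infty$ exists'' and ``$\Phi-\Phi_\infty\in L^1$'' enter.

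For the spatial part the strategy mirrors the proof of \lemref{lem:uminuslim}. The essential new ingredient is that $p_a$ solves the heat equation in the pair of variables $(t,\beta)$ on $\beta>0$: since $\partial_\beta^2 P_a(s,\beta)=sP_a(s,\beta)$ and $p_a(0^+,\beta)=0$ for $\beta>0$, one has $\partial_t p_a(t,\beta)=\partial_\beta^2 p_a(t,\beta)$, the exact analogue of the property \eqref{eq:heatproperty} used for $\cK_1$. Differentiating under the integral sign and estimating $\int_0^{+\infty}|(u_+^{\mathrm{sp}})'(t)|\df t$ by Tonelli, it suffices to control $\int_\IR|u_0(\tilde x)|\int_0^{+\infty}|\partial_t p_a(t,|1\pm\tilde x|)|\df t\,\df\tilde x$. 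I would split the $t$-integral at $t=1$. On $(1,+\infty)$ I would insert the subordination formula \eqref{eq:papatilderelation} together with the exponential bound on $\tilde p_a$ from \propref{prop:tildepexpdecay} and conclude by \lemref{lem:xnt3n1inft}, exactly as in \propref{prop:paw11}(ii); the Gaussian decay in $\beta$ makes the resulting $\tilde x$-integral converge after a Cauchy--Schwarz estimate using $u_0\in L^2(\IR)$.

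The main obstacle is the interval $t\in(0,1)$ near the two source points $\tilde x=\pm1$, where $\beta=|1\pm\tilde x|\apc0$ and the naive pointwise $W^{1,1}$-bound of \propref{prop:paw11}(ii) degenerates like $\beta^{-2}$ and is not integrable in $\tilde x$. Here I would use the heat-equation property to rewrite $\partial_t p_a(t,|1\pm\tilde x|)=\partial_{\tilde x}^2 p_a(t,|1\pm\tilde x|)$ for $\tilde x\neq\pm1$ and integrate by parts once in $\tilde x$, transferring one derivative onto $u_0'\in L^2(\IR)$; the integration by parts produces, besides $-\int_\IR\partial_{\tilde x}[p_a(t,|1\pm\tilde x|)]\,u_0'(\tilde x)\df\tilde x$, a boundary term at the kink $\tilde x=\pm1$ proportional to $\partial_\beta p_a(t,0^+)\,u_0(\pm1)$. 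Two points then have to be checked: first, that $\int_0^1|\partial_\beta p_a(t,0^+)|\df t<+\infty$, which holds because in the subordination representation the two most singular contributions to $\partial_\beta p_a(t,0^+)$ cancel, the remainder coming only from the region where $\tilde p_a(\cdot,0)$ departs from its constant value $\tfrac12$ on $[0,1]$ and hence being $O(e^{-c/t})$ as $t\apc0^+$; and second, that the remaining integral is finite, which follows from \lemref{lem:xnt2n01} since $\partial_\beta$ produces the taming factor $|1\pm\tilde x|$ that vanishes at the source points, precisely as in the final estimate of \lemref{lem:uminuslim}. Combining the temporal and spatial contributions gives $u_+'\in L^1(\IR_0^+)$.
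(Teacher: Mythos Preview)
Your treatment of the temporal part and of the spatial part on $t>1$ is essentially the same as the paper's. For the spatial part on $t\in(0,1)$ you take a genuinely different route. The paper works inside the subordination representation \eqref{eq:papatilderelation} and integrates by parts in the subordination variable $\tau$, using $\frac{\tau e^{-\tau^2/4t}}{\sqrt{4\pi t^3}}=-2\,\partial_\tau\cK_1(t,\tau)$ together with the delay relation $\tilde p_a'(\tau,\beta)=-a\tilde p_a(\tau-1,\beta)$; this produces a heat-kernel boundary term at $\tau=|1\pm\tilde x|$ (which is then handled exactly by the argument of \lemref{lem:uminuslim}, including the IBP in $\tilde x$ there) plus a remainder whose inner $\tau$-integral starts at $1+|1\pm\tilde x|$, bounded via \eqref{eq:paexpboundnega0}, \eqref{eq:paexpboundnega2} and \lemref{lem:explinearbound}. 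You instead observe the structural fact that $p_a$ itself satisfies $\partial_t p_a=\partial_\beta^2 p_a$ for $\beta>0$ and perform the IBP in $\tilde x$ directly at the level of $p_a$, without unpacking the subordination integral. Your route is conceptually cleaner and makes transparent why the $H^1$-regularity of $u_0$ is what matters; the paper's route stays closer to the explicit kernel estimates already assembled.

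One point to tighten in your sketch: the final step claims that ``$\partial_\beta$ produces the taming factor $|1\pm\tilde x|$'', but $\partial_\beta p_a(t,\beta)$ actually splits as $-\tfrac{\beta}{2}\,\cT(t,\beta)-\int_\beta^\infty \cT(t,\tau)\,\tilde p_a'(\tau-\beta,0)\,\df\tau$ (with $\cT$ as in \lemref{lem:lapkerncomputation}). Only the first piece carries the factor $\beta=|1\pm\tilde x|$ and falls under \lemref{lem:xnt2n01} as you say. The second piece has no such factor; it is controlled instead because $\tilde p_a'(\cdot,0)$ is supported on $[1,\infty)$, so the $\tau$-integral starts at $\beta+1\geqslant 1$ and is $O(e^{-c/t})$ uniformly in $\beta$ --- the same mechanism you already invoke for the boundary term $\partial_\beta p_a(t,0^+)$. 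Once this second piece is acknowledged and estimated (e.g.\ by Cauchy--Schwarz in $\tilde x$ against $u_0'$), your argument closes.
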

\begin{proof}
In order to show that $u_+'$ is absolutely integrable, we show this separately for $(u_+^{\mathrm{tp}})'$ and $(u_+^{\mathrm{sp}})'$. 
For $(u_+^{\mathrm{tp}})'$ use the basic properties of Laplace transform; see Table \cosref{tab:laplace}{A.1}, 
\begin{equation}\label{eq:uplutpprime}
(u_+^{\mathrm{tp}})'=\int_0^t2(p_a'(t-\tau,1)-p_a(0,1))\Phi(\tau)\df\tau.
\end{equation}
By \eqref{eq:paboundinf}, $p_a(0,1)=0$. In this case,
$(u_+^{\mathrm{tp}})'=2p_a'(\cdot,1)\ast\Phi=2p_a'(\cdot,1)\ast(\Phi-\Phi_\infty+\Phi_\infty)=2p_a'(\cdot,1)\ast\Phi=2p_a'(\cdot,1)\ast(\Phi-\Phi_\infty)+2\Phi_\infty\ast p_a'(\cdot,1)$. \propref{prop:tildepexpdecay} shows that $p_a'(\cdot,1)$ is in $L^1(\IR_0^+)$. By Young's Inequality (see\cite{folland-analysis} Proposition 8.9, p. 241), $2p_a'(\cdot,1)\ast(\Phi-\Phi_\infty)$ is in $L^1(\IR_0^+)$. Hence, $(u_+^{\mathrm{tp}})'$ is in $L^1(\IR_0^+)$.

It remains to show that $(u_+^{\mathrm{sp}})'$ is absolutely integrable. In other words, we need to show that the following integral is finite: \begin{equation}\label{eq:uplusspder}
\int_0^{+\infty}\left|\int_{\IR}\int_0^{+\infty}\frac{\partial}{\partial t}\left(\frac{\tau e^{-\frac{\tau^2}{4t}}}{\sqrt{4\pi t^3}}\left(\tilde p_a(\tau,|1-\tilde x|)+\tilde p_a(\tau,|1+\tilde x|)\right)\right)\df\tau u_0(\tilde x)\df\tilde x\right|\df t.
\end{equation}
The technique remains the same as in \propref{prop:paw11} for $t\in(1,+\infty)$: computing the time derivative directly and invoking \lemref{lem:xnt3n1inft}. On $t\in(0,1)$, the argument is trickier. First observe that
$$
\frac{\tau e^{-\frac{\tau^2}{4t}}}{\sqrt{4\pi t^3}}=-2\cdot\frac{\partial}{\partial\tau}\left(\frac{e^{-\frac{\tau^2}{4t}}}{\sqrt{4\pi t}}\right),$$
and $$\tilde p_a'(\tau,\beta)=-a\tilde p_a(\tau-1,\beta).$$ Together with Stokes' theorem, the finiteness of \eqref{eq:uplusspder} is reduced to verifying the finiteness of the following integrals,
\begin{align*}
\,&\int_0^1\int_{\IR}\left|\int_0^{+\infty}\frac{\partial}{\partial t}\frac{\partial}{\partial\tau}\left(\frac{e^{-\frac{\tau^2}{4t}}}{\sqrt{4\pi t}}\right)\tilde p_a(\tau,|1\pm\tilde x|)\df\tau u_0\df\tilde x\right|\df t\\
=\,&\int_0^1\left|\int_{\IR}\frac{\partial}{\partial t}\left(-\frac{e^{-\frac{|1\pm\tilde x|^2}{4t}}}{\sqrt{4\pi t}}\tilde p_a(|1\pm\tilde x|,|1\pm\tilde x|)-\int_0^{+\infty}\frac{e^{-\frac{\tau^2}{4t}}}{\sqrt{4\pi t}}\tilde p_a'(\tau,|1\pm\tilde x|)\right)\df\tau u_0\df x\right|\df t\\
\leqslant&\int_0^1\left|\int_{\IR}\frac{\partial}{\partial t}\left(\frac{e^{-\frac{|1\pm\tilde x|^2}{4t}}}{2\sqrt{4\pi t}}\right)u_0\df \tilde x\right|\df t+\int_0^1\left|\int_{\IR}\int_0^{+\infty}\frac{\partial}{\partial t}\left(\frac{e^{-\frac{\tau^2}{4t}}}{\sqrt{4\pi t}}a\tilde p_a(\tau-1,|1\pm\tilde x|)\right)\df\tau u_0\df\tilde x\right|\df t.
\end{align*}
The first integral is finite as this was already proven in \lemref{lem:uminuslim} by noticing the property \eqref{eq:heatproperty} of the heat kernel. Because $p_a(\tau-1,|1\pm\tilde x|)$ is identically $0$ on $[0,1+|1\pm\tilde x|)$, we may reset the lower limit of the innermost integral to be $|1\pm\tilde x|$. The second integral is now upper-bounded by
\begin{equation}\label{eq:3fold1plusx}
\int_0^1\int_{\IR}\int_{|1\pm\tilde x|}^{+\infty}\left(\frac{e^{-\frac{\tau^2}{4t}}}{2\sqrt{4\pi t^3}}+\frac{\tau^2e^{-\frac{\tau^2}{4t}}}{4\sqrt{4\pi t^5}}\right)Ce^{\omega\tau}\df\tau|u_0|\df\tilde x\df t.
\end{equation}
Use the estimation \eqref{eq:paexpboundnega0} and \eqref{eq:paexpboundnega2}, \eqref{eq:3fold1plusx} is further bounded from above by 
\begin{equation}\label{eq:extsquare}
\int_0^1\int_{\IR}\left(\frac{Ce^{-\frac{|1\pm\tilde x|}{2\sqrt{t}}}}{2\sqrt{\pi t^2}}+\frac{Ce^{-\frac{|1\pm\tilde x|}{2\sqrt{t}}}}{4\sqrt{\pi t^4}}(|1\pm\tilde x|^2+4\sqrt{t}|1+\tilde x|+8t)\right)|u_0|\df\tilde x\df t.
\end{equation}
Up to translation, each term in \eqref{eq:extsquare} satisfies the conditions in \lemref{lem:explinearbound}.
\end{proof}

Putting all preliminary results together, we reach our final result of this section,

\begin{proof}[Proof of \thmref{thm:steadystate}]
$\lim_{t\apc+\infty}u_+(t)$ exists by \lemref{lem:upluslim}. We apply \thmref{thm:fvt} to conclude that the limit is equal to $\lim_{s\apc0^+}sU_+(s)$. Given that $\Phi\in L^{\infty}_{\mathrm{loc}}(\IR_0^+)$ and $\lim_{t\apc+\infty}\Phi(t)=\Phi_{\infty}$. Then $\lim_{s\apc 0^+}s\cdot\Theta(s)=\Phi_{\infty}$. Thus, 
\[
    \lim_{s\apc0^+}sU_+(s) =\lim_{s\apc0^+}s(2P_a(s,1)\Theta(s)) 
    =\lim_{s\apc0^+}\frac{e^{-\sqrt{s}}}{\sqrt{s}+ae^{-\sqrt{s}}}\cdot\lim_{s\apc0^+}s\Theta(s)
    =\frac{\Phi_{\infty}}{a}.
\]
Moreover, $\lim_{t\apc+\infty}u_-(t)=0$ by \lemref{lem:uminuslim}, which shows that 
\[
    \lim_{t\apc+\infty}u(t,\pm1)=\frac{\Phi_{\infty}}{2a}.
\]
The regularity of the derivates is ensured by \lemref{lem:uminuslim} and \propref{prop:uplusprimeregular}. Finally, the steady state on $\IR\smallsetminus(-1,1)$ is obtained via \propref{prop:steadyderive}.
\end{proof}

\begin{rem}
It is crucial to have the all these extra $L^1$-regularity conditions on $u'(t,\pm1)$, $u_\pm'$, or $\Phi-\Phi_{\infty}$. Qualitatively, this just means that these functions approaching to their temporal limits with ``mild fluctuations". A key step is to apply Young's inequality and \lemref{lem:convolutionlimit} for convolutions. (The condition in ``$\Phi-\Phi_{\infty}\in L^1(\IR^+_0)$" in \propref{prop:uplusprimeregular} can also be substituted -- but not equivalently -- by ``$\Phi'\in L^1(\IR^+_0)$".) According to \lemref{lem:c00l1counter}, the convolution of an $L^1$-function and a $C_0^0$-function, a continuous function vanishing at infinity, is not necessarily an $L^1$-function. Namely, we can construct a $\Phi$ with temporal limit $0$ (without assuming that $\Phi-\Phi_{\infty}$ or $\Phi'$ belongs to $L^1(\IR_0^+)$), yet $u_+'\not\in L^1(\IR_0^+)$. Consequently, this leads non-existence of the steady states of the system.
\end{rem}

\section{Numerical Results}\label{Sec_numerical_results}
Although we have good control over $\tilde p_a(t,\beta)$ -- as illustrated in Fig. \ref{fig:tildepapos} and Fig. \ref{fig:tildepanega} -- thanks to its closed-form expression, this function is primarily introduced to study further theoretical insights of $p_a(t, \beta)$, which is more directly tied to the dynamics of the system. However, 
$p_a(t, \beta)$ has no such analytic formula. Therefore, in this section, we investigate the behaviours of $p_a(t, \beta)$ from a numerical perspective.

The function $p_a(t,\beta)$ is obtained as the inverse Laplace transform of $P_a(s,\beta)$ given in \eqref{eq:lappspell} by the Bromwich integral formula (\corref{cor:bromwich}),
\begin{equation}\label{eq:pabromwich}
p_a(t,\beta)=\frac{1}{2\pi i}\int_{\sigma-i\infty}^{\sigma+i\infty}\frac{e^{st-\beta\sqrt{s}}}{2\sqrt{s}+2ae^{-\sqrt{s}}}\df s.
\end{equation}
We explore its behaviour under various choices of $a$ and $\beta$, aiming to reveal its key qualitative and quantitative features. The improper integral \eqref{eq:pabromwich} is approximated by the following definite integral:
\begin{equation}
p_a(t,\beta)\approx\frac{e^{\sigma t}}{2\pi }\int_{-L}^L\Re\left(e^{it\xi}P_a(\sigma+it\xi ,\beta)\right)\df\xi.
\end{equation} 
After testing different values, we found that setting $\sigma = 0.1$ and the truncation limit $L = 50$ yields stable and reliable numerical results.

Fig. \ref{fig:asmall} shows the graphs of $p_a(t,\beta)$ for $a=0.25$ and $a=e^{-1}$, along with various values of $\beta$. These values of $a$ lie in the range $[0,e^{-1}]$, as derived from the key theoretical results in Section \ref{Sec:Pos}, and the numerical simulations confirm the theoretical results. For $a>e^{-1}$ but not yet very large, $p_a(t, \beta)$ can be negative and appears as a stable oscilation as time proceeds; see Figure \ref{fig:a2}. 

\begin{figure}[h!]
    \centering
    \subfigure[$a = 0.25$]{
    \includegraphics[width = 0.48\textwidth]{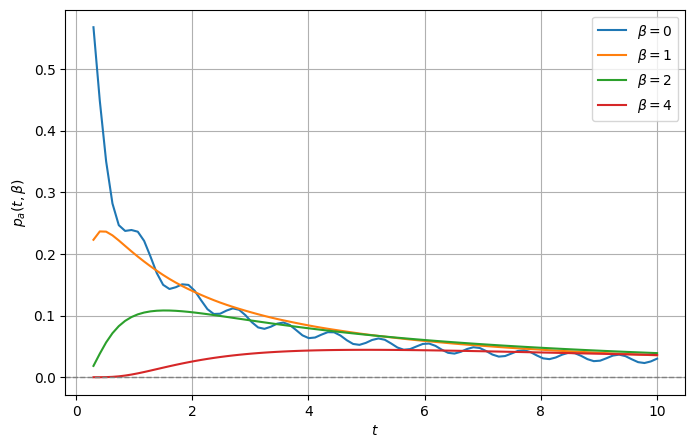}}
    \subfigure[$a=e^{-1}$]{
    \includegraphics[width = 0.48\textwidth]{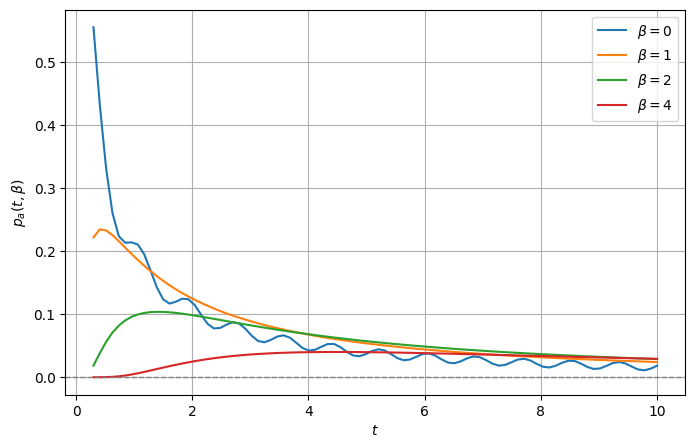}}
    \caption{The plot shows the values of $p_a(t,\beta)$ against $t$, with $a=0.25$ (Panel (a)) and $a=e^{-1}$ (Panel (b)). In each subfigure, we consider $\beta = \{0,1,2,4\}$.}
    \label{fig:asmall}
\end{figure}

\begin{figure}[h!]
    \centering
    \subfigure[$a=1.0$]{
    \includegraphics[width = 0.48\textwidth]{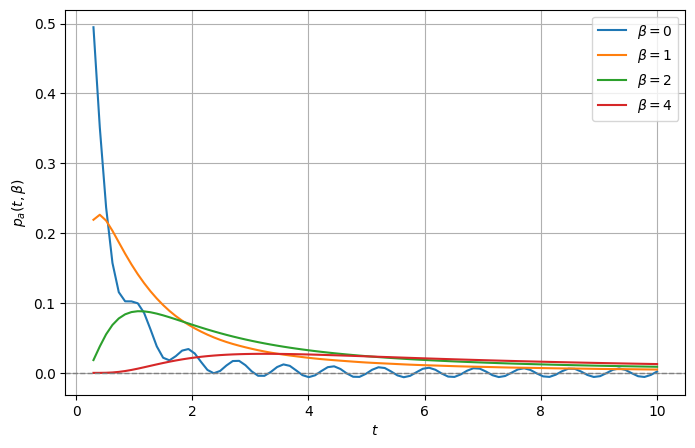}}
    \subfigure[$a=2.0$]{
    \includegraphics[width = 0.48\textwidth]{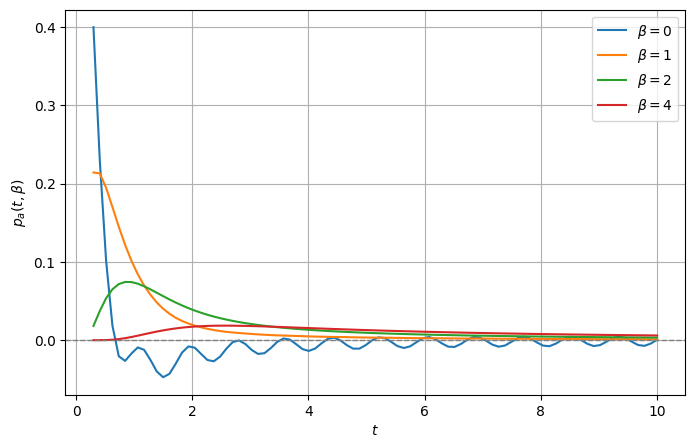}}
    \caption{The plot shows the values of $p_a(t,\beta)$ against $t$ and negative value starts appearing as stable oscilations, with $a=1.0$ (Panel (a)) and $a=2.0$ (Panel (b)). In each subfigure, we consider $\beta \in \{0,1,2,4\}$.}
    \label{fig:a2}
\end{figure}

For large values of $a$, say $a=50$, we expect that $p_a(t,\beta)$ to be an unstable oscillation (Fig. \ref{fig:tildepapos}) as the theoretical results suggest. Here one needs to be careful with selecting an eligible integral contour, i.e., to evaluate the right-hand side of Expression \eqref{eq:pabromwich}, there should be no poles on the right plane of the vertical line $\mathrm{Re} = \sigma$. In the example of $a = 50$, $P_a(s,\beta)$ has its principle pole
\begin{align*}
    \sqrt{s}+50e^{-\sqrt{s}}=0\;\;&\Leftrightarrow\;\; \sqrt{s}e^{-\sqrt{s}}=-50\\
    \;\;&\Leftrightarrow\;\; s=[W_k(-50)]^2.
\end{align*}
In this case, the principle pole is $[W_0(-50)]^2\approx 1.193+12.686i$. We need to choose $\sigma>\Re([W_0(-50)]^2)\approx1.193$. Therefore, our original choice of $\sigma=0.1$ fails to produce a trustworthy result (see Fig. \ref{fig:a50}(a)). Instead, in Figure \ref{fig:a50}(b), we used $\sigma = 1.2$. 
\begin{figure}[h!]
    \centering
    \subfigure[$\sigma = 0.1$]{
    \includegraphics[width = 0.48\textwidth]{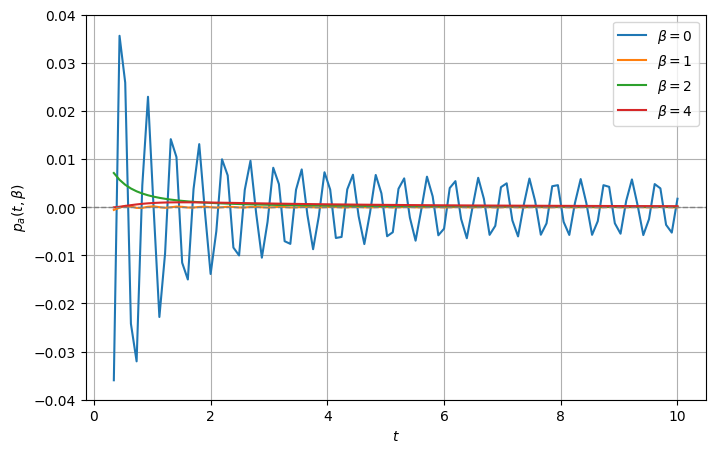}}
    \subfigure[$\sigma = 1.2$]{
    \includegraphics[width = 0.48\textwidth]{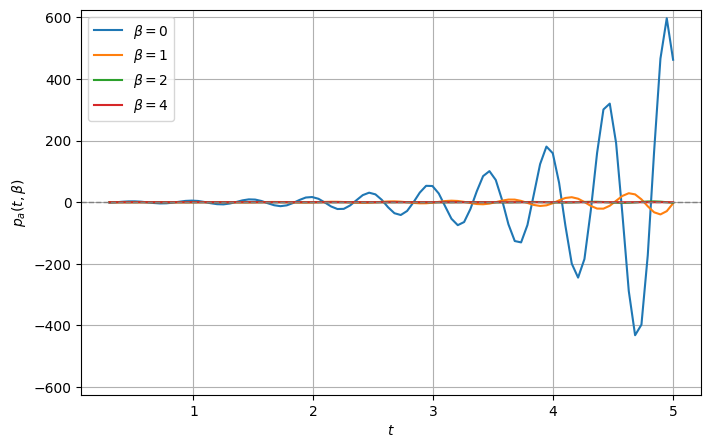}}
    \caption{The plot shows the values of $p_a(t, \beta)$ against $t$ when $a = 50$, with $\sigma = 0.1$ (Panel (a)) using an improper integral contour as defined in Expression \eqref{eq:pabromwich}and $\sigma = 1.2$ (Panel (b)) using a proper contour.}
    \label{fig:a50}
\end{figure}

The critical case happens when $[W_0(-\alpha_0)]^2$ is purely imaginary. Thus, $\arg(W_0(-\alpha_0))=\frac{\pi}{4}$ and $W_0(-\alpha_0)=x+ix$ for some $x>0$. By definition, $$(x+ix)e^{x+ix}=-\alpha_0, \qquad \alpha_0\in\IR$$ which implies that 
\begin{align*}
&\Im\left(e^x(x+ix)(\cos(x)+i\sin(x)\right)=0\\ \Rightarrow\;\; & \sin(x)+\cos(x)=0\\
\Rightarrow\;\; & x=\frac{3\pi}{4}+k\pi,\quad k\in\IZ.
\end{align*}
The smallest positive solution is then $\displaystyle x=\frac{3\pi}{4}$. Together with the relation 
$$-\alpha_0 = \Re\left(e^x(x+ix)(\cos(x)+i\sin(x)\right),$$ and hence, $$\alpha_0=\frac{3\pi\sqrt{2}}{4}e^{\frac{3\pi}{4}}\approx35.157.$$ In this case, we obtain a periodic undamped oscillation as shown in Fig. \ref{fig:atheta_0})
\begin{figure}[h!]
    \centering
    \includegraphics[width = 0.6\textwidth]{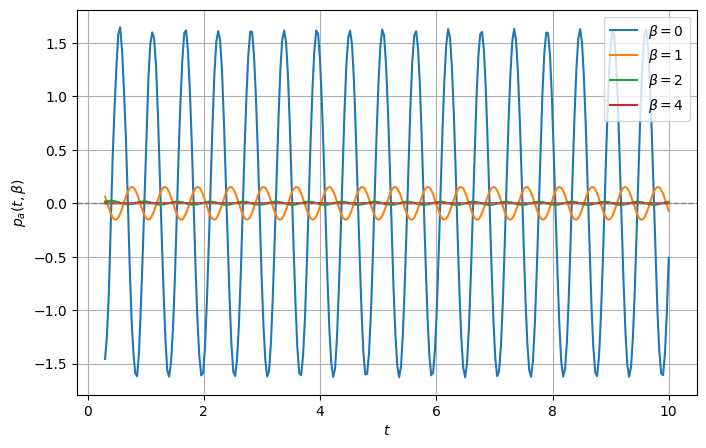}
    \caption{Undamped oscillation occurs when $[W_0(-\alpha_0)]^2$ only contains imaginary part. Here, we take $a = 35.157$ and $\beta\in\{0,1,2,4\}$.}
    \label{fig:atheta_0}
\end{figure}

Further using Monte-Carlo simulation, we can find the purple region where $p_a$ is nonnegative and the red region where $p_a$ is negative at some $t$. We compute $p_a(t,\beta)$ for 20,000 pairs of $(a,\beta)$ uniformly randomly chosen in the region $ [0,10]\times [0,3]$. However, due to the limitations in CPU performance and numerical precision, we can only test $p_a(t,\beta)>-\varepsilon$ for $t\in [0,4]$ with the tolerance to be $\varepsilon:=0.01$. The resulting distribution is shown in Fig. \ref{fig:monte-carlo}. 

Numerically, we run the simulation on the pairs of $(a, \beta)$, which range from $(0.08, 10)$ as grid points. The simulation is conducted between $t\in (0, 400)$, and negativity is defined by the ratio between the maximum and minimum value in the inverse Laplace transform: if the minimum value is negative and the ratio, $|\min(p_a(t, \beta))|/|\max(p_a(t, \beta))|>10^{-5}$ holds, then there exists negative solution.

\begin{figure}[h!]
    \centering
    \includegraphics[width = 0.9\textwidth]{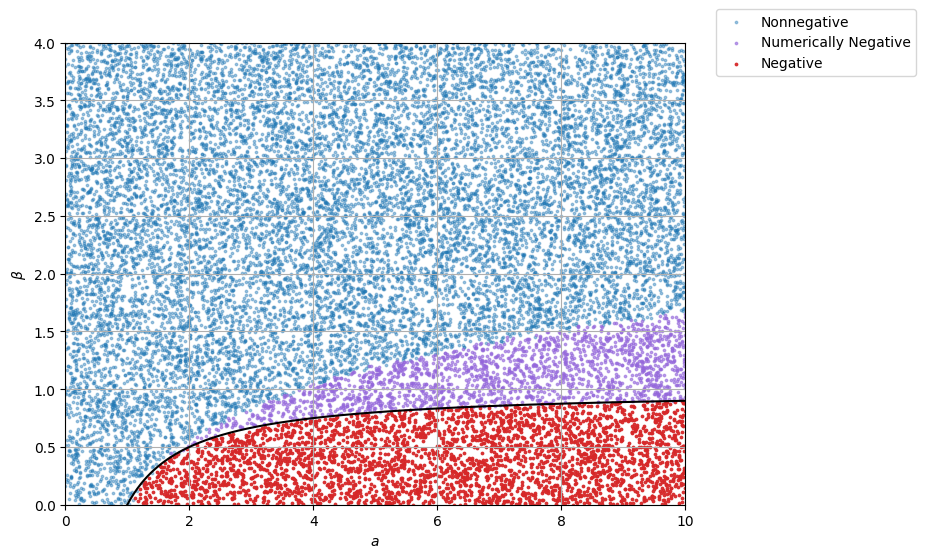}\caption{Monte-Carlo simulations on solving System \eqref{eq: system considered u} -- \eqref{def:Psi new} with various values of $(\alpha,\beta)$. Both purple and red points represent the existence of negative solutions: the red points are aligned with theoretical analysis in Section \ref{Sec:Pos} and the purple points are newly found via numerical simulations. The blue points represent the nonnegative solutions.}
    \label{fig:monte-carlo}
\end{figure}
The critical black curve in Fig. \ref{fig:monte-carlo} is  $a\beta-a+1=0$ as in Section \ref{Sec:Pos}. The purple points under black curve are falsely positive, likely due to numerical imprecision and the limited time window $t \in [0,4]$ used in the simulation. It is possible that $p_a(t,\beta)$ is already negative in theory but the value is too close to zero to comparing to the tolerance, or that it becomes negative for larger values of $t$ beyond the time window.

The implementation is carried out on CPU \texttt{AMD EPYC 9R45} using \texttt{Python 3.9} for data science with \texttt{NumPy} and \texttt{SciPy}.

\section{Conclusion}\label{Sec_conclusions}
This paper investigates a time-evolving biological system of compound-exchange cells modeled by a nonlocal PDE with negative feedback. The study is motivated by the two-dimensional models in \cite{Peng2023} and \cite{Yang2025}, which we reduce to a one-dimensional setting for analytical tractability. We focus on the positivity and steady-state behavior of the system. A novel aspect of our approach lies in the use of Laplace transform techniques, which convert the temporal evolution problem into an algebraic and complex-analytic one. Numerical evidence further confirms and supports our analytical results.

Some numerical simulation (see \secref{Sec_numerical_results}) show that for certain pairs $(a, \beta)$ lying just above the critical curve in Fig. \ref{fig:possible-reject}, the function $\tilde p_a(t, \beta)$ can still be negative at some $t$. The schematic Fig. \ref{fig:certify-numerical} shows these adjustments based on the theoretical results \thmref{thm:upluspos} and the numerical simulations. It also reveals that the critical curve is not sharp, very possibly because the curve is obtained by considering only the first derivative of $P_a(t,\beta)$ and there are many more higher derivatives thereof which contain more refined and delicate information.

\begin{figure}[H]
    \centering
    \includegraphics[scale=0.45]{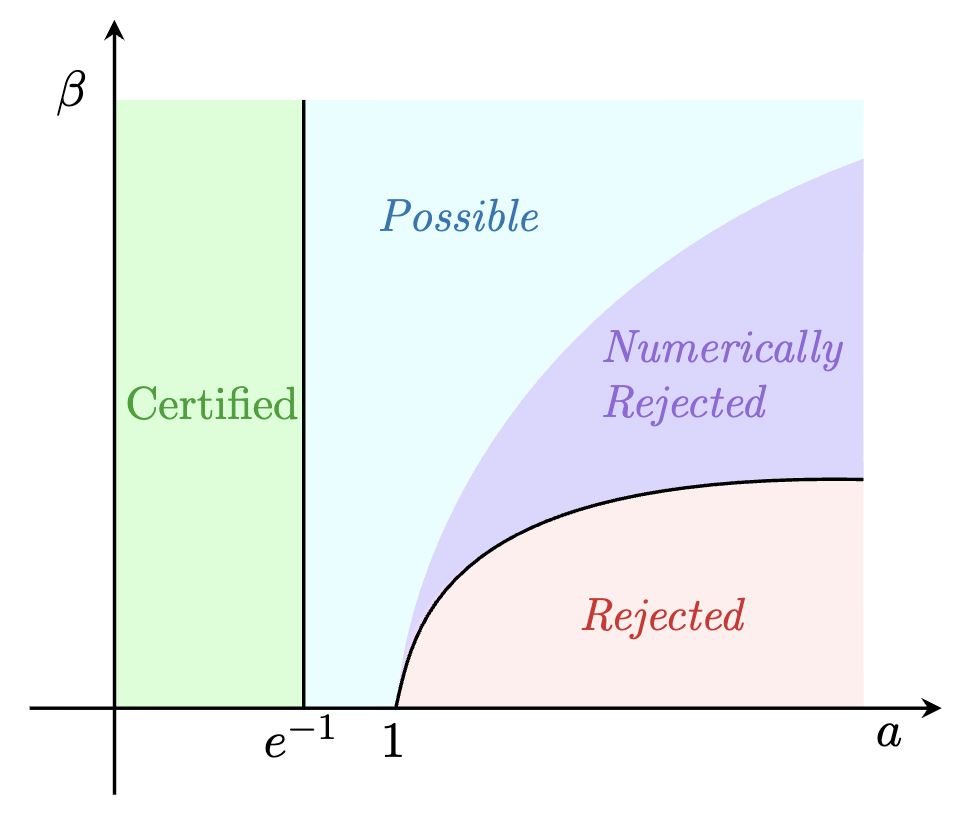}
    \caption{Adjusted schematic diagram with the theoretically certified region for positivity and the numerically rejected region}
    \label{fig:certify-numerical}
\end{figure}

At the heart of our analysis are the functions $p_a(t, \beta)$ and $\tilde p_a(t,\beta)$, which encapsulate key dynamical properties of the system. Through their behavior, we derive conditions on the feedback coefficient $a$, the secretion rate $\Phi$, and the initial condition $u_0$ that ensure positivity and the existence of steady states.

The two-dimensional models in \cite{Peng2023} and \cite{Yang2025} are considerably more complex, particularly due to the difficulty of applying Laplace transform techniques in higher dimensions. We leave a detailed exploration of those systems to future work.

\newpage

\renewcommand{\theHsection}{A\arabic{section}}

\appendix
\noindent{\huge\textbf{\hypertarget{app}{Appendix}}}

\section{Essentials of Laplace Transform Theory}\label{app:A}
In this Appendix, we collect and prove several relevant results about Laplace transform required in \secref{Sec:Pos} and \secref{Sec:Laplace}. See \secref{Sec:Laplace} for the basic definitions and properties.

\begin{defn}\label{defn:loclp}
Let $f:\IR_0^+\to\IR$. We say $f$ is \textit{locally}-$L^p$ for $p\in[1,\infty]$, namely $f\in L^p_{\mathrm{loc}}(\IR_0^+)$, if $f\in L^p([t_0,t_1])$ for any compact subinterval  $[t_0,t_1]\subseteq\IR_0^+$. We say $f\in L^p_{\mathrm{loc}}(\IR_0^+)$ is of \textit{$($eventually$)$ exponential order} $\omega\in\IR$ if there exists $t_0\geqslant0$ and zero-measure $U\subseteq[t_0,+\infty)$ such that $\sup_{t\in[t_0,+\infty)\smallsetminus U}|e^{-\omega t}f(t)|<+\infty$, namely, $\esssup_{t\geqslant t_0}|e^{-\omega t}f(t)|<+\infty$.
\end{defn}
Note that our definition of functions of (eventually) exponential order in \defnref{defn:loclp} is different from the standard one in literature, say Section 1.4 in \cite{Arendt-Batty}. E.g., $\frac{1}{\sqrt{t}}$ whose Laplace transform exists $\sqrt{\frac{\pi}{s}}$; see Table \cosref{tab:laplace}{A.1} below, but it is not square-integrable along any vertical line $\{\sigma+yi\mid y\in\IR\}$ with $\sigma>0$. However, this kind of functions is actually important for us. 

\begin{exa}
Extend any function in $L^1_\mathrm{loc}(\IR_0^+)$ by $0$ for $t<0$. Let $\theta$ be the \textit{Heaviside step function}
$$\theta(t)=\begin{cases}
    1, & t\geqslant0,\\
    0, & t<0.
\end{cases}$$
Some basic Laplace transform pairs are mentioned in the following table. More details can be found in \cite{oberhettinger-laplace}. 

\begin{center}\hypertarget{tab:laplace}{}
    \begin{tabular}{l|c}
    
        \multicolumn{1}{c|}{$f(t)$} & $F(s) = \mathcal{L}\{f\}$ \\ \hline\xrowht{15pt}
        $af_1(t)+bf_2(t)$ & $aF_1(s)+bF_2(s)$\\ \xrowht{15pt}
        $f(ct)$ & $\frac{1}{c} F \left( \frac{s}{c} \right)$\\ \xrowht{15pt}
        $t^p\theta(t)$, $p > -1$ & $\frac{\Gamma(p+1)}{s^{p+1}}$\\ \xrowht{15pt}
        $f(t-t_0)$ & $e^{-t_0s} F(s)$ \\ \xrowht{15pt}
        $ (f\ast g)(t)$ & $F(s) G(s)$\\ \xrowht{15pt}
        $f'(t)$ & $sF(s)-f(0)$
    \end{tabular}
    \vspace{1.5em}
    
    \footnotesize\textbf{\textit{Table A.1: Table of Laplace Transforms}}
\end{center}
\end{exa}

\begin{thm}\label{thm:lapexist}
Let $f\in L^1_{\mathrm{loc}}(\IR_0^+)$. If $\cL\{f\}$ exists for some $s_0\in\IC$, then $\cL\{f\}$ exists for all $s$ with $\Re(s)>\Re(s_0)$. In this case, $F:\{s\in\IC\mid\Re(s)>\Re(s_0)\}\to\IC$ is holomorphic. Assume moreover $f$ is of exponential order $\omega$. Then $\cL\{f\}$ exists for all $s$ with $\Re(s)>\omega$. 
\end{thm}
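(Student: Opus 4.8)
The plan is to prove the three assertions in turn, treating the first — that mere existence of $\cL\{f\}(s_0)$ forces existence throughout the open half-plane to its right — as the substantive point, with holomorphy and the exponential-order claim following with comparatively little effort. Throughout I would write $\mu := s - s_0$, so that $\Re(\mu) > 0$ precisely when $\Re(s) > \Re(s_0)$, and introduce the primitive
\[
    \Phi(t) := \int_0^t e^{-s_0\tau} f(\tau)\,\df\tau .
\]
Since $f \in L^1_\loc(\IR_0^+)$, the integrand $e^{-s_0\tau}f(\tau)$ is in $L^1_\loc(\IR_0^+)$, so $\Phi$ is continuous on $\IR_0^+$; and since by hypothesis $\cL\{f\}(s_0) = \lim_{t\to+\infty}\Phi(t)$ exists, $\Phi$ is a continuous function with a finite limit at infinity, hence bounded: $|\Phi(t)| \leqslant C$ for all $t\geqslant 0$.

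The decisive manoeuvre is an integration by parts (Abel's technique). Writing $e^{-st}f(t)\,\df t = e^{-\mu t}\,\df\Phi(t)$ and using $\Phi(0)=0$, I would obtain
\[
    \int_0^T e^{-st} f(t)\,\df t = e^{-\mu T}\Phi(T) + \mu \int_0^T e^{-\mu t}\Phi(t)\,\df t .
\]
Because $\Re(\mu) > 0$ and $\Phi$ is bounded, the boundary term $e^{-\mu T}\Phi(T)$ tends to $0$ as $T\to+\infty$, while the remaining integrand is dominated by $C e^{-\Re(\mu) t}$, so its integral converges absolutely. Letting $T\to+\infty$ shows that the limit defining $F(s)$ exists, and simultaneously yields the representation
\[
    F(s) = (s-s_0)\int_0^{+\infty} e^{-(s-s_0)t}\Phi(t)\,\df t, \qquad \Re(s) > \Re(s_0),
\]
which is the workhorse for the remaining claims.

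For holomorphy I would argue from this representation. Set $G_N(s) := \int_0^N e^{-(s-s_0)t}\Phi(t)\,\df t$; each $G_N$ is holomorphic on $\{\Re(s) > \Re(s_0)\}$, being a finite-interval integral of an integrand that is entire in $s$ and jointly continuous (Morera together with Fubini on the compact interval $[0,N]$ makes this precise). On any compact $K \subseteq \{\Re(s) > \Re(s_0)\}$ one has $\Re(s-s_0)\geqslant \delta > 0$, whence $|e^{-(s-s_0)t}\Phi(t)|\leqslant C e^{-\delta t}$, so $G_N \to G := \int_0^{+\infty}e^{-(s-s_0)t}\Phi(t)\,\df t$ uniformly on $K$. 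A locally uniform limit of holomorphic functions is holomorphic, so $G$, and hence $F = (s-s_0)G$, is holomorphic. The exponential-order statement is then a plain absolute-convergence estimate: if $|f(t)| \leqslant M e^{\omega t}$ eventually, say for $t\geqslant t_0$, then for $\Re(s) > \omega$ the tail satisfies $\int_{t_0}^{+\infty} e^{-\Re(s)t}|f(t)|\,\df t \leqslant M\int_{t_0}^{+\infty} e^{-(\Re(s)-\omega)t}\,\df t < +\infty$, while $\int_0^{t_0}|e^{-st}f(t)|\,\df t < +\infty$ since $f\in L^1_\loc(\IR_0^+)$; thus the Laplace integral converges absolutely and $F(s)$ exists.

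The main obstacle is confined to the first part: the hypothesis supplies only \emph{conditional} convergence of the improper integral at $s_0$ — the integral need not converge absolutely there — so one cannot simply bound the integrand and invoke dominated convergence to transfer convergence to larger $\Re(s)$. The integration-by-parts trick is exactly what sidesteps this, replacing the delicate oscillatory integral by the manifestly bounded primitive $\Phi$; the care needed lies in verifying the vanishing of the boundary term and the absolute convergence of the resulting integral. Once the representation $F(s)=(s-s_0)G(s)$ is in hand, holomorphy and the exponential-order claim are routine.
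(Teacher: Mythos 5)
Your proof is correct and complete. Note that the paper itself does not really prove this theorem: it cites Sections 1.4 and 1.5 of Arendt--Batty for the half-plane convergence and the holomorphy, and only supplies the small patch needed because its Definition A.1 of exponential order is ``eventual'' (essential boundedness of $e^{-\omega t}f$ only on some $[t_0,+\infty)$). Your argument reconstructs, self-contained, exactly the classical mechanism underlying the cited result: the Abel/integration-by-parts representation
\[
F(s)=(s-s_0)\int_0^{+\infty}e^{-(s-s_0)t}\Phi(t)\,\df t,\qquad \Phi(t)=\int_0^t e^{-s_0\tau}f(\tau)\,\df\tau,
\]
with $\Phi$ continuous, vanishing at $0$, and bounded because it has a finite limit at infinity; this converts the merely conditional convergence at $s_0$ into absolutely convergent integrals on $\Re(s)>\Re(s_0)$, and holomorphy follows from Weierstrass's theorem applied to the locally uniformly convergent truncations $G_N$. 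Your handling of the exponential-order clause --- splitting the integral at $t_0$, using local integrability of $f$ on $[0,t_0]$ and the bound $Me^{\omega t}$ beyond --- coincides with the paper's patch for its weakened definition. What your version buys is independence from the reference and an explicit formula for $F$ in terms of the bounded primitive $\Phi$; what the paper's citation buys is brevity and the greater generality (Banach-space-valued $f$) available in Arendt--Batty.
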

\begin{proof}
The proof can be found in Section 1.4 and Section 1.5 in \cite{Arendt-Batty}. Although their theorem applies to the broader generality of functions valued in certain Banach spaces, note that our definition of exponential order $\omega$ is weaker as we only require the \textit{essential limit superior} to be finite not just (\textit{essential}) \textit{supremum}. However, this will not be a problem. Take a big $T$ such that $|fe^{-st}|$ is essentially bounded on $[T,+\infty)$. 
$\left|\int_0^Tfe^{-st}\df t\right|\leqslant C\int_0^T|f|\df t$ as $e^{-st}$ is bounded on $[0,T]$. The rest becomes the same as in \cite{Arendt-Batty} when $t>T$. Later, we shall see that this is crucial to modify this definition.
\end{proof}

\begin{thm}[Post-Widder Inversion Formula]\label{thm:postwidder}
Let $f\in L^1_\loc(\IR_0^+)$. Assume that $\mathrm{abs}(f)<+\infty$. Let $F=\mathcal{L}\{f\}$. Then for every Lebesgue point $t>0$ of $f$, 
\begin{equation}\label{eq:Post-Widder formula}
    f(t)=\lim _{n\apc+\infty }{\frac {(-1)^{n}}{n!}}\left({\frac {n}{t}}\right)^{n+1}F^{(n)}\left({\frac {n}{t}}\right).
\end{equation}
\end{thm}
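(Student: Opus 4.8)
The plan is to recognise the right-hand side of \eqref{eq:Post-Widder formula} as the integral of $f$ against a concentrating kernel — an approximate identity whose mass collapses onto $\tau = t$ — and to show that such a kernel reproduces the value of $f$ at every Lebesgue point. First I would convert the $n$-th derivative of $F$ into a moment integral. Since $F$ is holomorphic on the half-plane $\{\Re(s) > \mathrm{abs}(f)\}$ by \thmref{thm:lapexist}, for $n$ large enough that $n/t > \mathrm{abs}(f)$ one may differentiate under the integral sign to obtain $(-1)^n F^{(n)}(s) = \int_0^{+\infty} \tau^n e^{-s\tau} f(\tau)\,\df\tau$. Evaluating at $s = n/t$ and inserting the prefactor yields the representation
\[
\frac{(-1)^n}{n!}\left(\frac{n}{t}\right)^{n+1} F^{(n)}\!\left(\frac{n}{t}\right) = \int_0^{+\infty} K_n(t,\tau)\, f(\tau)\,\df\tau, \qquad K_n(t,\tau) := \frac{1}{n!}\left(\frac{n}{t}\right)^{n+1}\tau^n e^{-\frac{n\tau}{t}}.
\]

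Next I would show that $K_n(t,\cdot)$ is a probability kernel concentrating at $\tau = t$. A direct Gamma-integral computation gives $\int_0^{+\infty} K_n(t,\tau)\,\df\tau = 1$, and the substitution $\tau = tx$ transforms the kernel into $\phi_n(x) = \frac{n^{n+1}}{n!}x^n e^{-nx}$, a probability density on $(0,+\infty)$ whose logarithm $n(\ln x - x)$ is maximised at $x = 1$. Differentiating shows $\phi_n$ is unimodal, increasing on $(0,1)$ and decreasing on $(1,+\infty)$; a Gamma-moment computation gives mean $\frac{n+1}{n}\apc 1$ and variance $\frac{n+1}{n^2} = O(1/n)$. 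By Stirling together with $\ln x - x = -1 - \tfrac12(x-1)^2 + O((x-1)^3)$, the density $\phi_n$ is asymptotically Gaussian of width $1/\sqrt{n}$ centred at $x=1$; in particular, outside any interval $[1-\delta, 1+\delta]$ it is exponentially small, of order $e^{-c(\delta)n}$.

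The inversion then reduces, via $\int \phi_n = 1$, to showing $\int_0^{+\infty}\phi_n(x)\,[f(tx) - f(t)]\,\df x \apc 0$. I would split this at $|x-1|\leqslant\delta$ and $|x-1|>\delta$. On the far region the exponential decay $e^{-c(\delta)n}$ of $\phi_n$ dominates the at-most-exponential growth of $f$: once $n > \omega t$ the factor $e^{-nx}$ overwhelms the bound $|f(tx)|\leqslant Me^{\omega t x}$, so that contribution vanishes as $n\apc+\infty$. On the near region I would invoke the Lebesgue-point hypothesis: writing $\Omega(h) := \int_{|x-1|\leqslant h}|f(tx) - f(t)|\,\df x = o(h)$, and using that $\phi_n$ is unimodal about $x = 1$ (hence decreasing on each side of its peak), an integration-by-parts (layer-cake) estimate against the monotone kernel bounds the near contribution by a quantity tending to $0$ as first $n\apc+\infty$ and then $\delta\apc 0$.

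The main obstacle is precisely this last near-peak estimate at a Lebesgue point. Were $f$ continuous at $t$, the approximate-identity property alone would close the argument; to reach \emph{every} Lebesgue point one cannot use a sup-norm bound on $f(tx)-f(t)$ and must instead combine the unimodality of the Gamma kernel with the differentiated cumulative bound $\Omega(h) = o(h)$. The delicate point is controlling the interplay between the shrinking width $1/\sqrt{n}$ of $\phi_n$ and the rate at which $\Omega(h)/h \apc 0$, ensuring the two limits can be taken in the correct order. Everything else — the moment computation, the Stirling asymptotics, and the tail estimate — is routine.
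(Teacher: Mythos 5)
The paper itself gives no proof of this theorem --- it simply cites \cite{Arendt-Batty}, Theorem 1.7.7 --- so your proposal has to stand on its own. Its skeleton is the classical Post--Widder argument, and most of it is sound: the identity $\frac{(-1)^n}{n!}(n/t)^{n+1}F^{(n)}(n/t)=\int_0^{+\infty}K_n(t,\tau)f(\tau)\,\df\tau$, the Gamma-kernel facts (mass one, mean $\frac{n+1}{n}$, variance $\frac{n+1}{n^2}$, unimodality with peak at $x=1$, exponential smallness off the peak), and in particular your near-peak treatment of the Lebesgue point via the monotone (layer-cake) estimate against $\Omega(h)=o(h)$ are all correct; that last part is exactly the right way to get \emph{every} Lebesgue point rather than only continuity points.

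The genuine gap is in the tail, and it is precisely where the stated hypothesis matters. The theorem assumes only $f\in L^1_\loc(\IR_0^+)$ with $\mathrm{abs}(f)<+\infty$, i.e.\ that the \emph{improper} integrals $\lim_{T\apc+\infty}\int_0^T e^{-st}f(t)\,\df t$ exist for some $s$. This does \emph{not} give any pointwise bound $|f(\tau)|\leqslant Me^{\omega\tau}$: convergence of a Laplace integral can come from oscillation rather than from decay. A standard example is $f(\tau)=\frac{\df}{\df\tau}\sin\bigl(e^{\tau^{2}}\bigr)=2\tau e^{\tau^{2}}\cos\bigl(e^{\tau^{2}}\bigr)$, which satisfies $\mathrm{abs}(f)\leqslant 0$ (integrate by parts) but grows like $e^{\tau^{2}}$ along a sequence $\tau\apc+\infty$. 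For such $f$ your argument fails at two points: (i) the moment representation $(-1)^nF^{(n)}(s)=\int_0^{+\infty}\tau^ne^{-s\tau}f(\tau)\,\df\tau$ holds only as an improper, conditionally convergent integral (cf.\ \cite{Arendt-Batty}, Theorem 1.5.1), and indeed $\int_0^{+\infty}K_n(t,\tau)|f(\tau)|\,\df\tau=+\infty$ for every $n$ in the example above, so the split into $|x-1|\leqslant\delta$ and $|x-1|>\delta$ with absolute-value estimates is not legitimate; (ii) the far-region bound ``$e^{-nx}$ overwhelms $Me^{\omega tx}$'' invokes an exponential-order bound that simply is not available. The gap is material for the paper, because \propref{prop:char positivity by Laplace} applies the theorem under exactly this weak hypothesis. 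The classical repair (Widder's original proof, and in essence the proof in \cite{Arendt-Batty}) is to fix $c>\mathrm{abs}(f)$ and pass to the bounded continuous function $\phi(\tau):=\int_0^\tau e^{-cr}f(r)\,\df r$: integration by parts replaces all Laplace and moment integrals of $f$ by absolutely convergent integrals of $\phi$ against derivatives of the kernel, the concentration argument is then run for $\phi$, and the conclusion is transferred back to $f$ since every Lebesgue point of $f$ is a point where the needed differentiated limits of $\phi$ exist. As written, your proof establishes the theorem only under the stronger hypothesis that $f$ is of exponential order; with the integration-by-parts reduction inserted before the near/far splitting, it becomes a complete proof of the statement as given.
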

\begin{proof}
    See \cite{Arendt-Batty}, Theorem 1.7.7, p.43.
\end{proof}
If, for example, $f$ is a Lipschitz function with $f(0)=0$, then \eqref{eq:Post-Widder formula} holds for every $t>0$ (see \cite{Arendt-Batty}, Theorem 2.3.1, p.75).

\begin{cor}[Bromwich integral formula]\label{cor:bromwich}
Let $F:\{s\in\IC\mid\Re(s)>
\omega\}\to\IC$ be a function satisfying the properties in \thmref{thm:lapinvexist}. Then for any $\sigma>\omega$,
$$\cL^{-1}\{F\}(t)=\frac{1}{2\pi i}\int_{\sigma-\infty i}^{\sigma+\infty i}e^{st}f(s)\df s.$$
\end{cor}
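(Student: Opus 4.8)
The plan is to reduce the complex Bromwich contour integral to an ordinary Fourier inversion along the vertical line $\Re(s)=\sigma$, exploiting that the hypotheses are precisely those of the Paley--Wiener theorem. First I would invoke \thmref{thm:lapinvexist} to obtain the existence of $f:=\cL^{-1}\{F\}\in L^1_\loc(\IR_0^+)$ together with the exponential bound $|f(t)|\leqslant Ce^{\omega t}$; extend $f$ by $0$ for $t<0$. Fix $\sigma>\omega$ and introduce the auxiliary function $g(t):=e^{-\sigma t}f(t)\theta(t)$. The exponential bound yields $|g(t)|\leqslant Ce^{(\omega-\sigma)t}$ on $\IR_0^+$, and since $\omega-\sigma<0$ this shows $g\in L^1(\IR)\cap L^2(\IR)$.

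The key observation is that evaluating $F$ on the line $\Re(s)=\sigma$ produces exactly the Fourier transform of $g$: for $\tau\in\IR$, $F(\sigma+i\tau)=\int_0^{+\infty}e^{-(\sigma+i\tau)t}f(t)\df t=\int_{\IR}e^{-i\tau t}g(t)\df t=\hat g(\tau)$. The Paley--Wiener $L^2$ bound states precisely that $\hat g\in L^2(\IR)$, consistent with Plancherel given $g\in L^2$. I would then apply the Fourier inversion theorem to recover $g$ from $\hat g$: because $g\in L^1\cap L^2$ and $\hat g\in L^2$, one gets $g(t)=\frac{1}{2\pi}\int_{\IR}e^{i\tau t}\hat g(\tau)\df\tau$ at every Lebesgue point of $g$ (hence of $f$), i.e.\ for almost every $t$. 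Substituting $\hat g(\tau)=F(\sigma+i\tau)$, multiplying through by $e^{\sigma t}$, and changing variables $s=\sigma+i\tau$, $\df s=i\,\df\tau$, gives $f(t)=\frac{1}{2\pi i}\int_{\sigma-i\infty}^{\sigma+i\infty}e^{st}F(s)\df s$, which is the asserted formula.

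The main obstacle is making the Fourier inversion rigorous in the correct sense. Since $\hat g=F(\sigma+i\,\cdot\,)$ is only guaranteed to lie in $L^2$ and not in $L^1$, the inversion integral need not be absolutely convergent; it must be read as the symmetric principal-value limit $\lim_{R\to+\infty}\int_{-R}^{R}$, and the resulting pointwise identity holds only almost everywhere (at Lebesgue points), which is exactly the sense in which the inverse transform is defined here. A secondary point to address is independence of the contour: although $g$ manifestly depends on $\sigma$, the recovered function $e^{\sigma t}g(t)=f(t)$ does not, so the formula is valid for every $\sigma>\omega$; this is visible directly from the derivation above, and can be confirmed from the holomorphy of $F$ together with the Uniqueness Theorem \thmref{thm:uniquness Laplace}.
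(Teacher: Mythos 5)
Your proposal is correct in substance, and it takes a genuinely different route from the paper: the paper gives no argument of its own for this corollary, deferring entirely to \cite{Arendt-Batty}, Theorem 2.3.4, whereas you give a self-contained reduction to classical Fourier analysis. Your chain -- \thmref{thm:lapinvexist} produces $f$ with $|f(t)|\leqslant Ce^{\omega t}$, hence $g(t):=e^{-\sigma t}f(t)\theta(t)\in L^1(\IR)\cap L^2(\IR)$; the Laplace integral converges absolutely on $\Re(s)=\sigma>\omega$, giving $F(\sigma+i\tau)=\hat{g}(\tau)$; Fourier inversion and the substitution $s=\sigma+i\tau$ then yield the Bromwich formula for every $\sigma>\omega$ -- is sound, and it has the virtue of making explicit how the $L^2$ hypothesis of \thmref{thm:lapinvexist} enters. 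The one loose point is the inversion step itself: for $g\in L^1\cap L^2$ with $\hat{g}\in L^2$, convergence of the symmetric partial integrals $\frac{1}{2\pi}\int_{-R}^{R}e^{i\tau t}\hat{g}(\tau)\,\df\tau$ to $g(t)$ \emph{at every Lebesgue point} is not a standard consequence of those hypotheses; what Plancherel gives is convergence in $L^2(\IR)$, hence equality with $g$ almost everywhere when the Bromwich integral is read as a limit in the mean, while genuine pointwise a.e.\ convergence of the partial integrals would invoke the Carleson--Hunt theorem. This is an interpretive repair rather than a gap: since $\cL^{-1}\{F\}$ is an element of $L^1_{\loc}(\IR_0^+)$, defined only up to null sets, the a.e./limit-in-mean reading is exactly the sense in which the corollary's formula can be asserted; you should simply replace ``at every Lebesgue point'' by ``in $L^2$, hence almost everywhere'' and state that $\int_{\sigma-i\infty}^{\sigma+i\infty}$ means $\lim_{R\apc+\infty}\int_{\sigma-iR}^{\sigma+iR}$ in that sense.
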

\begin{proof}
See Theorem 2.3.4 of \cite{Arendt-Batty}.
\end{proof}

We conclude this Section with a version of the \textit{final value theorem} that characterizes the long-term behavior of $f(t)$ in terms of its Laplace transform. In the literature, proofs and even statements thereof are often incomplete, or additional assumptions on the derivative $f'(t)$ are imposed. Here, we provide a self-contained proof tailored to our conditions that avoids reliance on derivatives. A warning first: 
\begin{quote}
    {\it One really needs to assume a priori that {\bf $f(t)$ has a limit when $t\apc+\infty$},\\
    otherwise one may get erroneous results.}
\end{quote}

\begin{thm}[Final value theorem]\label{thm:fvt}
Let $f\in L^1_{\mathrm{loc}}(\IR_0^+)$ such that $\lim_{t\apc+\infty}f(t)$ exists. Then 
$$\lim_{t\apc+\infty}f(t)=\lim_{s\apc 0^+}sF(s).$$
\end{thm}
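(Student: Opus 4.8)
The plan is to reduce everything to the elementary identity $s\int_0^{+\infty} e^{-st}\,\df t = 1$ for $s>0$, which says that $s$ times the Laplace transform of the constant function $\theta$ equals $1$ (cf. Table \cosref{tab:laplace}{A.1}). Writing $L := \lim_{t\apc+\infty} f(t)$, the idea is to subtract off this constant and show that the resulting ``remainder'' vanishes as $s\apc 0^+$.

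First I would check that $sF(s)$ is even defined near $s=0^+$. Since $\lim_{t\apc+\infty} f(t)=L$ exists, taking $\eps=1$ gives a $T\geqslant 0$ with $|f(t)|\leqslant |L|+1$ for $t\geqslant T$; together with $f\in L^1_\loc(\IR_0^+)$ (so $f$ is integrable on the compact set $[0,T]$) this shows $f$ is of exponential order $0$, hence $\mathrm{abs}(f)\leqslant 0$ and $F(s)$ exists for all $s$ with $\Re(s)>0$ by \thmref{thm:lapexist}. Next, set $g(t):=f(t)-L$, so that $g\in L^1_\loc(\IR_0^+)$ and $g(t)\apc 0$ as $t\apc+\infty$. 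Using the identity above, for every $s>0$ one writes
$$sF(s)-L = s\int_0^{+\infty} e^{-st} f(t)\,\df t - L\cdot s\int_0^{+\infty} e^{-st}\,\df t = s\int_0^{+\infty} e^{-st} g(t)\,\df t.$$

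The core of the argument is then a split of this last integral at the threshold $T$. Given $\eps>0$, choose $T$ so that $|g(t)|<\eps$ for $t\geqslant T$, and set $M := \int_0^T |g(t)|\,\df t < +\infty$. On $[0,T]$ one bounds $\bigl|s\int_0^T e^{-st} g(t)\,\df t\bigr|\leqslant s\,M$ using $e^{-st}\leqslant 1$; on $[T,+\infty)$ one bounds $\bigl|s\int_T^{+\infty} e^{-st} g(t)\,\df t\bigr|\leqslant \eps\cdot s\int_0^{+\infty} e^{-st}\,\df t = \eps$. Hence $|sF(s)-L|\leqslant sM+\eps$, so that $\limsup_{s\apc 0^+}|sF(s)-L|\leqslant\eps$, and letting $\eps\apc 0$ yields the claim.

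I do not anticipate a genuine obstacle: the proof is soft once the limit $L$ is assumed to exist. The only point requiring care is precisely the one flagged by the warning preceding the statement — the whole mechanism relies on $g(t)=f(t)-L$ being uniformly small on a tail $[T,+\infty)$, which is exactly the a priori existence of the limit, while the contribution of the compact piece $[0,T]$ is killed by the prefactor $s$ regardless of how large $M$ is. Without the hypothesis that $\lim_{t\apc+\infty} f(t)$ exists, the tail estimate collapses, and $\lim_{s\apc 0^+} sF(s)$ may exist even though $f$ has no limit, producing the ``erroneous results'' warned against.
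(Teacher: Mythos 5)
Your proposal is correct and is essentially the paper's own proof: the same identity $s\int_0^{+\infty}e^{-st}\,\df t=1$, the same subtraction of $L$, and the same split of the integral at a threshold $T$, with the compact piece killed by the prefactor $s$ and the tail bounded by $\eps$. If anything, your bookkeeping is slightly cleaner: you take $M=\int_0^T|f(t)-L|\,\df t$ (whereas the paper's text has a slip, invoking $B=\int_0^{+\infty}|f(t)|\,\df t$, which local integrability alone does not provide), and you spell out the exponential-order-zero argument behind the existence of $F(s)$ for $s>0$ that the paper compresses into a citation of Theorem \ref{thm:lapexist}.
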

\begin{proof}
By \thmref{thm:lapexist}, $\cL\{f\}$ exists for all $s>0$. Let $L:=\lim_{t\apc+\infty}f(t)$. By Table \cosref{tab:laplace}{A.1}, $\theta(t)$ is transformed to $\frac{1}{s}$, namely $s\int_0^{+\infty}e^{-st}\df t=1$. Then
$$sF(s)-L=s\int_0^{+\infty}(f(t)-L)e^{-st}\df t,$$
which implies that
$$|sF(s)-L|\leqslant s\int_0^{+\infty}|f(t)-L|e^{-st}\df t.$$
For any $\varepsilon>0$, choose $T$ such $|f(x)-L|<\varepsilon$. Then
$$s\int_0^{+\infty}|f(t)-L|e^{-st}\df t\leqslant s\int_0^{T}|f(t)-L|e^{-st}\df t+s\int_T^{+\infty}|f(t)-L|e^{-st}\df t.$$
Because $e^{-st}\leqslant1$ on $\IR_0^+$, the first term is bounded by $s(TL+B)$ where $B=\int_0^{+\infty}|f(t)|\df t$ exists because $f\in L^1_{\mathrm{loc}}(\IR_0^+)$. The second term is bounded by $s\int_0^{+\infty}\varepsilon e^{-st}\df t=\varepsilon$. Now we can choose a small $s$ such that the sum of the two terms is bounded by $2\varepsilon$.
\end{proof}

\section{Analysis of a Delayed Differential Equation}\label{app:B}

The following linear delayed differential equation (DDE) with nonconstant coefficients \begin{equation}\label{eq:dde}
\left\{\begin{alignedat}{2}
y'(t) &= A(t)y(t-\tau),\quad & & t\in(t_0,+\infty), 
\\
y(t) &= h(t), & &t\in [t_0-\tau,t_0],
\end{alignedat}\right.
\end{equation}
where $h(t)$ provides the history condition for $t\in [t_0-\tau, t_0]$, has been extensively studied (see e.g., \cite{gyori1991oscillation, DIBLIK1998200}, and various references in the latter). It, or the even more general version with multiple and variable delays (see \eqref{eq:DDE with variable delay and coefficients}), occurs as an essential object of study in various questions, like it does (in much simplified form) in our question, about positivity of solutions of the System \eqref{eq: system considered u}--\eqref{def:Psi new}. Properties of the solution (like positivity or being oscillatory, i.e., having zeros at arbitrarily large times) then characterize properties of solutions to the original question.

For constant delay and continuous coefficient function $A(t)$ and history $h(t)$, the existence of a unique solution $y:[t_0-\tau,+\infty)\apc\IR$ is immediate by means of integration by steps. By shifting time when necessary and redefining coefficient and history functions accordingly, we may assume without loss of generality that $t_0=\tau$. For the convenience of conducting analysis, we non-dimensionalize Equation \eqref{eq:dde} by rescaling as follows: 
\begin{equation}\label{eq:rescaling DDE}
    t^* = \frac{t}{\tau}, \quad y^*(t^*) = \frac{y(t)}{\hat{y}}, \quad A^*(t^*) = \tau A(t), \quad h^*(t^*) = \frac{h(t)}{\hat{y}}.
\end{equation}
Then the dimensionless equation is given by
\begin{equation*}
    \left\{
        \begin{alignedat}{2}
            \frac{\df y^*}{\df t^*} &= A^*(t^*)y^*(t^*-1), \quad& &t^*\in(1, +\infty),\\
            y^*(t^*) &= h^*(t^*), & & t^*\in[0,1].
        \end{alignedat}
    \right.
\end{equation*}

For our question of interest, we need to obtain those $A\in\IR$ for which the solution to the specific version of Equation \eqref{eq:dde} with $\tau=1$, $t_0=1$,
\begin{equation}
\label{eq:dimensionless dde}
        \left\{
        \begin{alignedat}{2}
            y'(t) &= Ay(t-1), \quad &&t\in(1, +\infty),\\
            y(t) &= 1, & & t\in[0,1].
        \end{alignedat}
    \right.
\end{equation}
is positive on $\IR^+$. We denote this solution by $\eta_A(t)$. Note that in view of the rescaling \eqref{eq:rescaling DDE}, the case of Equation \eqref{eq:dde} with constant coefficient and  history function is then covered by \eqref{eq:dimensionless dde}.

\begin{lem}\label{lem:bound y}
    Suppose that the coefficient function $A(t)$ and the history function $h(t)$ are continuous. If $A(t)$ is bounded on $[t_0-\tau,+\infty)$, then the solution $y(t)$ to \eqref{eq:dde} satisfies
    \begin{equation}\label{eq:exp bound y}
        |y(t)|\leqslant Me^{\omega (t-t_0+\tau)}\quad \mbox{for all } t\geqslant t_0-\tau,
    \end{equation}
    where $M:= \sup_{t\in [t_0-\tau,t_0]} |h(t)|$ and $\omega := \sup_{t\geqslant t_0-\tau} |A(t)|$.
\end{lem}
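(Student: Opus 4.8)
The plan is to convert the delay differential equation into an integral inequality and then run a Gronwall argument adapted to the delayed argument by means of a running supremum. First I would observe that since $\omega = \sup_{t\geqslant t_0-\tau}|A(t)|\geqslant 0$, the claimed estimate $|y(t)|\leqslant M e^{\omega(t-t_0+\tau)}$ is implied by the slightly sharper bound $|y(t)|\leqslant M e^{\omega(t-t_0)}$ for $t\geqslant t_0$, together with the trivial observation that on the history interval $[t_0-\tau,t_0]$ one has $|y(t)|=|h(t)|\leqslant M\leqslant M e^{\omega(t-t_0+\tau)}$ because $\omega(t-t_0+\tau)\geqslant 0$ there. So it suffices to establish the sharper bound on $[t_0,+\infty)$.

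Since the coefficient and history functions are continuous and the delay is constant, integration by steps produces a solution $y$ that is continuous on $[t_0-\tau,+\infty)$ and continuously differentiable on $(t_0,+\infty)$. Hence for $t\geqslant t_0$ I can write the integral form
\[
y(t) = h(t_0) + \int_{t_0}^t A(s)\,y(s-\tau)\,\df s,
\]
and therefore
\[
|y(t)| \leqslant M + \omega\int_{t_0}^t |y(s-\tau)|\,\df s.
\]
To handle the delayed argument $s-\tau$, I would introduce the running supremum $u(t):=\sup_{t_0-\tau\leqslant r\leqslant t}|y(r)|$, which is nondecreasing and continuous. Because $u$ is nondecreasing and $\tau>0$, one has $|y(s-\tau)|\leqslant u(s-\tau)\leqslant u(s)$ for every $s\geqslant t_0$; taking the supremum of the previous display over $r\in[t_0-\tau,t]$ (the portion over the history interval contributes only $M$) then yields the \emph{undelayed} integral inequality
\[
u(t) \leqslant M + \omega\int_{t_0}^t u(s)\,\df s, \qquad t\geqslant t_0.
\]

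The finish is a direct application of the classical Gronwall inequality with constant rate $\omega$, giving $u(t)\leqslant M e^{\omega(t-t_0)}$ and hence $|y(t)|\leqslant u(t)\leqslant M e^{\omega(t-t_0)}\leqslant M e^{\omega(t-t_0+\tau)}$. The only genuinely delay-specific step, and thus the main obstacle, is the passage from the delayed integral inequality to the undelayed one; this is exactly what the monotonicity of the running supremum $u$ provides, and it relies on $\tau>0$ and on $s-\tau$ staying inside the domain $[t_0-\tau,+\infty)$ for $s\geqslant t_0$. An equivalent route, closer in spirit to the existence proof by integration by steps, is an induction over the intervals $[t_0+(k-1)\tau,\,t_0+k\tau]$: assuming the bound up to $t_0+(k-1)\tau$, one integrates over the next step, bounds $|y(s-\tau)|$ by the inductive estimate $M e^{\omega(s-t_0)}$, and verifies that the result stays below $M e^{\omega(t-t_0+\tau)}$; this computation closes precisely because the extra factor $e^{\omega\tau}$ built into the target absorbs the increment across each step. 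I would present the Gronwall version as the main argument, since it is shorter and even yields the sharper constant.
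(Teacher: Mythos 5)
Your proof is correct, but it takes a genuinely different route from the paper's. The paper argues by induction over the intervals $[t_0+(k-1)\tau,\,t_0+k\tau]$: on each step it writes $y(t)=y(t_n)+\int_{t_n}^t y'(\xi)\,\df\xi$, bounds the delayed term $|y(\xi-\tau)|$ by the inductive estimate, and verifies that the result stays below $Me^{\omega(t-t_0+\tau)}$ --- this is precisely the ``equivalent route'' you sketch at the end, and as you correctly note, the extra factor $e^{\omega\tau}$ built into the target is what makes that induction close. Your main argument instead reduces the delayed integral inequality to an undelayed one via the running supremum $u(t)=\sup_{t_0-\tau\leqslant r\leqslant t}|y(r)|$, then applies classical Gronwall. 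Each step checks out: the integral form $y(t)=h(t_0)+\int_{t_0}^t A(s)y(s-\tau)\,\df s$ is legitimate for the continuous solution produced by integration by steps; the monotonicity step $|y(s-\tau)|\leqslant u(s-\tau)\leqslant u(s)$ is valid because $s-\tau\geqslant t_0-\tau$ stays in the domain for $s\geqslant t_0$; the supremum over the history portion indeed contributes only $M$; and Gronwall applies to the continuous nonnegative $u$. What your route buys is brevity and a sharper constant, $Me^{\omega(t-t_0)}$ on $[t_0,+\infty)$ rather than the paper's $Me^{\omega(t-t_0+\tau)}$, at the cost of invoking Gronwall's lemma; the paper's step-by-step induction is entirely self-contained and mirrors the integration-by-steps construction of the solution, which is presumably why the authors chose it.
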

\begin{proof}
    Put $t_{k}:=t_0+k\tau$ for $k\in\IZ$. We shall prove the statement by induction. For $t\in[t_{-1},t_0]$, by construction $|y(t)|=|h(t)|\leqslant M\leqslant Me^{\omega(t-t_{-1})}$.
    Suppose that \eqref{eq:exp bound y} has been proven for all $t\in [t_{k-1},t_k]$, for all $0\leqslant k\leqslant n$. Let $t\in [t_n,t_{n+1}]$. Then
    \begin{align*}
        |y(t)| &\leqslant |y(t_n)| + \bigl| \int_{t_n}^t y'(\xi)\df \xi\bigl|  
        \leqslant Me^{\omega(n+1)\tau} + \omega\int_{t_n}^t \bigl| y(\xi-\tau)\bigr|\df\xi\\
        &\leqslant Me^{\omega(n+1)\tau} + \omega\int_{t_n}^t Me^{\omega(\xi-\tau-t_{-1})}\df\xi\\
        &\leqslant Me^{\omega(n+1)\tau} + M e^{-\omega\tau} \bigl( e^{\omega(t-t_{-1})} - e^{\omega(n+1)\tau} \bigr)\\
        &\leqslant Me^{\omega(n+1)\tau} \bigl(1 - e^{-\omega\tau}\bigr) + Me^{\omega(t-\tau-t_{-1})}\\
        &\leqslant Me^{\omega(t-t_{-1})}\bigl(1-e^{-\omega\tau} + e^{-\omega\tau} \bigr) \\
        &=Me^{\omega(t-t_{-1})}. 
    \end{align*}
Hence, the result follows.
\end{proof}

It is clear from integration by steps, that $\eta_A>0$ for $A\geqslant 0$. The case $A<0$ requires a more subtle analysis. Because the coefficient function is constant, we can apply Laplace transform to gain insight into the positivity of $\eta_A$.

The common approach to prove positivity of the solution to a specific system of ordinary differential equations or partial differential equations is to exhibit the solution as a limit of an iteration procedure (like Picard's Iteration) and prove that positivity is preserved in each step of this procedure (see e.g., \cite{Sikic:1994,Canizo_ea:2012,Hille_ea:2025}). This strategy did not work well for the DDE \eqref{eq:dimensionless dde}. Instead we employ a positivity result by Dibl\`ik, \lemref{prop:ddeincrease}.

The solution $\eta_A(t)$ to \eqref{eq:dimensionless dde} may be obtained through integration by steps, yielding
\begin{equation}
\label{eq:etaA}
    \eta_A(t) = \sum_{m=0}^\infty A^m \frac{(t-m)^m}{m!} \theta(t-m).
\end{equation}
This expression may be obtained through Laplace transform as well, because the coefficient function, $A(t)=A$, is constant.

Integration by steps yields that the solution $y(t)$ will be piecewise-$C^1$. Lemma \ref{lem:bound y} implies that $y$ is exponentially bounded: 
\begin{equation}\label{eq:etaexpord}
    \mbox{For any } A\in\IR,\qquad |\eta_A(t)|\leqslant e^{|A|t}\qquad \mbox{for all } t\geqslant0.
\end{equation}
Thus, $y$ and $y'$ have Laplace transforms. Put $Y:=\cL\{y\}$. One can also apply the Laplace transform to \eqref{eq:dimensionless dde}. The Uniqueness Theorem (Theorem \ref{thm:lapuniq}) yields 
\begin{equation}\label{eq:lapdde}
sY(s)-y(0)=Ae^{-s}Y(s),
\end{equation}
where $y(0)=1$. Then $Y(s)$ can be expressed as 
\begin{equation}\label{eq:lapinvdde}
Y(s)=\frac{1}{s-Ae^{-s}} = \frac{\frac{1}{s}}{1-\frac{A}{s}e^{-s}}.
\end{equation}
Hence, for $s\in\mathbb{C}$ such that $\displaystyle\left|\frac{A}{s}e^{-s}\right|<1$, $Y(s)$ can be expressed as a geometric series:
\begin{equation}
    Y(s) = \lim_{n\apc+\infty} Y_n(s) := \lim_{n\apc+\infty} \sum_{m=0}^n \frac{A^me^{-ms}}{s^{m+1}}.
\end{equation}
In particular, the series converges for $s>|A|$. Define
\[
y_n(t) := \sum_{m=0}^n \frac{A^m(t-m)^m}{m!}\theta(t-m).
\]
Then $y_n$ is exponentially bounded of the same type for all $n$:
\begin{equation}
    |y_n(t)| \leqslant \sum_{m=0}^n |A|^m \frac{(t-m)^m}{m!}\theta(t-m) \leqslant \sum_{m=0}^n \frac{|A|^m}{m!} t^m  \leqslant e^{|A|t} \quad\mbox{for all } t\geqslant 0.
\end{equation}
Hence, $y_n$ has Laplace transform (see Table \cosref{tab:laplace}{A.1})
\[
    \cL\{y_n\}(s) = \sum_{m=0}^n A^m \frac{e^{-ms}}{s^{m+1}} = Y_n(s).
\]
Moreover, $y_n$ converges uniformly on compact subsets of $\IR_0^+$ to $\eta_A$ in \eqref{eq:etaA}. Because of the uniform exponential order bound for the $y_n$, $f$ satisfies the same growth bound. Moreover, because of the pointwise convergence of the geometric series for $s>|A|$, \cite{Arendt-Batty} Theorem 1.7.5 (Approximation) now implies that $\cL\{f\}(s) = \lim_{n\apc+\infty} Y_n(s)= Y(s)$. Thus, $y(t) = \eta_A(t)$.

\subsection{Positivity of Solution to System \eqref{eq:dimensionless dde}}\label{Sec_Positivity}
We are now concerned with finding a condition on $A$ that ensures the positivity of $\eta_A$ on $\IR_0^+$. 
Our approach works for the particular system \eqref{eq:dimensionless dde}, and not all DDE can be studied through Laplace transform. Furthermore, even though \eqref{eq:lapinvdde} is derived from the first equation in System \eqref{eq:dimensionless dde} which is defined for $t\in(1,+\infty)$, the inverse Laplace transform of \eqref{eq:lapinvdde}, expressed in \eqref{eq:etaA}, is the solution to System \eqref{eq:dimensionless dde} for the entire $\IR_0^+$, i.e., \eqref{eq:etaA} satisfies the history term as well, even though it is a computation result of the DDE in $(1, +\infty)$. 

The behaviour of $\eta_A(t)$ can be become oscillatory. For $A\geqslant 0$ this cannot happen; see Fig. \ref{fig:tildepapos}. When $A<0$, determining the positivity of the solution becomes more complicated. Roughly speaking, the solution can be categorised as follows: when $|A|$ is sufficiently small, $y(t)$ is exponentially decaying but will stay positive; With $|A|$ is slightly large, $y(t)$ behaves like a damped oscillation; When $|A|$ is sufficiently large, $y(t)$ behaves like an unstable oscillation; see Fig. \ref{fig:tildepanega} for the numerical examples of various values of $A$. The theoretical thresholds to categorize the above behaviours of $y(t)$ requires further study of spectra of System \eqref{eq:dimensionless dde}.

\begin{figure}[h!]
    \centering
\includegraphics[width = 0.85\textwidth]{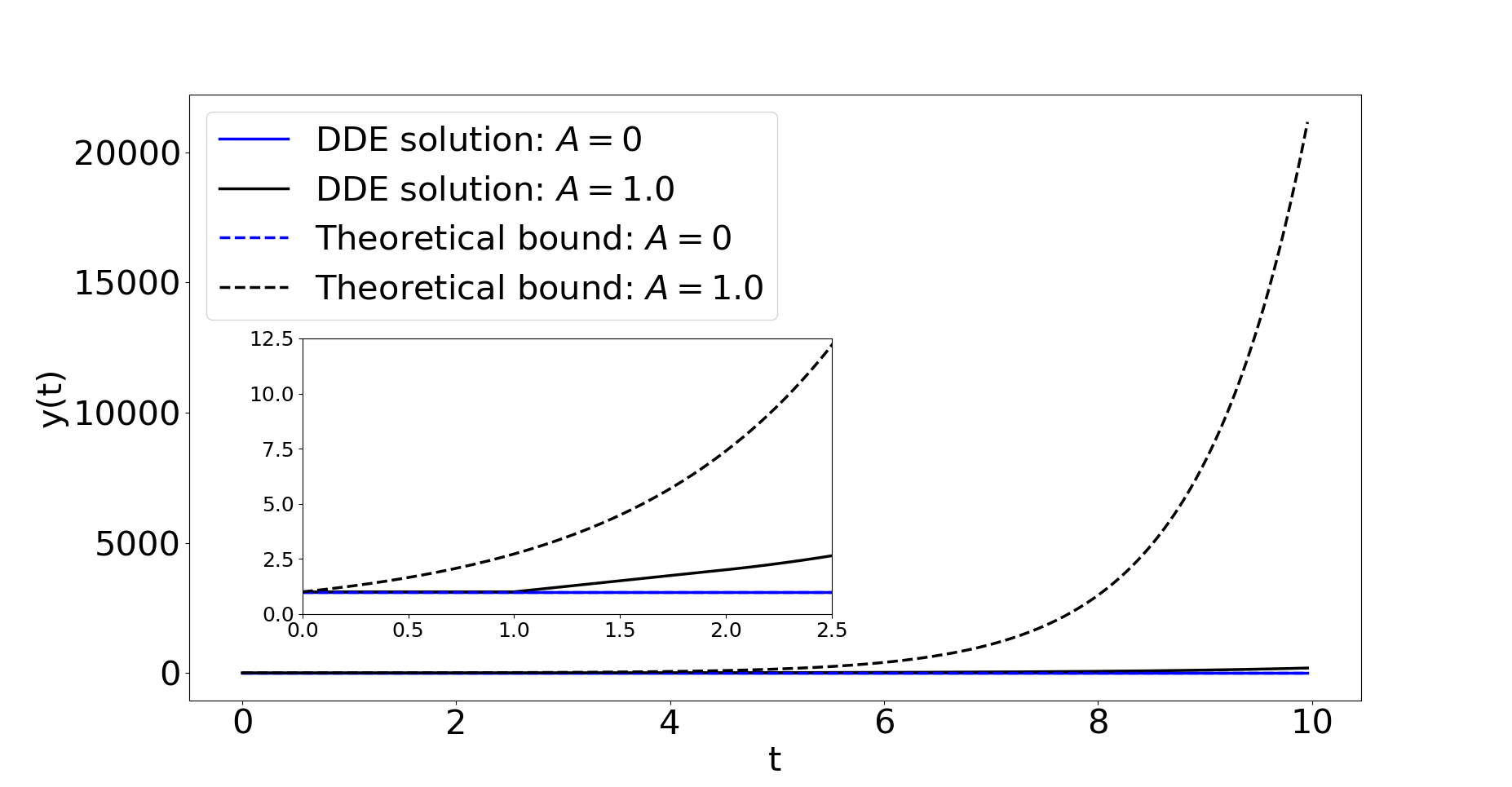}
    \caption{For $A\geqslant0$, the solution $y(t)$ to System \eqref{eq:dimensionless dde} stays positive and upper bounded by $e^{At}$. Here, the numerical solution $y(t)$ using forward Euler method (solid curves) and the corresponding upper bounds (dashed curves) are shown for $A=0$ (blue curves) and $A = 1.0$ (black curves), respectively.}
    \label{fig:tildepapos}
\end{figure}

\begin{figure}[h!]
    \centering
\includegraphics[width = 0.85\textwidth]{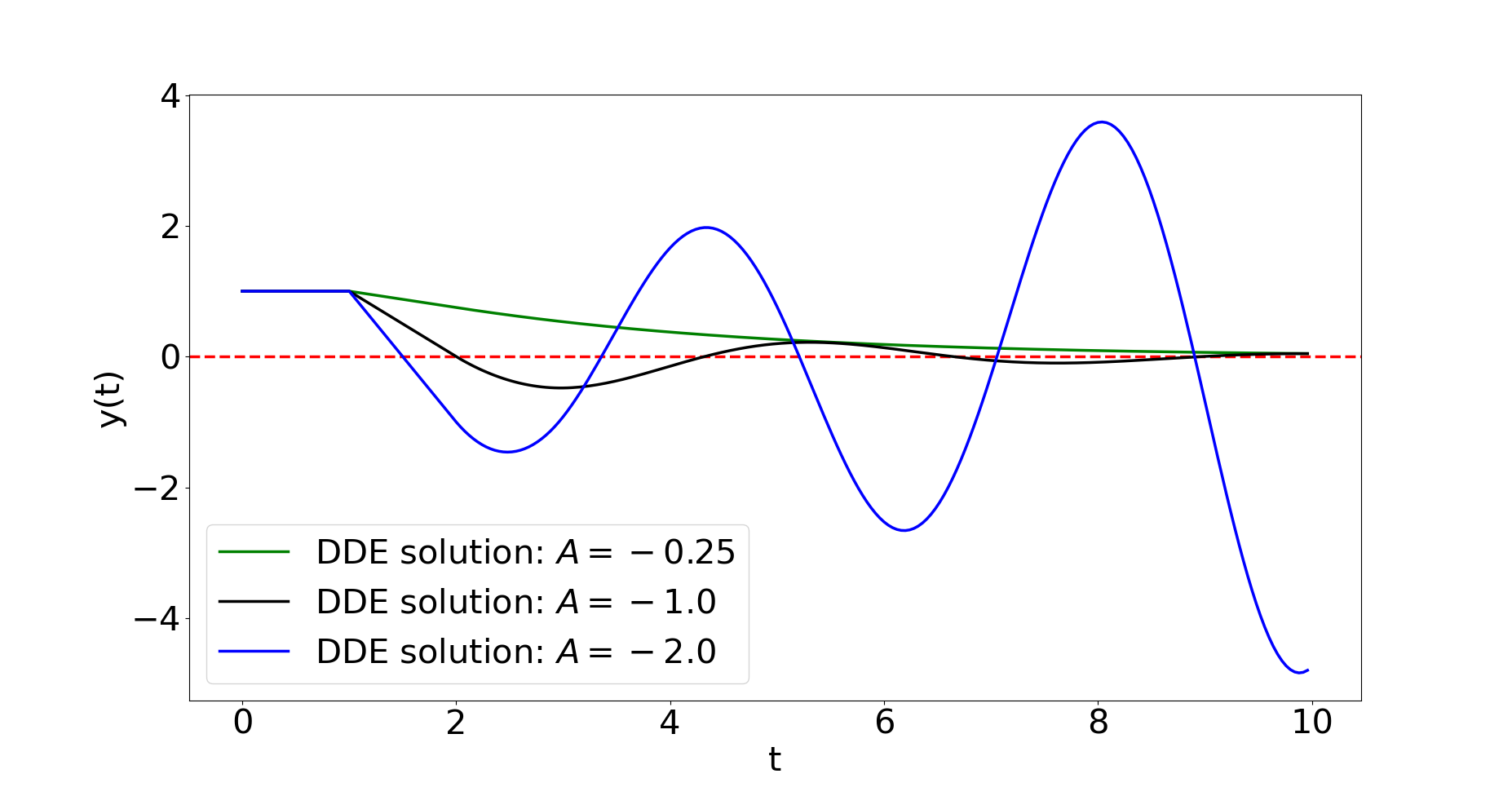}
    \caption{When $A\leqslant0$, the solution $y(t)$ to System \eqref{eq:dimensionless dde} behaves differently depending on the value of $|A|$. Here, we plot the solution for $A=-0.25$ (green), $A=-1.0$ (black) and $A=-2.0$ (blue), respectively.}
    \label{fig:tildepanega}
\end{figure}

\begin{lem}\label{lem:zerobrange}
    The spectral problem of System \eqref{eq:dimensionless dde}, given by $\lambda-Ae^{-\lambda}=0$, has two negative real roots when $A\in[-e^{-1},0)$: the larger root is in $[-1,A)$, and the smaller root is in $(A^{-1}, \ln(-A)]$.
\end{lem}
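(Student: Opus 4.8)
The plan is to study the real zeros of the entire function $g(\lambda) := \lambda - Ae^{-\lambda}$ directly by a monotonicity argument, rather than through the Lambert $W$-function; this yields the existence of the two roots and their localization simultaneously. First I would differentiate: $g'(\lambda) = 1 + Ae^{-\lambda}$. Since $A<0$, $g'$ increases strictly from $-\infty$ (as $\lambda\to-\infty$) to $1$ (as $\lambda\to+\infty$) and vanishes at the unique point $\lambda^\ast := \ln(-A)$. Hence $g$ is strictly decreasing on $(-\infty,\lambda^\ast)$ and strictly increasing on $(\lambda^\ast,+\infty)$, with a global minimum at $\lambda^\ast$. A direct computation gives $g(\lambda^\ast)=\lambda^\ast+1=\ln(-A)+1$, which is $\leqslant 0$ precisely because $A\in[-e^{-1},0)$ forces $\ln(-A)\leqslant -1$, with equality only at $A=-e^{-1}$. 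Together with $g(\lambda)\to+\infty$ as $\lambda\to\pm\infty$, this shows $g$ has exactly two real roots $\lambda_-\leqslant\lambda^\ast\leqslant\lambda_+$ (which coincide at $A=-e^{-1}$), one on each monotone branch. Both are negative: $\lambda_-\leqslant\lambda^\ast\leqslant-1<0$, while $g(0)=-A>0$ together with monotonicity on the increasing branch forces $\lambda_+<0$.

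To localize the larger root $\lambda_+$ on the increasing branch, I would evaluate $g$ at the two proposed endpoints. At $\lambda=-1$ one has $g(-1)=-1-Ae=-1+(-A)e\leqslant 0$ since $0<-A\leqslant e^{-1}$, and $-1$ lies on the (closed) increasing branch because $\lambda^\ast\leqslant-1$; hence $\lambda_+\geqslant-1$. At $\lambda=A$ one has $g(A)=A\bigl(1-e^{-A}\bigr)>0$, a product of two negative factors. Here I must first confirm that $A$ itself lies on the increasing branch, i.e. $A>\lambda^\ast=\ln(-A)$; this is an elementary inequality on $[-e^{-1},0)$ (the function $A-\ln(-A)$ has derivative $1-A^{-1}>1$ and is positive at $A=-e^{-1}$). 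Monotonicity of $g$ on $(\lambda^\ast,+\infty)$ and $g(A)>0=g(\lambda_+)$ then give $\lambda_+<A$, so $\lambda_+\in[-1,A)$.

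For the smaller root $\lambda_-$, the upper bound $\lambda_-\leqslant\ln(-A)=\lambda^\ast$ is immediate since $\lambda_-$ sits on the decreasing branch. The lower bound $\lambda_->A^{-1}$ is the crux. I would show $g(A^{-1})>0$; since $A^{-1}$ also lies on the decreasing branch (the comparison $A^{-1}<\ln(-A)$ on $[-e^{-1},0)$ being elementary) and $g$ is decreasing there, positivity of $g(A^{-1})$ forces $A^{-1}<\lambda_-$. The cleanest route is the substitution $u:=-A^{-1}>0$, under which $g(A^{-1})=\tfrac{1}{u}\bigl(e^{u}-u^{2}\bigr)$ and, as $A$ ranges over $[-e^{-1},0)$, $u$ ranges over $[e,+\infty)$; the claim then reduces to the elementary inequality $e^{u}>u^{2}$, which in fact holds for all $u\geqslant 0$ since its derivative $e^{u}-2u$ stays positive. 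This substitution, together with checking that the test points $-1$, $A$, and $A^{-1}$ genuinely lie on the intended monotone branches, is the main technical obstacle; all the remaining work is bookkeeping of signs dictated by the single-minimum shape of $g$.
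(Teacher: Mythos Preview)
Your proof is correct and follows essentially the same route as the paper's: analyze the shape of $g(\lambda)=\lambda-Ae^{-\lambda}$ via its unique critical point $\lambda^\ast=\ln(-A)$, then evaluate $g$ at the four test points $A^{-1}<\ln(-A)\leqslant-1<A$ to localize the two roots on the decreasing and increasing branches. The only cosmetic difference is how the key inequality $e^{u}>u^{2}$ (with $u=-A^{-1}$) is verified: you argue via positivity of the derivative $e^{u}-2u$, whereas the paper expands $2\cosh(u)=e^{u}+e^{-u}$; both are fine.
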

\begin{proof}

Denote the spectral equation as function $$f_A(\lambda)=\lambda-Ae^{-\lambda},$$
with $A\in[-e^{-1},0)$. Take its derivative
$$f_A'(\lambda)=1+Ae^{-\lambda}$$
which has a unique root $\lambda=\ln(-A)$. Thus, $f_A(\lambda)$ increases in $\lambda\in(\ln(-A),+\infty)$ and decreases in $\lambda\in(-\infty,\ln(-A))$. Notice that for $A$ in the given range, $A^{-1}<\ln(-A)\leqslant-1<A$. Denote these four points from left to right on the real axis by $\lambda_1,\lambda_2,\lambda_3,\lambda_4$ respectively. To obtain the desired statement, we just need to show that $f_A(\lambda_1)>0$, $f_A(\lambda_2)\leqslant0$, $f_A(\lambda_3)\leqslant0$, and $f_A(\lambda_4)>0$. 

We can determine the signs of $f_A$ at last three points easily:
$f_A(\lambda_2)=\ln(-A)+1\leqslant0$, $f_A(x_3)=-1-Ae^{-1}\leqslant0$,
$f_A(\lambda_4)=A(1-e^{-A})>0$. It remains to show that 
$
f_A(\lambda_1)=\frac{1}{A}-A e^{-\frac{1}{A}} > 0$. Set $\tilde A:=-\frac{1}{A}\in [e,+\infty)$. Then
$$f_A(\lambda_1)>0 \;\;\Leftrightarrow\;\;-\tilde A + \frac{e^{\tilde A}}{\tilde A} > 0\;\;\Leftrightarrow\;\;e^{\tilde A} > \tilde A^2.$$
Since
$$e^{\tilde A} + e^{-\tilde A} = 2\cosh(\tilde A)= 2 + \tilde A^2 + \text{(nonnegative h.o.t.)} \geqslant 2 + \tilde A^2,$$
we obtain
$$e^{\tilde A} \geqslant \tilde A^2 + (2 - e^{-\tilde A}) > \tilde A^2,$$
as $0<e^{-\tilde A}<1$ for $\tilde A>0$. Hence, the claim.
\end{proof}

The existence of the positive solution for the differential equation in System \eqref{eq:dimensionless dde} can be verified by the lemma below:
\begin{lem}\label{lem:ddepos}
When $A\in[-e^{-1},0)$,
there exists a strictly positive function satisfying \begin{equation}\label{eq:ddenocondition}
    y'(t)=Ay(t-1).
\end{equation}
\end{lem}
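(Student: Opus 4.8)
The plan is to read off a strictly positive solution directly from the spectral analysis already completed in \lemref{lem:zerobrange}. The governing observation is that, for the autonomous delay equation $y'(t)=Ay(t-1)$, a pure exponential $y(t)=e^{\lambda t}$ is a solution exactly when $\lambda$ is a root of the characteristic equation. Indeed, substituting $y(t)=e^{\lambda t}$ gives $y'(t)=\lambda e^{\lambda t}$ and $A\,y(t-1)=Ae^{-\lambda}e^{\lambda t}$, so the equation $y'(t)=Ay(t-1)$ holds for all $t$ if and only if $\lambda=Ae^{-\lambda}$, i.e. $f_A(\lambda)=\lambda-Ae^{-\lambda}=0$ in the notation of \lemref{lem:zerobrange}.

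First I would invoke \lemref{lem:zerobrange}, which asserts that for every $A\in[-e^{-1},0)$ the spectral equation $f_A(\lambda)=0$ possesses real roots — in fact two negative real roots, the larger lying in $[-1,A)$ and the smaller in $(A^{-1},\ln(-A)]$. Fix any one such real root $\lambda$ and set $y(t):=e^{\lambda t}$. By the computation above, this $y$ solves $y'(t)=Ay(t-1)$.

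Finally, since $\lambda\in\IR$, the exponential $e^{\lambda t}$ is strictly positive for every $t$, so $y(t)>0$ everywhere; as both roots are negative, $y$ is moreover exponentially decaying. This produces exactly the strictly positive solution claimed, completing the argument. (This positive solution is precisely the object required as input for the subsequent Dibl\`ik-type positivity principle used to prove \corref{clry:suff pos condition specific}.)

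There is, in truth, no serious obstacle left: the substantive difficulty — establishing that a real characteristic root exists at all in this parameter range — has already been dispatched in \lemref{lem:zerobrange}, so the present statement is essentially a corollary of it. The only point demanding care is conceptual rather than technical, namely that it is the \emph{reality} of $\lambda$ that forces $e^{\lambda t}>0$; a merely complex root would yield real solutions of the form $e^{\Re(\lambda)t}\cos(\Im(\lambda)t+\varphi)$, which oscillate in sign. Thus all the content resides in the availability of a real root, which is exactly what the threshold $A\geqslant -e^{-1}$ guarantees (at the endpoint $A=-e^{-1}$ the two roots coalesce into the double root $\lambda=-1$).
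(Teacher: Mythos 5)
Your proposal is correct and follows exactly the paper's route: the paper's own proof is a one-line appeal to \lemref{lem:zerobrange}, taking a real negative characteristic root $\lambda$ and using the resulting positive exponential $e^{\lambda t}$ as the solution, which is precisely what you spell out (including the correct observation that the roots coalesce at $\lambda=-1$ when $A=-e^{-1}$). Your write-up merely makes explicit the substitution argument that the paper leaves implicit.
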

\begin{proof}
The proof can be analogously concluded from Lemma \ref{lem:zerobrange} due to the negativity of the eigenvalues of the corresponding spectral problem.
\end{proof}

Let $w(t)$ be any positive solution to Equation \eqref{eq:ddenocondition}. Define the variable transformation 
\[
    v(t):=\frac{y(t)}{w(t)},
\]
then we obtain 
\begin{align*}
v'(t)&=\frac{y'(t)w(t)-y(t)w'(t)}{w^2(t)} \\
&=\frac{y'(t)}{w(t)}-\frac{y(t)w'(t)}{w^2(t)}\\
&=\frac{Ay(t-1)}{w(t)}-\frac{Ay(t)w(t-1)}{w^2(t)}\\&
=\frac{-Aw(t-1)}{w(t)}\left(\frac{y(t)}{w(t)}-\frac{y(t-1)}{w(t-1)}\right)\\
&=\frac{-Aw(t-1)}{w(t)}(v(t)-v(t-1))\\
&:= \widetilde{A}(t)(v(t) - v(t-1)). 
\end{align*}
Hence, the new parameter $v(t)$ satisfies the following differential equation:
\begin{equation}\label{eq:vchavar}
v'(t)=\widetilde{A}(t)(v(t)-v(t-1)), \quad t>1.
\end{equation}
This transformation is important to prove the positivity of $y(t)$ in System \eqref{eq:dimensionless dde}. Prior to that, we investigate further of the properties of $v(t)$ defined in Equation \eqref{eq:vchavar} via Lemma 3 of \citet{DIBLIK1998200}.

\begin{lem}[\citep{DIBLIK1998200}, Lemma 3]\label{prop:ddeincrease}
Let $v(t)$ satisfy Equation \eqref{eq:vchavar} on $t\in (1,+\infty)$ and assume that the history condition $h(t)$ for $t\in[0,1]$ is continuous. If $\widetilde{A}(t)>0$ and $h(1)>h(t)$ on $t\in[0,1)$, then $v(t)$ is increasing on $[1,+\infty)$. 
\end{lem}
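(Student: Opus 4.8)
The plan is to prove the statement by induction on the unit intervals $[n,n+1]$, $n\geqslant 1$, showing that $v$ is strictly increasing on each of them; concatenating these gives (strict, hence a fortiori) monotonicity on all of $[1,+\infty)$. The guiding observation is that $v$ is continuous on $[0,+\infty)$ and $C^1$ on each open interval $(n,n+1)$, and that since $\widetilde{A}(t)>0$ the sign of $v'(t)$ equals the sign of the continuous function $D(t):=v(t)-v(t-1)$; thus it suffices to control the sign of $D$, and no regularity of $\widetilde{A}$ beyond positivity is needed. The one genuine subtlety is that the history $h$ is \emph{not} assumed monotone on $[0,1]$: we are told only that $h(1)$ is the strict maximum there. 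Everything hinges on converting this single ``$h(1)$ is largest'' fact into monotonicity and then bootstrapping it across successive delay intervals.

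For the base interval $[1,2]$ the delayed argument $t-1$ lies in $[0,1]$, so $v(t-1)=h(t-1)$ and $v'(t)=\widetilde{A}(t)\bigl(v(t)-h(t-1)\bigr)$. By continuity $v(1)=h(1)$, hence $\lim_{t\apc 1^+}v'(t)=\widetilde{A}(1)\bigl(h(1)-h(0)\bigr)>0$, using $h(1)>h(0)$. I would then run a ``first vanishing of the slope'' argument on the open interval: were $v'$ to have a first zero at some $s\in(1,2)$, then $v$ would be strictly increasing on $[1,s]$, giving $v(s)>v(1)=h(1)$, while maximality of $h(1)$ forces $h(s-1)<h(1)$ (strict, as $s-1<1$); but $v'(s)=0$ means $v(s)=h(s-1)$, whence $h(1)<v(s)=h(s-1)<h(1)$, a contradiction. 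Thus $v'>0$ throughout $(1,2)$ and $v$ is strictly increasing on $[1,2]$.

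The inductive step on $[n,n+1]$ for $n\geqslant 2$ assumes $v$ is strictly increasing on $[1,n]$. Now $t-1$ ranges over $(n-1,n)\subseteq[1,n]$, where $v$ is already strictly increasing, so $v(t-1)<v(n)$. At the left endpoint $\lim_{t\apc n^+}v'(t)=\widetilde{A}(n)\bigl(v(n)-v(n-1)\bigr)>0$. Repeating the first-zero argument: a first zero of $v'$ at $s\in(n,n+1)$ would give $v(s)>v(n)$ (strict increase on $[n,s]$) and $v(s-1)<v(n)$ (strict increase on $[s-1,n]$, legitimate since $s-1>1$), whence $v(s)=v(s-1)<v(n)<v(s)$, which is impossible. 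Hence $v$ is strictly increasing on $[n,n+1]$, and the induction closes.

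The step I expect to be the main obstacle is making the base case airtight, since it is the only place where the non-monotone history intervenes: one must argue that the delayed feedback through $h(t-1)$ can never drag $v'$ down to zero, using \emph{only} that $h(1)$ dominates every earlier history value together with the fact that $v$ has already climbed to at least $v(1)=h(1)$. Subsidiary care is needed at the junctions $t=n$, where $v'$ may be only one-sidedly continuous; I would handle this by working on open intervals and taking one-sided limits, relying on the genuine continuity of $D(t)=v(t)-v(t-1)$ (which holds because $v$ is continuous) rather than on any property of $v'$ at the endpoints.
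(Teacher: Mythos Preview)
Your argument is correct. The paper does not supply its own proof of this lemma; it simply cites it as Lemma~3 of Dibl\'ik~\cite{DIBLIK1998200} and uses the result as a black box in the proof of \propref{thm:ypos}. So there is nothing in the paper to compare your approach against directly.

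That said, your step-by-step induction over unit intervals with a ``first vanishing of $D(t)=v(t)-v(t-1)$'' contradiction is the natural and standard route for this type of statement, and it is essentially the argument one finds in the delay-equations literature for such monotonicity results. Two small remarks: first, your decision to run the contradiction on the continuous function $D$ rather than on $v'$ is exactly right, since it sidesteps any question about continuity of $\widetilde{A}$ or of $v'$ across the junctions $t=n$; second, in the base case you should note explicitly that the possible endpoint $s=2$ is excluded because $D>0$ on $(1,2)$ forces $v(2)>v(1)$ by integration, so $D(2)=v(2)-v(1)>0$ as well---this closes the interval cleanly and feeds the strict inequality $v(2)>v(1)$ into the start of the next step. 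With that endpoint handled, the induction is airtight.
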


The key result in this appendix, i.e., the positivity of the solutions to System \eqref{eq:dimensionless dde} is stated below. 
\begin{prop}\label{thm:ypos}
For $A\in[-e^{-1},0)$,
the solution $y(t)$ to System \eqref{eq:dimensionless dde} is positive, and decreasing exponentially on $\IR_0^+$. 
\end{prop}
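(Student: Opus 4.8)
The plan is to establish positivity first---this is the substantive content---and then read off the exponential decay as a short consequence. For positivity I would exploit the change of variables already set up just before the statement: given a strictly positive solution $w$ of $w'(t)=A\,w(t-1)$, the quotient $v:=y/w$ satisfies the transformed equation \eqref{eq:vchavar} with coefficient $\widetilde A(t)=-A\,w(t-1)/w(t)$, and Dibl\'ik's monotonicity result (\lemref{prop:ddeincrease}) reduces the whole problem to checking two sign conditions.

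Concretely, I would take $\lambda$ to be a negative real root of the characteristic equation $\lambda-Ae^{-\lambda}=0$, which exists for $A\in[-e^{-1},0)$ by \lemref{lem:zerobrange} (equivalently \lemref{lem:ddepos}), and set $w(t):=e^{\lambda t}$. Then $w>0$ everywhere and $w'=A\,w(\cdot-1)$, so $w$ is a legitimate positive comparison solution. With this choice $\widetilde A(t)=-A e^{-\lambda}=-\lambda>0$ is a positive constant, and on the history interval the explicit form $v(t)=y(t)/w(t)=e^{-\lambda t}$ is strictly increasing since $-\lambda>0$; in particular $v(1)>v(t)$ for $t\in[0,1)$. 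These are exactly the hypotheses of \lemref{prop:ddeincrease}, which then yields that $v$ is increasing on $[1,+\infty)$. Consequently $v(t)\geqslant v(1)>0$ for $t\geqslant 1$, while $v>0$ on $[0,1]$ by the explicit formula, so $v>0$ throughout and therefore $y=v\,w>0$ on $\IR_0^+$.

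Once positivity is in hand the decay is immediate. For $t>1$ we have $y'(t)=A\,y(t-1)<0$, so $y$ is strictly decreasing on $(1,+\infty)$; combined with $y\equiv 1$ on $[0,1]$ this forces $y(t-1)\geqslant y(t)>0$ for every $t>1$ (for $t\in(1,2]$ because $y(t-1)=1\geqslant y(t)$, and for $t>2$ by monotonicity). Multiplying $y(t-1)\geqslant y(t)$ by $A<0$ reverses the inequality to give $y'(t)=A\,y(t-1)\leqslant A\,y(t)$, hence $\frac{\df}{\df t}\bigl(e^{-At}y(t)\bigr)\leqslant 0$. Integrating from $1$ yields $y(t)\leqslant e^{A(t-1)}$, which is exponential decay at rate at least $|A|$.

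The main obstacle is conceptual rather than computational: the usual iteration/Picard arguments for positivity (as noted in the excerpt) do not apply to this DDE, so the entire burden falls on instantiating Dibl\'ik's comparison lemma correctly. The delicate point is precisely the choice of comparison function $w=e^{\lambda t}$ with $\lambda$ a \emph{negative} root, which simultaneously makes $\widetilde A$ positive and renders the transformed history $v|_{[0,1]}=e^{-\lambda\,\cdot}$ monotone increasing; once these two conditions are verified, everything that follows---including the exponential bound---is routine.
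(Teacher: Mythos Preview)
Your proof is correct and follows essentially the same route as the paper: both take $w(t)=e^{\lambda t}$ with $\lambda$ a negative characteristic root, verify that $\widetilde A=-Ae^{-\lambda}>0$ and that the transformed history $v=e^{-\lambda t}$ is strictly increasing on $[0,1]$, and invoke Dibl\'ik's lemma to conclude $v$---hence $y$---stays positive. The only difference is in the decay estimate: the paper reads off the \emph{lower} bound $y(t)\geqslant e^{\lambda(t-1)}$ directly from $v(t)\geqslant v(1)$, whereas you derive the \emph{upper} bound $y(t)\leqslant e^{A(t-1)}$ via the differential inequality $y'\leqslant Ay$; the two bounds are complementary and together give $e^{\lambda(t-1)}\leqslant y(t)\leqslant e^{A(t-1)}$.
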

\begin{proof}
The proof we shall give now avoids the use of Theorem 3.3.1 of \citet{gyori1991oscillation}. Let $w(t) = e^{\lambda t}$ be the positive solution to Equation \eqref{eq:vchavar}, where $\lambda$ is the eigenvalues computed in Lemma \ref{lem:zerobrange}, then 
\[
    \widetilde{A} = \widetilde{A}(t) = -Ae^{-\lambda}>0,\qquad -1\leqslant\lambda< A <0.
\]
Hence, with $y(t)$ defined by System \eqref{eq:dimensionless dde}, $v(t)$ is then given by
\begin{equation*}
    \left\{
    \begin{alignedat}{2}
        v'(t)&=-Ae^{-\lambda}(v(t) - v(t-1)), \quad &&t\in(1, +\infty),\\
        v(t) &= e^{-\lambda t}, & &t\in[0,1].
    \end{alignedat}
    \right.
\end{equation*}
Notice $y(t)=1$ on $[0,1]$, so the history condition $h$ for $v$ on $[0,1]$ satisfies $h(1)=v(1)=e^{-\lambda}>e^{-\lambda t}=v(t)=h(t)$ for all $t\in[0,1)$. Then apply \lemref{prop:ddeincrease} to see that $v(t)$ is increasing on $[1,+\infty)$. Namely, $\frac{y(t)}{e^{\lambda t}}=v(t)\geqslant v(1)=e^{-\lambda}$. That is to say $y(t)\geqslant e^{\lambda (t-1)}>0$. This also shows that $y(t)$ is strictly decreasing on $[0,1)$ as $y'(t)=By(t-1)<0$, and hence, $y(t)$ is decreasing on $\IR_0^+$.
\end{proof}

\section{Miscellaneous Integrals}\label{app:C}
\begin{lem} For all $n\in\IN$,
\begin{equation}
\label{eq:gaussint}
\int_{\IR}x^{2n}e^{-ax^2}\df x =\frac{(2n-1)!!\sqrt{\pi}}{2^{n}\sqrt{a^{1+2n}}}.
\end{equation}
\end{lem}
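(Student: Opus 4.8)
The plan is to reduce everything to the classical Gaussian integral $\int_{\IR}e^{-ax^2}\df x=\sqrt{\pi/a}$ (valid for $a>0$) and then obtain the general even power $x^{2n}$ by induction on $n$ using integration by parts. An equivalent route is repeated differentiation of the base integral with respect to the parameter $a$; I would take the inductive route as it keeps the bookkeeping of the double factorials transparent.

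First I would settle the base case $n=0$: since $(-1)!!=1$ by the usual convention, the right-hand side reads $\sqrt{\pi}/\sqrt{a}$, which is exactly the standard Gaussian integral. This itself follows from the familiar two-dimensional trick $\left(\int_{\IR}e^{-ax^2}\df x\right)^2=\int_{\IR^2}e^{-a(x^2+y^2)}\df x\,\df y$ evaluated in polar coordinates, or from the substitution $u=\sqrt{a}\,x$ reducing to the case $a=1$.

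For the inductive step, assume the formula holds for $n-1$. I would write $x^{2n}e^{-ax^2}=x^{2n-1}\cdot xe^{-ax^2}$ and use the identity $xe^{-ax^2}=-\frac{1}{2a}\frac{\df}{\df x}e^{-ax^2}$. Integration by parts then gives
\[
\int_{\IR}x^{2n}e^{-ax^2}\df x=\frac{2n-1}{2a}\int_{\IR}x^{2(n-1)}e^{-ax^2}\df x,
\]
the boundary terms vanishing because $e^{-ax^2}$ decays faster than any polynomial grows. Substituting the inductive hypothesis $\int_{\IR}x^{2(n-1)}e^{-ax^2}\df x=\frac{(2n-3)!!\sqrt{\pi}}{2^{n-1}\sqrt{a^{2n-1}}}$, and using $(2n-1)(2n-3)!!=(2n-1)!!$ together with $2a\cdot 2^{n-1}=2^n a$ and $a\cdot a^{(2n-1)/2}=a^{(2n+1)/2}=\sqrt{a^{2n+1}}$, yields precisely the claimed expression with exponent $1+2n$.

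The computation is entirely routine; there is no genuine obstacle. The only points requiring a word of care are the vanishing of the boundary terms in the integration by parts (immediate from the super-polynomial decay of the Gaussian factor) and, should one prefer the differentiation-under-the-integral approach instead, the justification of interchanging $\frac{\df^n}{\df a^n}$ with the integral sign — which is licensed by dominated convergence on any compact subinterval $[a_0,a_1]\subset(0,+\infty)$, since $|x^{2n}e^{-ax^2}|\leqslant x^{2n}e^{-a_0x^2}\in L^1(\IR)$ uniformly in $a\in[a_0,a_1]$.
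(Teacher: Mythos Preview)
Your proof is correct. The paper, however, takes the parameter-differentiation route that you mention only as an alternative: it starts from $\int_{\IR}e^{-ax^2}\df x=\sqrt{\pi/a}$ and applies $\partial^n/\partial a^n$ to both sides, reading off $\int_\IR (-1)^n x^{2n}e^{-ax^2}\df x = (-\tfrac{1}{2})(-\tfrac{3}{2})\cdots(-\tfrac{2n-1}{2})\sqrt{\pi}\,a^{-(2n+1)/2}$ and simplifying. Your integration-by-parts induction derives the one-step recursion $\int_\IR x^{2n}e^{-ax^2}\df x=\tfrac{2n-1}{2a}\int_\IR x^{2(n-1)}e^{-ax^2}\df x$ directly, which makes the appearance of the double factorial completely transparent and sidesteps the (minor) need to justify swapping $\partial_a^n$ with the integral. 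The differentiation argument in the paper is slightly quicker to write down but hides the combinatorics in the successive derivatives of $a^{-1/2}$; both are standard and equally valid, and you correctly note the dominated-convergence justification needed for the paper's route.
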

\begin{proof}
By the well-known Gaussian integral
$$
\int_{\IR}e^{-ax^2}\df x=\sqrt{\frac{\pi}{a}}.
$$
Take partial derivative with respect to $a$ on both side,
\begin{align*}
    \frac{\partial^n}{\partial a^n}\left(\int_{\IR}e^{-ax^2}\df x\right)&=\frac{\partial^n}{\partial a^n}\left(\sqrt{\frac{\pi}{a}}\right)\\
    \int_{\IR}\frac{\partial^ne^{-ax^2}}{\partial a^n}\df x&=\frac{\partial^n}{\partial a^n}\left(\sqrt{\pi}a^{-\frac{1}{2}}\right)\\
    \int_{\IR}(-1)^nx^{2n}e^{-ax^2}\df x&=\left(-\frac{2n-1}{2}\right)\left(-\frac{2n-3}{2}\right)\cdots\left(-\frac{1}{2}\right)\sqrt{\frac{\pi}{a^{1+2n}}}.
\end{align*}
Note that $(2n-1)!!:=(2n-1)(2n-3)\cdots1$ by definition and $(2\cdot0-1)!!:=1$ by convention. We obtain the result.
\end{proof}

\begin{lem}\label{lem:lapkerncomputation}
Let $\beta\geqslant0$. The heat kernel $\cK_1(t,\beta)$ has Laplace transform
\begin{equation}\label{eq:lapheatkernone}
Q(s,\beta)=\frac{e^{-\sqrt{s}\beta}}{2\sqrt{s}}.
\end{equation}

The function $\cT(t,\beta):=\frac{\beta e^{-\frac{\beta^2}{4t}}}{\sqrt{4\pi t^3}}$ has Laplace transform
\begin{equation}\label{eq:lapsqrtkern}
T(s,\beta)=e^{-\sqrt{s}\beta}.
\end{equation}
\end{lem}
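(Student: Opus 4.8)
The plan is to prove the second identity \eqref{eq:lapsqrtkern} first, by an explicit change of variables that collapses the integral to a single standard one-dimensional Gaussian-type integral, and then to deduce the first identity \eqref{eq:lapheatkernone} from it using the elementary relation $\partial_\beta \cK_1 = -\tfrac12\cT$ between the two kernels. Since each of the integrals converges for $\Re(s)$ large and both sides of each claimed identity are holomorphic on a right half-plane, it suffices to verify the formulas for real $s>0$; the general case then follows by the identity theorem. Note also that I cannot invoke \lemref{lem:lapsqrtsub} here, as its proof already quotes \eqref{eq:lapsqrtkern}; the argument must be self-contained. I treat $\beta>0$ as the substantive case (at $\beta=0$ one has $\cT(\cdot,0)\equiv 0$, while $Q(s,0)$ is computed directly below).

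The computational engine is the auxiliary integral
\[
    J(\alpha) := \int_0^{+\infty} e^{-u^2-\alpha/u^2}\df u = \frac{\sqrt{\pi}}{2}\,e^{-2\sqrt{\alpha}}, \qquad \alpha>0.
\]
To establish it I differentiate under the integral sign and substitute $w=\sqrt{\alpha}/u$, which produces the first-order relation $J'(\alpha)=-\alpha^{-1/2}J(\alpha)$; integrating this and inserting the Gaussian value $J(0)=\sqrt{\pi}/2$ gives the closed form. With $J$ in hand, I compute $T(s,\beta)=\int_0^{+\infty}e^{-st}\cT(t,\beta)\df t$ via the substitution $u=\beta/(2\sqrt{t})$, that is $t=\beta^2/(4u^2)$. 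Bookkeeping of the Jacobian (the constants combine to $2/\sqrt{\pi}$) turns the integral into
\[
    T(s,\beta)=\frac{2}{\sqrt{\pi}}\int_0^{+\infty}e^{-u^2-(s\beta^2/4)/u^2}\df u = \frac{2}{\sqrt{\pi}}\,J\!\left(\tfrac{s\beta^2}{4}\right),
\]
and since $2\sqrt{s\beta^2/4}=\beta\sqrt{s}$, this evaluates to $T(s,\beta)=e^{-\beta\sqrt{s}}$, proving \eqref{eq:lapsqrtkern}.

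For \eqref{eq:lapheatkernone} I first record the boundary value, using the $t^p$ entry of Table A.1 (or directly the Gamma integral):
\[
    Q(s,0)=\int_0^{+\infty}\frac{e^{-st}}{\sqrt{4\pi t}}\df t = \frac{1}{\sqrt{4\pi}}\,\frac{\Gamma(\tfrac12)}{\sqrt{s}} = \frac{1}{2\sqrt{s}}.
\]
A direct differentiation shows $\partial_\beta\cK_1(t,\beta)=-\tfrac12\cT(t,\beta)$, so differentiating under the integral sign (legitimate by the uniform Gaussian/exponential control of the integrands) gives $\partial_\beta Q(s,\beta)=-\tfrac12 T(s,\beta)=-\tfrac12 e^{-\beta\sqrt{s}}$. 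Integrating from $0$ to $\beta$ and substituting $Q(s,0)=\tfrac{1}{2\sqrt{s}}$ yields $Q(s,\beta)=\tfrac{1}{2\sqrt{s}}+\tfrac{e^{-\beta\sqrt{s}}-1}{2\sqrt{s}}=\tfrac{e^{-\beta\sqrt{s}}}{2\sqrt{s}}$, as claimed.

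The main obstacle is the auxiliary integral $J(\alpha)$ and, more generally, the rigorous justification of interchanging differentiation and integration in the two places it is used; once $J$ is available, the substitution $u=\beta/(2\sqrt{t})$ is purely mechanical. One must also keep track of the mild degeneracy of $\cT$ at $\beta=0$, but this is harmless in the final step, since it affects only a single point of the $\beta$-integration.
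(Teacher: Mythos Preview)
Your proof is correct, but it follows a genuinely different route from the paper's. The paper computes $Q(s,\beta)$ \emph{first} and directly, by rewriting the exponent as $-\sqrt{s}\beta-(\sqrt{st}-\sqrt{\beta^2/(4t)})^2$, then applying a Cauchy--Schl\"omilch symmetrization: the substitution $t=\beta^2/(4st')$ gives a second integral representation, the two are averaged, and the change of variable $v=\sqrt{st}-\sqrt{\beta^2/(4t)}$ collapses the result to a single Gaussian. The same symmetrization is then reapplied verbatim to $T$. You instead compute $T$ first by reducing it, via $u=\beta/(2\sqrt{t})$, to the auxiliary integral $J(\alpha)=\int_0^\infty e^{-u^2-\alpha/u^2}\,\df u$, which you evaluate by Feynman's trick (differentiate in $\alpha$, substitute, solve the resulting ODE); then you recover $Q$ from $T$ by the relation $\partial_\beta\cK_1=-\tfrac12\cT$ and a single $\beta$-antiderivative anchored at the explicit value $Q(s,0)=\tfrac{1}{2\sqrt{s}}$. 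The paper's argument is slightly more elementary in that it never needs to justify differentiation under the integral sign, and it treats both kernels uniformly; your argument is more economical in that it computes only one integral ``from scratch'' and obtains the other essentially for free through the structural link between $\cK_1$ and $\cT$.
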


\begin{proof}
\begin{align*}
Q(s,\beta):=\,&\cL\{\cK_1(\cdot,\beta)\}(s)\\
=\,&\int_0^{+\infty} \frac{e^{-\frac{\beta^2}{4t}}}{\sqrt{4\pi t}}e^{-st}\df t\\
=\,&\frac{e^{-\sqrt{s}\beta}}{\sqrt{4\pi}}\int_0^{+\infty} \frac{e^{-\left(\sqrt{\frac{\beta^2}{4t}}-\sqrt{st}\right)^2}}{\sqrt{t}}\df t.
\end{align*}
Substitute $t=\frac{\beta^2}{4st'}$ and $\df t=-\frac{\beta^2}{4st'^2}\df t'$. Then
$$Q(s,\beta)=\frac{e^{-\sqrt{s}\beta}}{\sqrt{4\pi}}\int_0^{+\infty}\sqrt{\frac{\beta^2}{4st'^3}}e^{-\left(\sqrt{st'}-\sqrt{\frac{\beta^2}{4t'}}\right)^2}\df t'.$$
Add the two integral representations of $Q(s,\beta)$ up:
\begin{align*}
&2Q(s,\beta)=\frac{e^{-\sqrt{s}\beta}}{\sqrt{4\pi}}\int_0^{+\infty} \left(\sqrt{\frac{1}{t}}+\sqrt{\frac{\beta^2}{4st^3}} \right)e^{-\left(\sqrt{st}-\sqrt{\frac{\beta^2}{4t}}\right)^2}\df t\\
\Rightarrow\;\;&Q(s,\beta)=\frac{e^{-\sqrt{s}\beta}}{\sqrt{4\pi s}}\int_0^{+\infty}\frac{1}{2}\left(\sqrt{\frac{s}{t}}+\sqrt{\frac{\beta^2}{4t^3}}\right)e^{-\left(\sqrt{st}-\sqrt{\frac{\beta^2}{4t}}\right)^2}\df t
\end{align*}
where we divide the factor $2$ on the left and exact $\sqrt{s}$ on the denominator.
Substitute $v=\sqrt{st}-\sqrt{\frac{\beta^2}{4t}}$ which has range $\IR$ and $\df v=\frac{1}{2}\left(\sqrt{\frac{s}{t}}+\sqrt{\frac{\beta^2}{4t^3}}\right)\df t$. Using \eqref{eq:gaussint}, we eventually get
\begin{align*}
Q(s,\beta)&=\frac{e^{-\sqrt{s}\beta}}{\sqrt{4\pi s}}\int_0^{+\infty}e^{-v^2}\df t\\
&=\frac{e^{-\sqrt{s}\beta}}{2\sqrt{s}}.
\end{align*}

Moreover, we can compute the higher derivatives of Laplace transform of the heat kernel,
\begin{align*}
    Q^{(n)}(s,\beta)&=(-1)^n\int_0^{+\infty} t^n\frac{e^{-\frac{\beta^2}{4t}}}{\sqrt{4\pi t}}e^{-st}\df t\\
&=\frac{(-1)^n}{\sqrt{4\pi}}s^{-n-\frac{1}{2}}\int_0^{+\infty} t^{n-\frac{1}{2}}e^{-\frac{(\sqrt{s}\beta)^2}{4t}-t}\df t.
\end{align*}
Using Formula (15) in 6.22 of \cite{watson_bessel}, we can further express the result in terms of \textit{modified Bessel functions}:
\begin{equation}
    Q^{(n)}(s,\beta)=\frac{(-1)^n\beta^{n+\frac{1}{2}}}{\sqrt{2^{2n+5}\pi s^{n+\frac{1}{2}}}}K_{n-\frac{1}{2}}(\sqrt{s}\beta).
\end{equation}

For the second function, we compute
$$\int_0^{+\infty}\frac{\beta e^{-\frac{\beta^2}{4t}}}{\sqrt{4\pi t^3}}e^{-st}\df t=\frac{e^{-\sqrt{s}\beta}}{\sqrt{\pi}}\int_0^{+\infty}\frac{\beta e^{-\left(\sqrt{st}-\sqrt{\frac{\beta^2}{4t}}\right)^2}}{\sqrt{4t^3}}\df t.$$
Substitute $t=\frac{\beta^2}{4st'}$, add two integrals, divide by $2$, and substitute $v=\sqrt{st}-\sqrt{\frac{\beta^2}{4t}}$.
\end{proof}

\begin{lem}\label{lem:xnt2n01}
Let $f\in L^2(\IR)$, $n\in\IN$, and $p\geqslant0$. Then the integral
\begin{equation}\label{eq:xn01}
\int_0^1\int_{\IR}\frac{x^ne^{-\frac{x^2}{4t}}}{\sqrt{4\pi t^{2+n-p}}}f(x)\df x \df t
\end{equation}
converges absolutely.
\end{lem}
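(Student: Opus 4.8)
The plan is to reduce the claim to the finiteness of a single nonnegative iterated integral and then estimate it by Cauchy--Schwarz followed by an explicit Gaussian moment computation. Since the integrand $\frac{|x|^n e^{-x^2/4t}}{\sqrt{4\pi t^{2+n-p}}}|f(x)|$ is nonnegative and measurable, Tonelli's theorem guarantees that absolute convergence of the double integral is equivalent to finiteness of the iterated integral
$$\int_0^1\left(\int_{\IR}\frac{|x|^n e^{-\frac{x^2}{4t}}}{\sqrt{4\pi t^{2+n-p}}}|f(x)|\df x\right)\df t,$$
so it suffices to bound this. The idea is to do the spatial integral first, for each fixed $t\in(0,1)$, obtaining a clean power of $t$, and then integrate in time.

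First I would apply the Cauchy--Schwarz inequality in the spatial variable, using the hypothesis $f\in L^2(\IR)$:
$$\int_{\IR}\frac{|x|^n e^{-\frac{x^2}{4t}}}{\sqrt{4\pi t^{2+n-p}}}|f(x)|\df x\leqslant \Vert f\Vert_{L^2}\left(\int_{\IR}\frac{x^{2n}e^{-\frac{x^2}{2t}}}{4\pi t^{2+n-p}}\df x\right)^{1/2}.$$
Next I would evaluate the Gaussian moment by \eqref{eq:gaussint} with $a=\frac{1}{2t}$, which gives $\int_{\IR}x^{2n}e^{-x^2/2t}\df x=(2n-1)!!\sqrt{\pi}\,2^{-n}(2t)^{n+1/2}$. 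Substituting this and cancelling powers of $t$, the bracketed expression equals a constant $C_n$ (depending only on $n$) times $t^{p-3/2}$, so the spatial integral is bounded by $\sqrt{C_n}\,\Vert f\Vert_{L^2}\,t^{p/2-3/4}$.

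The final step is to integrate this bound over $t\in(0,1)$, where the relevant quantity is $\int_0^1 t^{p/2-3/4}\df t$; this converges precisely when $p/2-3/4>-1$, i.e. $p>-\tfrac12$, which is guaranteed by the hypothesis $p\geqslant 0$. I do not expect a genuine obstacle here: the argument is a routine Cauchy--Schwarz-plus-Gaussian-moment estimate, and the only point requiring care is the exact bookkeeping of the exponent of $t$ after inserting the Gaussian moment, so as to confirm that the singularity at $t=0$ is integrable for every admissible value of $p$.
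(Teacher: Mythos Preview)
Your proposal is correct and follows essentially the same approach as the paper: apply Cauchy--Schwarz (H\"older) in the spatial variable, insert the Gaussian moment formula \eqref{eq:gaussint}, and verify that the resulting power $t^{p/2-3/4}$ is integrable on $(0,1)$ for $p\geqslant 0$. The only addition you make is the explicit invocation of Tonelli to justify the iterated integral, which the paper leaves implicit.
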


\begin{proof}
By H\"older's inequality and \eqref{eq:gaussint}, \eqref{eq:xn01} is
\begin{align*}
    &\leqslant\int_0^1\frac{1}{\sqrt{4\pi t^{2+n-p}}}\left(\frac{(2n-1)!\sqrt{\pi (2t)^{1+2n}}}{2^n}\right)^{\frac{1}{2}}\Vert f\Vert_{L^2} \df t\\
    &=\left(\frac{(2n-1)!!\sqrt{2^{1+2n}\pi}}{2^{n+2}\pi}\right)^{\frac{1}{2}}\Vert f\Vert_{L^2}\cdot\int_0^1t^{-\frac{3}{4}+\frac{p}{2}}\df t.,
\end{align*}
which is finite.
\end{proof}

\begin{lem}\label{lem:xnt3n1inft}
Let $f\in L^2(\IR)$, $n\in\IN$, and $p\geqslant0$. Then the integral
\begin{equation}\label{eq:xn1inf}
\int_1^{+\infty}\int_{\IR}\frac{x^ne^{-\frac{x^2}{4t}}}{\sqrt{4\pi t^{3+n+p}}}f(x)\df x \df t
\end{equation}
converges absolutely.
\end{lem}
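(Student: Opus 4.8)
The plan is to mirror the proof of \lemref{lem:xnt2n01}: estimate the inner $x$-integral by Cauchy--Schwarz and then check that the resulting $t$-integral converges, this time at $+\infty$ rather than at the origin. First, for each fixed $t>0$ I would apply H\"older's inequality with exponents $2,2$ to separate $|f|$ from the Gaussian--polynomial weight:
\[
    \int_{\IR} |x|^n e^{-\frac{x^2}{4t}}|f(x)|\df x \;\leqslant\; \left(\int_{\IR}x^{2n}e^{-\frac{x^2}{2t}}\df x\right)^{\frac{1}{2}}\Vert f\Vert_{L^2},
\]
where the exponent in the Gaussian doubles to $\frac{1}{2t}$ upon squaring.

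Second, I would evaluate the Gaussian moment via \eqref{eq:gaussint} with $a=\frac{1}{2t}$, obtaining
\[
    \int_{\IR}x^{2n}e^{-\frac{x^2}{2t}}\df x = \frac{(2n-1)!!\sqrt{\pi}}{2^n}\,(2t)^{n+\frac{1}{2}},
\]
so that the square root above equals a constant (depending only on $n$) times $t^{\,n/2+1/4}$. Multiplying by the prefactor $(4\pi t^{3+n+p})^{-1/2}$ and by $\Vert f\Vert_{L^2}$, the full $x$-integral is bounded by a constant times
\[
    t^{\,(n/2+1/4)-(3+n+p)/2} = t^{-\frac{5}{4}-\frac{p}{2}}.
\]

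Third, it remains to integrate this bound in $t$ over $(1,+\infty)$. Since $-\tfrac{5}{4}-\tfrac{p}{2}<-1$ for every $p\geqslant 0$, the tail integral $\int_1^{+\infty} t^{-5/4-p/2}\df t$ is finite, which establishes the absolute convergence of \eqref{eq:xn1inf}. The argument is entirely routine; the only point requiring a moment's care is the bookkeeping of the powers of $t$, and in particular verifying that the combined exponent is strictly below $-1$. This is precisely what the extra factor $t^{-3}$ in the denominator (in place of the $t^{-2}$ appearing in \lemref{lem:xnt2n01}) provides, now securing integrability at $+\infty$ rather than near the origin. I do not anticipate any genuine obstacle.
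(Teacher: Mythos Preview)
Your proposal is correct and follows exactly the same approach as the paper's proof: apply Cauchy--Schwarz in the $x$-variable, evaluate the Gaussian moment via \eqref{eq:gaussint}, and check that the resulting power $t^{-5/4-p/2}$ is integrable on $(1,+\infty)$. The bookkeeping of powers of $t$ matches the paper's computation line for line.
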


\begin{proof}
By \eqref{eq:gaussint} and H\"older's inequality, \eqref{eq:xn1inf} is
\begin{align*}
    &\leqslant\int_1^{+\infty}\frac{1}{\sqrt{4\pi t^{3+n+p}}}\left(\frac{(2n-1)!!\sqrt{\pi (2t)^{1+2n}}}{2^{n}}\right)^{\frac{1}{2}}\Vert f\Vert_{L^2} \df t\\
    &=\left(\frac{(2n-1)!!\sqrt{2^{1+2n}\pi}}{2^{n+2}\pi}\right)^{\frac{1}{2}}\Vert f\Vert_{L^2}\cdot\int_1^{+\infty}t^{-\frac{5}{4}-\frac{p}{2}}\df t.
\end{align*}
which is finite.
\end{proof}

\begin{lem}\label{lem:pabound}
Let $a,\beta,C,p\geqslant0$, $\omega<0$ ,and $t>0$. Then
\begin{equation}\label{eq:paexpbound}
\int_\beta^{+\infty}\frac{\tau e^{-\frac{\tau^2}{4t}}}{\sqrt{4\pi t^p}}Ce^{a\tau}\df\tau\leqslant\frac{5Ce^{-\frac{\beta^2}{5t}+20a^2t}}{4\sqrt{\pi t^{p-2}}},
\end{equation}
\begin{equation}\label{eq:paexpboundnega1}
\int_\beta^{+\infty}\frac{\tau e^{-\frac{\tau^2}{4t}}}{\sqrt{4\pi t^p}}Ce^{\omega \tau}\df\tau\leqslant\frac{Ce^{-\frac{\beta^2}{4t}}}{\sqrt{\pi t^{p-2}}},
\end{equation}
\begin{equation}\label{eq:paexpboundnega0}
\int_\beta^{+\infty}\frac{ e^{-\frac{\tau^2}{4t}}}{\sqrt{4\pi t^p}}Ce^{\omega \tau}\df\tau\leqslant\frac{Ce^{-\frac{\beta}{2\sqrt{t}}}}{\sqrt{\pi t^{p-1}}},
\end{equation}
and
\begin{equation}\label{eq:paexpboundnega2}
\int_\beta^{+\infty}\frac{\tau^2 e^{-\frac{\tau^2}{4t}}}{\sqrt{4\pi t^p}}Ce^{\omega \tau}\df\tau\leqslant\frac{Ce^{-\frac{\beta}{2\sqrt{t}}}}{\sqrt{\pi t^{p-1}}}(\beta^2+4\sqrt{t}\beta+8t).
\end{equation}
\end{lem}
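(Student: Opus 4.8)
The plan is to treat all four estimates by the same two-move strategy: first dispose of the exponential factor $e^{a\tau}$ (respectively $e^{\omega\tau}$), and then reduce what remains to an incomplete Gaussian moment integral $\int_\beta^{+\infty}\tau^k e^{-\tau^2/(4t)}\df\tau$ with $k\in\{0,1,2\}$, which I estimate by elementary means. Throughout I will keep track of the powers of $t$ using $t/\sqrt{t^p}=1/\sqrt{t^{p-2}}$ and $\sqrt t/\sqrt{t^p}=1/\sqrt{t^{p-1}}$.

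For the first inequality \eqref{eq:paexpbound}, where the exponent \emph{grows}, I would complete the square after splitting the Gaussian weight. Writing $\tfrac14=\tfrac15+\tfrac1{20}$, I keep $e^{-\tau^2/(5t)}$ and bound the remainder by its maximum over $\tau$, namely $-\tfrac{\tau^2}{20t}+a\tau\leqslant 5a^2t\leqslant 20a^2t$, so that $e^{-\tau^2/(4t)+a\tau}\leqslant e^{20a^2t}\,e^{-\tau^2/(5t)}$. The surviving integral is elementary,
\[
    \int_\beta^{+\infty}\tau e^{-\tau^2/(5t)}\df\tau=\tfrac{5t}{2}e^{-\beta^2/(5t)},
\]
and collecting the constants gives exactly the claimed right-hand side. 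For the three estimates with $\omega<0$ I instead use the trivial bound $e^{\omega\tau}\leqslant 1$ on $[\beta,+\infty)\subseteq\IR_0^+$, reducing each to a pure Gaussian integral. Then \eqref{eq:paexpboundnega1} is immediate from $\int_\beta^{+\infty}\tau e^{-\tau^2/(4t)}\df\tau=2t\,e^{-\beta^2/(4t)}$.

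The cases \eqref{eq:paexpboundnega0} and \eqref{eq:paexpboundnega2} rest on a single auxiliary tail bound that I would isolate as the technical heart of the argument. After the substitution $u=\tau/(2\sqrt t)$ one is left with a complementary error function, and I claim
\[
    \int_b^{+\infty}e^{-u^2}\df u\leqslant e^{-b}\qquad(b\geqslant 0).
\]
This follows by showing $g(b):=e^{-b}-\int_b^{+\infty}e^{-u^2}\df u$ is positive at $b=0$ and that its derivative $g'(b)=e^{-b^2}-e^{-b}$ changes sign only at $b=1$, so $g$ cannot dip below zero. Applied with $b=\beta/(2\sqrt t)$, this yields \eqref{eq:paexpboundnega0} directly.

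Finally, \eqref{eq:paexpboundnega2} I obtain from one integration by parts,
\[
    \int_\beta^{+\infty}\tau^2 e^{-\tau^2/(4t)}\df\tau=2t\beta\,e^{-\beta^2/(4t)}+2t\int_\beta^{+\infty}e^{-\tau^2/(4t)}\df\tau,
\]
feeding the remaining integral into the $\mathrm{erf}$-bound above and controlling the boundary term $2t\beta\,e^{-\beta^2/(4t)}$ via the elementary inequality $e^{b-b^2}\leqslant e^{1/4}\leqslant 2(b+2)$. The generous polynomial $\beta^2+4\sqrt t\beta+8t$ on the right leaves ample slack, so none of the constants are delicate. The only genuinely non-mechanical point is the error-function tail estimate; everything else is completing the square and bookkeeping of powers of $t$, which I expect to be the main (if modest) obstacle to pinning down the stated constants exactly.
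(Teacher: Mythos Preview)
Your treatment of \eqref{eq:paexpbound} and \eqref{eq:paexpboundnega1} matches the paper's proof exactly: complete the square after the $\tfrac14=\tfrac15+\tfrac{1}{20}$ split (the paper states the resulting parabola inequality $-\tfrac{\tau^2}{20t}+a\tau\leqslant 20a^2t$ directly), and for $\omega<0$ drop $e^{\omega\tau}\leqslant 1$ and integrate $\tau e^{-\tau^2/(4t)}$ explicitly.

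For \eqref{eq:paexpboundnega0} and \eqref{eq:paexpboundnega2} your argument is correct but genuinely different from the paper's. The paper does not pass through the complementary error function at all; instead it linearizes the Gaussian exponent once and for all via the elementary inequality
\[
    -\frac{\tau^2}{4t}\leqslant -\frac{\tau}{2\sqrt t}+1,
\]
equivalently $e^{-u^2}\leqslant e\,e^{-u}$ for $u\geqslant 0$. After this, both integrals become pure exponential integrals: \eqref{eq:paexpboundnega0} is immediate, and \eqref{eq:paexpboundnega2} is the textbook identity $\int_\beta^{+\infty}\tau^2 e^{-\alpha\tau}\df\tau=e^{-\alpha\beta}\bigl(\tfrac{\beta^2}{\alpha}+\tfrac{2\beta}{\alpha^2}+\tfrac{2}{\alpha^3}\bigr)$ with $\alpha=1/(2\sqrt t)$, which produces the polynomial $\beta^2+4\sqrt t\beta+8t$ on the nose. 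Your route---an $\mathrm{erfc}$ tail bound $\int_b^{+\infty}e^{-u^2}\df u\leqslant e^{-b}$ plus one integration by parts and a separate estimate on the boundary term---works and even sidesteps a stray factor of $e$ that the paper's linearization introduces, but it is more labor-intensive: you have to argue the monotonicity of $g(b)=e^{-b}-\int_b^\infty e^{-u^2}\df u$ and then patch together two estimates for \eqref{eq:paexpboundnega2}. The paper's single linearization handles both inequalities uniformly with no auxiliary lemma.
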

\begin{proof}
For \eqref{eq:paexpbound},
\begin{align*}
    \int_\beta^{+\infty}\frac{\tau e^{-\frac{\tau^2}{4t}}}{\sqrt{4\pi t^p}}Ce^{a\tau}\df\tau&\leqslant\int_\beta^{+\infty}\frac{\tau e^{-\frac{\tau^2}{5t}+20a^2t}}{\sqrt{4\pi t^{p}}}\df\tau\\
    &=\frac{5Ce^{-\frac{\beta^2}{5t}+20a^2t}}{4\sqrt{\pi t^{p-2}}},
\end{align*}
where the last step is because $e^{-\frac{\tau^2}{4t}+a\tau}\leqslant e^{-\frac{\tau^2}{5t}+20a^2t}$ for $\tau\geqslant0$ if and only if $-\frac{\tau^2}{4t}+a\tau\leqslant-\frac{\tau^2}{5t}+20a^2t$ for $\tau\geqslant0$. But this is trivial for $a\geqslant0$ and $t>0$ by looking at the two parabolas.

The estimate \eqref{eq:paexpboundnega1} follows from the same computation as in \eqref{eq:paexpbound}, upon noting that $Ce^{\omega\tau} \leqslant C$ under the given parameters.

For \eqref{eq:paexpboundnega0}, the estimation $-\frac{\tau^2}{4t}\leqslant-\frac{\tau}{2\sqrt{t}}+1$ shows that the integral is bounded above by
$$\int_\beta^{+\infty}\frac{Ce^{-\frac{\tau}{2\sqrt{t}}}}{\sqrt{4\pi t^{p}}}\df\tau=\frac{Ce^{-\frac{\beta}{2\sqrt{t}}}}{\sqrt{\pi t^{p-1}}}.$$

For \eqref{eq:paexpboundnega2},  we apply the same the estimation $-\frac{\tau^2}{4t}\leqslant-\frac{\tau}{2\sqrt{t}}+1$. Then the integral is bounded above by 
$$\int_\beta^{+\infty}\frac{C\tau^2e^{-\frac{\tau}{2\sqrt{t}}}}{\sqrt{4\pi t^{p}}}\df\tau=\frac{Ce^{-\frac{\beta}{2\sqrt{t}}}}{\sqrt{\pi t^{p-1}}}(\beta^2+4\sqrt{t}\beta+8t),$$
by integration by parts.
\end{proof}

\begin{lem}\label{lem:intderheatfinite}
For any $\beta>0$, $$\int_0^{+\infty}\left|\frac{\partial}{\partial t}\left(\frac{e^{-\frac{\beta^2}{4t}}}{\sqrt{4\pi t}}\right)\right|\df t$$ converges.
\end{lem}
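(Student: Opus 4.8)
The plan is to recognise the integrand as the $t$-derivative of the heat kernel $\cK_1(t,\beta)=\frac{e^{-\beta^2/(4t)}}{\sqrt{4\pi t}}$ for fixed $\beta>0$, and to exploit the fact that, as a function of $t$, this kernel is unimodal and vanishes at both ends of $(0,+\infty)$. For such a function the integral of the absolute value of its derivative is precisely its total variation, which equals twice its maximal value. This reduces the convergence question to evaluating the kernel at a single point.

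First I would compute
\[
\frac{\partial}{\partial t}\cK_1(t,\beta)=\frac{\beta^2-2t}{4t^2}\,\cK_1(t,\beta),
\]
so that $\partial_t\cK_1(\cdot,\beta)$ has the same sign as $\beta^2-2t$: positive on $(0,\beta^2/2)$ and negative on $(\beta^2/2,+\infty)$. Hence $t\mapsto\cK_1(t,\beta)$ increases on $(0,\beta^2/2)$ and decreases afterwards, with a unique maximum at $t^\ast:=\beta^2/2$. Next I would record the two boundary limits: since $\beta>0$, the factor $e^{-\beta^2/(4t)}$ decays faster than any power as $t\apc0^+$ and so dominates the algebraic singularity $t^{-1/2}$, giving $\cK_1(t,\beta)\apc0$; and $\cK_1(t,\beta)\sim(4\pi t)^{-1/2}\apc0$ as $t\apc+\infty$.

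Then, splitting the absolute-value integral at $t^\ast$ and applying the Fundamental Theorem of Calculus on each piece (valid since $\cK_1(\cdot,\beta)$ is smooth on $(0,+\infty)$ with the one-sided limits just established), I obtain
\[
\int_0^{+\infty}\Bigl|\frac{\partial}{\partial t}\cK_1(t,\beta)\Bigr|\df t
=2\,\cK_1(t^\ast,\beta)-\lim_{t\apc0^+}\cK_1(t,\beta)-\lim_{t\apc+\infty}\cK_1(t,\beta)
=2\,\cK_1\!\left(\tfrac{\beta^2}{2},\beta\right)=\frac{1}{\beta}\sqrt{\frac{2}{\pi e}},
\]
which is finite. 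This simultaneously proves convergence and yields the explicit value.

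There is no serious obstacle here; the only point requiring care is the justification that $\cK_1(t,\beta)\apc0$ as $t\apc0^+$, which is exactly where the hypothesis $\beta>0$ is used (for $\beta=0$ the kernel blows up like $t^{-1/2}$ and the statement fails). Should one prefer to avoid the total-variation shortcut, the same conclusion follows by the more pedestrian route of splitting $\int_0^{+\infty}$ at $t=1$: on $(1,+\infty)$ one bounds $e^{-\beta^2/(4t)}\leqslant1$ and $|\beta^2-2t|\leqslant\beta^2+2t$, leaving integrands of order $t^{-5/2}$ and $t^{-3/2}$, both convergent at infinity; on $(0,1)$ the continuous function $t^{-5/2}e^{-\beta^2/(4t)}$ extends by $0$ to $t=0$ and is therefore bounded, so the integrand is bounded there.
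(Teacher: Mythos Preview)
Your proof is correct and follows essentially the same approach as the paper: both observe that $t\mapsto\cK_1(t,\beta)$ is unimodal and vanishes at $0^+$ and $+\infty$, so the integral of $|\partial_t\cK_1|$ equals twice the maximum value. Your version is in fact more detailed, since you locate the maximum at $t^\ast=\beta^2/2$ and compute the explicit value $\beta^{-1}\sqrt{2/(\pi e)}$, whereas the paper merely asserts finiteness.
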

\begin{proof}
The value function of $\frac{e^{-\frac{\beta^2}{4t}}}{\sqrt{4\pi t}}$ first increases from $0$ till its maximum, then decreases from the maximum to $0$ again. Then this integral is equal to twice of the maximum value.
\end{proof}

\begin{lem}\label{lem:explinearbound}
Let $f\in L^2(\IR)$, $n\in\IN$, and $q,p\in\IR$. Then
\begin{equation}\label{eq:explinear2n}
\int_0^1\int_{\IR}\frac{|x|^n\sqrt{t^q}e^{-\frac{|x|}{2\sqrt{t}}}}{\sqrt{t^p}}f(x)\df x\df t
\end{equation}
converges when $n+q\geqslant p-2$.
\end{lem}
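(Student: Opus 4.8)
The plan is to follow exactly the template of the neighbouring Lemmas \ref{lem:xnt2n01} and \ref{lem:xnt3n1inft}: bound the inner integral over $x$ by H\"older's inequality, peeling off the factor $\Norm{f}_{L^2}$ and leaving the $L^2$-norm of an explicit kernel, then evaluate that norm as a power of $t$ and reduce the claim to the integrability of a single power of $t$ over $(0,1)$. The one genuine difference from those earlier lemmas is that here the spatial decay is \emph{linear} in $|x|$ (the factor $e^{-|x|/(2\sqrt t)}$) rather than Gaussian, so the relevant moment uses the elementary identity $\int_0^{\infty} u^{m} e^{-u}\,\df u = m!$ in place of the Gaussian moment formula \eqref{eq:gaussint}.

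First I would absorb the $t$-prefactor by writing $\sqrt{t^q}/\sqrt{t^p} = t^{(q-p)/2}$, which is independent of $x$. Since the integrand is, up to this prefactor, a product of the nonnegative kernel $|x|^n e^{-|x|/(2\sqrt t)}$ and $f$, Tonelli's theorem lets me estimate the iterated integral of absolute values. By H\"older's inequality applied to the inner integral,
\[
    \int_{\IR} |x|^n e^{-|x|/(2\sqrt t)}\,|f(x)|\,\df x \;\leqslant\; \left( \int_{\IR} |x|^{2n} e^{-|x|/\sqrt t}\,\df x\right)^{1/2}\Norm{f}_{L^2}.
\]
The substitution $u=|x|/\sqrt t$ together with the exponential moment gives $\int_{\IR}|x|^{2n} e^{-|x|/\sqrt t}\,\df x = 2(2n)!\,t^{\,n+1/2}$, so the kernel's $L^2$-norm equals $\sqrt{2(2n)!}\;t^{(2n+1)/4}$.

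Collecting the two powers of $t$, the double integral \eqref{eq:explinear2n} is then bounded in absolute value by
\[
    \sqrt{2(2n)!}\,\Norm{f}_{L^2}\int_0^1 t^{(q-p+n)/2 + 1/4}\,\df t .
\]
The integral over $(0,1)$ converges precisely when its exponent exceeds $-1$, i.e. when $n+q-p>-\tfrac{5}{2}$. Under the hypothesis $n+q\geqslant p-2$ the exponent is at least $-\tfrac{3}{4}>-1$, which yields the claim with a comfortable margin. There is no real obstacle here beyond careful bookkeeping of the exponents of $t$; the only point that needs attention is to use the exponential (factorial) moment rather than the Gaussian (double-factorial) moment employed in the adjacent lemmas, since otherwise the power of $t$ coming out of the kernel would be computed incorrectly.
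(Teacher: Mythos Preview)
Your proof is correct and follows essentially the same approach as the paper: apply H\"older's inequality to the inner integral, compute the $L^2$-norm of the kernel via the exponential moment $\int_0^{\infty} u^m e^{-u}\,\df u = m!$, and reduce to the integrability of $t^{(2n+2q-2p+1)/4}$ on $(0,1)$, which holds since the exponent is at least $-3/4$ under the hypothesis. The bookkeeping of exponents matches the paper's exactly.
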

\begin{proof}First by substitution, we can compute
$$
\int_0^{+\infty}x^ne^{-ax}\df x=\frac{n!}{a^{n+1}}.
$$

Again by H\"older's inequality, \eqref{eq:explinear2n} is 
\begin{align*}
&\leqslant\int_0^12\sqrt{t^{q-p}}\left(\int_0^{+\infty}(2n)!\sqrt{t^{2n+1}}\right)^{\frac{1}{2}}\Vert f\Vert_{L^2}\df x\df t\\
&=2\sqrt{(2n)!}\Vert f\Vert_{L^2}\cdot\int_0^1 t^{\frac{2n+2q-2p+1}{4}}\df t,
\end{align*}
which is finite as $\frac{2n+2q-2p+1}{4}\geqslant-\frac{3}{4}$ in the given range.
\end{proof}

\begin{lem}\label{lem:convolutionlimit}
Let $f\in L_{\mathrm{loc}}^{\infty}(\IR_0^+)$ and $g\in L^1(\IR_0^+)$ such that $\lim_{t\apc+\infty}f(t)=f_{\infty}$ exists. Then the limit of the convolution of $f$ and $g$ satisfies
$$\lim_{t\apc+\infty}\int_0^tf(t-\tau)g(\tau)\df\tau=f_{\infty}\cdot\int_0^{+\infty}g(t)\df t.$$
\end{lem}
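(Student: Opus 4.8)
The plan is to reduce everything to the standard ``dominated-tail'' splitting argument for convolutions. First I would note that $f$ is in fact globally (essentially) bounded: since $\lim_{t\apc+\infty}f(t)=f_\infty$ exists, there is $T_1\geqslant 0$ with $|f(s)-f_\infty|<1$ for all $s>T_1$, while $f\in L^\infty_\loc(\IR_0^+)$ supplies an essential bound on the compact piece $[0,T_1]$. Hence $M:=\esssup_{t\geqslant 0}|f(t)|<+\infty$, and in particular $|f_\infty|\leqslant M$. This a priori global bound is exactly what will let me control the ``large-$\tau$'' part of the integral uniformly in $t$.

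The central identity I would use is
\[
    \int_0^t f(t-\tau)g(\tau)\df\tau - f_\infty\int_0^t g(\tau)\df\tau = \int_0^t \bigl(f(t-\tau)-f_\infty\bigr)g(\tau)\df\tau,
\]
valid for every $t>0$. Since $g\in L^1(\IR_0^+)$, the subtracted term $f_\infty\int_0^t g(\tau)\df\tau$ converges to $f_\infty\int_0^{+\infty}g(\tau)\df\tau$ as $t\apc+\infty$, so it suffices to show that the right-hand side tends to $0$.

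For that, fix $\varepsilon>0$. Using $\lim_{s\apc+\infty}f(s)=f_\infty$, enlarge $T_1$ if necessary so that $|f(s)-f_\infty|<\varepsilon$ for $s>T_1$; using $g\in L^1$, choose $T_2$ with $\int_{T_2}^{+\infty}|g(\tau)|\df\tau<\varepsilon$. For $t>T_1+T_2$ I would split the integral at $\tau=T_2$. On $[0,T_2]$ one has $t-\tau>T_1$, so $|f(t-\tau)-f_\infty|<\varepsilon$ and this piece is bounded by $\varepsilon\Vert g\Vert_{L^1}$. On $[T_2,t]$ I bound $|f(t-\tau)-f_\infty|\leqslant 2M$ crudely and use the small $L^1$-tail to bound this piece by $2M\varepsilon$. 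Altogether the right-hand side is at most $(\Vert g\Vert_{L^1}+2M)\varepsilon$ for all $t>T_1+T_2$; since $\varepsilon>0$ was arbitrary it vanishes in the limit, and combining with the convergence of $f_\infty\int_0^t g$ yields the claim.

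There is no deep obstacle here: this is a routine continuity-of-convolution statement. The only points needing a little care are the a priori global boundedness of $f$ (which converts the local bound plus the limit into a uniform bound usable on the unbounded tail) and the bookkeeping ensuring that, once $t>T_1+T_2$, the two smallness estimates genuinely apply on the respective subintervals $[0,T_2]$ and $[T_2,t]$.
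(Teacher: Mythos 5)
Your proof is correct and takes essentially the same route as the paper's: both first upgrade $f\in L^\infty_{\mathrm{loc}}$ plus the existence of the limit to a global essential bound, then split the convolution into a region where $f(t-\tau)$ is within $\varepsilon$ of $f_\infty$ (estimated by $\varepsilon\Vert g\Vert_{L^1}$) and a complementary region estimated by the global bound on $f$ times a small $L^1$-tail of $g$. The only difference is bookkeeping: you subtract $f_\infty\int_0^t g$ up front and cut at a fixed point $\tau=T_2$, whereas the paper cuts at the moving point $\tau=t-t_0$; these are the same tail-splitting argument.
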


\begin{proof}
Because the limit of $f$ exists when $t\apc+\infty$, $f$ in fact belongs to $L^{\infty}(\IR_0^+)$. For any $\varepsilon>0$, find $t_0$ such that $|f(t)-f_{\infty}|<\varepsilon$ for all $t>t_0$. Partition the convolution into two parts,
\begin{equation}\label{eq:convpartition}
\int_0^{t-t_0}f(t-\tau)g(\tau,1)\df\tau+\int_{t-t_0}^tf(t-\tau)g(\tau)\df\tau.
\end{equation}
The first integral of \eqref{eq:convpartition} can be bounded by
$$\left|\int_0^{t-t_0}f(t-\tau)g(\tau)\df\tau-\int_0^{t-t_0}f_{\infty}g(\tau)\df\tau\right|<\varepsilon\Vert g\Vert_{L^1}.$$
The second integral of \eqref{eq:convpartition} is bounded by $$\int_{t-t_0}^t \left|f(t-\tau)g(\tau)\df\tau\right|\leqslant\int_{t-t_0}^t|g(\tau)|\df\tau\cdot\Vert f\Vert_{L^\infty}.$$
As $g$ is absolutely integrable, then for all large $t$, we have $\int_{t-t_0}^t |g(\tau)|\df\tau<\varepsilon$. This shows that the limit exists. 
\end{proof}

\begin{lem}\label{lem:c00l1counter}
For any $f\in L^1$, there exists $g\in C_0^0$ such that $f\ast g\not\in L^1$.
\end{lem}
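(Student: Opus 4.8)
The plan is to argue by contradiction, turning the problem into a boundedness statement for a linear operator and then violating that bound with a Fourier-analytic construction. A single slowly-decaying $g$ cannot work for \emph{every} $f$ (see the obstacle below), so I would route everything through the closed graph theorem. First discard the trivial case: for $f=0$ the claim is false, so I read the statement for $f\neq 0$. Assume $f\neq 0$ and suppose, for contradiction, that $f\ast g\in L^1(\IR_0^+)$ for \emph{every} $g\in C_0^0(\IR_0^+)$. Extending all functions by zero to $\IR$, the one-sided convolution $\int_0^t f(t-\tau)g(\tau)\df\tau$ agrees with the convolution on $\IR$, so the linear map $\Lambda\colon C_0^0(\IR_0^+)\to L^1(\IR_0^+)$, $\Lambda g:=f\ast g$, is everywhere defined between Banach spaces ($C_0^0$ with $\Vert\cdot\Vert_\infty$, and $L^1$).

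Next I would check that $\Lambda$ has closed graph. If $g_n\to g$ uniformly, the elementary bound $|(f\ast g_n)(t)-(f\ast g)(t)|\leqslant \Vert g_n-g\Vert_\infty\,\Vert f\Vert_{L^1}$ shows $f\ast g_n\to f\ast g$ pointwise (in fact uniformly); hence if in addition $f\ast g_n\to h$ in $L^1$, a subsequence converges a.e.\ and forces $h=f\ast g$ a.e. By the closed graph theorem, $\Lambda$ is bounded: there is $C>0$ with
\[
\Vert f\ast g\Vert_{L^1}\leqslant C\,\Vert g\Vert_\infty\qquad\text{for all }g\in C_0^0(\IR_0^+).
\]
Passing to the Fourier side, write $\hat h(\xi):=\int_\IR h(t)e^{-i\xi t}\df t$ for $h\in L^1$; then $|\hat h(\xi)|\leqslant\Vert h\Vert_{L^1}$ and $\widehat{f\ast g}=\hat f\,\hat g$. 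Combining with the previous inequality gives $|\hat f(\xi)|\,|\hat g(\xi)|\leqslant C\,\Vert g\Vert_\infty$ for all $g\in C_0^0$ and all $\xi$. Since $f\neq 0$, the continuous function $\hat f$ is not identically zero, so I may fix $\xi_0\neq 0$ with $\hat f(\xi_0)\neq 0$.

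It then remains to construct test functions violating this bound. Fix a continuous $\chi\geqslant 0$ supported in $[1,2]$ with $\int\chi>0$, and set $g_N(t):=\chi(t/N)\cos(\xi_0 t)$. Each $g_N$ is continuous with compact support, hence $g_N\in C_0^0$, and $\Vert g_N\Vert_\infty\leqslant\Vert\chi\Vert_\infty$ uniformly in $N$. Using $\cos(\xi_0 t)e^{-i\xi_0 t}=\tfrac12(1+e^{-2i\xi_0 t})$ one computes $\hat g_N(\xi_0)=\tfrac12\int\chi(t/N)\df t+\tfrac12\int\chi(t/N)e^{-2i\xi_0 t}\df t=\tfrac{N}{2}\int\chi+o(N)$, where the oscillatory term is $o(N)$ by the Riemann–Lebesgue lemma (here $\xi_0\neq 0$ is used). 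Thus $|\hat g_N(\xi_0)|\to+\infty$, whereas $|\hat f(\xi_0)|\,|\hat g_N(\xi_0)|\leqslant C\,\Vert\chi\Vert_\infty$ stays bounded — a contradiction. Hence there exists $g\in C_0^0$ with $f\ast g\notin L^1$.

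The \emph{main obstacle}, and the reason for this machinery, is the case $\int f=0$. When $\int f\neq 0$ one can instead take any nonnegative $g\in C_0^0\smallsetminus L^1$ (e.g.\ a continuous version of $(1+t)^{-1/2}$) and conclude directly by Tonelli, since $\int_0^\infty(f\ast g)=\big(\int f\big)\big(\int g\big)=+\infty$; but this fails under cancellation, where a slowly-decaying kernel can produce $f\ast g\in L^1$ even though $g\notin L^1$. The functional-analytic route sidesteps this entirely, as it only needs $\hat f$ to be nonzero at a single frequency, which is insensitive to cancellation. The two delicate points to get right are the closed-graph verification and confirming that $\hat g_N(\xi_0)\to\infty$ while $\Vert g_N\Vert_\infty$ remains bounded.
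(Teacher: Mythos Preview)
Your proof is correct and takes a genuinely different route from the paper. The paper argues directly: it picks the single function $g(t)=\sin(t)/t$, whose Fourier transform is the indicator of $[-1,1]$, and observes that $\widehat{f\ast g}=\hat f\cdot\hat g$ then has jump discontinuities at $\xi=\pm 1$, hence cannot be the Fourier transform of an $L^1$ function by Riemann--Lebesgue. That argument is much shorter but tacitly assumes $\hat f(\pm 1)\neq 0$ (otherwise the product is continuous after all); one would need to dilate $g$ to move the jump to a frequency where $\hat f$ is nonzero, a wrinkle the paper does not spell out. Your closed-graph approach trades brevity for robustness: it never requires a single $g$ to work, only that some $g_N$ eventually violates the operator bound, and it singles out the frequency $\xi_0$ \emph{after} knowing $\hat f\not\equiv 0$, so the vanishing issue never arises. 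Both proofs rest on the same Fourier obstruction, but yours packages it through functional analysis while the paper's is a one-line explicit counterexample.
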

\begin{proof}
For simplicity, we just prove the statement with on $\IR$. With some modifications we can recover the desired results back on $\IR_0^+$.

\lemref{lem:convolutionlimit} shows that the convolution between an $L^1$-function and a $C_0^0$-function is in $C_0^0$. By general theory of Fourier transform, the Fourier transfomr of an $L^1$-function is a $C_0^0$-function (see Theorem 8.22 on Page 249 of \cite{folland-analysis}). Let $F$ be the Fourier transform of $f$. $F$ is $C_0^0$.

For example, take $g(t):=\frac{\sin(t)}{t}$, which has Fourier transform $G(\xi):=\theta(\xi+1)\theta(-\xi+1)$. The Fourier transform of $f\ast g$ is $F\cdot G$, which is not even continuous so it can not be the Fourier transform of any $L^1$-function.
\end{proof}

\section{Notations}
\begin{multicols}{2}
\noindent$\IN:=\{0,1,2,3,\cdots\}$\\
$\IZ:=\{0,\pm1,\pm2,\cdots\}$\\
$\IR^+:=\{r\in\IR\mid r>0\}$\\
$\IR^+_0:=\{r\in\IR\mid r\geqslant0\}$\\
$f\ast g(t):=\int_0^tf(t-\tau)g(\tau)\df \tau$\\
$C^0_b(\Omega):=L^\infty(\Omega)\cap C^0(\Omega)$\\
$C^0_0(\Omega):=\{f\in C^0(\Omega)\mid f \text{ vanishes at infinity}\}$
\end{multicols}

\newpage
\bibliographystyle{abbrvnat}
\bibliography{ref}
\end{document}